\newcommand\aff{{\mathbb A}}
\newcommand{\CC}{{\mathbb C}}
\newcommand{\cA}{{\mathscr A}}
\newcommand{\cB}{{\mathscr B}}
\newcommand{\cD}{{\mathscr D}}
\newcommand{\cH}{{\mathscr H}}
\newcommand{\cM}{{\mathscr M}}
\newcommand{\cO}{{\mathscr O}}
\newcommand{\cP}{{\mathscr P}}
\newcommand{\cU}{{\mathscr U}}
\newcommand{\cV}{{\mathscr V}} 
\newcommand{\cW}{{\mathscr W}}
\newcommand{\cX}{{\mathscr X}} 
\newcommand{\cY}{{\mathscr Y}}
\newcommand{\cZ}{{\mathscr Z}}
\newcommand{\DD}{{\mathbb D}}
\newcommand{\dra}{\dashrightarrow}
\newcommand{\es}{\emptyset}
\newcommand{\gC}{\mathfrak{C}}
\newcommand{\gI}{\mathfrak{I}}
\newcommand{\gM}{\mathfrak{M}}
\newcommand{\gN}{\mathfrak{N}}
\newcommand{\gp}{\mathfrak{p}}
\newcommand{\gs}{\mathfrak{s}}
\newcommand{\gT}{\mathfrak{T}}
\newcommand{\hra}{\hookrightarrow}
\newcommand{\la}{\langle}
\newcommand{\lagr}{\mathbb{LG}(\bigwedge^ 3 V)}
\newcommand{\lagrdual}{\mathbb{LG}(\bigwedge^ 3 V^{\vee})}
\newcommand{\lagrhat}{\widehat{\mathbb{LG}}(\bigwedge^ 3 V)}
\newcommand{\lra}{\longrightarrow}
\newcommand{\ov}{\overline}
\newcommand{\PP}{{\mathbb P}}
\newcommand{\QQ}{{\mathbb Q}}
\newcommand{\ra}{\rangle}
\newcommand{\RR}{{\mathbb R}}
\newcommand{\XX}{{\mathbb X}}
\newcommand{\wh}{\widehat}
\newcommand{\wt}{\widetilde}
\newcommand{\ZZ}{{\mathbb Z}}
\newcommand{\Gr}{\mathrm{Gr}}
\newcommand{\mapor}[1]{{\stackrel{#1}{\longrightarrow}}}
\newcommand{\mapver}[1]{\Big\downarrow
\vcenter{\rlap{$\scriptstyle#1$}}}
\theoremstyle{plain}
\newtheorem{thm}{Theorem}[section]
\newtheorem{clm}[thm]{Claim}
\newtheorem{crl}[thm]{Corollary}
\newtheorem{lmm}[thm]{Lemma}
\newtheorem{prp}[thm]{Proposition}
\newtheorem{prp-dfn}[thm]{Proposition-Definition}
\theoremstyle{definition}
\newtheorem{dfn}[thm]{Definition}
\theoremstyle{remark}
\newtheorem{rmk}[thm]{Remark}
\DeclareMathOperator{\Ann}{Ann}
\DeclareMathOperator{\Card}{Card}
\DeclareMathOperator{\cod}{cod}
\DeclareMathOperator{\cork}{cork}
\DeclareMathOperator{\Def}{Def}
\DeclareMathOperator{\discr}{discr}
\DeclareMathOperator{\divisore}{div}
\DeclareMathOperator{\GL}{GL}
\DeclareMathOperator{\Hom}{Hom}
\DeclareMathOperator{\Id}{Id}
\DeclareMathOperator{\im}{Im}
\DeclareMathOperator{\Ind}{Ind}
\DeclareMathOperator{\mult}{mult}
\DeclareMathOperator{\PGL}{PGL}
\DeclareMathOperator{\Pic}{Pic}
\DeclareMathOperator{\rk}{rk}
\DeclareMathOperator{\sing}{sing}
\DeclareMathOperator{\Stab}{Stab}
\DeclareMathOperator{\supp}{supp}
\DeclareMathOperator{\Sym}{S}
\DeclareMathOperator{\Tr}{Tr}
\DeclareMathOperator{\vol}{vol}
\newcommand{\cit}[1]{{\rm \textbf{#1}}}
\newcommand{\Ref}[2]{\cit{%
\ifthenelse{\equal{#1}{thm}}{Theorem}{}%
\ifthenelse{\equal{#1}{ass}}{Assumption}{}%
\ifthenelse{\equal{#1}{chp}}{Chapter}{}%
\ifthenelse{\equal{#1}{prp}}{Proposition}{}%
\ifthenelse{\equal{#1}{lmm}}{Lemma}{}%
\ifthenelse{\equal{#1}{cnj}}{Conjecture}{}%
\ifthenelse{\equal{#1}{crl}}{Corollary}{}%
\ifthenelse{\equal{#1}{dfn}}{Definition}{}%
\ifthenelse{\equal{#1}{expl}}{Example}{}%
\ifthenelse{\equal{#1}{hyp}}{Hypothesis}{}%
\ifthenelse{\equal{#1}{rmk}}{Remark}{}%
\ifthenelse{\equal{#1}{clm}}{Claim}{}%
\ifthenelse{\equal{#1}{exe}}{Exercise}{}%
\ifthenelse{\equal{#1}{qst}}{Question}{}%
\ifthenelse{\equal{#1}{sec}}{Section}{}%
\ifthenelse{\equal{#1}{subsec}}{Subsection}{}%
\ifthenelse{\equal{#1}{univ}}{Universal Property}{}%
\ifthenelse{\equal{#1}{trm}}{Terminology}{}%
\ifthenelse{\equal{#1}{tbl}}{Table}{}%
\  \ref{#1:#2}%
}}
\begin{document}
 \title{Periods of double EPW-sextics}
 \author{Kieran G. O'Grady\\\\
\lq\lq Sapienza\rq\rq Universit\`a di Roma}
\date{July 20 2014}
\thanks{Supported by PRIN 2010}
  \maketitle
 \section{Introduction}\label{sec:prologo}
 \setcounter{equation}{0}
 Let $V$ be a $6$-dimensional complex vector space.   Let $\lagr\subset\Gr(10,\bigwedge^3 V)$ be the symplectic Grassmannian parametrizing subspaces which are lagrangian for the symplectic form given by wedge-product.
  Given $A\in\lagr$ we let
   \begin{equation*}
Y_A:=\{[v]\in\PP(V) \mid A\cap (v\wedge\bigwedge^2 V)\not=\{0\}\}.
\end{equation*}
Then $Y_A$ is a degeneracy locus and hence it is naturally a subscheme of $\PP(V)$. 
For certain pathological choices of $A$ we have $Y_A=\PP(V)$: barring those cases    $Y_A$   is a sextic hypersurface named {\it EPW-sextic}. An EPW-sextic comes equipped with a double cover~\cite{ogdoppio} 
\begin{equation}\label{xaya}
f_A\colon X_A\to Y_A.
\end{equation}
 $X_A$ is what we call a {\it double EPW-sextic}. There is an open dense subset $\lagr^0\subset\lagr$ 
 parametrizing smooth double EPW-sextics - these $4$-folds 
are hyperk\"ahler (HK) deformations of the Hilbert square of a $K3$ (i.e.~the blow-up of the diagonal in the symmetric product of a $K3$ surface), see~\cite{ogduca}. By varying $A\in\lagr^0$ one gets a locally versal family of HK varieties - one of the five known such families in dimensions greater than $2$, see~\cite{beaudon,debvoi,iliran1,iliran2,llsv} for the construction of the other families. 
The complement  of $\lagr^0$ in $\lagr$ is the union of two prime divisors, $\Sigma$ and $\Delta$; the former consists of those $A$ containing a non-zero decomposable tri-vector,  the latter is defined in~\Ref{subsec}{divdel}. If $A$ is generic in $\Sigma$ then $X_A$ is singular along a $K3$ surface, see~\Ref{crl}{runrabbit}, if $A$ is generic in 
$\Delta$ then $X_A$ is singular at a single point whose tangent cone is isomorphic to the contraction of the zero-section of the cotangent sheaf of $\PP^2$, see Prop.~3.10 of~\cite{ogdoppio}.  
 By associating to $A\in\lagr^0$ the   Hodge structure on $H^2(X_A)$ one gets    
a  regular map of quasi-projective varieties~\cite{grhusa} 
\begin{equation}\label{perzero}
\cP^0\colon\lagr^0 \lra \DD_{\Lambda}
\end{equation}
where $\DD_{\Lambda}$ is a quasi-projective period domain, the quotient of a bounded symmetric domain of Type IV by the action of an arithmetic group, see~\Ref{subsec}{radici}.  Let $\DD_{\Lambda}^{BB}$ be the Baily-Borel compactification of $\DD_{\Lambda}$ and
\begin{equation}\label{perraz}
\cP\colon \lagr\dra \DD_{\Lambda}^{BB}
\end{equation}
the rational  map defined by~\eqref{perzero}.
The  map $\cP$ descends to the GIT-quotient of $\lagr$ for the natural action of $\PGL(V)$. More precisely: the action of $\PGL(V)$  on $\lagr$  is uniquely linearized and hence there is an unambiguous GIT quotient 
\begin{equation}
\gM:=\lagr//\PGL(V).
\end{equation}
Let  $\lagr^{\rm st},\lagr^{\rm ss}\subset\lagr$ be the loci of (GIT) stable and semistable points of $\lagr$.   By Cor.~2.5.1 of~\cite{ogmoduli} the open $\PGL(V)$-invariant subset $\lagr^0$ is contained in  $\lagr^{\rm st}$: we let 
\begin{equation}
\gM^0:=\lagr^0//\PGL(V).
\end{equation}
Then $\cP$ descends to   a rational map 
\begin{equation}\label{razper}
\gp\colon\gM\dra\DD_{\Lambda}^{BB}
\end{equation}
which is regular on $\gM^0$. By  Verbitsky's global Torelli Theorem and Markman's monodromy results the restriction of  $\gp$ to $\gM^0$ is injective, see Theorem~1.3 and Lemma~9.2 of~\cite{markman}. Since domain and codomain of the period map have the same dimension it follows that $\gp$ is a birational map.
In the present paper we will  be mainly concerned with the following problem: what is the indeterminacy locus of  $\gp$?  In order to state our main results we will go through a few more definitions.   Given  $A\in\lagr$  we let
\begin{equation}\label{eccoteta}
\Theta_A:=\{W\in\Gr(3, V)\mid \bigwedge^3 W\subset A\}.
\end{equation}
Thus $A\in\Sigma$ if and only if $\Theta_A\not=\es$. Suppose that 
  $W\in\Theta_A$: there is  a natural determinantal subscheme $C_{W,A}\subset\PP(W)$, see Subsect.~3.2 of~\cite{ogmoduli}, with the property that 
\begin{equation}
\supp C_{W,A}=\{[v]\in\PP(W) \mid \dim( A\cap(v\wedge\bigwedge^2 V))\ge 2\}.
\end{equation}
 $C_{W,A}$ is either a sextic curve or (in pathological cases) $\PP(W)$.   Let 
\begin{equation}\label{persestiche}
|\cO_{\PP(W)}(6)|\dashrightarrow \DD^{BB}_{\Phi} 
\end{equation}
be the  compactified period map where $\DD^{BB}_{\Phi} $ is the Baily-Borel compactification of the period space for  
$K3$ surfaces of degree $2$, see~\cite{shah}.
\begin{dfn}\label{dfn:igotico}
\begin{enumerate}
\item[(1)]
Let $\gM^{ADE}\subset \gM$    be the subset of points represented by $A\in\lagr^{ss}$ for which the following hold:
\begin{enumerate}
\item[(1a)]
The  orbit $\PGL(V)A$ is closed in $\lagr^{ss}$.
\item[(1b)]
For all $W\in\Theta_A$  we have  that $C_{W,A}$ is a sextic curve 
 with simple singularities. 
\end{enumerate}
\item[(2)]
Let $\gI\subset \gM$    be the subset of points represented by $A\in\lagr^{ss}$ for which the following hold:
\begin{enumerate}
\item[(2a)]
The  orbit $\PGL(V)A$ is closed in $\lagr^{ss}$.
\item[(2b)]
 There exists $W\in\Theta_A$  such that $C_{W,A}$ is either $\PP(W)$ or a sextic curve 
in the indeterminacy locus of~\eqref{persestiche}. 
\end{enumerate}
\end{enumerate}
\end{dfn}
Then $\gM^{ADE}$,  $\gI$ are respectively  open and closed subsets of $\gM$, and since every point of $\gM$ is represented by a single closed  $\PGL(V)$-orbit $\gI$ is in the complement of   $\gM^{ADE}$.  As is well-known the family of double EPW-sextics is analogous to the family of varieties of lines on a smooth cubic $4$-fold, and the period map for double EPW-sextics is analogous to the period map for cubic $4$-folds:  the subset $\gM^{ADE}$ is the analogue in our context of the moduli space of cubic $4$-folds with simple singularities, see~\cite{laza,looij}. 
 Below is the main result of the present paper.
\begin{thm}\label{thm:teorprinc}
The period map $\gp$ is regular away from $\gI$. Let $x\in(\gM\setminus\gI)$: then $\gp(x)\in\DD_{\Lambda}$ if and only if $x\in\gM^{ADE}$.
\end{thm}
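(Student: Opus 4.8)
The plan is to understand the local and global behavior of the period map $\gp$ at a point $x \in \gM$ represented by a closed semistable orbit $\PGL(V)A$, distinguishing the regions $\gM^{ADE}$ and $\gI$. The central conceptual tool is the relationship, for $W \in \Theta_A$, between the double EPW-sextic $X_A$ and the double cover of $\PP(W) \cong \PP^2$ branched along the sextic curve $C_{W,A}$, i.e.\ a degree-$2$ $K3$ surface $S_{W,A}$. The idea is that when $A$ degenerates into $\Sigma$ (so $\Theta_A \neq \es$), the Hodge-theoretic degeneration of $X_A$ is governed by the degeneration of these associated $K3$ surfaces of degree $2$, whence the appearance of the period map~\eqref{persestiche} in the very definition of $\gI$.

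First I would treat regularity of $\gp$ away from $\gI$. On $\gM^0 \subset \gM^{ADE}$ the map is already regular by~\eqref{razper}, so the work is at boundary points of $\gM \setminus \gI$, i.e.\ at closed orbits $\PGL(V)A$ with $A \in (\Sigma \cup \Delta)$ but for which every $C_{W,A}$ (if $A\in\Sigma$) is a sextic with simple singularities lying off the indeterminacy locus of~\eqref{persestiche}. The strategy is to show that, along a one-parameter family approaching such an $A$, the limiting Hodge structure extends across the boundary. For the $\Sigma$ case I would use the semistable reduction / simultaneous resolution philosophy: simple (ADE) singularities on the branch sextic $C_{W,A}$ produce $K3$ surfaces with at worst rational double points, whose periods by Shah's analysis~\cite{shah} remain in the interior $\DD_\Phi$; one then transfers this control to $X_A$ via the geometry relating $X_A$'s singularities (a $K3$ surface when $A$ is generic in $\Sigma$, see~\Ref{crl}{runrabbit}) to these sextics, showing the period of $X_A$ has a well-defined limit in $\DD_\Lambda^{BB}$. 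For the $\Delta$ case the singularity is a single cone over $T^*\PP^2$, an isolated symplectic-type degeneration, and I would argue its contribution to the limiting mixed Hodge structure is tame enough to keep the period finite. Thus $\gp$ extends regularly across all of $\gM \setminus \gI$.

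Next I would prove the dichotomy $\gp(x) \in \DD_\Lambda \iff x \in \gM^{ADE}$. The forward-complementary direction is the substance: for $x \in \gI$ I must show $\gp(x) \in (\DD_\Lambda^{BB} \setminus \DD_\Lambda)$, i.e.\ the period lands on the Baily--Borel boundary. Here the presence of some $W \in \Theta_A$ with $C_{W,A}$ either $\PP(W)$ or a sextic in the indeterminacy locus of~\eqref{persestiche} signals, via Shah, that the associated degree-$2$ $K3$ has non-semistable / non-$\DD_\Phi$ periods, which forces a Type II or Type III degeneration of the Hodge structure on $H^2(X_A)$ and hence a boundary point of $\DD_\Lambda^{BB}$. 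Conversely, for $x \in \gM^{ADE}$ one shows $\gp(x) \in \DD_\Lambda$: since all $C_{W,A}$ have simple singularities, the limiting $K3$ surfaces have only rational double points, so their periods stay interior, and correspondingly the polarized Hodge structure on $H^2(X_A)$ remains of the expected weight-two K3-type with no isotropic degeneration, landing in $\DD_\Lambda$ rather than its boundary. The openness of $\gM^{ADE}$ and closedness of $\gI$, already noted after~\Ref{dfn}{igotico}, make this a genuine partition of $\gM \setminus \gI$.

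I expect the main obstacle to be the precise Hodge-theoretic comparison between the degeneration of the fourfold $X_A$ and that of the auxiliary degree-$2$ $K3$ surfaces $S_{W,A}$ attached to $W \in \Theta_A$ — that is, proving that the type of the limit mixed Hodge structure on $H^2(X_A)$ (interior versus boundary of $\DD_\Lambda^{BB}$) is controlled exactly by whether the $C_{W,A}$ have simple singularities and avoid the indeterminacy of~\eqref{persestiche}. Making this correspondence quantitative will require a careful local analysis of $f_A\colon X_A \to Y_A$ over the singular strata, together with a matching of discriminants / monodromy on the two sides, and dovetailing Shah's explicit boundary description for plane sextics~\cite{shah} with the Type IV boundary structure of $\DD_\Lambda^{BB}$.
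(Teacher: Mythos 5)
Your conceptual picture --- that the degeneration of $X_A$ along $\Sigma$ is governed by the degree-$2$ $K3$ surfaces $S_{W,A}$ attached to $W\in\Theta_A$, with Shah's results deciding interior versus boundary --- is the right one, and it does match the paper's. But the mechanism you propose for regularity (extend the limiting Hodge structure along one-parameter degenerations, ``transferring'' control from the $K3$ to the fourfold) is not carried out, and the transfer is exactly the step you defer as the ``main obstacle.'' The paper never computes limit mixed Hodge structures. Its argument is indirect and dimension-theoretic: (i) it first extends $\cP$ over $\lagr\setminus(\Sigma_\infty\cup\Sigma[2])$, whose complement has codimension $\ge 4$, by cutting with a generic $3$-plane ${\bf L}$, showing $f_A^{-1}{\bf L}$ has at worst DuVal singularities, and invoking the Lefschetz hyperplane theorem plus finiteness of monodromy plus Griffiths' removable singularity theorem (\Ref{prp}{dagest}); (ii) since $\lagr$ is smooth, the resolution $p\colon\lagrhat\to\lagr$ of $\cP$ has exceptional set a Cartier divisor, and the codimension-$4$ bound forces $\dim(p^{-1}(A)\cap\wh\Sigma)\ge 2$ at any indeterminacy point (\Ref{crl}{dagest}); (iii) for $A\in\Sigma^{ILS}$ the fiber $p^{-1}(A)\cap\wh\Sigma$ is shown to be the image of $\Theta_A=\tau^{-1}(A)$ under a \emph{regular} map built from the $K3$ period map of the sextics $C_{W,A}$ (\Ref{crl}{feniglia}), hence of dimension $\le\dim\Theta_A\le 1$ --- contradiction. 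The ``transfer'' making step (iii) possible is solved lattice-theoretically, not Hodge-theoretically: $\zeta_A^{\bot}\cap H^2(\wt X_A;\ZZ)\hra H^2(S_A;\ZZ)$ is an index-$2$ inclusion (a discriminant computation), identified with the finite map $\rho\colon\DD_{\Gamma}^{BB}\to\DD_{\Phi}^{BB}$ between Baily--Borel compactifications, so regularity of Shah's map at $C_{W,A}$ lifts to regularity of the extended period map along $\Sigma$ (\Ref{prp}{feniglia}). Your proposal contains no substitute for steps (i)--(iii).

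Two further gaps. First, you omit the GIT input entirely: for a closed semistable orbit $\PGL(V)A$ representing $x\notin\gI$ one must prove $\dim\Theta_A\le 1$ before the dimension count closes; in the paper this requires the taxonomy of lagrangians with $\dim\Theta_A\ge 2$ from~\cite{ogtasso} and~\cite{ogmoduli} (such $A$ are either $\PGL(V)$-unstable or fail the ILS condition, contradicting $x\notin\gI$ in either case). Second, your dichotomy step misreads the statement: the claim to prove is that $\gp(x)\notin\DD_\Lambda$ for $x\in(\gM\setminus\gI)\setminus\gM^{ADE}$ --- points where every $C_{W,A}$ lies in Shah's regular locus but some $C_{W,A}$ has worse-than-simple singularities --- whereas you set out to show that points of $\gI$ itself map to the Baily--Borel boundary; the theorem asserts nothing about $\gI$, and indeed on $\gI$ the map may simply fail to be regular. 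Once the machinery above is in place, the dichotomy is not a separate degeneration argument: it falls out of \Ref{crl}{feniglia}, since $\cP(A)$ is computed by $\nu\circ Q$ from the $K3$ periods, and those lie in $\DD_\Phi$ exactly when the $C_{W,A}$ have simple singularities.
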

\Ref{thm}{teorprinc} is the analogue of the result that the period map for cubic $4$-folds extends regularly to  the moduli space of cubic $4$-folds with simple singularities, and it maps it into the interior of the relevant Baily-Borel compactification, see~\cite{laza,looij}. 
The above result is a first step towards an understanding of the rational map $\gp\colon\gM\dra\DD_{\Lambda}^{BB}$. 
Such an understanding will eventually  include  a characterization of the image  of $(\lagr\setminus\Sigma)$. (Notice that if $A\in(\lagr\setminus\Sigma)$ then $A$ is a stable point (Cor.~2.5.1 of~\cite{ogmoduli}) and hence $[A]\in\gM^{ADE}$ because $\Theta_A$ is empty.) 
In the present paper we  will give a preliminary result in that direction. In order to state our result we will to introduce more notation. Let
\begin{equation}
\gN:=\Sigma//\PGL(V).
\end{equation}
(The generic point of $\Sigma$ is $\PGL(V)$-stable by Cor.~2.5.1 of~\cite{ogmoduli}, hence $\gN$ is a prime divisor of $\gM$.)   
 In~\Ref{subsec}{radneg} we will prove that   the set of Hodge structures in $\DD_{\Lambda}$  which have a $(1,1)$-root of negative square is the union of
four prime divisors named ${\mathbb S}_2^{\star}$, ${\mathbb S}'_2$, ${\mathbb S}''_2$, ${\mathbb S}_4$. 
\begin{thm}\label{thm:imagoper}
The restriction of the period map $\gp$  to $(\gM\setminus\gN)$ is an open embedding 
\begin{equation}\label{looicomp}
(\gM\setminus\gN)\hra (\DD_{\Lambda}
\setminus({\mathbb S}_2^{\star}\cup{\mathbb S}'_2\cup{\mathbb S}''_2\cup{\mathbb S}_4)).
\end{equation}
\end{thm}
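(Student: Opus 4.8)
The plan is to read the statement off \Ref{thm}{teorprinc} and the global Torelli theorem, after isolating the two points that are genuinely new here: injectivity of $\gp$ on all of $\gM\setminus\gN$, and the fact that the image misses the four divisors. First I would check the two containments that make \Ref{thm}{teorprinc} applicable. On the one hand $\gI\subseteq\gN$: a point of $\gI$ is represented by a semistable $A$ with closed orbit and, by condition (2b) of \Ref{dfn}{igotico}, with $\Theta_A\ne\es$; hence $A\in\Sigma$, and since $\Sigma$ is closed and $\PGL(V)$-invariant, $[A]\in\Sigma/\!/\PGL(V)=\gN$. Thus $\gM\setminus\gN\subseteq\gM\setminus\gI$ and $\gp$ is regular on $\gM\setminus\gN$. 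On the other hand $\gM\setminus\gN\subseteq\gM^{ADE}$: if $[A]\in\gM\setminus\gN$, the closed orbit representing it satisfies $A\notin\Sigma$, so $\Theta_A=\es$, condition (1b) holds vacuously and (1a) holds because such $A$ is stable (Cor.~2.5.1 of~\cite{ogmoduli}). By \Ref{thm}{teorprinc} this gives $\gp(\gM\setminus\gN)\subseteq\DD_{\Lambda}$.

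Granting injectivity, the open embedding is then formal. The variety $\gM\setminus\gN$ is open in the normal quotient $\gM$, hence normal, and $\DD_{\Lambda}$ is normal; moreover $\gp$ is birational by Verbitsky's global Torelli theorem together with Markman's monodromy results (Thm~1.3 and Lemma~9.2 of~\cite{markman}). An injective morphism of finite type is quasi-finite, so by Zariski's Main Theorem the birational quasi-finite morphism $\gp\colon\gM\setminus\gN\to\DD_{\Lambda}$ factors as an open immersion followed by a finite birational morphism onto the normal target, the latter necessarily an isomorphism; hence $\gp|_{\gM\setminus\gN}$ is an open immersion onto an open subset of $\DD_{\Lambda}$. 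Combined with the previous paragraph and with the divisor-avoidance below, this exhibits $\gp|_{\gM\setminus\gN}$ as an open embedding into $\DD_{\Lambda}\setminus(\essestella\cup{\mathbb S}'_2\cup{\mathbb S}''_2\cup{\mathbb S}_4)$.

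For injectivity I would propagate the Torelli injectivity known on $\gM^0$ across the extra points, which lie over $\Delta\setminus\Sigma$. For such $A$ the fourfold $X_A$ has a single singular point whose tangent cone is the contraction of the zero-section of $T^{*}\PP^2$ (Prop.~3.10 of~\cite{ogdoppio}); reinstating that Lagrangian $\PP^2$ yields a small symplectic resolution $\wt X_A$, a smooth hyperk\"ahler fourfold deformation equivalent to $S^{[2]}$, and smallness gives $H^2(\wt X_A)\cong H^2(X_A)$ as polarized Hodge structures. If two points of $\gM\setminus\gN$ share a period, their resolutions carry Hodge-isometric markings via a parallel-transport operator fixing the polarization class, so by~\cite{markman} they are bimeromorphic; the common polarization identifies their ample ($h$-)models, namely the $X_A$'s, and O'Grady's reconstruction of $A$ from $X_A$ (see~\cite{ogmoduli}) forces the two points of $\gM$ to agree. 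The same argument rules out a collision between $\Delta\setminus\Sigma$ and $\gM^0$.

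The substantive step, and the one I expect to be the main obstacle, is to show that $\gp(\gM\setminus\gN)$ avoids the four divisors. By \Ref{subsec}{radneg} their union is exactly the locus of Hodge structures carrying a $(1,1)$-root of negative square, so it suffices to prove that $H^2(X_A)$ carries no such root when $A\notin\Sigma$. I would argue the contrapositive: a $(1,1)$-root of negative square is a wall-divisor class and produces a birational contraction of $X_A$; pushing this contraction through the double cover $f_A\colon X_A\to Y_A$ into the projective geometry of $Y_A\subset\PP(V)$ should exhibit a plane $\PP(W)$ with $\bigwedge^3 W\subset A$, that is $W\in\Theta_A$ and hence $A\in\Sigma$. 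Dually one must verify that the class created by the $\Delta$-degeneration, the class of the Lagrangian $\PP^2$ above, is not of root type, i.e.\ has square outside $\{-2,-4\}$; this is precisely what lets $\Delta\setminus\Sigma$ map into the complement of the removed divisors rather than onto one of them. The delicate part is the lattice bookkeeping that matches the three square $-2$ orbits and the square $-4$ orbit to the determinantal geometry of the curves $C_{W,A}$ and shows that every such root forces $\Theta_A\ne\es$; this is where the bulk of the work lies.
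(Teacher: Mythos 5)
Your reduction to \Ref{thm}{teorprinc} (the inclusions $\gI\subset\gN$ and $(\gM\setminus\gN)\subset\gM^{ADE}$, hence regularity of $\gp$ on $\gM\setminus\gN$ with image in $\DD_{\Lambda}$) is correct and agrees with the paper. The genuine gap is that the substantive half of the theorem --- that $\gp(\gM\setminus\gN)$ misses ${\mathbb S}_2^{\star}\cup{\mathbb S}'_2\cup{\mathbb S}''_2\cup{\mathbb S}_4$ --- is exactly what you do not prove: you propose the mechanism \lq\lq negative $(1,1)$-root $\Rightarrow$ wall class $\Rightarrow$ birational contraction of $X_A$ $\Rightarrow$ a plane $\PP(W)\subset Y_A$, hence $A\in\Sigma$\rq\rq\ and defer the \lq\lq lattice bookkeeping\rq\rq\ as the bulk of the work. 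That mechanism fails already for ${\mathbb S}_4$: the $(-4)$-root $e_1+e_2$ has divisibility $2$ in $\Lambda$ but divisibility $1$ in $\wt{\Lambda}$, and for deformations of the Hilbert square of a $K3$ such a class is neither $\pm$effective nor a wall class, so its presence in $H^{1,1}_{\ZZ}\cap h^{\bot}$ forces no contraction and no degeneration whatsoever. Indeed the paper shows that a dense subset of ${\mathbb S}_4$ is filled out by periods of $(S^{[2]},\mu(d)-\xi)$ with $S\subset\PP^3$ a quartic surface, for which the polarization map $S^{[2]}\to\Gr(1,\PP^3)$ is \emph{finite} of degree $6$; nothing in that geometry produces a plane inside an EPW-sextic. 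The paper's actual route is: (i) \Ref{lmm}{nondel}, a Picard-lattice computation for the genus-$6$ $K3$ attached to generic $A\in\Delta\setminus\Sigma$ (the relevant generator $2\mu(d_A)-5\xi$ has square $-10$), which excludes that a divisorial component of the preimage of any of the four divisors equals $\Delta$; (ii) for ${\mathbb S}_2^{\star}$, the fact that $\cP(\Sigma)$ is already dense in $\ov{\mathbb S}_2^{\star}$ (\Ref{prp}{nerola}), so a second dominating divisor would contradict birationality of $\gp$ together with normality of $\DD_{\Lambda}^{BB}$; (iii) for ${\mathbb S}''_2$ and ${\mathbb S}_4$, a degree comparison via Verbitsky's Global Torelli theorem ($f_A$ has degree $2$ onto its image, against degree $4$ for the Hilbert square of a degree-$2$ $K3$ and degree $6$ for the quartic case), where the ${\mathbb S}_4$ case needs the Pell-equation analysis of \Ref{lmm}{quadue}, \Ref{lmm}{nodeff} and \Ref{prp}{contiene} to rule out \emph{every} square-$2$ class on $S^{[2]}$; and (iv) ${\mathbb S}'_2$ deduced from ${\mathbb S}''_2$ by the duality involution via \eqref{idiota} and \eqref{azionesse}. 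None of this content is recovered by your sketch.

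There is also a gap in your openness step. Your framing (injectivity $+$ Zariski's Main Theorem) requires injectivity of $\gp$ on all of $\gM\setminus\gN$ as an \emph{input}, and your injectivity argument over $\gT=\Delta//\PGL(V)$ --- small resolutions, parallel transport, and \lq\lq reconstruction of $A$ from $X_A$\rq\rq\ --- is itself only a sketch not covered by the results you cite (in particular, identifying the two singular models from a birational map between the resolutions, and recovering $A$ from the polarized fourfold, both need proof). The paper arranges the logic so that injectivity is an output rather than an input: local Torelli makes $\gp$ a local isomorphism at points with $X_A$ smooth, points of $\gT$ away from $\gT\cap\delta(\gT)$ are handled by the duality involution and \eqref{idiota} (\Ref{clm}{granaper}), one has $\cod(\gT\cap\delta(\gT),\gM)\ge 2$, and since $\DD_{\Lambda}$ is $\QQ$-factorial the exceptional set of the birational map $\gp|_{\gM\setminus\gN}$ is either empty or of pure codimension $1$; it is therefore empty, and the map is an open embedding. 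If you keep the ZMT framing, you must still supply either the injectivity statement or this purity argument.
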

Let us briefly summarize the main intermediate results of the paper and the proofs of~\Ref{thm}{teorprinc} and~\Ref{thm}{imagoper}.  In~\Ref{subsec}{radneg}  we will define the Noether-Lefschetz prime divisor   ${\mathbb S}_2^{\star}$ of $\DD_{\Lambda}$ and its closure  $\ov{\mathbb S}_2^{\star}$ in $\DD_{\Lambda}^{BB}$;  later we will prove that  the closure of $\cP(\Sigma)$ is equal to $\ov{\mathbb S}_2^{\star}$.  In~\Ref{subsec}{radneg}   we will show that the normalization of  $\ov{\mathbb S}_2^{\star}$ is equal to the Baily-Borel compactification $\DD^{BB}_{\Gamma}$ of the quotient of a bounded symmetric domain of Type IV modulo an arithmetic group 
and we will define    a natural finite map  $\rho\colon \DD^{BB}_{\Gamma}\to \DD^{BB}_{\Phi}$ where $\DD^{BB}_{\Phi}$ is as in~\eqref{persestiche}  - the map $\rho$ will play a key 
r\^ole in the proof of~\Ref{thm}{teorprinc}. In~\Ref{sec}{primest} we will prove that  $\cP$ is regular away from a certain closed subset  of  $\Sigma$ which has codimension $4$ in $\lagr$. The idea of the proof is the following. Suppose that $X_A$ is smooth and ${\bf L}\subset\PP(V)$ is a $3$-dimensional linear subspace such that  $f_A^{-1}(Y_A\cap{\bf L})$ is smooth: by Lefschetz' Hyperplane Theorem the periods of $X_A$ inject into the periods of $f_A^{-1}(Y_A\cap{\bf L})$.  This together with  Griffiths' Removable Singularity Theorem  gives that the period map extends regularly over the subset of $\lagr$ parametrizing those $A$ for which $f_A^{-1}(Y_A\cap{\bf L})$ has at most rational double points for generic ${\bf L}\subset\PP(V)$ as above.   We will prove that the latter condition holds away from the union of the   subsets of $\Sigma$  denoted    $\Sigma[2]$ and  $\Sigma_{\infty}$, see~\Ref{prp}{platone}. One gets the stated result because the codimensions in $\lagr$ of $\Sigma[2]$ and $\Sigma_{\infty}$ are  $4$ and $7$ respectively, see~\eqref{romalazio} and~\eqref{siginf}. 
 Let $\lagrhat\subset\lagr\times\DD_{\Lambda}^{BB}$ be the closure of the graph of $\cP$. Since  $\lagr$ is smooth the projection $p\colon\lagrhat \to \lagr$ is identified with the blow-up of the indeterminacy locus of $\cP$ and hence the exceptional set of $p$ is the support of the exceptional Cartier divisor of the blow-up.
 Let $\wh{\Sigma}\subset \lagrhat$ be the strict transform of $\Sigma$. The  results of~\Ref{sec}{primest}  described above  give that if $A$ is in the indeterminacy locus of $\cP$ (and hence $A\in\Sigma$) then 
\begin{equation}\label{almenodue}
\dim (p^{-1}(A)\cap\wh{\Sigma})\ge 2.
\end{equation}
 \Ref{sec}{perdisig} starts with an analysis of $X_A$ for generic $A\in\Sigma$: we will prove that it is singular along a $K3$ surface $S_A$ which is a double cover of $\PP(W)$ where $W$ is the unique element of $\Theta_A$ (unique because $A$ is generic in $\Sigma$) and that the blow-up of $X_A$ with center $S_A$ - call it $\wt{X}_A$ -  is a smooth HK variety deformation equivalent to  smooth double EPW-sextics, see~\Ref{crl}{runrabbit} and~\Ref{crl}{conichesuw}. It follows that $\cP(A)$ is identified with the weight-$2$ Hodge structure of  $\wt{X}_A$. Let $\zeta_A$ be the Poincar\'e dual of the exceptional divisor of the blow-up $\wt{X}_A\to X_A$. Then $\zeta_A$ is a $(-2)$-root of divisibility $1$ perpendicular to  the pull-back of $c_1(\cO_{Y_A}(1))$: it follows that  $\cP(A)\in\ov{\mathbb S}_2^{\star}$ and that $\cP(\Sigma)$ is dense in $\ov{\mathbb S}_2^{\star}$, see~\Ref{prp}{nerola}. 
  We will also define an  index-$2$ inclusion of integral Hodge structures $\zeta_A^{\bot}\hra H^2(S_A;\ZZ)$, see~\eqref{leuca}, and we will show that the inclusion may be identified with the value at $\cP(A)$ of the finite map $\rho\colon \DD^{BB}_{\Gamma}\to \DD^{BB}_{\Phi}$ mentioned above (this makes sense because $\rho$ is the map associated to an extension of lattices), see~\eqref{pescia}. Let 
\begin{equation}\label{sigtil}
\wt{\Sigma}:=  \{(W,A)\in\Gr(3,V)\times\lagr \mid \bigwedge^3 W\subset A\}.
\end{equation}
The natural forgetful map $\wt{\Sigma}\to\Sigma$ is birational (for general $A\in\Sigma$ there is a unique $W\in\Gr(3,V)$ such that $\bigwedge^3 W\subset A$); since the period map is regular at the generic point of $\Sigma$ it induces a rational map $\wt{\Sigma}\dra\ov{\mathbb S}_2^{\star}$ and hence a rational map to its normalization
\begin{equation}\label{checalore}
\wt{\Sigma}\dra\DD^{BB}_{\Gamma}.
\end{equation}
 Let $(W,A)\in\wt{\Sigma}$ and suppose that  $C_{W,A}$ is a sextic (i.e.~$C_{W,A}\not=\PP(W)$) and  the period map~\eqref{persestiche} is regular at $C_{W,A}$: the relation described above  between the Hodge structures  on $\zeta_A^{\bot}$ and  $H^2(S_A)$ gives that Map~\eqref{checalore} is regular at $(W,A)$. 
Now let $x\in(\gM\setminus\gI)$ and suppose that $x$ is in the indeterminacy locus of the rational period map $\gp$. One reaches a contradiction arguing as follows. Let $A\in\lagr$ be semistable with orbit closed in $\lagr^{ss}$ and representing $x$. Results of~\cite{ogmoduli} and~\cite{ogtasso} give that $\dim\Theta_{A}\le 1$; this result combined with  the  regularity of~\eqref{checalore}   at all $(W,A)$ with $W\in\Theta_{A}$ gives that $\dim (p^{-1}(A)\cap\wh{\Sigma})\le 1$: that contradicts~\eqref{almenodue} and hence proves that $\gp$ is regular at $x$ (it proves also the last clause in the statement of~\Ref{thm}{teorprinc}). 
In~\Ref{sec}{imagoper} we will prove~\Ref{thm}{imagoper}. The main ingredients of the proof  that $\gp$ maps $(\gM\setminus\gN)$ into the right-hand side of~\eqref{looicomp}  are Verbitsky's Global Torelli Theorem and our knowledge of degenerate EPW-sextics whose periods fill out open dense substes of the divisors ${\mathbb S}_2^{\star}$, ${\mathbb S}'_2$, 
${\mathbb S}''_2$ and ${\mathbb S}_4$. Here \lq\lq degenerate\rq\rq means that we have a hyperk\"ahler deformation of the Hilbert square of a $K3$ and   a map $f\colon X\to\PP^5$: while $X$ is \emph{not} degenerate, the map is degenerate in the sense that it is not a double cover of its image, either it has  (some) positive dimensional fibers (as in the case of  ${\mathbb S}_2^{\star}$ that we discussed  above) or  it has degree (onto its image) strictly higher than $2$. Lastly we will prove that the restriction of $\gp$ to  $(\gM\setminus\gN)$ is open. Since $\DD_{\Lambda}$ is $\QQ$-factorial it will suffice to prove that the exceptional set of $\gp|_{(\gM\setminus\gN)}$ has codimension at least $2$. If $A\in\lagr^0$ then $\gp$ is open  at $[A]$ because $X_A$ is smooth (injectivity and surjectivity of the local period map). On the other hand there is a regular involution of $(\gM\setminus\gN)$ which lifts a regular involution of $\DD_{\Lambda}$, and  $(\Delta\setminus\Sigma)//\PGL(V)$ is not sent to itself by the involution: this will allow us to conclude that the exceptional set of $\gp|_{(\gM\setminus\gN)}$ has codimension at least $2$.

\medskip
\noindent
{\bf Acknowledgments.} Thanks go to Olivier Debarre for many suggestions  on how to improve the exposition. 
 \section{Preliminaries}\label{sec:prelim}
 \setcounter{equation}{0}
 \subsection{Local equation of EPW-sextics}\label{sec:vidaloca}
 \setcounter{equation}{0}
We will recall notation and results from~\cite{ogtasso}. Let $A\in\lagr$ and $[v_0]\in \PP(V)$. 
Choose a direct-sum decomposition
\begin{equation}\label{trasuno}
 V=[v_0]\oplus V_0. 
\end{equation}
We identify $V_0$ with the open affine  $(\PP(V)\setminus\PP(V_0))$ via the isomorphism
\begin{equation}\label{apertoaffine}
\begin{matrix}
 V_0 & \overset{\sim}{\lra} & \PP(V)\setminus\PP(V_0) \\
v & \mapsto &  [v_0+v].
\end{matrix}
\end{equation}
Thus $0\in V_0$ corresponds to $[v_0]$. Then
\begin{equation}\label{taylor}
Y_A\cap V_0=V(f_0+f_1+\cdots+f_6),\qquad f_i\in \Sym^i V_0^{\vee}.
\end{equation}
The following result collects together statements contained in Corollary 2.5 and Proposition 2.9 of~\cite{ogtasso}. 
\begin{prp}\label{prp:ipsing}
Keep assumptions and hypotheses as above.
Let    $k:=\dim (A\cap (v_0\wedge\bigwedge^2 V))$. 
\begin{itemize}
\item[(1)]
Suppose that there is no $W\in\Theta_A$ containing $v_0$. Then the following hold:
\begin{enumerate}
\item[(1a)]
$0=f_0=\ldots= f_{k-1}$ and $f_k\not=0$.
\item[(1b)]
Suppose that $k=2$ and hence $[v_0]\in Y_A(2)$. Then $Y_A(2)$ is  smooth two-dimensional at $[v_0]$.
\end{enumerate}
\item[(2)]
Suppose that there exists $W\in\Theta_A$ containing $v_0$. Then $0=f_0=f_1$.
\end{itemize}
\end{prp}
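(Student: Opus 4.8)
The plan is to make the determinantal nature of $Y_A$ explicit and to read off all the Taylor coefficients $f_i$ from a single symmetric bilinear form. Using the decomposition $\bigwedge^3 V=(v_0\wedge\bigwedge^2 V_0)\oplus\bigwedge^3 V_0$, which is a pair of complementary Lagrangians for the wedge form (here $\dim V_0=5$), note that $F_{v_0}:=v_0\wedge\bigwedge^2 V=v_0\wedge\bigwedge^2 V_0$. For $v$ near $0$ the Lagrangian $F_{v_0+v}=(v_0+v)\wedge\bigwedge^2 V$ stays transverse to $\bigwedge^3 V_0$, so I let $\pi_v\colon\bigwedge^3 V\to\bigwedge^3 V_0$ be the projection with kernel $F_{v_0+v}$ and set $\lambda_A(v):=\pi_v|_A\colon A\to\bigwedge^3 V_0$. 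Both spaces are $10$-dimensional and $Y_A\cap V_0=\{v:\det\lambda_A(v)=0\}$, so $f=\det\lambda_A(v)$ up to a unit. Writing $K:=A\cap F_{v_0}$ and $K':=\{\alpha\in\bigwedge^2 V_0:v_0\wedge\alpha\in A\}$, one has $K=v_0\wedge K'$, $\ker\lambda_A(0)=K$, and $\cork\lambda_A(0)=\dim K'=k$.

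First I would record the linear-algebra consequence: a square matrix-valued germ of corank $k$ at the origin has determinant vanishing to order $\ge k$, which already yields $f_0=\cdots=f_{k-1}=0$, and by a Schur-complement computation the degree-$k$ part equals the determinant of the first-order map $N_1\colon\ker\lambda_A(0)\to\coker\lambda_A(0)$. The key computation is that for $\alpha\in K'$ one has $\pi_{tv}(v_0\wedge\alpha)=-t\,(v\wedge\alpha)$ \emph{exactly}: indeed $(v_0+tv)\wedge\alpha\in F_{v_0+tv}=\ker\pi_{tv}$, while $v\wedge\alpha\in\bigwedge^3 V_0$ is fixed by the projection. Dualising via the Lagrangian pairing one finds that the image of $\lambda_A(0)$ is the annihilator of $K$ inside $\bigwedge^3 V_0\cong F_{v_0}^\vee$, so $\coker\lambda_A(0)\cong K^\vee$. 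Hence $N_1=B_v$ is the symmetric bilinear form on $K'$ given by $B_v(\alpha,\alpha')=v_0\wedge v\wedge\alpha\wedge\alpha'\in\bigwedge^6 V\cong\CC$, and $f_k=\det B_v$ up to a unit.

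The dictionary with $\Theta_A$ is that a $W\in\Theta_A$ with $v_0\in W$ is exactly a nonzero decomposable $w_1\wedge w_2\in K'$ (so that $\bigwedge^3 W=\langle v_0\wedge w_1\wedge w_2\rangle\subset A$), and since $\alpha\wedge\alpha=0$ precisely when $\alpha$ is decomposable, such an element is $B_v$-isotropic for every $v$. This settles (2): the existence of $W$ forces $k\ge1$, hence $f_0=0$; if $k=1$ then $K'=\langle\alpha_0\rangle$ with $\alpha_0$ decomposable, so $B_v\equiv 0$ and $f_1=0$, while if $k\ge2$ then $f_1=0$ already by the order bound. For (1a) and (1b) everything reduces to the non-degeneracy of $B_v$. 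The hypothesis in (1) says $K'$ contains no nonzero decomposable, and since $\Gr(2,V_0)\subset\PP(\bigwedge^2 V_0)\cong\PP^9$ has codimension $3$, a linear $\PP(K')$ meets it whenever $k\ge4$; hence under (1) we must have $k\le 3$.

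The heart of the matter — and the step I expect to be the main obstacle — is then to analyse $\mu\colon\Sym^2 K'\to\bigwedge^4 V_0$, $\alpha\cdot\alpha'\mapsto\alpha\wedge\alpha'$; under $\bigwedge^4 V_0\cong V_0^\vee$ the form $B$ is the transpose of $\mu$, since $B_v(\alpha,\alpha')=\langle v,\mu(\alpha\cdot\alpha')\rangle$. I would show that the no-decomposable hypothesis forces $\ker\mu$ to contain no symmetric tensor of rank $\le 2$: a rank-$1$ element is a square $\alpha_\eta\wedge\alpha_\eta$, whose vanishing makes $\alpha_\eta\in K'$ decomposable, and a rank-$2$ element, after diagonalising, produces two indecomposable (hence rank-$4$) forms $\gamma_1,\gamma_2\in K'$ with $\gamma_1^{\wedge 2}$ proportional to $\gamma_2^{\wedge 2}$, i.e.\ sharing a radical line, so that $\gamma_1,\gamma_2\in\bigwedge^2 U$ for a $4$-space $U$; the pencil $s\gamma_1+t\gamma_2$ then has square valued in $\bigwedge^4 U\cong\CC$, a binary quadratic with a root, giving a decomposable member of $K'$ — a contradiction in every case. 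Granting this, a short count of $\dim\ker\mu$ (namely $0$ for $k\le 2$ and exactly $1$ for $k=3$, the latter because every linear subspace of $\Sym^2\CC^3$ of dimension $\ge 2$ meets the determinant cubic) shows that the image $(\ker\mu)^\perp$ of the transpose is too large to lie in the locus of degenerate forms, whose linear subspaces are classical; whence $\det B_v\not\equiv 0$ and $f_k\ne 0$, proving (1a). Finally (1b) is the case $k=2$, where $\ker\mu=0$ makes the three entries of the symmetric matrix $B_v$ independent linear forms on $V_0$; since near $[v_0]$ the locus $Y_A(2)$ is the vanishing of the $2\times 2$ Schur complement, whose linear part is $B_v$, it is cut out by these three independent forms and is therefore smooth of dimension $5-3=2$.
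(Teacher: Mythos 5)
Your derivation of parts (1a) and (2) is correct, and it is a genuinely self-contained route: the paper itself gives no proof of this proposition (it is quoted from Corollary~2.5 and Proposition~2.9 of~\cite{ogtasso}), and the closest machinery inside the paper --- \Ref{prp}{ipsdisc} together with \Ref{prp}{conodegenere}, which identify the lowest-order term of a local equation with $c\det(q_v|_{K})$ --- is set up only under the transversality assumption~\eqref{trasdue}, which the proposition does not grant. Your model $\lambda_A(v)=\pi_v|_A$ needs no such assumption, and the key steps check out: the identity $\pi_{tv}(v_0\wedge\alpha)=-t\,v\wedge\alpha$ is exact; $\im\lambda_A(0)$ is the annihilator of $K$ because $A$, $v_0\wedge\bigwedge^2 V$ and $\bigwedge^3 V_0$ are all Lagrangian, whence $f_k=\det(B_v|_{K'})$ up to a unit; nonzero decomposables in $K'$ correspond exactly to elements of $\Theta_A$ through $v_0$; hypothesis~(1) forces $k\le 3$; and your rank analysis correctly shows $\ker\mu$ contains no nonzero tensor of rank $\le 2$, so $\ker\mu=0$ for $k\le 2$ and $\dim\ker\mu=1$ for $k=3$. (For $k=3$ you can avoid the appeal to the classification of linear systems of singular symmetric $3\times 3$ matrices: the generator $T_0$ of $\ker\mu$ has rank $3$, so in a basis of $K'$ where $T_0$ is the identity, $\Ann(T_0)$ is the space of traceless forms, which contains the nondegenerate form $\mathrm{diag}(1,1,-2)$.)

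Part (1b), however, has a genuine gap: you treat the Schur complement as a \emph{symmetric} $2\times 2$ matrix, i.e.\ you claim that near $[v_0]$ the scheme $Y_A[2]$ is cut out by \emph{three} functions whose linear parts are the entries of $B_v$. What you actually proved is that the \emph{linear part} of the Schur complement is symmetric. In your model $\lambda_A(v)\colon A\to\bigwedge^3 V_0$ the full Schur complement is a map $K\to\coker\lambda_A(0)\cong K^{\vee}$ with no evident symmetry: the natural symmetric form $(a,a')\mapsto\vol(\pi_v(a)\wedge a')$ on $A$ has kernel strictly larger than $A\cap\bigl((v_0+v)\wedge\bigwedge^2 V\bigr)$ as soon as $A\cap\bigwedge^3 V_0\ne\{0\}$, so symmetry of the local matrix genuinely uses~\eqref{trasdue}, which you never assumed nor arranged. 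Without it, $Y_A[2]$ is locally the zero scheme of \emph{four} functions; Krull's theorem then gives only $\dim_{[v_0]}Y_A[2]\ge 1$, and a point with $2$-dimensional Zariski tangent space on a scheme of dimension $\le 1$ is \emph{singular}, so smoothness does not follow from your Jacobian computation. The repair is to build a symmetric model: choose a Lagrangian $M\subset\bigwedge^3 V$ transverse to both $v_0\wedge\bigwedge^2 V$ and $A$ (always possible, transversality to each being a dense open condition on $\lagr$); then $A$ and every nearby $(v_0+v)\wedge\bigwedge^2 V$ are graphs of symmetric maps $v_0\wedge\bigwedge^2 V\to M\cong(v_0\wedge\bigwedge^2 V)^{\vee}$, their intersection is the kernel of a difference of symmetric matrices, and $Y_A[2]$ is locally cut out by the three entries of a symmetric Schur complement. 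Its linear part is independent of the choice of $M$: pairing against $v_0\wedge\alpha'\in K$ kills the $v_0\wedge\bigwedge^2 V$-component, leaving $\pm\vol(v_0\wedge v\wedge\alpha\wedge\alpha')=\pm B_v(\alpha,\alpha')$. Krull then gives $\dim\ge 5-3=2$, and your tangent-space computation upgrades this to smoothness of dimension exactly $2$; when~\eqref{trasdue} can be arranged this symmetric model is precisely the paper's $q_A+q_v$ of \Ref{prp}{ipsdisc}.
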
 
Next we recall how one describes $Y_A\cap V_0$ under the following assumption:
\begin{equation}\label{trasdue}
\bigwedge^3 V_0 \cap A=\{0\}. 
\end{equation}
Decomposition~\eqref{trasuno} determines a direct-sum decomposition $\bigwedge^3 V=[v_0]\wedge\bigwedge^2 V_0\oplus \bigwedge^3 V_0$. 
We will identify  $\bigwedge^2 V_0$  with $v_0\wedge\bigwedge^2 V_0$ via 
\begin{equation}\label{iaquinta}
\begin{matrix}
\bigwedge^2 V_0 & \overset{\sim}{\lra} & v_0\wedge\bigwedge^2 V_0\\
\beta & \mapsto & v_0\wedge\beta
\end{matrix}
\end{equation}
By~\eqref{trasdue} the subspace $A$ is the graph of a linear map 
\begin{equation}\label{aprile}
\wt{q}_A\colon \bigwedge^2 V_0\to \bigwedge^3 V_0.
\end{equation}
Choose a volume-form  
\begin{equation}\label{volzero}
\vol_0\colon \bigwedge^5 V_0\overset{\sim}{\lra} \CC   
\end{equation}
 in order to identify $\bigwedge^3 V_0$ with $\bigwedge^2 V_0^{\vee}$. Then $\wt{q}_A$  is symmetric because $A\in\lagr$. Explicitly
\begin{equation}\label{giannibrera}
\wt{q}_A(\alpha)=\gamma\iff
(v_0\wedge\alpha+\gamma)\in A. 
\end{equation}
We let $q_A$ be the associated  quadratic form on $\bigwedge^2 V_0$. Notice that
\begin{equation}\label{nucqua}
\ker q_A=\{\alpha\in\bigwedge^2 V_0 \mid v_0\wedge\alpha \in A\cap (v_0\wedge\bigwedge^2 V)  \} 
\end{equation}
 is identified with $A\cap (v_0\wedge\bigwedge^2 V)$ via~\eqref{iaquinta}. 
Let  $v\in V_0$  and  $q_v$ be the Pl\"ucker quadratic form  defined by
\begin{equation}\label{quadpluck}
\begin{matrix}
\bigwedge^2 V_0 & \overset{q_v}{\lra} & \CC \\
\alpha & \mapsto & \vol_0( v\wedge \alpha\wedge\alpha)
\end{matrix}
\end{equation}
\begin{prp}[Proposition~2.18 of~\cite{ogtasso}]\label{prp:ipsdisc}
Keep notation and hypotheses as above, in particular~\eqref{trasdue} holds.  Then
\begin{equation}
Y_A\cap V_0=V(\det(q_A+q_v))\,.
\end{equation}
\end{prp}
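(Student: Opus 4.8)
The plan is to compute the intersection $A\cap(w\wedge\bigwedge^2 V)$ directly for the point $w=v_0+v$ corresponding to $v\in V_0$, and to read off from \eqref{giannibrera} and \eqref{quadpluck} exactly when it is nonzero. By the definition of $Y_A$ we have $[v_0+v]\in Y_A$ if and only if $A\cap(w\wedge\bigwedge^2 V)\ne\{0\}$. First I would reduce the ambient bivectors to $\bigwedge^2 V_0$: writing a general element of $\bigwedge^2 V$ as $v_0\wedge u+\beta$ with $u\in V_0$ and $\beta\in\bigwedge^2 V_0$, a one-line computation using $w\wedge v_0=-v_0\wedge v$ gives $w\wedge(v_0\wedge u+\beta)=w\wedge(\beta-v\wedge u)$, so every vector of $w\wedge\bigwedge^2 V$ is of the form $w\wedge\alpha$ with $\alpha\in\bigwedge^2 V_0$. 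Moreover $\alpha\mapsto w\wedge\alpha$ is injective on $\bigwedge^2 V_0$, since $w\wedge\alpha=0$ forces $\alpha\in(w\wedge V)\cap\bigwedge^2 V_0=\{0\}$. Hence the whole problem is reduced to determining which $\alpha\in\bigwedge^2 V_0$ satisfy $w\wedge\alpha\in A$.

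Next I would bring in the graph description of $A$ made available by \eqref{trasdue}. Decomposing along $\bigwedge^3 V=(v_0\wedge\bigwedge^2 V_0)\oplus\bigwedge^3 V_0$ gives $w\wedge\alpha=v_0\wedge\alpha+v\wedge\alpha$, where $v_0\wedge\alpha$ lies in the first summand (corresponding to $\alpha$ under \eqref{iaquinta}) and $v\wedge\alpha\in\bigwedge^3 V_0$. By the characterization \eqref{giannibrera} of $\wt{q}_A$, the vector $w\wedge\alpha$ lies in $A$ precisely when $v\wedge\alpha=\wt{q}_A(\alpha)$. Combining this with the injectivity above, $A\cap(w\wedge\bigwedge^2 V)$ is carried isomorphically by $\alpha\mapsto w\wedge\alpha$ onto the kernel of the linear map $\alpha\mapsto\wt{q}_A(\alpha)-v\wedge\alpha$ from $\bigwedge^2 V_0$ to $\bigwedge^3 V_0$.

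It then remains to recognize this map as the polarization of $q_A+q_v$. After identifying $\bigwedge^3 V_0$ with $\bigwedge^2 V_0^{\vee}$ through $\vol_0$ as in \eqref{volzero}, the map $\alpha\mapsto v\wedge\alpha$ becomes the functional $\mu\mapsto\vol_0(v\wedge\alpha\wedge\mu)$, that is, the symmetric bilinear form polarizing the Pl\"ucker form $q_v$ of \eqref{quadpluck} (symmetry holds because $\alpha\wedge\mu=\mu\wedge\alpha$ in $\bigwedge^4 V_0$), while $\wt{q}_A$ becomes the bilinear form polarizing $q_A$. Consequently $[v_0+v]\in Y_A$ if and only if the quadratic form $q_A+q_v$ is degenerate, i.e.\ $\det(q_A+q_v)=0$, which is the asserted equation. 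A sanity check worth recording: the polynomial $\det(q_A+q_v)$ has degree exactly $6$ in $v$, not $10=\dim\bigwedge^2 V_0$, because $q_v$ has rank $6$ (its radical is $v\wedge V_0$, of dimension $4$); this is precisely why the EPW locus is a sextic.

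The computation is essentially mechanical, and the only bookkeeping subtlety is the sign in front of $q_v$: the naive calculation produces $\wt{q}_A(\alpha)-v\wedge\alpha$, so the equation takes the form stated only once the sign conventions fixed in \eqref{giannibrera}, \eqref{volzero} and \eqref{quadpluck} are pinned down (replacing $\vol_0$ by its opposite, or $v_0+v$ by $v_0-v$, interchanges $q_A-q_v$ and $q_A+q_v$). The one genuinely substantive point, which I would treat as the main obstacle, is upgrading the set-theoretic equality to an identity of subschemes of $V_0$: one must verify that the natural determinantal structure carried by the degeneracy locus $Y_A$ agrees with the vanishing scheme of $\det(q_A+q_v)$. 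For this the compatibility of the isomorphism $\alpha\mapsto w\wedge\alpha$ with the determinantal presentation of $Y_A$---together with the rank computation $\rk q_v=6$ that forces the determinant to be a sextic---is exactly what is required.
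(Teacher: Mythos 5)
Your proof is correct, but note that there is no in-paper argument to compare it with: the paper records this statement as a quotation of Proposition~2.18 of~\cite{ogtasso} and gives no proof, so the only meaningful check is correctness, and your route --- reduce $w\wedge\bigwedge^2 V$ to $w\wedge\bigwedge^2 V_0$, check injectivity of $\alpha\mapsto w\wedge\alpha$, and use the graph description \eqref{giannibrera} to identify $A\cap(w\wedge\bigwedge^2 V)$ with the kernel of a symmetric map --- is exactly the natural linear-algebra computation that underlies the cited result (compare \eqref{nucqua}, which is your identification at $v=0$). Two remarks. First, your sign bookkeeping is right: with the literal conventions \eqref{apertoaffine}, \eqref{giannibrera}, \eqref{quadpluck} one lands on $\det(q_A-q_v)$, and the stated $+$ reflects the sign conventions fixed in~\cite{ogtasso} (equivalently, precompose the chart \eqref{apertoaffine} with $v\mapsto -v$); since the two subschemes $V(\det(q_A\pm q_v))$ differ only by that involution, this is purely a normalization matter, and you were right to flag rather than agonize over it. Second, the scheme-theoretic upgrade you single out as the main obstacle is more routine than you suggest, and does not require the rank computation $\rk q_v=6$ (that only bounds the degree by $6$, which is not part of the statement): the assignment $\alpha\mapsto w\wedge\alpha$ trivializes, algebraically over $V_0$, the rank-$10$ bundle with fibers $w\wedge\bigwedge^2 V$, while \eqref{trasdue} gives $\bigwedge^3 V=A\oplus\bigwedge^3 V_0$ and hence a trivialization $\bigwedge^3 V/A\cong\bigwedge^3 V_0$; in these trivializations the bundle map whose degeneracy locus defines the subscheme $Y_A$ becomes
\begin{equation*}
\alpha\longmapsto v\wedge\alpha-\wt{q}_A(\alpha),
\end{equation*}
because $w\wedge\alpha=(v_0\wedge\alpha+\wt{q}_A(\alpha))+(v\wedge\alpha-\wt{q}_A(\alpha))$ with the first summand in $A$. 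Its determinant is by definition the local equation of $Y_A$, so the equality of subschemes is the same computation you did, performed in the family over $V_0$ rather than fiber by fiber.
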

Next we will state a hypothesis which ensures  existence of a decomposition~\eqref{trasuno}  such 
that~\eqref{trasdue} holds. First recall~\cite{ogbobomolov}
that  we have an isomorphism
\begin{equation}
\begin{matrix}
 \lagr & \overset{\delta}{\overset{\sim}{\lra}} & \lagrdual \\
 A & \mapsto & \Ann   A.
\end{matrix}
\end{equation}
Let $E\in\Gr(5,V)$;   then
 \begin{equation}\label{uaidelta}
\text{$E\in Y_{\delta(A)}$ if and only if 
$(\bigwedge^3 E)\cap A\not=\{0\}$.}
\end{equation}
(The EPW-sextic $Y_{\delta(A)}$ is the  dual of $Y_A$.) Thus there exists  a decomposition~\eqref{trasuno}   such that~\eqref{trasdue} holds if and only if $Y_{\delta(A)}\not=\PP(V^{\vee})$. The  proposition below follows at once from Claim~2.11 and Equation~(2.82) of~\cite{ogtasso}.
\begin{prp}\label{prp:alpiudue}
Let $A\in\lagr$ and suppose that $\dim\Theta_A\le 2$. Then 
\begin{equation*}
Y_A\not=\PP(V),\qquad Y_{\delta(A)}\not=\PP(V^{\vee}).
\end{equation*}
In particular there exists  a decomposition~\eqref{trasuno}   such that~\eqref{trasdue} holds. 
\end{prp}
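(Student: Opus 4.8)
The plan is to prove the contrapositive of the first inequality, namely that $Y_A=\PP(V)$ forces $\dim\Theta_A\ge 3$, and then to reduce the dual inequality to it through the isomorphism $\delta$. Granting both inequalities, the final sentence is then immediate, since — as recalled just before the statement — a decomposition \eqref{trasuno} satisfying \eqref{trasdue} exists precisely when $Y_{\delta(A)}\ne\PP(V^{\vee})$.

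The key step I would establish first is that $Y_A=\PP(V)$ implies that \emph{every} $[v_0]\in\PP(V)$ lies on some $W\in\Theta_A$, i.e.\ that $\bigcup_{W\in\Theta_A}\PP(W)=\PP(V)$. To prove this I would fix $[v_0]$, choose a decomposition \eqref{trasuno}, and examine the expansion \eqref{taylor} of $Y_A\cap V_0$ at the origin: the hypothesis $Y_A=\PP(V)$ gives $Y_A\cap V_0=V_0$ and hence $f_0=\cdots=f_6=0$. On the other hand $[v_0]\in Y_A$ means $k:=\dim(A\cap(v_0\wedge\bigwedge^2 V))\ge 1$, so if no $W\in\Theta_A$ contained $v_0$, then part~(1a) of \Ref{prp}{ipsing} would force $f_k\ne 0$, a contradiction. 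Hence such a $W$ exists, and since $[v_0]$ was arbitrary the covering property follows.

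Next I would read off the dimension bound by a routine incidence computation. Setting $Z:=\{(W,[v])\in\Theta_A\times\PP(V)\mid v\in W\}$, the first projection $Z\to\Theta_A$ has every fibre a $\PP^2$, so $\dim Z=\dim\Theta_A+2$, whereas the covering property just proved says the second projection $Z\to\PP(V)$ is surjective, so $\dim Z\ge\dim\PP(V)=5$. Combining the two gives $\dim\Theta_A\ge 3$, which is exactly the contrapositive of the assertion $Y_A\ne\PP(V)$.

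Finally I would handle the dual statement by means of the bijection $\Theta_A\to\Theta_{\delta(A)}$, $W\mapsto\Ann W$: under the volume-form identification $\bigwedge^3 V\cong\bigwedge^3 V^{\vee}$ that carries the Lagrangian $A$ onto $\Ann A$, the line $\bigwedge^3 W$ is carried to $\bigwedge^3(\Ann W)$, so $\bigwedge^3 W\subset A$ if and only if $\bigwedge^3(\Ann W)\subset\Ann A$. Consequently $\dim\Theta_{\delta(A)}=\dim\Theta_A\le 2$, and applying the implication already proved to $\delta(A)\in\lagrdual$ (the whole construction being symmetric in $V$ and $V^{\vee}$) yields $Y_{\delta(A)}\ne\PP(V^{\vee})$. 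I expect the only genuine subtleties to be the two bookkeeping points: checking that part~(1a) of \Ref{prp}{ipsing} applies with $k\ge 1$ uniformly over all of $\PP(V)$ in the covering step, and verifying the $\bigwedge^3 W\leftrightarrow\bigwedge^3(\Ann W)$ compatibility that makes $W\mapsto\Ann W$ respect the two families $\Theta_A$ and $\Theta_{\delta(A)}$; the incidence count itself is entirely formal.
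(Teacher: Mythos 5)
Your proof is correct, but it is not the route the paper takes: the paper does not prove this proposition internally at all --- its entire ``proof'' is the sentence preceding the statement, namely that it ``follows at once from Claim~2.11 and Equation~(2.82) of~\cite{ogtasso}''. What you have done is reconstruct a self-contained argument from material actually quoted in the paper, and all three of your steps check out. (i) If $Y_A=\PP(V)$ the local defining equation at every point vanishes identically, so $f_0=\dots=f_6=0$, and \Ref{prp}{ipsing}, stated for arbitrary $A\in\lagr$, then forces every $[v_0]$ to lie on some $\PP(W)$ with $W\in\Theta_A$; the bookkeeping point you flag is harmless, since under hypothesis~(1) one automatically has $k\le 3$ (a linear subspace of $\PP(A\cap(v_0\wedge\bigwedge^2 V))\subset\PP^9$ of projective dimension $\ge 3$ would meet the codimension-$3$ cone over $\Gr(2,V/v_0)$, producing a decomposable vector), so $f_k$ really is one of $f_0,\dots,f_6$ and the contradiction is genuine. (ii) The incidence count $\dim\Theta_A+2=\dim Z\ge 5$ is standard and correct. (iii) The duality step is the one place where the Lagrangian condition is essential, and you use it correctly: $\alpha\mapsto\vol(\alpha\wedge\cdot)$ carries $A$ onto $\Ann A$ exactly because $A$ equals its own symplectic orthogonal, and it carries $\bigwedge^3 W$ onto $\bigwedge^3(\Ann W)$ (in coordinates, $e_1\wedge e_2\wedge e_3\mapsto \pm\, e_4^{\vee}\wedge e_5^{\vee}\wedge e_6^{\vee}$), whence $\Theta_{\delta(A)}=\{\Ann W\mid W\in\Theta_A\}$ and $\dim\Theta_{\delta(A)}=\dim\Theta_A$. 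As for what each approach buys: the citation keeps the paper short but leaves the reader dependent on external results, while your argument makes the proposition self-contained modulo \Ref{prp}{ipsing} (itself imported from the same source) and makes transparent why the numerical threshold is $2$: a family of planes of dimension at most $2$ cannot sweep out the $5$-dimensional $\PP(V)$.
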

Let $A\in\lagr$.  We will need to consider higher degeneracy loci attached to $A$. Let
\begin{equation}
Y_A[k]=\{[v]\in\PP(V)\mid \dim(A\cap (v\wedge\bigwedge^2 V))\ge k\}.
\end{equation}
Notice that $Y_A[0]=\PP(V)$ and $Y_A[1]=Y_A$. Moreover $A\in\Delta$ if and only if $Y_A[3]$ is not empty. 
We set
 \begin{equation}\label{yerre}
Y_A(k):=Y_A[k]\setminus Y_A[k+1].
\end{equation}
 \subsection{Explicit description  of double EPW-sextics}\label{sec:eqdoppio}
 \setcounter{equation}{0}
Throughout the  present subsection we will  assume that
 $A\in\lagr$ and  $Y_A\not=\PP(V)$. Let $f_A\colon X_A\to Y_A$ be the double cover of~\eqref{xaya}.  The following is an immediate consequence of the definition of $f_A$, see~\cite{ogdoppio}:
\begin{equation}\label{nonram}
\text{$f_A$ is a topological covering of degree $2$ over $Y_A(1)$.}
\end{equation}
Let $[v_0]\in Y_A$;  we will give    explicit equations for  a neighborhood of $f_A^{-1}([v_0])$ in $X_A$. We will assume throughout the   subsection that we are given a direct-sum decomposition~\eqref{trasuno} such that~\eqref{trasdue} holds.  We start  by introducing some notation. 
 Let $K:=\ker q_A$ and  
let  $J\subset\bigwedge^2 V_0$ be a maximal subspace over which $q_A$ is non-degenerate;  
 we have a direct-sum decomposition
\begin{equation}\label{donmilani}
\bigwedge^2 V_0=J\oplus K.
\end{equation}
Choose a basis of $\bigwedge^2 V_0$ adapted to Decomposition~\eqref{donmilani}. Let $k:=\dim K$. The Gram matrices of 
 $q_A$ and $q_v$ (for $v\in V_0$) relative to the chosen basis are given by
\begin{equation}\label{patrici}
M(q_A)=
\begin{pmatrix}
N_{J} & 0 \\
0 & 0_k
\end{pmatrix},
\quad
M(q_v)=
\begin{pmatrix}
Q_{J}(v) & R_{J}(v)^t \\
R_{J}(v) & P_{J}(v)
\end{pmatrix}.
\end{equation}
(We let $0_k$ be the $k\times k$ zero matrix.) 
Notice that $N_{J}$ is invertible and $q_0=0$;
thus there exist arbitrarily small  open (in the classical topology) neighborhoods $V_0'$ of $0$ in $V_0$  such that
$(N_{J}+Q_{J}(v))^{-1}$ exists for $v\in V'_0$. We let
\begin{equation}\label{amatriciana}
M_{J}(v):=P_{J}(v)-R_{J}(v)\cdot
(N_{J}+Q_{J}(v))^{-1}\cdot R_{J}(v)^t,
\qquad v\in V_0'\,.
\end{equation}
If $V_0'$ is sufficiently small we may write $(N_{J}+Q_{J}(v))=S(v)\cdot S(v)^t$ for all $v\in V_0'$ where $S(v)$  is an analytic  function of $v$ (for this we need $V_0'$ to be open in the classical topology) and  $S(v)$ is invertible  for all $v\in V_0'$.
Let $j:=\dim J$. For later use we record the following equality
\begin{equation}\label{congruenza}
\begin{pmatrix}
\scriptstyle 1_j & \scriptstyle 0 \\
\scriptstyle -R_J(v) S^{-1}(v)^t & \scriptstyle 1_k
\end{pmatrix}
\cdot
\begin{pmatrix}
\scriptstyle S(v)^{-1} & \scriptstyle 0 \\
\scriptstyle 0 & \scriptstyle 1_k
\end{pmatrix}
\cdot
\begin{pmatrix}
\scriptstyle N_J+Q_{J}(v) & \scriptstyle R_{J}(v)^t \\
\scriptstyle R_{J}(v) & \scriptstyle P_{J}(v)
\end{pmatrix}
\cdot
\begin{pmatrix}
\scriptstyle S^{-1}(v)^t & \scriptstyle 0 \\
\scriptstyle 0 & \scriptstyle 1_k
\end{pmatrix}
\cdot 
\begin{pmatrix}
\scriptstyle 1_j & \scriptstyle -S^{-1}(v)R_J(v)^t \\
\scriptstyle 0 & \scriptstyle 1_k
\end{pmatrix}
\scriptstyle =
\begin{pmatrix}
\scriptstyle 1_j & \scriptstyle 0 \\
\scriptstyle 0 & \scriptstyle M_J(v)
\end{pmatrix}
\end{equation}
Let  ${\bf X}_{J}\subset V_0'\times\CC^{k}$ be the closed subscheme whose ideal is generated by the entries of the matrices
\begin{equation}\label{eqespl}
M_{J}(v)\cdot\xi,\qquad \xi\cdot\xi^t-M_{J}(v)^c,
\end{equation}
where $\xi\in\CC^k$  is a column vector and $M_{J}(v)^c$ is the matrix of cofactors of $M_{J}(v)$. 
We identify $V'_0$ with an open neighborhood of $[v_0]\in \PP(V)$ via~\eqref{apertoaffine}.
Projection  defines a map $\phi\colon {\bf X}_{J}\to V(\det M_{J})$. By~\eqref{congruenza} we have $V(\det M_{J})=V'_0\cap Y_A$. 
\begin{prp}\label{prp:locdegsym}
Keep notation and assumptions as above. 
There exists  a commutative diagram 
\begin{equation*}
\xymatrix{  \\  (X_A,f_A^{-1}([v_0]))\ar^{f_A}[dr]  \ar^{\zeta}[rr] & & 
({\bf X}_{J},\,\phi^{-1}([v_0]))\ar_{\phi}[dl] \\
& (Y_A,[v_0]) &}
\end{equation*}
where the germs are in the analytic topology. Furthermore $\zeta$ is an isomorphism.
\end{prp}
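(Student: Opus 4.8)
The plan is to realize both germs as the symmetric determinantal double cover attached to the family of forms $q_A+q_v$, and then to use the congruence~\eqref{congruenza} to compress that double cover onto the corank block $M_J$. First I would recall from~\cite{ogdoppio}, together with \Ref{prp}{ipsdisc}, the local normal form of $f_A\colon X_A\to Y_A$ as the \emph{adjugate double cover} of the degenerating symmetric form: in the chart $V_0'$, and after the identification~\eqref{iaquinta}, the germ of $X_A$ along $f_A^{-1}([v_0])$ should be isomorphic over $Y_A$ to the closed subscheme ${\bf X}\subset V_0'\times\CC^{j+k}$ (with $\CC^{j+k}\cong\bigwedge^2 V_0$) cut out by the entries of $M(q_A+q_v)\cdot\Xi$ and of $\Xi\,\Xi^{t}-M(q_A+q_v)^{c}$, the map to $Y_A$ being the projection onto $Y_A\cap V_0'=V(\det(q_A+q_v))$. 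Establishing this normal form is the step that genuinely uses O'Grady's construction of the double cover, and it is the main obstacle; once it is in place the rest is congruence bookkeeping.

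Next I would lift the congruence~\eqref{congruenza} to an isomorphism of adjugate models. Let $G(v)$ be the invertible analytic matrix read off from~\eqref{congruenza}, so that $M':=G(v)\,M(q_A+q_v)\,G(v)^{t}=\begin{pmatrix}1_{j}&0\\0&M_{J}(v)\end{pmatrix}$; note $\det G(v)=\det S(v)^{-1}$ is nowhere zero on $V_0'$. I claim the fibrewise-linear substitution $\Xi\mapsto\Xi':=\det(G(v))\,(G(v)^{-1})^{t}\,\Xi$ is an analytic automorphism of $V_0'\times\CC^{j+k}$ carrying ${\bf X}$ onto the model ${\bf X}'$ of $M'$. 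Indeed $M'\Xi'=\det(G)\,G\,\bigl(M(q_A+q_v)\,\Xi\bigr)$, and from the transformation law $(M')^{c}=\det(G)^{2}\,(G^{-1})^{t}\,M(q_A+q_v)^{c}\,G^{-1}$ one computes
\[
\Xi'\,(\Xi')^{t}-(M')^{c}=\det(G)^{2}\,(G^{-1})^{t}\bigl(\Xi\,\Xi^{t}-M(q_A+q_v)^{c}\bigr)G^{-1}.
\]
The point to watch here is precisely the factor $\det(G)$: the cofactor matrix scales by $\det(G)^{2}$ under congruence, so the fibre coordinate must be twisted by $\det(G)$ (not merely by $(G^{-1})^{t}$) in order for the second family of equations to transform into one another. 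Since this automorphism covers the identity on $V_0'$, it is compatible with the projections to $Y_A\cap V_0'$.

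Finally I would split off the nondegenerate block. For the block-diagonal matrix one has
\[
\begin{pmatrix}1_{j}&0\\0&M_{J}(v)\end{pmatrix}^{c}=\begin{pmatrix}\det(M_{J}(v))\,1_{j}&0\\0&M_{J}(v)^{c}\end{pmatrix},
\]
while the kernel equation $M'\Xi'=0$ contains, among its generators, the first $j$ coordinates $\xi_{1}$ of $\Xi'=(\xi_{1},\xi_{2})$. Setting $\xi_{1}=0$ kills the $(1,2)$-block of the cofactor equation and turns its $(1,1)$-block into $\det(M_{J})\,1_{j}$; but $\det(M_{J})$ already lies in the ideal generated by the entries of $M_{J}\xi_{2}$ and $\xi_{2}\xi_{2}^{t}-M_{J}^{c}$, via $M_{J}M_{J}^{c}=\det(M_{J})\,1_{k}$, so these extra generators are redundant. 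Hence eliminating $\xi_{1}$ identifies ${\bf X}'$ with ${\bf X}_{J}\subset V_0'\times\CC^{k}$ in the coordinate $\xi=\xi_{2}$, with its equations~\eqref{eqespl}. Composing the three identifications produces the isomorphism $\zeta$, and since every map in sight is a projection onto $Y_A\cap V_0'=V(\det M_{J})$, commutativity of the triangle with $f_A$ and $\phi$ is automatic. As noted, the only genuine difficulty is the first step — matching the intrinsic double cover of~\cite{ogdoppio} with the explicit adjugate model — the congruence reduction being elementary apart from the $\det(G)$-twist forced by the quadratic scaling of cofactors.
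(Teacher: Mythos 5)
Your proposal is essentially the paper's own proof: the paper too starts from the adjugate model of Claim~1.3 of \cite{ogdoppio}, block-diagonalizes via the congruence \eqref{congruenza}, and eliminates the $1_j$-block using exactly the redundancy $M_J M_J^c=\det(M_J)1_k$ that you spell out; the one step you delegate to a citation --- that the double cover of \cite{ogdoppio} is the adjugate model of $M(q_A+q_v)$ itself, and not merely of some matrix with the same determinant --- is where the paper does its real work, choosing $B\in\lagr$ transverse to both $A$ and $v_0\wedge\bigwedge^2 V$ and checking that the intrinsic Gram matrix $L(v)$ becomes $\wt{q}_A+\wt{q}_v$ when pulled back by $\mu_{A,B}(v)\circ\phi_{v_0+v}$, so that step is itself a congruence computation rather than a bare recall. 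Your care with the $\det(G)$-twist is warranted and in fact corrects a slip in the paper, whose stated automorphism $(v,\xi)\mapsto(v,G(v)^{-1}\xi)$ carries $V(L\xi,\ \xi\xi^t-L^c)$ onto $V(H\xi,\ \xi\xi^t-\det(G)^{-2}H^c)$ rather than onto $V(H\xi,\ \xi\xi^t-H^c)$, so the extra factor $\det(G(v))$ in the fibre coordinate is exactly what is needed for the cofactor equations to correspond.
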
 
\begin{proof}
Let $[v]\in\PP(V)$: there is a canonical identification between $v\wedge \bigwedge^2 V$ and the fiber at $[v]$ of  $\Omega^3_{\PP(V)}(3)$, see for example Proposition~5.11 of~\cite{ogduca}. Thus we have an injection $\Omega^3_{\PP(V)}(3)\hra\bigwedge^3 V\otimes\cO_{\PP(V)}$.
Choose $B\in\lagr$ transversal to $A$. The direct-sum decomposition
$\bigwedge^3 V=A\oplus B$ gives rise to a  commutative diagram with exact rows
\begin{equation}\label{spqr}
\begin{array}{ccccccccc}
0 & \to & \Omega^3_{\PP(V)}(3) &\mapor{\lambda_A}& A^{\vee}\otimes\cO_{\PP(V)} & \lra & i_{*}\zeta_A
&
\to & 0\\
 & & \mapver{\mu_{A,B}}& &\mapver{\mu^{t}_{A,B}} &
&
\mapver{\beta_{A}}& & \\
0 & \to & A\otimes\cO_{\PP(V)}& \mapor{\lambda_A^{t}}& \bigwedge^3 T_{\PP(V)}(-3) & \lra &
Ext^1(i_{*}\zeta_A,\cO_{\PP(V)}) & \to & 0
\end{array}
\end{equation}
(As  suggested by our notation the map $\beta_A$ does not depend on the choice of $B$.) Choosing $B$ transverse to $v_0\wedge\bigwedge^2 V$ we can assume that $\mu_{A,B}(0)$ (recall that $(\PP(V)\setminus\PP(V_0))$ is identified with $V_0$ via~\eqref{apertoaffine} and that $[v_0]$ corresponds to $0$) is an isomorphism. Then there exist   arbitrarily small open (classical topology) neighborhoods $\cU$ of $0$ such that $\mu_{A,B}(v)$ is an isomorphism for all $v\in\cU$. The map $\lambda_A\circ \mu_{A,B}^{-1}$ is symmetric because $A$ is lagrangian. Choose a basis of $A$ and let $L(v)$ be the Gram matrix of  $\lambda_A\circ \mu_{A,B}^{-1}(v)$   with respect to the chosen basis. Let $L(v)^c$ be the matrix of cofactors of $L(v)$. Claim~1.3  of~\cite{ogdoppio} gives an embedding
\begin{equation}\label{locdoppio}
f_A^{-1}(\cU\cap Y_A)\hra\cU\times\aff^{10} 
\end{equation}
with image  the closed subscheme whose ideal is generated   by the entries of the matrices
\begin{equation}\label{equazelle}
L(v)\cdot\xi\,\qquad \xi\cdot\xi^t-L(v)^c.
\end{equation}
(Here  $\xi$ is a $10\times 1$-matrix whose entries are coordinates on $\aff^{10}$.) We will denote the above subscheme by $V(L(v)\cdot\xi,\  \xi\cdot\xi^t-L(v)^c)$.
Under this embedding the restriction of $f_A$ to 
$f_A^{-1}(\cU\cap Y_A)$ gets identified with the restriction of the projection $\cU\times\aff^{10}\to\cU$. Let $G\colon \cU\to\GL_{10}(\CC)$ be an analytic map and for $v\in\cU$ let $H(v):=G^t(v)\cdot L(v)\cdot G(v)$.  The automorphism of $\cU\times\aff^{10}$ given by $(v,\xi)\mapsto (v,G(v)^{-1}\xi)$ restricts to an isomorphism
\begin{equation}
V(L(v)\cdot\xi,\  \xi\cdot\xi^t-L(v)^c) \overset{\sim}{\lra}  V(H(v)\cdot\xi,\  \xi\cdot\xi^t-H(v)^c).
\end{equation}
In other words we are free to replace $L$ by an arbitrary congruent matrix function. 
Let 
\begin{equation}\label{arbie}
\begin{matrix}
\bigwedge^2 V_0 & \overset{\phi_{v_0+v}}{\lra} &   (v_0+v)\wedge\bigwedge^2 V  \\
\alpha & \mapsto & (v_0+v)\wedge\alpha
\end{matrix}
\end{equation}
 A straightforward computation gives that 
 \begin{equation}
\phi_{v_0+v}^t\circ \mu_{A,B}^t(v)\circ(\lambda_A(v)\circ \mu_{A,B}^{-1}(v))\circ \mu_{A,B}(v)\circ\phi_{v_0+v}=\wt{q}_A+\wt{q}_{v},\qquad v\in \cU. 
\end{equation}
Thus the Gram matrix $M(q_A+q_v)$ is congruent to $L(v)$ and hence we have an embedding~\eqref{locdoppio} with image   $V(M(q_A+q_v)\cdot\xi,\  \xi\cdot\xi^t-M(q_A+q_v)^c)$. On the other hand~\eqref{congruenza} shows that $M(q_A+q_v)$ is congruent to the matrix
\begin{equation}
E(v):=
\begin{pmatrix}
\scriptstyle 1_j & \scriptstyle 0 \\
\scriptstyle 0 & \scriptstyle M_J(v)
\end{pmatrix}
\end{equation}
Thus  we have an embedding~\eqref{locdoppio} with image   $V(E(v)\cdot\xi,\  \xi\cdot\xi^t-E(v)^c)$. A straightforward computation shows that the latter subscheme is isomorphic to ${\bf X}_J\cap (\cU\times\CC^k)$.
\end{proof}
 \subsection{The subscheme $C_{W,A}$}\label{sec:ciwloc}
 \setcounter{equation}{0}
Let $(W,A)\in\wt{\Sigma}$. For the definition of the subscheme $C_{W,A}\subset\PP(W)$ we refer to Subs.~3.2 
of~\cite{ogmoduli}.  
\begin{dfn}\label{dfn:malvagio}
Let $\cB(W,A)\subset\PP(W)$ be the set of $[w]$ such that one of the following holds:
\begin{itemize}
\item[(1)]
There exists $W'\in(\Theta_A\setminus\{W\})$ containing $w$.
\item[(2)]
$\dim(A\cap (w\wedge\bigwedge^2 V) \cap (\bigwedge^2 W\wedge V))\ge 2$.
\end{itemize}
\end{dfn}
The following result is obtained by pasting together  Proposition 3.3.6 and Corollary 3.3.7 of~\cite{ogmoduli}.
\begin{prp}\label{prp:cnesinerre}
Let $(W,A)\in\wt{\Sigma}$. Then the following hold:
\begin{enumerate}
\item
  $C_{W,A}$ is a  smooth curve at $[v_0]$ if and only if $\dim (A\cap (v_0\wedge\bigwedge^2 V))=2$ and $[v_0]\notin \cB(W,A)$.
\item
$C_{W,A}=\PP(W)$ if and only if  $\cB(W,A)=\PP(W)$.  
\end{enumerate}
\end{prp}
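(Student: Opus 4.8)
The plan is to work in the analytic-local models of Subsection~\ref{sec:eqdoppio} and reduce both assertions to a corank computation for an explicit symmetric matrix. Fix $[v_0]\in\PP(W)$, choose a decomposition~\eqref{trasuno} with $v_0$ the distinguished vector and~\eqref{trasdue} in force (legitimate by~\Ref{prp}{alpiudue} after passing to $\delta(A)$ if necessary), and write $\bigwedge^2 V_0=J\oplus K$ as in~\eqref{donmilani} with $M_J(v)$ the $k\times k$ symmetric matrix of~\eqref{amatriciana}. By~\eqref{congruenza} the corank of $q_A+q_v$ equals the corank of $M_J(v)$, and by the kernel identification extending~\eqref{nucqua} this corank is $\dim\bigl(A\cap((v_0+v)\wedge\bigwedge^2 V)\bigr)$. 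Since $\supp C_{W,A}$ is the locus where this dimension is $\ge 2$, near $[v_0]$ the set $C_{W,A}$ is $\{v\in W_0:\cork M_J(v)\ge 2\}$, where $W_0:=W\cap V_0$ is the two-dimensional chart of $\PP(W)$ through $[v_0]$. The first task is to verify that this matches the determinantal \emph{scheme} structure of Subsect.~3.2 of~\cite{ogmoduli}.

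The structural input is $\bigwedge^3 W\subset A$. For every $[v]\in\PP(W)$ the line $\bigwedge^3 W=v\wedge\bigwedge^2 W$ lies in $A\cap(v\wedge\bigwedge^2 V)$, so $\PP(W)\subseteq Y_A$ and $M_J$ has corank $\ge 1$ identically along $W_0$; under~\eqref{iaquinta} the corresponding constant kernel line is $\bigwedge^2 W_0\subset K$. I would split off this constant kernel direction, writing $M_J(v)|_{W_0}$ in an adapted frame, to produce a reduced symmetric matrix $M'(v)$ of size $k-1$ with $\cork M'(v)=\cork M_J(v)-1$ along $W_0$. Then locally $C_{W,A}=V(\det M')$, a plane curve, and $[v_0]\in\supp C_{W,A}$ iff $\det M'(0)=0$, i.e. $\cork M'(0)=k-1\ge 1$.

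For part~(1): $C_{W,A}$ is smooth at $[v_0]$ iff $\det M'$ vanishes there to order exactly one. Because the differential of $\det$ at a symmetric matrix of corank $\ge 2$ vanishes (its adjugate is zero), first-order vanishing forces $\cork M'(0)=1$, i.e. $k-1=1$, i.e. $\dim(A\cap(v_0\wedge\bigwedge^2 V))=2$; it then demands $d(\det M')_0|_{W_0}\ne 0$. I would identify this transversality with $[v_0]\notin\cB(W,A)$ in the sense of~\Ref{dfn}{malvagio}: the presence of a second $W'\in\Theta_A$ through $v_0$ (condition~(1)) produces a second persistent kernel line, raising $\cork M'$ along $W_0$ and destroying first-order vanishing; and condition~(2), $\dim\bigl(A\cap(v_0\wedge\bigwedge^2 V)\cap(\bigwedge^2 W\wedge V)\bigr)\ge 2$, is precisely the vanishing of the second-order term that computes $d(\det M')_0$ in the $W_0$-directions. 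Unwinding this derivative through~\eqref{amatriciana} and expressing it via the intersection $A\cap(v_0\wedge\bigwedge^2 V)\cap(\bigwedge^2 W\wedge V)$ is the crux.

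Part~(2) is the global counterpart: $C_{W,A}=\PP(W)$ iff $\det M'\equiv 0$ on $W_0$, i.e. $\cork M_J(v)\ge 2$ for all $[v]\in\PP(W)$. The inclusion $\cB(W,A)\subseteq\supp C_{W,A}$ is immediate and gives one implication; for the converse I would run the first-order analysis of part~(1) at a general point of $\PP(W)$ to deduce that $C_{W,A}=\PP(W)$ forces the transversality to fail everywhere, hence $\cB(W,A)=\PP(W)$. The main obstacle throughout is the linear-algebra bookkeeping of the previous paragraph, namely translating the derivative of the reduced determinant into the intersection-theoretic condition~(2) of $\cB(W,A)$; in practice the cleanest route is to assemble Proposition~3.3.6 and Corollary~3.3.7 of~\cite{ogmoduli}, where exactly this computation is carried out.
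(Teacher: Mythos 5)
The paper's own ``proof'' of \Ref{prp}{cnesinerre} is nothing but a citation: the result \emph{is obtained by pasting together Proposition~3.3.6 and Corollary~3.3.7 of}~\cite{ogmoduli}. Since that is exactly where your proposal ends up (``the cleanest route is to assemble Proposition~3.3.6 and Corollary~3.3.7 of~\cite{ogmoduli}''), your proof of record coincides with the paper's, and on those terms the proposal is acceptable.

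However, the from-scratch sketch that fills the body of your proposal would not survive being made precise, because the mechanism you offer at the step you yourself call the crux is wrong. You claim that a second $W'\in\Theta_A$ through $v_0$ (condition~(1) of \Ref{dfn}{malvagio}) ``produces a second \emph{persistent} kernel line, raising $\cork M'$ along $W_0$ and destroying first-order vanishing''. In the typical case $W\cap W'=\langle v_0\rangle$ this is false: for $0\ne v\in W_0$ we have $v_0+v\notin W'$, so no nonzero $\alpha\in\bigwedge^2 V_0$ satisfies $(v_0+v)\wedge\alpha\in\bigwedge^3 W'$, and the extra kernel direction exists at the single point $[v_0]$ only. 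Corank $1$ of the reduced $1\times 1$ matrix $M'$ at an isolated point is perfectly compatible with $d(\det M')_0\ne 0$; indeed, were your persistence claim true, condition~(1) at one point would force $C_{W,A}$ to contain a whole neighborhood of $[v_0]$ in $\PP(W)$, which is absurd. The correct mechanism uses the Pl\"ucker structure rather than persistence of kernels: writing $K=\langle\alpha_1,\alpha_2\rangle$ with $\alpha_1$ generating $\bigwedge^2 W_0$, the correction term in~\eqref{amatriciana} is quadratic in $v$, so the derivative of $M'$ at $0$ along $v\in W_0$ is $q_v(\alpha_2)=\vol_0(v\wedge\alpha_2\wedge\alpha_2)$, a linear form that is independent of the representative $\alpha_2$ modulo $\alpha_1$. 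Condition~(1) supplies a representative with $\alpha_2\wedge\alpha_2=0$ (a decomposable bivector, on which all Pl\"ucker forms vanish), condition~(2) supplies a representative $\alpha_2\in W_0\wedge V_0$, and in either case the form vanishes identically on $W_0$; conversely, if it vanishes on $W_0$ then $\alpha_2\wedge\alpha_2$ annihilates $W_0$, and (adjusting $\alpha_2$ by a multiple of $\alpha_1$, using that $\bigwedge^4\supp(\alpha_2)$ is one-dimensional when $W_0\subset\supp(\alpha_2)$) one lands in case~(1) or~(2). Two further gaps: you never actually check that $V(\det M')$ computes the determinantal \emph{scheme} structure of $C_{W,A}$ from Subsect.~3.2 of~\cite{ogmoduli}, which is what ``smooth curve at $[v_0]$'' refers to; and in part~(2) your converse treats only points where $\dim(A\cap(v\wedge\bigwedge^2 V))=2$ --- if the dimension is $\ge 3$ at the generic point of $\PP(W)$, membership in $\cB(W,A)$ does not follow from part~(1), and passing from ``$\cB(W,A)$ dense'' to ``$\cB(W,A)=\PP(W)$'' also needs an argument since condition~(1) of \Ref{dfn}{malvagio} is not obviously closed.
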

 \subsection{The divisor $\Sigma$}\label{sec:ancorasig}
 \setcounter{equation}{0}
Given $d\ge 0$ we let $\wt{\Sigma}[d]\subset\wt{\Sigma}$  be 
\begin{equation}
\wt{\Sigma}[d]:= \{(W,A)\in\wt{\Sigma}\mid \dim(A\cap (\bigwedge^2 W\wedge V))\ge d+1\}.
\end{equation}
Notice that $\wt{\Sigma}:=\wt{\Sigma}[0]$. 
Let 
\begin{equation}
\Gr(3,V)\times\lagr\overset{\pi}{\lra} \lagr
\end{equation}
be projection and $\Sigma[d]:=\pi(\wt{\Sigma}[d])$.
Notice that $\Sigma:=\Sigma[0]$. 
Proposition 3.1 of~\cite{ogtasso} gives that
\begin{equation}\label{romalazio}
\cod(\Sigma[d],\lagr)=(d^2+d+2)/2.
\end{equation}
Let
\begin{equation}
\Sigma_{+}:=  \{A\in\Sigma\mid \Card(\Theta_A)>1\}.
\end{equation}
Proposition 3.1 of~\cite{ogtasso} gives that  $\Sigma_{+}$ is a constructible subset of $\lagr$ and
\begin{equation}\label{codsigpiu}
\cod(\Sigma_{+},\lagr)=2.
\end{equation}
We claim that
\begin{equation}\label{singsig}
\sing\Sigma=\Sigma_{+}\cup\Sigma[1].
\end{equation}
In fact  $(\ov{\Sigma}_{+}\setminus \Sigma_{+})\subset\Sigma[1]$ by Equation~(3.19)  of~\cite{ogtasso} and hence~\eqref{singsig} follows from Proposition~3.2 of~\cite{ogtasso}.
We let
\begin{equation}\label{siginf}
\Sigma_{\infty}:=  \{A\in\lagr\mid \dim\Theta_A>0\}.
\end{equation}
Theorem 3.37 and Table 3 of~\cite{ogtasso} give the following:
\begin{equation}\label{codinf}
\cod(\Sigma_{\infty},\lagr)=7.
\end{equation}
 \subsection{The divisor $\Delta$}\label{subsec:divdel}
 \setcounter{equation}{0}
Let 
\begin{equation}
\Delta:=    \{A\in\lagr\mid \text{$\exists [v]\in\PP(V)$ such that $\dim (A\cap (v\wedge\bigwedge^2 V))\ge 3$} \}.
\end{equation}
A dimension count gives that $\Delta$ is a prime divisor in $\lagr$, see~\cite{ogdoppio}.
Let
\begin{equation}
\wt{\Delta}(0):=\{([v],A)\in\PP(V)\times\lagr \mid \dim ( A\cap (v\wedge\bigwedge^2 V))=3\}.
\end{equation}
The following result will be handy.
\begin{prp}\label{prp:delinf}
Let $A\in\lagr$ and suppose that $\dim Y_A[3]>0$. Then
$A\in(\Sigma_{\infty}\cup\Sigma[2])$.
\end{prp}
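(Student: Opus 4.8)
The plan is to reduce the hypothesis to the existence of an irreducible curve $C\subseteq Y_A[3]$ and then to translate everything into the geometry of the quadratic forms $q_A+q_v$. First I would record that, by~\eqref{nucqua} applied with $v_0$ replaced by $v_0+v$, one has $\dim\bigl(A\cap((v_0+v)\wedge\bigwedge^2 V)\bigr)=\dim\ker(q_A+q_v)$, so that $Y_A[3]$ is exactly the locus where $U_u:=A\cap(u\wedge\bigwedge^2 V)$ has dimension $\ge 3$, i.e.\ where $q_A+q_v$ has corank $\ge 3$. If $\dim\Theta_A>0$ we are done by~\eqref{siginf}, so I would assume $\Theta_A$ is finite; then $\dim\Theta_A\le 2$, and~\Ref{prp}{alpiudue} provides a decomposition~\eqref{trasuno} for which~\eqref{trasdue} holds, legitimizing the entire $q_A+q_v$ description. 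The key reformulation is this: under the canonical identification $u\wedge\bigwedge^2 V\cong\bigwedge^2(V/\langle u\rangle)$, a subspace $W\in\Theta_A$ with $u\in W$ corresponds exactly to a nonzero decomposable (rank $2$) element of $U_u$; equivalently $\Theta_A$ is identified with $\PP(A)\cap\Gr(3,V)$ inside $\PP(\bigwedge^3 V)$. Thus a point of $C$ admits a plane through it iff $U_u$ contains a decomposable vector.

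Now I would split according to the dichotomy of~\Ref{prp}{ipsing}. Suppose first that $C$ lies in a single plane $\PP(W)$, $W\in\Theta_A$. For every $[v]\in C$ we have $\dim U_v\ge 3\ne 2$, so by~\Ref{prp}{cnesinerre}(1) the curve $C_{W,A}$ is non-smooth along $C$; since a reduced plane sextic has finite singular locus, $C_{W,A}$ is either non-reduced or equal to $\PP(W)$, and in the latter case $\cB(W,A)=\PP(W)$ by~\Ref{prp}{cnesinerre}(2). Granting (this is the point to establish in the non-reduced case) that $C\subseteq\cB(W,A)$, each $[v]\in C$ satisfies alternative (1) or (2) of~\Ref{dfn}{malvagio}. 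Alternative (1) occurring along the curve $C$ would produce infinitely many planes, contradicting finiteness of $\Theta_A$; hence alternative (2) holds for generic $[v]\in C$, i.e.\ $\dim\bigl(A\cap(v\wedge\bigwedge^2 V)\cap(\bigwedge^2 W\wedge V)\bigr)\ge 2$. Writing $T:=A\cap(\bigwedge^2 W\wedge V)\supseteq\bigwedge^3 W$ and $V=W\oplus W'$, this says $T\cap(v\wedge\bigwedge^2 V)\supsetneq\bigwedge^3 W$ for all $[v]\in C$; identifying $\bigwedge^2 W\wedge W'\cong\Hom(W,W')$, a putative single extra generator $\tau=c\cdot(\bigwedge^3 W)+\tau'$ of $T$ would have to satisfy $\tau'(v)=0$ for all $[v]\in C$, hence $\tau'=0$ because $C$ spans $W$. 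This forces $\dim T\ge 3$, i.e.\ $A\in\Sigma[2]$.

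It remains to rule out that the generic point of $C$ lies on no plane of $\Theta_A$, and to fill the non-reduced gap above; both come down to the same assertion, which I expect to be the main obstacle: a moving family $\{U_v\}_{[v]\in C}$ of subspaces of dimension $\ge 3$ must either acquire a decomposable element (a plane through $v$, contradicting type (1)) or force alternative (2) above. The mechanism exploits the special shape of the Pl\"ucker forms: a direct computation gives $\ker q_v=v\wedge V_0$, so each $q_v$ has corank exactly $4$ with kernel the decomposable $4$-plane $v\wedge V_0$. A type (1) point of $C$ is one where the affine family $v\mapsto q_A+q_v$ meets the corank-$\ge 3$ stratum (codimension $6$ among symmetric forms) along a positive-dimensional locus; differentiating the relations $\eta(t)\in A\cap\bigl(v(t)\wedge\bigwedge^2 V\bigr)$ along $C$ yields further vectors of $A$ which, combined with the rank-$2$ structure of the kernels $v\wedge V_0$, should force a nonzero decomposable element into some $U_v$. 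The honest bookkeeping behind this is precisely the stratification of $\wt{\Sigma}$ and $\wt{\Delta}(0)$ underlying the codimension counts~\eqref{romalazio} and~\eqref{codinf} of~\cite{ogtasso}; the cleanest route is to invoke that stratification directly, and the computation producing the decomposable vector is where the real work lies.
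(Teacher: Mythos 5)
Your proposal has the right skeleton, but its central step is deferred rather than proved, and that step is the entire content of the paper's argument. Concretely: you need that at the generic point $[v]$ of a positive-dimensional component $C$ of $Y_A[3]$ either two distinct members of $\Theta_A$ pass through $[v]$, or some $W\in\Theta_A$ contains $v$ with $\dim\bigl(A\cap(\bigwedge^2 W\wedge V)\cap(v\wedge\bigwedge^2 V)\bigr)\ge 2$. This is exactly what you postpone (``should force \dots where the real work lies''), and it is also what your Case-1 ``non-reduced gap'' ($C\subseteq\cB(W,A)$) amounts to, since $\dim(A\cap(v\wedge\bigwedge^2 V))\ge 3$ by itself does not place $[v]$ in $\cB(W,A)$. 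The paper does not obtain this from the stratification behind the codimension counts~\eqref{romalazio} and~\eqref{codinf} -- those are statements about the generic lagrangian in each stratum and cannot, by themselves, give a pointwise dichotomy for a fixed $A$. Instead, when $\dim(A\cap(v\wedge\bigwedge^2 V))=3$ generically on $C$, the paper embeds $C^0\hra\wt{\Delta}(0)$ via $[v]\mapsto([v],A)$, notes that the differential of the projection $\wt{\Delta}(0)\to\lagr$ vanishes on the image of $d\iota$ and is therefore not injective, and then quotes Corollary~3.4 and Proposition~3.5 of~\cite{ogdoppio}, which convert non-injectivity of that differential into precisely the two alternatives; the locus where $\dim(A\cap(v\wedge\bigwedge^2 V))>3$ is handled separately via the length-$5$ scheme ${\bf K}\cap\Gr(2,V_0)$ from the proof of Proposition~3.5 of~\cite{ogdoppio}. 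Your sketched mechanism (differentiating $\eta(t)\in A\cap(v(t)\wedge\bigwedge^2 V)$ against the kernels $v\wedge V_0$ of the Pl\"ucker forms) is in the right spirit, but as written it is a plan, not a proof.

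There is also a genuine error in the branch you do treat. After reducing to alternative~(2) of \Ref{dfn}{malvagio} with $\dim T=2$, you conclude ``$\tau'=0$ because $C$ spans $W$'': but $C$ is only an irreducible curve in $\PP(W)$ and may be a line, in which case $\ker$ of the homomorphism associated to $\tau'$ need only contain a $2$-dimensional subspace, so $\tau'$ can have rank $1$ and no contradiction with $\dim T=2$ arises. The paper closes this case differently: rank $1$ forces $\beta$ ($=\tau$) to be decomposable, $\beta\in\bigwedge^3 W'$ with $\dim(W\cap W')=2$, and then the whole pencil $\{W''\in\Gr(3,V)\mid (W\cap W')\subset W''\subset(W+W')\}$ lies in $\Theta_A$, contradicting finiteness. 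The same pencil argument is what makes your alternative-(1) step rigorous as well: finiteness of $\Theta_A$ first yields a single $W'\ne W$ with $C\subset\PP(W)\cap\PP(W')$, hence $\dim(W\cap W')=2$, and only then does one get infinitely many planes (the pencil) inside $\Theta_A$. So both terminal contradictions in the proposition ultimately run through this one construction, which your write-up never makes.
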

\begin{proof}
By contradiction. Thus we assume that $\dim Y_A[3]>0$ and 
$A\notin(\Sigma_{\infty}\cup\Sigma[2])$. By hypothesis there exists an irreducible component $C$ of $Y_A[3]$ of strictly positive dimension. Let $[v]\in C$ be generic. We claim that one of the following holds:
\begin{itemize}
\item[(a)]
There exist  distinct $W_1([v]),W_2([v])\in\Theta_A$ containing $v$.
\item[(b)]
There exists $W([v])\in\Theta_A$ containing  $v$ and such that
\begin{equation}\label{caffe}
\dim  A\cap S_{W([v])}\cap (v\wedge\bigwedge^2 V) \ge 2.
\end{equation}
\end{itemize}
In fact assume first that $\dim(A\cap (v\wedge\bigwedge^2 V))=3$ for  $[v]$ in an open dense $C^0\subset C$. We may assume that $C^0$ is smooth; then we have an embedding $\iota\colon C^0\hra\wt{\Delta}(0)$ defined by mapping $[v]\in C^0$ to $([v],A)$. Let $[v]\in C^0$: the 
 differential of the projection $\wt{\Delta}(0)\to\lagr$ at $([v],A)$ is \emph{not} injective because it vanishes on $\im d\iota([v])$.
By Corollary~3.4 and Proposition~3.5 of~\cite{ogdoppio} we get that one of Items~(a), (b) above holds. 
Now assume that $\dim(A\cap (v\wedge\bigwedge^2 V))>3$ for generic $[v]\in C$ (and hence for all $[v]\in C$). Let notation be as in the proof of  Proposition 3.5 of~\cite{ogdoppio}; then ${\bf K}\cap \Gr(2,V_0)$ is a zero-dimensional (if it has strictly positive dimension then $\dim\Theta_A>0$ and hence $A\in\Sigma_{\infty}$ against our assumption)  scheme of length $5$.  It follows that either Item~(a) holds (if ${\bf K}\cap \Gr(2,V_0)$ is not a single point) or Item~(b) holds (if ${\bf K}\cap \Gr(2,V_0)$ is a single point ${\bf p}$ and hence the tangent space of ${\bf K}\cap \Gr(2,V_0)$ at ${\bf p}$ has dimension at least $1$). Now we are ready to reach a contradiction. First suppose that Item~(a)  holds. Since $\Theta_A$ is finite there exist distinct $W_1,W_2\in\Theta_A$ such that $C\subset(\PP(W_1)\cap \PP(W_2))$. Thus $\dim(W_1\cap W_2)=2$ and hence the line
\begin{equation}
\{W\in\Gr(3,V)\mid (W_1\cap W_2)\subset W\subset (W_1+W_2)\}
\end{equation}
is contained in $\Theta_A$, that is a contradiction. Now suppose that Item~(b) holds. Since $\Theta_A$ is finite there exists $W\in\Theta_A$ such that $C\subset \PP(W)$ and
\begin{equation}\label{cappuccino}
\dim  A\cap S_{W}\cap (v\wedge\bigwedge^2 V)\ge 2\qquad \forall [v]\in C.
\end{equation}
 Since  $A\notin\Sigma[2]$ we have $\dim(A\cap (\bigwedge^2 W\wedge V))=2$. Let $\{w_1,w_2,w_3\}$ be a basis of $W$; then
\begin{equation}
A\cap (\bigwedge^2 W\wedge V)=\la w_1\wedge w_2\wedge w_3,\, \beta \ra.
\end{equation}
Let $\ov{\beta}$ be the image  of $\beta$ under the quotient map $(\bigwedge^2 W\wedge V)\to (\bigwedge^2 W\wedge V)/\bigwedge^3 W$. Then 
\begin{equation}
\ov{\beta}\in\bigwedge^2 W\wedge(V/W)\cong \Hom(W,V/W).
\end{equation}
(We choose a volume form on $W$ in order to define the isomorphism above.)
By~\eqref{cappuccino}  the kernel of $\ov{\beta}$ (viewed as a map $W\to(V/W)$) contains all $v$ such that $[v]\in C$.
Thus $\ov{\beta}$  has rank $1$. It follows that $\beta$ is decomposable: $\beta\in\bigwedge^3 W'$ where $W'\in\Theta_A$ and $\dim W\cap W'=2$. Then $\Theta_A$ contains the line in $\Gr(3,V)$ joining $W$ and $W'$: that is a contradiction.
\end{proof}
 \subsection{Lattices and periods}\label{subsec:radici}
 \setcounter{equation}{0}
 Let $L$ be an even lattice: we will denote by $(,)$ the bilinear symmetric  form on $L$ and for $v\in L$ we let $v^2:=(v,v)$.  For a ring $R$  we let  $L_{R}:=L\otimes_{\ZZ} R$ and we let $(,)_{R}$ be the $R$-bilinear symmetric form on $L_R$ obtained from $(,)$ by extension of scalars. 
Let $L^{\vee}:=\Hom(L,\ZZ)$. The bilinear form defines an embedding $L\hra L^{\vee}$: the quotient $D(L):=L^{\vee}/L$ is the {\it discriminant group} of $L$.  Let $0\ne v\in L$ be primitive i.e.~$L/\la v\ra$ is torsion-free. The {\it divisibility} of $v$ is the positive generator of $(v,L)$ and is denoted  by $\divisore(v)$; we let $v^{*}:=v/\divisore(v)\in D(L)$.
The group $O(L)$ of isometries of $L$ acts naturally on $D(L)$.  The {\it stable} orthogonal group is equal to
\begin{equation}
\wt{O}(L):=\ker(O(L)\to D(L)).
\end{equation}
We let ${\bf q}_L\colon D(L)\to \QQ/2\ZZ$ and ${\bf b}_L\colon D(L)\times D(L)\to \QQ/\ZZ$ be the discriminant quadratic-form and symmetric bilinear form respectively, see~\cite{nikulin}.
 The following criterion of Eichler will be handy.
\begin{prp}[Eichler's Criterion, see Prop.~3.3 of~\cite{ghs-abtn}]\label{prp:criteich}
Let $L$ be an even lattice which contains $U^2$ (the direct sum of two hyperbolic planes). Let $v_1,v_2\in L$ be non-zero and primitive. There exists $g\in\wt{O}(L)$ such that $g v_1=v_2$ if and only if $v_1^2=v_2^2$ and $v_1^{*}=v_2^{*}$. 
\end{prp}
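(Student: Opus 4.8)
The plan is to prove the two implications separately, the forward one being routine and the converse being the substance of Eichler's theorem. Throughout set $d_i:=\divisore(v_i)$. For necessity, suppose $g\in\wt{O}(L)$ with $gv_1=v_2$. Since $g$ is an isometry, $v_2^2=(gv_1)^2=v_1^2$ and $(v_2,L)=(gv_1,gL)=(v_1,L)$, so $d_1=d_2=:d$. Extending $g$ to $L^{\vee}$ and passing to $D(L)=L^{\vee}/L$, one gets $v_2^{*}=g(v_1)/d=g(v_1/d)=g(v_1^{*})=v_1^{*}$, the last equality because $g$ acts trivially on $D(L)$ by definition of $\wt{O}(L)$. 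The work is entirely in the converse.

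For sufficiency, first I would exploit that $U$ is unimodular, so the inclusion $U^2\hookrightarrow L$ splits: fix $L=U_1\oplus U_2\oplus L'$ with $U_i=\langle e_i,f_i\rangle$, $e_i^2=f_i^2=0$, $(e_i,f_i)=1$. Then $D(L)=D(L')$ and I may view $v_i^{*}\in D(L')$; moreover the order of $v_i^{*}$ in $D(L)$ is exactly $d_i$, so fixing $v^{*}$ already fixes the divisibility. It then suffices to carry every primitive $v$ with prescribed $v^2=2n$ and $v^{*}=x$ to a single normal-form vector, using the subgroup $E(L)\subseteq\wt{O}(L)$ generated by Eichler transvections. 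Recall that for isotropic $e\in L$ and $a\in e^{\bot}$ one sets $t_{e,a}(x)=x+(e,x)a-(a,x)e-\tfrac12(a,a)(e,x)e$; a direct computation using $(e,e)=0=(a,e)$ shows $t_{e,a}\in O(L)$ and fixes $e$, and since for $x\in L^{\vee}$ both $(e,x),(a,x)$ are integers and $\tfrac12(a,a)\in\ZZ$ by evenness, one has $t_{e,a}(x)-x\in L$, so $t_{e,a}\in\wt{O}(L)$. Writing $v=\alpha e_1+\beta f_1+v_M$ with $v_M\in M:=U_2\oplus L'$, the transvections $t_{e_1,a}$ and $t_{f_1,b}$ with $a,b\in M$ translate the $M$-component by $\beta a$, respectively $\alpha b$, while leaving $(v,e_1)=\beta$, respectively $(v,f_1)=\alpha$, unchanged: this is the engine of the reduction.

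The reduction then runs in two stages. First I would arrange $(v,e_1)\neq 0$: if $v$ already pairs nontrivially with $U_1$ there is nothing to do, and otherwise I use transvections attached to the \emph{second} plane $U_2$ (namely $t_{e_2,f_1}$, $t_{f_2,f_1}$, and, if $v\in L'$, first $t_{e_2,a}$ with $a\in L'$ chosen so that $(a,v)\neq 0$, which exists since $L'$ is nondegenerate) to create a nonzero pairing with $U_1$. This is precisely where the hypothesis $U^2\subseteq L$, rather than a single $U$, is indispensable: the second hyperbolic plane guarantees that one never gets stuck with $v$ orthogonal to $U_1$. Once $\beta:=(v,e_1)\neq 0$, the two transvection families attached to $U_1$ (together with further $U_2$-transvections used to reduce $\beta$ when needed) give enough flexibility to clear the $U_2$-component of $v$ and to replace its $L'$-component by a fixed lift of the class $x$; the remaining $U_1$-coefficients are then pinned down by the constraints $\divisore(v)=\operatorname{ord}(x)$ and $v^2=2n$. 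The outcome is a normal-form vector depending only on $(2n,x)$, which yields transitivity and hence the claim.

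The hard part will be the bookkeeping in the second stage. One must verify that the successive transvections reduce $v$ not merely to a vector of the correct square but to the \emph{fixed} representative within the correct coset of $D(L')$ — that is, that the entire reduction preserves $v^{*}$ and not only $v^2$ — while simultaneously respecting primitivity and the divisibility constraint $\divisore(v)=\operatorname{ord}(x)$. Handling the case $\operatorname{ord}(x)>1$ (where $v$ pairs with $U_1$ only through multiples of $d$) requires a judicious choice of the lift $w\in L'^{\vee}$ of $x$ and of the auxiliary vectors $a,b\in M$, and constitutes the computational heart of Eichler's argument; the surrounding structural steps above are comparatively formal.
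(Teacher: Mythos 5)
The paper itself offers no proof of this proposition: it is quoted verbatim, with attribution, from Prop.~3.3 of~\cite{ghs-abtn}, which in turn rests on Eichler's work. So there is no internal argument to compare against; what you propose is the same standard route as the cited source, namely transitivity of the group generated by Eichler transvections. The portions you actually carry out are correct: the necessity direction; the orthogonal splitting $L=U_1\oplus U_2\oplus L'$ with $D(L)=D(L')$ (valid because $U^2$ is unimodular); the verification that $t_{e,a}\in\wt{O}(L)$; the computation that $t_{e_1,a}$ and $t_{f_1,b}$, for $a,b\in M:=U_2\oplus L'$, translate the $M$-component of $v$ by $(v,e_1)a$, resp.\ $(v,f_1)b$; the observation that $\operatorname{ord}(v^{*})=\divisore(v)$; and the use of the second hyperbolic plane to arrange $(v,e_1)\neq 0$.

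However, your ``second stage'' is a promise rather than a proof, and it is exactly there that the whole content of Eichler's theorem lives. Two concrete deficiencies. First, writing $v=\alpha e_1+\beta f_1+u$ with $u\in M$, the moves available translate $u$ by $\beta a$ (fixing $\beta$, altering $\alpha$) or by $\alpha b$ (fixing $\alpha$, altering $\beta$); to clear the $U_2$-component and to move the $L'$-component to a prescribed lift of $x$ inside its coset modulo $dL'$ (where $d=\divisore(v)$), you need to translate $u$ by arbitrary elements of $dM$, hence you need $\gcd(\alpha,\beta)=d$. A priori $\gcd(\alpha,\beta)$ is only a multiple of $d$, since the divisibility may be realized through the $U_2$- or $L'$-pairings; establishing $\gcd(\alpha,\beta)=d$ requires a Euclidean-algorithm interplay in which $U_1$-transvections with $a\in U_2$ or $a\in L'$ feed the $U_2$- and $L'$-coefficients into $\alpha$ and $\beta$ (e.g.\ $t_{e_1,f_2}$ replaces $\alpha$ by $\alpha-\gamma$ while changing the $f_2$-coefficient by $\beta$), and this — the computational heart you explicitly defer — is nowhere supplied. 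Second, even granting that reduction, your assertion that the remaining $U_1$-coefficients ``are then pinned down by the constraints $\divisore(v)=\operatorname{ord}(x)$ and $v^2=2n$'' is false as stated: for $v=\alpha e_1+\beta f_1+w$ these constraints determine only the product $\alpha\beta=n-\tfrac12 w^2$ together with $d\mid\alpha,\beta$, not the ordered pair $(\alpha,\beta)$; identifying, say, $(\alpha,\beta)=(dm,d)$ with $(d,dm)$ requires a further explicit chain of transvections through the (now cleared) second plane. So the architecture of your argument is the right one and matches the literature, but as written it establishes only the formal shell: the transitivity statement, which is the theorem, is asserted where it needs to be proved.
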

Now suppose  that $L$ is an even lattice of signature $(2,n)$. Let
\begin{equation}\label{domper}
\Omega_L:=  \{[\sigma]\in \PP(L_\CC)\mid (\sigma,\sigma)_\CC=0,\quad 
(\sigma,\ov{\sigma})_\CC>0\}.
\end{equation}
(Notice that the isomorphism class of $\Omega_L$ depends on $n$ only.) Then $\Omega_L$ is the union of two disjoint bounded symmetric domains of Type IV  on which 
$O(L)$ acts. By  Baily and Borel's fundamental results the quotient
\begin{equation}
\DD_L:=\wt{O}(L)\backslash\Omega_L.
\end{equation}
 is  quasi-projective.
\begin{rmk}\label{rmk:compdidi}
 Suppose that $v_0\in L$ has  square $2$. The reflection
\begin{equation}\label{rifdue}
\begin{matrix}
L & \overset{R_{v_0}}{\lra} & L \\
v & \mapsto & v-(v,v_0) v_0
\end{matrix}
\end{equation}
belongs to the stable orthogonal group. We claim that  $R_{v_0}$  exchanges  the two connected components of  $\Omega_L$. In fact let $M\subset L_{\RR}$ be a positive definite subspace of maximal dimension (i.e.~$2$) containing $v_0$.
If  $[\sigma]\in \Omega_L\cap(M_{\CC})$ then $R_{v_0}([\sigma])=[\ov{\sigma}]$: this proves our claim because conjugation interchanges the two connected components of $\Omega_L$.  
It follows that if  $L$ contains a vector of square $2$ then  $\DD_L$ is connected. 
\end{rmk}
  Let us examine  the lattices of interest to us. Let $J,M,N$ be three copies of the hyperbolic plane $U$, let $E_8(-1)$ be the unique unimodular negative definite even lattice of rank $8$ and $(-2)$ the rank-$1$ lattice with generator of square $(-2)$. Let 
\begin{equation}\label{lambdatilde}
 \wt{\Lambda}:=J\oplus M\oplus N \oplus E_8(-1)^2\oplus(-2) \cong U^3\oplus E_8(-1)^2\oplus(-2).
\end{equation}
 If $X$ is a HK manifold deformation equivalent to the Hilbert square of a $K3$ then $H^2(X;\ZZ)$ equipped with the Beauville-Bogomolov quadratic form is isometric to $\wt{\Lambda}$. A vector in $\wt{\Lambda}$ of square $2$ has divisibility $1$: it follows from~\Ref{prp}{criteich}  that any two vectors in $\wt{\Lambda}$ of square $2$ are $O(\wt{\Lambda})$-equivalent and hence the isomorphism class of $v^{\bot}$ for $v^2=2$ is independent of $v$. 
We choose  $v_1\in J$ of square $2$ and let $\Lambda:=v_1^{\bot}$. Then
\begin{equation}\label{isomteta}
\Lambda\cong U^2\oplus E_8(-1)^2\oplus(-2)^2.
\end{equation}
We get an inclusion $\wt{O}(\Lambda)< O(\wt{\Lambda})$ by associating to $g\in \wt{O}(\Lambda)$ the unique $\wt{g}\in  O(\wt{\Lambda})$ which is the identity on $\ZZ v_1$ and restricts to $g$ on $v_1^{\bot}$  (such a lift  exists  because $g\in\wt{O}(\Lambda)$). 
 Now suppose that $X$ is  a $HK$ manifold deformation equivalent to the Hilbert square of $K3$ and that 
 $h\in H^{1,1}_{\ZZ}(X)$ has square $2$. Since there is a single $O(\wt{\Lambda})$-orbit of square-$2$ vectors there exists an isometry 
 \begin{equation}
\psi\colon H^2(X;\ZZ)\overset{\sim}{\lra}\wt{\Lambda},\qquad \psi(h)=v_1.
\end{equation}
   Such an isometry is a {\it marking} of $(X,h)$. If $H$ is a divisor  on $X$ of square $2$ a marking of $(X,H)$ is a marking of $(X,c_1(\cO_X(H)))$. Let   $\psi_{\CC}\colon H^2(X;\CC)\to \wt{\Lambda}_{\CC}$ be the $\CC$-linear extension of $\psi$. Since $h$ is of type $(1,1)$ we have that $\psi_{\CC}(H^{2,0})\in v_1^{\bot}$. Well-known properties of the Beauville-Bogomolov quadratic form give that  $\psi_{\CC}(H^{2,0})\in\Omega_{\Lambda}$. 
Any two markings of $(X,h)$ differ by the action of an element of $\wt{O}(\Lambda)$. It follows that the equivalence class 
\begin{equation}\label{eccoperi}
\Pi(X,h):=[\psi_{\CC}H^{2,0}]\in\DD_{\Lambda}
\end{equation}
 is well-defined i.e.~independent of the marking: that is the \emph{period point} of $(X,h)$. Since the lattice $\Lambda$  contains vectors of square $2$ the quotient $\DD_{\Lambda}$ is irreducible by~\Ref{rmk}{compdidi}. 
The discriminant group and discriminant quadratic form of $\Lambda$ are described as follows. Let $e_1$ be a generator of
$v_1^{\bot}\cap J$ and let $e_2$ be a generator of the last summand of~\eqref{lambdatilde}:
\begin{equation}
\ZZ e_1=v_1^{\bot}\cap J,\qquad \ZZ e_2=(-2).
\end{equation}
Then $-2=e^2_1=e^2_2$, $(e_1,e_2)=0$ and $2=\divisore_{\Lambda}(e_1)=\divisore_{\Lambda}(e_2)$: here we denote by $\divisore_{\Lambda}(e_i)$ the divisibility of $e_i$ as element of $\Lambda$, one should notice that the divisibility of $e_1$ in $\wt{\Lambda}$ is $1$ (not $2$) while the divisibility of $e_2$ in $\wt{\Lambda}$ is $2$ (equal to the divisibility  of $e_2$ in $\Lambda$). In particular $e_1/2$ and $e_2/2$ are order-$2$ elements of $D(\Lambda)$. We have   the following:
\begin{equation}\label{tuttosudi}
\qquad\qquad
\begin{matrix}
\ZZ/(2)\oplus \ZZ/(2)  & \overset{\sim}{\lra} & D(\Lambda) \\
([x], [y]) & \mapsto & x(e_1/2)+y (e_2/2)
\end{matrix}
\qquad
q_{\Lambda}(x(e_1/2)+y (e_2/2))\equiv  -\frac{1}{2}x^2 -\frac{1}{2}y^2 \pmod{2\ZZ}
\end{equation}
In particular we get that 
\begin{equation}\label{indicegamma}
[O(\Lambda):\wt{O}(\Lambda)]=2.
\end{equation}
Let $\iota\in O(\Lambda)$ be the involution  characterized by 
\begin{equation}\label{vipera}
\iota(e_1)=e_2,\quad \iota(e_2)=e_1,\quad \iota|_{\{e_1,e_2\}^{\bot}}=\Id _{\{e_1,e_2\}^{\bot}}\,.
\end{equation}
Then $\iota\notin \wt{O}(\Lambda)$. Since $[O(\Lambda):\wt{O}(\Lambda)]=2$  we get that $\iota$
induces a non-trivial involution 
\begin{equation}\label{invoper}
\ov{\iota}\colon\DD_{\Lambda}^{BB}\to\DD_{\Lambda}^{BB}.
\end{equation}
The geometric counterpart of $\ov{\iota}$ is given by the involution $\delta\colon{\mathfrak M}\to{\mathfrak M}$ induced by the map
\begin{equation}\label{specchio}
\begin{matrix}
 \lagr & \overset{\delta_V}{\overset{\sim}{\lra}} & \lagrdual \\
 A & \mapsto & \delta_V(A)=\Ann   A.
\end{matrix}
\end{equation}
(The geometric meaning of $\delta_V(A)$: for generic $A\in\lagr$ the dual of $Y_A$ is equal  to $Y_{\delta_V(A)}$.)
 In~\cite{ogbobomolov} we proved that
 \begin{equation}\label{idiota}
\ov{\iota}\circ \gp=\gp\circ\delta. 
\end{equation}
 \subsection{Roots of $\Lambda$}\label{subsec:radneg}
 \setcounter{equation}{0}
Let  $v_0\in\Lambda$ be  \emph{primitive} and let $v_0^2=-2d\not=0$: then $v_0$ is a \emph{root}  if the reflection
\begin{equation}
\begin{matrix}
\Lambda_{\QQ} & \overset{R}{\lra} & \Lambda_{\QQ} \\
v & \mapsto & v+\frac{(v,v_0)v_0}{d}
\end{matrix}
\end{equation}
is integral, i.e.~$R(\Lambda)\subset\Lambda$. We record the square of $v_0$ by stating that $v_0$ is $(-2d)$-root.  Notice that if $v_0^2=\pm 2$ then $v_0$ is a root.
In particular $e_1$ and $e_2$ are $(-2)$-roots of $\Lambda$. Let 
\begin{equation}
e_3\in M,\qquad e_3^2=-2.
\end{equation}
 Notice that $e_3\in\Lambda$ and hence  it is a $(-2)$-root of $\Lambda$. Since $(e_1+e_2)^2=-4$ and $\divisore(e_1+e_2)=2$ we get that   $(e_1+e_2)$ is a  $(-4)$-root of $\Lambda$. 
\begin{prp}\label{prp:orbrad}
The set of negative roots of $\Lambda$  breaks up into $4$  orbits for the action of $\wt{O}(\Lambda)$, namely the  orbits of $e_1$, $e_2$,   $e_3$ and $(e_1+e_2)$. 
\end{prp}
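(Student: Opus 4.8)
The plan is to classify negative roots of $\Lambda$ by the discrete invariants that govern $\wt{O}(\Lambda)$-orbits, namely the square $v_0^2$ and the image $v_0^*\in D(\Lambda)$, and then invoke Eichler's Criterion (\Ref{prp}{criteich}) to promote equality of these invariants to genuine $\wt{O}(\Lambda)$-equivalence. This is legitimate because $\Lambda\cong U^2\oplus E_8(-1)^2\oplus(-2)^2$ by~\eqref{isomteta} contains $U^2$, so the hypothesis of~\Ref{prp}{criteich} is met. Thus two primitive vectors lie in the same $\wt{O}(\Lambda)$-orbit precisely when they have the same square and the same image in $D(\Lambda)$.

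First I would pin down which squares can occur for a root. By definition $v_0$ is a $(-2d)$-root iff the reflection $v\mapsto v+(v,v_0)v_0/d$ is integral; writing $v_0^*=v_0/\divisore(v_0)$ this integrality forces strong divisibility constraints relating $d$ to $\divisore(v_0)$. Since $D(\Lambda)\cong(\ZZ/2)^2$ by~\eqref{tuttosudi}, the divisibility of any primitive $v_0$ is either $1$ or $2$, so $v_0^*$ is one of the four elements of $D(\Lambda)$: the zero class, $e_1/2$, $e_2/2$, or $(e_1+e_2)/2$. The integrality condition, combined with the constraint that $v_0^*$ have the correct order in $D(\Lambda)$ and that $q_\Lambda(v_0^*)\equiv v_0^2/(2\divisore(v_0)^2)$ be compatible with~\eqref{tuttosudi}, should cut the list of admissible $(v_0^2,v_0^*)$ pairs down to exactly four. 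Concretely I expect: divisibility $1$ forces $v_0^*=0$ and (for a negative root) $v_0^2=-2$, giving the single invariant $(-2,0)$; divisibility $2$ forces $v_0^2\in\{-2,-4\}$ with $v_0^*\in\{e_1/2,e_2/2,(e_1+e_2)/2\}$, and the discriminant-form computation~\eqref{tuttosudi} matches $v_0^2=-2$ with the order-$2$ classes $e_1/2,e_2/2$ (each having $q_\Lambda=-1/2$) and matches $v_0^2=-4$ with $(e_1+e_2)/2$ (having $q_\Lambda=-1$).

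Next I would exhibit the four representatives and check they realize these four invariant-pairs. We already know $e_1,e_2$ are $(-2)$-roots of divisibility $2$ with $e_i^*=e_i/2$; that $e_3\in M$ with $e_3^2=-2$ is a $(-2)$-root of divisibility $1$, so $e_3^*=0$; and that $(e_1+e_2)$ is a $(-4)$-root of divisibility $2$ with $(e_1+e_2)^*=(e_1+e_2)/2$. These four pairs $(-2,0)$, $(-2,e_1/2)$, $(-2,e_2/2)$, $(-4,(e_1+e_2)/2)$ are pairwise distinct, so by Eichler the four representatives lie in four distinct orbits, and by the classification of admissible pairs every negative root has one of these four invariant-pairs, hence is $\wt{O}(\Lambda)$-equivalent to one of $e_1,e_2,e_3,(e_1+e_2)$.

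The main obstacle is the enumeration step: showing that \emph{no other} pairs $(v_0^2,v_0^*)$ can arise from a negative root. I would need to rule out, for instance, a $(-4)$-root of divisibility $1$ (which would have $v_0^*=0$ but a square incompatible with integrality of the reflection, since $d=2$ and $\divisore=1$ makes $(v,v_0)v_0/2$ non-integral for suitable $v$), as well as deeper roots such as $(-6)$- or $(-8)$-roots; here the integrality condition $\divisore(v_0)^2\mid 2d$ together with $\divisore(v_0)\le 2$ should force $d\le 2$ after accounting for primitivity, so only $d\in\{1,2\}$ survive. Verifying this divisibility bound carefully — i.e. that a primitive root in a lattice with discriminant group $(\ZZ/2)^2$ cannot have square below $-4$ — is the technical heart of the argument; once it is in place, the orbit count follows immediately from Eichler's Criterion.
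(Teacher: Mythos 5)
Your overall strategy---separating the four candidate orbits by the invariants $(v_0^2,v_0^{*})$ and using Eichler's Criterion (\Ref{prp}{criteich}) to show every negative root falls into one of those four classes---is exactly the route the paper takes, and most of your steps are sound: $\divisore(v_0)\in\{1,2\}$ because $v_0^{*}$ has order $\divisore(v_0)$ in the $2$-torsion group $D(\Lambda)$, and the matching of squares with discriminant classes is correct, via $q_{\Lambda}(v_0^{*})\equiv v_0^2/\divisore(v_0)^2\pmod{2\ZZ}$ (note: not $v_0^2/(2\divisore(v_0)^2)$ as you wrote; with your normalization $e_1/2$ would have $q_{\Lambda}$-value $-1/4$ rather than $-1/2$).

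However, the step you yourself call the technical heart---bounding the square of a negative root---is wrong as stated. Your proposed integrality condition $\divisore(v_0)^2\mid 2d$ is not the condition for $R_{v_0}$ to be integral: it fails for $e_1$ and $e_2$ themselves (there $\divisore(v_0)^2=4$ and $2d=2$), which certainly are roots; and, worse, it points in the wrong direction---with $\divisore(v_0)=1$ it holds for \emph{every} $d$, and with $\divisore(v_0)=2$ it holds for every even $d$, so it cannot force $d\le 2$ and would not exclude $(-6)$- or $(-8)$-roots. The correct condition, which you do use correctly in your parenthetical about a putative $(-4)$-root of divisibility $1$, is the following: since $v_0$ is primitive, $(v,v_0)v_0/d\in\Lambda$ for all $v\in\Lambda$ if and only if $d\mid (v,v_0)$ for all $v$, i.e.\ $d\mid\divisore(v_0)$. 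Combined with $\divisore(v_0)\in\{1,2\}$ this immediately gives $d\in\{1,2\}$, and moreover $\divisore(v_0)=2$ when $d=2$; this is precisely the paper's argument (``Since $D(\Lambda)$ is $2$-torsion $\divisore(v_0)\in\{1,2\}$: it follows that $v_0$ is either a $(-2)$-root or a $(-4)$-root, and in the latter case $\divisore(v_0)=2$''). Once your divisibility condition is replaced by $d\mid\divisore(v_0)$, the enumeration of admissible pairs and the Eichler matching go through verbatim and your proof coincides with the paper's.
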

\begin{proof}
First let us prove that the  orbits of $e_1$, $e_2$,   $e_3$ and $(e_1+e_2)$ are pairwise disjoint. Since $-2=e_1^2=e_2^2=e_3^2$ and $(e_1+e_2)^2=-4$ the orbits  of $e_1$, $e_2$ and   $e_3$  are disjoint from that of $(e_1+e_2)$.   
We have $\divisore_{\Lambda}(e_3)=1$ and hence $e_3^{*}=0$.  Since  $e_1^{*}$,   $e_2^{*}$  and $e_3^{*}$  are pairwise distinct elements of $D(\Lambda)$ it follows that the orbits of $e_1$, $e_2$,   $e_3$ are pairwise disjoint. Now let $v_0\in\Lambda$ be a negative root. Since  $D(\Lambda)$ is $2$-torsion $\divisore(v_0)\in\{1,2\}$: it follows that $v_0$ is either a $(-2)$-root or a $(-4)$-root, and in the latter case $\divisore(v_0)=2$. Suppose first that $v_0$ is a $(-2)$-root. If $\divisore_{\Lambda}(v_0)=1$ then $v_0^{*}=0$ and hence $v_0$ is in the orbit of $e_3$ by~\Ref{prp}{criteich}. If $\divisore_{\Lambda}(v_0)=2$ then $v^{*}\in\{e_1^{*},e_2^{*}\}$  because  $q_{\Lambda}(e_1^{*}+ e_2^{*})\equiv -1\not\equiv -1/2\pmod{2\ZZ}$: it follows from~\Ref{prp}{criteich} that $v_0$ belongs either to the $\wt{O}(\Lambda)$-orbit of $e_1$ or to that of $e_2$. Lastly suppose that $v_0$ is a $(-4)$-root. Since $\divisore(v_0)=2$ we have $q_{\Lambda}(v_0^{*})=-1$ and hence $v_0^{*}=e_1/2+e_2/2$: it follows from~\Ref{prp}{criteich} that $v_0$ belongs  to the $\wt{O}(\Lambda)$-orbit of $(e_1+e_2)$.
\end{proof}
Let  $\kappa\colon\Omega_{\Lambda}\to\DD_{\Lambda}$ be the quotient map.  Let
 \begin{equation}\label{essetiti}
{\mathbb S}_2':=\kappa  ( e_1^{\bot}\cap\Omega_{\Lambda}),\quad
{\mathbb S}_2'':=\kappa ( e_2^{\bot}\cap\Omega_{\Lambda}),\quad
{\mathbb S}_2^{\star}:=\kappa (e_3^{\bot}\cap\Omega_{\Lambda}),\quad
{\mathbb S}_4:=\kappa  ( (e_1+e_2)^{\bot}\cap\Omega_{\Lambda}).
\end{equation}
\begin{rmk}\label{rmk:essirr}
Let $i=1,2,3$: then $e_i^{\bot}\cap\Omega_{\Lambda}$ has two connected components - see~\Ref{rmk}{compdidi}. 
Let $v_0\in N$ (we refer to~\eqref{lambdatilde}) of square $2$. Then $(v_0,e_i)=0$ for 
$i=1,2,3$ and hence Reflection~\eqref{rifdue} exchanges the two connected components of  $e_i^{\bot}\cap\Omega_{\Lambda}$ for $i=1,2,3$ and also the two connected components of  $(e_1+e_2)^{\bot}\cap\Omega_{\Lambda}$ . 
It follows that   each of ${\mathbb S}_2'$, ${\mathbb S}_2''$, ${\mathbb S}_2^{\star}$ and
 ${\mathbb S}_4$ is a prime divisor in $\DD_{\Lambda}$. 
\end{rmk}  
Let $\ov{\iota}$ be the involution given by~\eqref{invoper}: then
\begin{equation}\label{azionesse}
\ov{\iota}({\mathbb S}_2^\star) =  {\mathbb S}_2^\star,\quad 
\ov{\iota}({\mathbb S}'_2) =  {\mathbb S}''_{2},\quad
\ov{\iota}({\mathbb S}''_2) =  {\mathbb S}'_{2},\quad
\ov{\iota}({\mathbb S}_4) =  {\mathbb S}_4.
\end{equation}
We will describe the normalization of ${\mathbb S}_2^{\star}$ and we will show that it is a finite cover of  the period space for $K3$ surfaces of degree $2$. Let $v_3$ be a generator of $e_3^{\bot}\cap M$. Let
\begin{equation}\label{tilgam}
\wt{\Gamma}:=e_3^{\bot}=J\oplus \ZZ v_3\oplus N\oplus E_8(-1)^2\oplus \ZZ e_2\cong
U\oplus (2)\oplus U\oplus E_8(-1)^2\oplus (-2)
\end{equation}
and
\begin{equation}
\Gamma:=e_3^{\bot}\cap\Lambda=\ZZ e_1\oplus \ZZ v_3\oplus N\oplus E_8(-1)^2\oplus \ZZ e_2\cong
(-2)\oplus (2)\oplus U\oplus E_8(-1)^2\oplus (-2).
\end{equation}
 We have $\Omega_{\Gamma}=e_3^{\bot}\cap\Omega_{\Lambda}$. Viewing $\wt{O}(\Gamma)$ as the subgroup of $\wt{O}(\Lambda)$ fixing $e_3$ we get a natural map
 \begin{equation}\label{normesse}
\nu\colon\DD^{BB}_{\Gamma}\lra \ov{\mathbb S}_2^{\star}.
\end{equation}
\begin{clm}\label{clm:normesse}
Map~\eqref{normesse} is the normalization of ${\mathbb S}_2^{\star}$.
\end{clm}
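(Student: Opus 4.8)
The plan is to exhibit $\nu$ as a finite, birational morphism from the normal projective variety $\DD^{BB}_{\Gamma}$ onto $\ov{\mathbb S}_2^{\star}$; since a finite birational morphism from a normal variety is the normalization, this suffices (and restricting to the dense open part identifies $\DD_{\Gamma}\to{\mathbb S}_2^{\star}$ as the normalization of ${\mathbb S}_2^{\star}$). Normality of $\DD^{BB}_{\Gamma}$ is automatic, being a Baily--Borel compactification. Finiteness of $\nu$ follows from the standard properties of such compactifications together with the fact that $\nu$ is proper with finite fibres: it is induced by the inclusion $\Omega_{\Gamma}\hra\Omega_{\Lambda}$ and by the inclusion of commensurable arithmetic groups acting on these domains, and a proper quasi-finite morphism of varieties is finite. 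Thus the only substantial point is that $\nu$ has degree $1$, i.e.\ is generically injective.

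To prove generic injectivity I would work on the dense open $\DD_{\Gamma}$ and analyse the fibres of $\DD_{\Gamma}\to{\mathbb S}_2^{\star}$. First I would choose a \emph{very general} $[\sigma]\in\Omega_{\Gamma}$, so that $\sigma^{\bot}\cap\Lambda=\ZZ e_3$; since $\sigma\bot e_3$ one always has $\ZZ e_3\subseteq\sigma^{\bot}\cap\Lambda\subseteq\Gamma$, and equality fails only on the countable union of proper sub-domains $\Omega_{(\Gamma')^{\bot}}\cap\Omega_{\Gamma}$, one for each primitive $\Gamma'$ with $\ZZ e_3\subsetneq\Gamma'\subseteq\Gamma$, hence on a dense set. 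If $[\sigma']\in\Omega_{\Gamma}$ is another such very general point with the same image in ${\mathbb S}_2^{\star}$, then $g[\sigma]=[\sigma']$ for some $g\in\wt{O}(\Lambda)$, and $g$ carries $\sigma^{\bot}\cap\Lambda=\ZZ e_3$ onto $(\sigma')^{\bot}\cap\Lambda=\ZZ e_3$, whence $g(e_3)=\pm e_3$.

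It then remains to dispose of the two signs. If $g(e_3)=e_3$ then $g\in\Stab_{\wt{O}(\Lambda)}(e_3)$, and this stabiliser restricts isomorphically onto $\wt{O}(\Gamma)$ --- exactly the identification used to define $\nu$. I would confirm this by a discriminant-form computation: writing $\Lambda$ as the index-$2$ overlattice of $\ZZ e_3\oplus\Gamma$ determined by the isotropic glue vector $e_3/2+v_3/2$, an isometry fixing $e_3$ preserves $\Lambda$ iff its restriction to $\Gamma$ fixes $v_3/2\in D(\Gamma)$, and it lies in $\wt{O}(\Lambda)$ iff this restriction in addition fixes $e_1/2$ and $e_2/2$ --- that is, iff it fixes all generators of $D(\Gamma)$, i.e.\ lies in $\wt{O}(\Gamma)$. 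Hence $[\sigma]$ and $[\sigma']$ lie in one $\wt{O}(\Gamma)$-orbit. If instead $g(e_3)=-e_3$, I would compose with $-\Id_{\Lambda}$: since $D(\Lambda)$ is $2$-torsion we have $-\Id\in\wt{O}(\Lambda)$, it sends $e_3\mapsto -e_3$, and it acts trivially on $\Omega_{\Lambda}$ because $[\sigma]=[-\sigma]$ in $\PP(\Lambda_{\CC})$. Thus $(-\Id)\circ g$ fixes $e_3$ and still sends $[\sigma]$ to $[\sigma']$, reducing to the previous case. Consequently the fibre through a very general $[\sigma]$ is a single point, $\nu$ has degree $1$, and combined with the first paragraph this proves the claim.

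The hard part is the degree-$1$ statement, and within it the lattice-theoretic identification $\Stab_{\wt{O}(\Lambda)}(e_3)\cong\wt{O}(\Gamma)$: one must check that fixing $e_3$ together with membership in the \emph{stable} orthogonal group of $\Lambda$ forces the restriction to act trivially on \emph{all} of $D(\Gamma)$, and not merely on the part surviving in $D(\Lambda)$ (recall $v_3$ has divisibility $1$ in $\Lambda$ but $2$ in $\Gamma$). The clean point that prevents $\nu$ from acquiring degree $2$ is that the sign $g(e_3)=-e_3$ is already absorbed by $-\Id$, which acts trivially projectively; verifying that no further identifications occur for very general $[\sigma]$ is the crux of the argument.
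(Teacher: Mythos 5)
Your proof is correct and follows essentially the same route as the paper's: reduce to showing $\deg\nu=1$ using normality of $\DD^{BB}_{\Gamma}$ and finiteness of $\nu$, use genericity to get $\sigma^{\bot}\cap\Lambda=\ZZ e_3$ and hence $g(e_3)=\pm e_3$, and absorb the sign by composing with $-\Id_{\Lambda}$, which lies in $\wt{O}(\Lambda)$ because $D(\Lambda)$ is $2$-torsion and acts trivially on $\Omega_{\Lambda}$. The one point where you go beyond the paper is the discriminant-form verification that $\Stab_{\wt{O}(\Lambda)}(e_3)$ restricts isomorphically onto $\wt{O}(\Gamma)$ (via the glue vector $e_3/2+v_3/2$); the paper builds this identification into the very definition of $\nu$ without proof, so your computation is a legitimate and correct filling-in of that implicit step rather than a different argument.
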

\begin{proof}
Since $\DD_{\Gamma}^{BB}$ is normal and $\nu$ is finite it sufffices to show that $\nu$ has degree $1$. Let $[\sigma]\in e_3^{\bot}\cap\Omega_{\Lambda}$ be generic. Let $g\in\wt{O}(\Lambda)$ and $[\tau]=g([\sigma])$. We must show that there exists $g'\in\wt{O}(\Gamma)$ such that $[\tau]=g'([\sigma])$. Since $[\sigma]$ is generic  we have that
\begin{equation}
\sigma^{\bot}\cap \Lambda=\ZZ e_3.
\end{equation}
It follows that $g(e_3)=\pm e_3$. If $g(e_3)= e_3$ then $g\in \wt{O}(\Gamma)$ and we are done. Suppose that $g(e_3)=- e_3$. Let $g':=(-1_{\Lambda})\circ g$. Since  multiplication by $2$  kills $D(\Lambda)$ we have that $(-1_{\Lambda})\in\wt{O}(\Lambda)$ and hence $g'\in \wt{O}(\Lambda)$: in fact  $g'\in \wt{O}(\Gamma)$ because $g'(e_3)=e_3$.  On the other hand $[\tau]=g'([\sigma])$ because $(-1_{\Lambda})$ acts trivially on $\Omega_{\Lambda}$.
\end{proof}
Our next task will be to define a finite map from $\DD_{\Gamma}^{BB}$ to the Baily-Borel compactification of the period space for $K3$ surfaces with a polarization of degree $2$. Let 
\begin{equation}
\wt{\Phi}:=J\oplus \la v_3,(v_3+e_2)/2 \ra \oplus N\oplus E_8(-1)^2 \cong
U^3\oplus  E_8(-1)^2.
\end{equation}
 Then $\wt{\Phi}$ is isometric to the \emph{$K3$ lattice} i.e.~$H^2(K3;\ZZ)$ equipped with the intersection form. 
 Let 
\begin{equation}\label{retdue}
\Phi:=v_1^{\bot}\cap \wt{\Phi}:=\ZZ e_1 \oplus\la v_3,(v_3+e_2)/2 \ra \oplus N\oplus E_8(-1)^2\oplus \cong
(-2)\oplus U^2\oplus  E_8(-1)^2.
\end{equation}
Then $\DD_\Phi$ is the period space for $K3$ surfaces with a polarization of degree $2$. 
\begin{clm}\label{clm:unisov}
$\wt{\Phi}$ is the unique   lattice contained in $\wt{\Lambda}_\QQ$ (with quadratic form equal to the  restriction of the quadratic form on $\wt{\Lambda}_\QQ$) and containing $\wt{\Gamma}$ as a  sublattice of index $2$. 
\end{clm}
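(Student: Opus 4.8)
The plan is to read off the index-$2$ even overlattices of $\wt{\Gamma}$ inside $\wt{\Gamma}_\QQ=\wt{\Phi}_\QQ$ through Nikulin's discriminant-form dictionary~\cite{nikulin}, and to check that $\wt{\Phi}$ is the only one. First I would record the integrality reduction. If $L$ is a lattice with $\wt{\Gamma}\subset L\subset\wt{\Gamma}_\QQ$ of index $2$, then integrality of $L$ forces $L\subset L^{\vee}\subset\wt{\Gamma}^{\vee}$, so that $H:=L/\wt{\Gamma}$ is an order-$2$ subgroup of the discriminant group $D(\wt{\Gamma})=\wt{\Gamma}^{\vee}/\wt{\Gamma}$; conversely $L$ is the preimage of $H$. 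By the standard theory this is a bijection between index-$2$ (even) overlattices of $\wt{\Gamma}$ and order-$2$ subgroups of $D(\wt{\Gamma})$ that are isotropic for $q_{\wt{\Gamma}}$. Hence the entire statement collapses to counting isotropic order-$2$ classes in $D(\wt{\Gamma})$.

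Next I would compute $D(\wt{\Gamma})$ with its form. Since the summands $J$, $N$ and $E_8(-1)^2$ of $\wt{\Gamma}$ are unimodular, they do not contribute, so $D(\wt{\Gamma})\cong D(\ZZ v_3)\oplus D(\ZZ e_2)\cong(\ZZ/2)^2$, with generators $v_3/2$ and $e_2/2$. From $v_3^2=2$, $e_2^2=-2$ and $(v_3,e_2)=0$ one reads off $q_{\wt{\Gamma}}(v_3/2)\equiv 1/2$ and $q_{\wt{\Gamma}}(e_2/2)\equiv -1/2\pmod{2\ZZ}$, while the pairing $(v_3/2,e_2/2)$ vanishes. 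The third non-zero class satisfies $((v_3+e_2)/2)^2=(v_3^2+e_2^2)/4=(2-2)/4=0$, i.e.\ $q_{\wt{\Gamma}}((v_3+e_2)/2)\equiv 0\pmod{2\ZZ}$.

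Comparing the three non-zero classes, whose $q_{\wt{\Gamma}}$-values are $1/2$, $-1/2$ and $0$, only $(v_3+e_2)/2$ is isotropic; the other two are not even integral vectors, having half-integer square. Thus there is exactly one index-$2$ (even) overlattice, namely $\wt{\Gamma}+\ZZ\cdot(v_3+e_2)/2$. Since $e_2=2\cdot(v_3+e_2)/2-v_3$, this overlattice equals $J\oplus\la v_3,(v_3+e_2)/2\ra\oplus N\oplus E_8(-1)^2=\wt{\Phi}$, giving both existence and uniqueness.

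The one point that genuinely requires care is the integrality reduction in the first step: it is essential that a \emph{lattice} $L$ (integral, indeed even) containing $\wt{\Gamma}$ with index $2$ automatically sits inside $\wt{\Gamma}^{\vee}$, which cuts the candidates down to the finitely many classes in $D(\wt{\Gamma})$. Without integrality the uniqueness would simply be false, as there are $2^{22}-1$ order-$2$ sublattices of $\wt{\Gamma}$ in $\tfrac12\wt{\Gamma}$; so I would make sure the hypothesis ``lattice'' is used to mean integral (even) before invoking the discriminant-group enumeration. Everything else is a short, mechanical discriminant computation.
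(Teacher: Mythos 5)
Your proposal is correct and follows essentially the same route as the paper: the paper's proof also reduces the statement to the observation that an index-$2$ overlattice of $\wt{\Gamma}$ must be generated by $\wt{\Gamma}$ together with an isotropic element of $D(\wt{\Gamma})$, of which there is exactly one. You simply make explicit the discriminant computation ($D(\wt{\Gamma})\cong(\ZZ/2)^2$ generated by $v_3/2$ and $e_2/2$, with $q$-values $1/2$, $-1/2$, $0$) that the paper leaves implicit, which is a worthwhile amplification but not a different argument.
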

\begin{proof}
First it is clear that $\wt{\Gamma}$ is contained in $\wt{\Phi}$  as a  sublattice of index $2$. 
Now suppose that $L$ is  a  lattice contained in $\wt{\Lambda}_\QQ$ and containing $\wt{\Gamma}$ as a  sublattice of index $2$. Then $L$ must be generated by $\wt{\Gamma}$ and an isotropic element of $D(\wt{\Gamma})$: since there is a unique such element $L$ is unique.
\end{proof}
By~\Ref{clm}{unisov} every isometry of $\wt{\Lambda}$ induces an isometry of $\wt{\Phi}$. It follows that 
we have well-defined injection $\wt{O}(\Lambda)<\wt{O}(\Phi)$. Since $\Omega_\Lambda=\Omega_\Phi$  
there is an induced finite map
\begin{equation}\label{mapparo}
\rho\colon \DD^{BB}_\Gamma\lra  \DD^{BB}_\Phi.
\end{equation}
\begin{rmk}\label{rmk:gradoro}
Keep notation as above. Then $\deg\rho=2^{20}-1$. 
\end{rmk}
 \subsection{Determinant of a variable quadratic form}\label{subsec:formequadr}
 \setcounter{equation}{0}
Let $U$ be a complex vector-space of finite dimension  $d$. We view $\Sym^2 U^{\vee}$ as the vector-space of quadratic forms on $U$.    Given $q\in \Sym^2 U^{\vee}$ we let $\wt{q}\colon U\to U^{\vee}$  be the associated  symmetric map. Let $K:=\ker q$; then $\wt{q}$ may be viewed as  a (symmetric)  map $\wt{q}\colon (U/K)\to \Ann K$.
The {\it dual} quadratic form $q^{\vee}$ is the quadratic form associated to the symmetric map $\wt{q}^{-1}\colon \Ann K\to (U/K)$; thus $q^{\vee}\in \Sym^2 (U/K)$. We will denote by $\wedge^i q$ the quadratic form induced by $q$ on $\bigwedge^i U$. 
\begin{rmk}\label{rmk:quadrest}
If $0\not=\alpha=v_1\wedge\ldots\wedge v_i$ is a decomposable vector of $\bigwedge^i U$ then $\wedge^i q(\alpha)$ is equal to the determinant of the Gram matrix of $q|_{\la v_1,\ldots, v_i\ra}$ with respect to the basis $\{v_1,\ldots, v_i\}$. 
\end{rmk}
The following exercise in linear algebra will be handy.
\begin{lmm}\label{lmm:primosem}
Suppose that $q\in \Sym^2 U^{\vee}$ is  non-degenerate.   Let $S\subset U$ be a subspace. Then
 \begin{equation}
\cork(q|_S)=\cork(q^{\vee}|_{\Ann  (S)}).
\end{equation}
\end{lmm}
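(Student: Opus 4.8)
The plan is to reduce the statement to a comparison of the radicals of the two restricted forms, and then to exhibit an explicit linear isomorphism between them. Since $q$ is non-degenerate, the associated symmetric map $\wt{q}\colon U\to U^{\vee}$ is an isomorphism, and under the canonical identification $U^{\vee\vee}=U$ the dual form $q^{\vee}$ has symmetric map $\wt{q}^{-1}\colon U^{\vee}\to U$. For any symmetric form restricted to a subspace, the corank equals the dimension of the radical, so it suffices to compute the two radicals and show they have the same dimension.

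First I would compute the radical of $q|_S$. For $s,s'\in S$ one has $q(s,s')=\langle\wt{q}(s),s'\rangle$, so $s$ lies in the radical exactly when $\wt{q}(s)\in\Ann(S)$; thus the radical of $q|_S$ is $S\cap\wt{q}^{-1}(\Ann S)$. Next I would compute the radical of $q^{\vee}|_{\Ann S}$ in the same way: for $f,f'\in\Ann(S)\subset U^{\vee}$ one has $q^{\vee}(f,f')=\langle\wt{q}^{-1}(f),f'\rangle$, and the vanishing of this for all $f'\in\Ann(S)$ means $\wt{q}^{-1}(f)\in\Ann(\Ann S)=S$. Hence the radical of $q^{\vee}|_{\Ann S}$ is $\Ann(S)\cap\wt{q}(S)$, where I use the standard reflexivity $\Ann(\Ann S)=S$.

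Finally I would observe that the isomorphism $\wt{q}$ carries one radical onto the other: since $\wt{q}$ is injective it commutes with intersections, so
\begin{equation*}
\wt{q}\bigl(S\cap\wt{q}^{-1}(\Ann S)\bigr)=\wt{q}(S)\cap\Ann(S).
\end{equation*}
In particular the two radicals have equal dimension, which gives $\cork(q|_S)=\cork(q^{\vee}|_{\Ann S})$. The computation is entirely elementary; the only point demanding care — and the place where a sign or direction error would creep in — is the bookkeeping of the double-dual identification $U^{\vee\vee}=U$ together with the two annihilator operations (from $U$ to $U^{\vee}$ and back), so I would state the pairing conventions explicitly at the outset to keep $\wt{q}^{-1}=\wt{q^{\vee}}$ and $\Ann(\Ann S)=S$ unambiguous.
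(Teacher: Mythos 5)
Your proof is correct. The paper itself gives no argument for this statement---it is introduced as ``an exercise in linear algebra'' and stated without proof---so there is no approach to compare against; your computation, identifying the two radicals as $S\cap\wt{q}^{-1}(\Ann(S))$ and $\Ann(S)\cap\wt{q}(S)$ and noting that the isomorphism $\wt{q}$ carries one onto the other, is the natural and complete way to fill that gap, with the $U^{\vee\vee}=U$ and $\Ann(\Ann(S))=S$ bookkeeping handled correctly.
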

Let $q_{*}\in \Sym^2 U^{\vee}$.  Then
\begin{equation}\label{granfi}
\det(q_{*}+q)=\Phi_0(q)+\Phi_1(q)+\ldots +\Phi_d(q),
\qquad \Phi_i\in \Sym^i (\Sym^2 U).
\end{equation}
 Of course $\det(q_{*}+q)$ is well-defined up to multiplication  by a non-zero scalar and hence so are the $\Phi_i$'s. 
The result below is  well-known (it follows from a straightforward  computation).
\begin{prp}\label{prp:conodegenere}
Let $q_{*}\in\Sym^2 U^{\vee}$
 and 
\begin{equation}
K:=\ker(q_{*}), \qquad k:=\dim K. 
\end{equation}
  Let $\Phi_i$ be the polynomials appearing in~\eqref{granfi}. Then
\begin{itemize}
\item[(1)]
$\Phi_i=0$ for $i<k$, and
\item[(2)]
there exists $c\not=0$ such that $\Phi_k(q)=c\det(q|_K)$. 
\end{itemize}
\end{prp}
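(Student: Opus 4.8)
The plan is to reduce the statement to a single block-matrix determinant by choosing a basis of $U$ adapted to $\ker q_*$. Concretely, I would fix a decomposition $U = J \oplus K$ with $K = \ker q_*$ and $q_*|_J$ non-degenerate (exactly the kind of splitting used in~\eqref{donmilani}), together with a basis of $U$ compatible with it. With respect to such a basis the Gram matrix of $q_*$ is block-diagonal,
\begin{equation*}
M(q_*) = \begin{pmatrix} N & 0 \\ 0 & 0_k \end{pmatrix}, \qquad \det N \neq 0,
\end{equation*}
where $N$ is the $(d-k)\times(d-k)$ Gram matrix of $q_*|_J$, while a variable $q \in \Sym^2 U^{\vee}$ has Gram matrix $M(q) = \begin{pmatrix} Q & R^t \\ R & P \end{pmatrix}$, the lower-right block $P$ being the $k\times k$ Gram matrix of $q|_K$ in the chosen basis of $K$.

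Next I would homogenize. By definition $\Phi_i(q)$ is the degree-$i$ homogeneous part of $\det(q_* + q)$ in the entries of $q$, hence it is the coefficient of $t^i$ in the one-variable polynomial $\det(q_* + t q)$. The Gram matrix of $q_* + t q$ is $\begin{pmatrix} N + tQ & tR^t \\ tR & tP \end{pmatrix}$. The crucial observation — and the only place where the hypothesis $q_*|_K = 0$ is used — is that each of the lower $k$ rows of this matrix is divisible by $t$, precisely because the lower-right block of $M(q_*)$ vanishes. Factoring $t$ out of each of these $k$ rows yields
\begin{equation*}
\det(q_* + t q) = t^k \cdot \det\begin{pmatrix} N + tQ & tR^t \\ R & P \end{pmatrix}.
\end{equation*}

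Finally I would read off both assertions. Writing $g(t)$ for the determinant on the right-hand side, $g$ is a polynomial in $t$ whose constant term is
\begin{equation*}
g(0) = \det\begin{pmatrix} N & 0 \\ R & P \end{pmatrix} = \det(N)\,\det(P),
\end{equation*}
the last equality because the matrix is block lower-triangular. Thus $\det(q_* + t q) = t^k\bigl(g(0) + t\,g'(0) + \cdots\bigr)$, so the coefficient of $t^i$ vanishes for every $i < k$, which is~(1), and the coefficient of $t^k$ equals $\det(N)\det(P)$. Since $P$ is the Gram matrix of $q|_K$ in the chosen basis, $\det P = \det(q|_K)$ up to the scalar implicit in the choice of basis (consistently with the fact, noted after~\eqref{granfi}, that $\det(q_*+q)$ and hence the $\Phi_i$ are defined only up to a non-zero scalar). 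Therefore $\Phi_k(q) = c\,\det(q|_K)$ with $c = \det N \neq 0$, which is~(2). I do not expect any genuine obstacle: the argument is routine linear algebra once the adapted basis is fixed, and the one step deserving care is the row-factoring, which relies exactly on $q_*$ vanishing identically on $K$.
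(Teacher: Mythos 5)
Your proof is correct. Note that the paper itself offers no argument for this statement: it is dismissed as well-known, \lq\lq following from a straightforward computation\rq\rq. Your computation is precisely the intended one, and it is consistent with the paper's own conventions: the splitting $U=J\oplus K$ with block Gram matrix $\begin{pmatrix} N & 0\\ 0 & 0_k\end{pmatrix}$ is the same device used in~\eqref{donmilani}--\eqref{patrici}, and the identification of $\Phi_i(q)$ with the coefficient of $t^i$ in $\det(q_*+tq)$, followed by an expansion adapted to the kernel, is exactly the technique the paper uses to prove the subsequent \Ref{prp}{zeronucleo}. The one step deserving care --- factoring $t$ out of the $k$ rows indexed by $K$, which is legitimate by multilinearity of the determinant in its rows and uses exactly the vanishing of $q_*$ on $K$ --- is handled correctly, and the block lower-triangular evaluation $g(0)=\det N\cdot\det P$ gives the constant $c=\det N\neq 0$ explicitly.
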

Keep notation and hypotheses as in~\Ref{prp}{conodegenere}.
Let $\cV_K\subset \Sym^2 U^{\vee}$ be the subspace of quadratic forms whose restriction to $K$ vanishes. Given $q\in\cV_K$ we have $\wt{q}(K)\subset \Ann K$ and hence it makes sense to consider the restriction of $q_{*}^{\vee}$ to $\wt{q}(K)$. 
\begin{prp}\label{prp:zeronucleo}
Keep notation and hypotheses as in~\Ref{prp}{conodegenere}.
 The restriction of $\Phi_i$ to $\cV_K$ vanishes for $I<2k$. 
Moreover there exists $c\not=0$ such that
\begin{equation}
\Phi_{2k}(q)=c\det(q_{*}^{\vee}|_{\wt{q}(K)}),\qquad q\in\cV_K.
\end{equation}
\end{prp}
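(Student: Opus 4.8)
The plan is to reduce the statement to a block-matrix computation in a basis adapted to $q_{*}$, extending the argument behind~\Ref{prp}{conodegenere}. First I would choose a direct-sum decomposition $U=K\oplus W$ with $q_{*}|_{W}$ non-degenerate, together with a basis of $U$ adapted to it, so that the Gram matrix of $q_{*}$ is block-diagonal with a zero $k\times k$ block on $K$ and an invertible $(d-k)\times(d-k)$ block $N$ on $W$. For $q\in\cV_K$ the hypothesis $q|_{K}=0$ means exactly that the top-left $k\times k$ block of the Gram matrix of $q$ vanishes, so
\[
M(q_{*}+q)=\begin{pmatrix} 0_k & R^{t} \\ R & N+P\end{pmatrix},
\]
where $R$ (of size $(d-k)\times k$) and $P$ depend linearly on the coordinates of $q$.

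Next I would compute $\det(q_{*}+q)$ via the Schur complement of the lower-right block $N+P$, which is invertible for $q$ near $0$ since $N$ is; on this neighborhood
\[
\det(q_{*}+q)=(-1)^{k}\,\det(N+P)\,\det\bigl(R^{t}(N+P)^{-1}R\bigr).
\]
Expanding the right-hand side around $q=0$ is now immediate: $\det(N+P)=\det N+O(q)$ with $\det N\neq0$, while $R^{t}(N+P)^{-1}R=R^{t}N^{-1}R+O(q^{3})$ is a $k\times k$ matrix whose leading part $R^{t}N^{-1}R$ has entries homogeneous of degree $2$ in $q$. Hence the lowest-order term of $\det(q_{*}+q)$ on $\cV_K$ has degree $2k$ and equals $c\,\det(R^{t}N^{-1}R)$ with $c\neq0$; this yields at once the vanishing of $\Phi_i|_{\cV_K}$ for $i<2k$ and the formula $\Phi_{2k}(q)=c\,\det(R^{t}N^{-1}R)$.

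It then remains to identify $\det(R^{t}N^{-1}R)$ with $\det(q_{*}^{\vee}|_{\wt{q}(K)})$. In the adapted basis $\Ann K$ is spanned by the dual basis of $W$, and since $q_{*}^{\vee}$ is the form attached to $\wt{q_{*}}^{-1}$, its Gram matrix in that basis is exactly $N^{-1}$. A one-line computation then shows that, for a basis $e_1,\dots,e_k$ of $K$, the image $\wt{q}(e_i)$ lies in $\Ann K$ with coordinate vector equal to the $i$-th column of $R$. Therefore the Gram matrix of $q_{*}^{\vee}$ on the ordered vectors $\wt{q}(e_1),\dots,\wt{q}(e_k)$ is precisely $R^{t}N^{-1}R$, whose determinant is what~\Ref{rmk}{quadrest} records as $\det(q_{*}^{\vee}|_{\wt{q}(K)})$.

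The point requiring care is the precise meaning of $\det(q_{*}^{\vee}|_{\wt{q}(K)})$: it must be read as the determinant of the Gram matrix of $q_{*}^{\vee}$ on the ordered images $\wt{q}(e_i)$, equivalently as $\bigwedge^{k}q_{*}^{\vee}$ evaluated on $\wt{q}(e_1)\wedge\cdots\wedge\wt{q}(e_k)$ in the sense of~\Ref{rmk}{quadrest}, so that it remains a well-defined polynomial in $q$ even on the locus where $\wt{q}|_{K}$ drops rank and $\dim\wt{q}(K)<k$ (there both sides vanish). With this convention the identification is purely formal, the only genuine computation being the column matching of the last step; everything else is bookkeeping of degrees together with the harmless scalar indeterminacy of $\det(q_{*}+q)$.
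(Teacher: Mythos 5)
Your proof is correct, and it reaches the result by a genuinely different computation than the paper's. The paper also works in a basis adapted to a splitting $U=K\oplus W$, but it normalizes further so that $\wt{q}_{*}(u_i)=u_i^{\vee}$ for $k<i\le d$ (i.e.\ your $N$ becomes $1_{d-k}$); it then expands $\det(q_{*}+tq)$ directly modulo $t^{2k+1}$, where the zero block forces every surviving monomial to carry at least $t^{2k}$, obtaining $(-1)^kt^{2k}\sum_J(\det M_{{\bf k},J})^2$, and finally identifies this sum of squared $k\times k$ minors with $\wedge^k(q_{*}^{\vee})(\wt{q}(u_1)\wedge\cdots\wedge\wt{q}(u_k))$ --- a Cauchy--Binet identity --- before invoking \Ref{rmk}{quadrest}, exactly as you do at the end. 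You instead keep $N$ arbitrary and factor $\det(q_{*}+q)=(-1)^k\det(N+P)\det\bigl(R^{t}(N+P)^{-1}R\bigr)$ by the Schur complement of the nondegenerate block; the degree bound and the formula for $\Phi_{2k}$ then fall out of the factorization, and the Cauchy--Binet bookkeeping disappears entirely because the Gram matrix of $q_{*}^{\vee}$ on the vectors $\wt{q}(e_i)$ is literally $R^{t}N^{-1}R$. The trade-off: the paper's normalization keeps every object polynomial (no inverses of matrices varying with $q$), whereas your route passes through rational expressions but is structurally cleaner, and it is in the same spirit as the Schur-complement congruence~\eqref{congruenza} that the paper itself uses to write local equations of $X_A$. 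One wording caveat, not a gap: ``the lowest-order term has degree $2k$ and equals $c\det(R^{t}N^{-1}R)$'' should be read as ``the homogeneous components of degree $<2k$ vanish and the degree-$2k$ component equals $c\det(R^{t}N^{-1}R)$'', since $\det(R^{t}N^{-1}R)$ can vanish identically on $\cV_K$ (e.g.\ when $2k>d$); in that case both sides of the asserted identity are zero, which the convention in your final paragraph for interpreting $\det(q_{*}^{\vee}|_{\wt{q}(K)})$ already handles.
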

\begin{proof}
Choose a basis $\{u_1,\ldots,u_d\}$ of $U$ such that $K=\la u_1,\ldots,u_k\ra$ and $\wt{q}_{*}(u_i)=u_i^{\vee}$ for $k<i\le d$. Let $M$ be the Gram matrix of $q$ in the chosen basis. Expanding $\det(q_{*}+tq)$ we get that
\begin{equation*}
\det(q_{*}+tq)\equiv (-1)^k t^{2k}\sum_{J}(\det M_{{\bf k},J})^2\pmod{t^{2k+1}}
\end{equation*}
where $M_{{\bf k},J}$ is the $k\times k$ submatrix of $M$ determined by the first $k$ rows and the columns  indicized by $J=(j_1,j_2,\ldots, j_k)$. The claim follows from the equality 
\begin{equation*}
\sum_{J}(\det M_{{\bf k},J})^2=\wedge^k (q_{*}^{\vee})(\wt{q}(u_1)\wedge\ldots\wt{q}(u_k))
\end{equation*}
and~\Ref{rmk}{quadrest}.
\end{proof}
Now suppose that 
\begin{equation}\label{corango}
\cork \wt{q}_{*}=1,\qquad \ker \wt{q}_{*}=\la e\ra.
\end{equation}
We let
$\ov{q}_{*}\in\Sym^2(U/\la e\ra)^{\vee}$  be the non-degenerate quadratic form induced by $q_{*}$ i.e.~
$\ov{q}_{*}(\ov{v}):=q_{*}(v)$ for $\ov{v}\in U/\la e\ra$. Let $...,\Phi_i,...$ be as in~\eqref{granfi}. In particular $\Phi_0=0$.  Assume that
 \begin{equation}\label{eccoelle}
 L\subset\ker\Phi_1=\{q\mid q(e)=0\} 
\end{equation}
  is a vector subspace. Thus
\begin{equation}
\det(q_{*}+q)|_{L}=\Phi_2|_{L}+\ldots +\Phi_d|_{L}.
\end{equation}
We will compute $\rk(\Phi_2|_{L})$. 
Let $T\subset U$ be 
defined by
\begin{equation}\label{eccoti}
T:=\Ann  \la \wt{q}(e)\ra_{q\in L }
\end{equation}
where $L$ and $e$ are as above.
Geometrically: $\PP(T)$ is the projective tangent space at $[e]$ of the intersection of the projective quadrics parametrized by $\PP(L)$.   
\begin{prp}\label{prp:beppegrillo}
Suppose that $L\subset \Sym^2 U^{\vee}$ is a vector subspace such that~\eqref{eccoelle} holds. Keep notation as above, in particular $T$ is given by~\eqref{eccoti}. Then
\begin{equation}\label{grillino}
\rk (\Phi_2|{_L})=\cod(T,U)-\cork(\ov{q}_{*}|_{T/\la e\ra}).
\end{equation}
(The last term on the right-side makes sense because $T\supset\la e\ra$.)
\end{prp}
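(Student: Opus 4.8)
The plan is to compute the quadratic form $\Phi_2|_L$ explicitly, recognize it as the pullback of a nondegenerate form along a natural linear map, and then extract its rank using \Ref{lmm}{primosem}. First I would observe that the hypotheses put us exactly in the situation of \Ref{prp}{zeronucleo}: with $K:=\ker q_*=\langle e\rangle$ we have $k=\dim K=1$, and the space $\cV_K$ of forms vanishing on $K$ is precisely $\{q\mid q(e)=0\}=\ker\Phi_1$, so $L\subseteq\cV_K$ by~\eqref{eccoelle}. Applying \Ref{prp}{zeronucleo} with $2k=2$ yields a constant $c\neq0$ with
\begin{equation*}
\Phi_2(q)=c\det\bigl(q_*^\vee|_{\wt q(K)}\bigr)=c\,q_*^\vee(\wt q(e)),\qquad q\in L,
\end{equation*}
where $q_*^\vee=\ov q_*^\vee$ is the nondegenerate dual form living on $\Ann\langle e\rangle=(U/\langle e\rangle)^\vee$, and $\wt q(e)$ indeed lies in $\Ann\langle e\rangle$ because $q(e)=0$. (This last point, and the vanishing of the $1\times1$ determinant when $\wt q(e)=0$, is exactly where the hypothesis $L\subseteq\ker\Phi_1$ is used.)

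The key observation is that this formula exhibits $\Phi_2|_L$ as the pullback $c\,\ell^*\bigl(\ov q_*^\vee\bigr)$ of the nondegenerate form $\ov q_*^\vee$ along the linear map
\begin{equation*}
\ell\colon L\lra (U/\langle e\rangle)^\vee,\qquad q\longmapsto\wt q(e).
\end{equation*}
A routine radical computation shows that the radical of a pulled-back form $\ell^*\beta$ is $\ell^{-1}$ of the radical of $\beta|_{\im\ell}$; hence
\begin{equation*}
\rk(\Phi_2|_L)=\rk(\ell)-\cork\bigl(\ov q_*^\vee|_{\im\ell}\bigr).
\end{equation*}

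It remains to match these two terms with the right-hand side of~\eqref{grillino}. By definition $T=\Ann\langle\wt q(e)\rangle_{q\in L}=\Ann(\im\ell)$, so $\rk(\ell)=\dim\im\ell=\cod(T,U)$, which is the first term. For the second term, note that $e\in T$, since $\wt q(e)$ annihilates $e$ for every $q\in L$; thus $T/\langle e\rangle$ is defined and, writing $\bar U:=U/\langle e\rangle$, one checks that $T/\langle e\rangle=\Ann_{\bar U}(\im\ell)$. The double-annihilator identity in $\bar U$ then gives $\Ann(T/\langle e\rangle)=\im\ell$, so \Ref{lmm}{primosem}, applied to the nondegenerate form $\ov q_*$ on $\bar U$ and the subspace $S=T/\langle e\rangle$, yields $\cork\bigl(\ov q_*|_{T/\langle e\rangle}\bigr)=\cork\bigl(\ov q_*^\vee|_{\im\ell}\bigr)$. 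Substituting both identities produces~\eqref{grillino}.

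The step I expect to demand the most care is the duality bookkeeping in the last paragraph: $\im\ell$ naturally lives in $\bar U^\vee$ while $T/\langle e\rangle$ lives in $\bar U$, and the form appearing on the $T/\langle e\rangle$ side of~\eqref{grillino} is $\ov q_*$, whereas the determinant expansion hands us its dual $\ov q_*^\vee$ on $\im\ell$. \Ref{lmm}{primosem} together with the double-annihilator identity is precisely the bridge relating these; once it is in place, the remainder is a direct appeal to \Ref{prp}{zeronucleo} and the standard rank formula for a pulled-back quadratic form.
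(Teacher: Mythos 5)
Your proof is correct and takes essentially the same route as the paper's: both apply \Ref{prp}{zeronucleo} to exhibit $\Phi_2|_L$ as (a nonzero multiple of) the pullback of $\ov{q}_{*}^{\vee}$ under the map $q\mapsto\wt{q}(e)$, and then invoke \Ref{lmm}{primosem} together with the double-annihilator identity to trade $\cork(\ov{q}_{*}^{\vee}|_{\im\ell})$ for $\cork(\ov{q}_{*}|_{T/\la e\ra})$. The only difference is that you make explicit some steps the paper leaves implicit (the rank formula for a pulled-back form and the identification $\Ann(\im\ell)=T/\la e\ra$).
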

\begin{proof}
Let 
\begin{equation}
\begin{matrix}
L & \overset{\alpha}{\lra} & (U/\la e\ra )^{\vee} \\
q & \mapsto & \wt{q}(e).
\end{matrix}
\end{equation}
By~\Ref{prp}{zeronucleo} we have
\begin{equation}
\rk (\Phi_2|_{L})=\rk (\ov{q}^{\vee}_{*}|_{\im(\alpha)}).
\end{equation}
On the other hand~\Ref{lmm}{primosem} gives that
\begin{equation}
\rk (\ov{q}_{*}^{\vee}|_{\im(\alpha)})=\dim \im(\alpha)-
\cork(\ov{q}_{*}|_{\Ann  (\im(\alpha))}).
\end{equation}
By definition $\Ann  (\im(\alpha))=T/\la e\ra$. Since $\dim \im(\alpha)=\cod(T, U)$ we get the proposition.
\end{proof}
 \section{First extension of the period map}\label{sec:primest}
 \setcounter{equation}{0}
\subsection{Local structure of $Y_A$ along a singular plane}
\setcounter{equation}{0}
Let $(W,A)\in\wt{\Sigma}$. Then $\PP(W)\subset Y_A$. In this section we will analyze the local structure of $Y_A$ at  $v_0 \in(\PP(W)\setminus C_{W,A})$ under mild hypotheses on $A$. Let $[v_0 ]\in\PP(W)$ - for the moment being we do not require that $v_0 \notin C_{W,A}$. Let $V_0\subset V$ be a subspace transversal to $[v_0 ]$. We identify $V_0$ with an open affine neighborhood of $[v_0 ]$ via~\eqref{apertoaffine}; thus $0\in V_0$ corresponds to $[v_0 ]$. Let $f_i\in \Sym^i V_0^{\vee}$ for $i=0,\ldots,6$ be the polynomials  of~\eqref{taylor}. 
By Item~(2) of~\Ref{prp}{ipsing} we have
\begin{equation}\label{espansione}
Y_A\cap V_0=V(f_2+\ldots+f_6).
\end{equation}
Suppose that $Y_A\not=\PP(V)$. Then $[v_0 ]\in \sing Y_A$; 
since $[v_0 ]$ is an arbitrary point of $\PP(W)$  we get that $\PP(W)\subset \sing Y_A$.
It follows that  $\rk f_2\le 3$.
\begin{prp}\label{prp:germediy}
Let $(W,A)\in\wt{\Sigma}$ and suppose that $Y_{\delta(A)}\not=\PP(V^{\vee})$. Let  $[v_0 ]\in(\PP(W)\setminus C_{W,A})$ and $f_2$ be the quadratic term of the Taylor expansion of a local equation of $Y_A$ centered at $[v_0]$.   Then
\begin{equation}\label{rinogaetano}
\rk f_2=4-\dim(A\cap (\bigwedge^2 W\wedge V)).
\end{equation}
\end{prp}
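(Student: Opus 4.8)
<br>

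The goal is to compute the rank of the quadratic term $f_2$ of a local equation of $Y_A$ at a point $[v_0] \in \PP(W) \setminus C_{W,A}$. The plan is to use the explicit local description of $Y_A$ from Proposition~\ref{prp:ipsdisc}, namely $Y_A \cap V_0 = V(\det(q_A + q_v))$, and to extract the degree-two part of this determinant as a function of $v$.

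First I would set up coordinates adapted to the situation. Since $[v_0] \in \PP(W)$, Item~(2) of~\Ref{prp}{ipsing} gives $f_0 = f_1 = 0$, consistent with~\eqref{espansione}. The hypothesis $Y_{\delta(A)} \neq \PP(V^\vee)$ guarantees, via~\eqref{uaidelta} and the discussion around~\eqref{trasdue}, that we may choose the decomposition~\eqref{trasuno} so that $\bigwedge^3 V_0 \cap A = \{0\}$, hence $q_A$ is defined. The key point is to identify the relevant kernel data. Because $[v_0] \notin C_{W,A}$, Proposition~\ref{prp:cnesinerre}(1) tells us $\dim(A \cap (v_0 \wedge \bigwedge^2 V)) = 2$, which by~\eqref{nucqua} means $\cork q_A = k = 2$. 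The function $v \mapsto \det(q_A + q_v)$ then has $q_A$ as its constant-form $q_*$ in the notation of Subsection~\ref{subsec:formequadr}, with $\cork q_* = k = 2$. By~\Ref{prp}{conodegenere} the expansion~\eqref{granfi} starts at $\Phi_k = \Phi_2$, and since $[v_0]$ is a point where the linear term $f_1$ already vanishes, the quadratic term $f_2$ of $Y_A$ should be governed by $\Phi_2$ evaluated along the one-parameter family $q_v$.

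The heart of the computation is therefore to translate the abstract rank formula of~\Ref{prp}{conodegenere}(2), which gives $\Phi_2(q) = c \det(q|_K)$ with $K = \ker q_A$, into the geometric quantity on the right-hand side of~\eqref{rinogaetano}. I would evaluate $f_2$ as the quadratic form $v \mapsto \Phi_2(q_v) = c \det(q_v|_K)$, where $q_v$ is the Pl\"ucker form of~\eqref{quadpluck} and $K \subset \bigwedge^2 V_0$ is the two-dimensional kernel of $q_A$. Thus $\rk f_2$ equals the rank of the map $v \mapsto \det(q_v|_K)$ as a quadratic form in $v \in V_0$. The decisive identification is between $K$ and the intersection $A \cap (v_0 \wedge \bigwedge^2 V)$ via~\eqref{iaquinta}, together with the analysis of which directions $v$ make $\det(q_v|_K)$ degenerate. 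I expect the corank of this quadratic form in $v$ to be exactly $\dim(A \cap (\bigwedge^2 W \wedge V))$: the forms $w \wedge \beta$ with $w \in W$ lying in $A$ contribute precisely to the directions along which the Pl\"ucker pairing on $K$ drops rank, reflecting the fact that $\PP(W) \subset Y_A$ forces $f_2$ to vanish along $W$. Since $v$ ranges over a complement $V_0$ of dimension $5$, and the relevant quadric lives essentially in a $4$-dimensional space after accounting for tangency to $\PP(W)$, one arrives at $\rk f_2 = 4 - \dim(A \cap (\bigwedge^2 W \wedge V))$.

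The main obstacle I anticipate is the explicit linear-algebra identification of the degenerate directions of the quadratic form $v \mapsto \det(q_v|_K)$ with the space $A \cap (\bigwedge^2 W \wedge V)$. This requires choosing a basis $\{w_1, w_2, w_3\}$ of $W$ and a complementary basis, writing out the two generators $\alpha_1, \alpha_2$ of $K \cong A \cap (v_0 \wedge \bigwedge^2 V)$ in Pl\"ucker coordinates, and computing $\det(q_v|_K)$ as a $2 \times 2$ determinant whose entries are the bilinear pairings $\vol_0(v \wedge \alpha_i \wedge \alpha_j)$. One must then show that the kernel of the resulting quadratic form in $v$ has dimension exactly $\dim(A \cap (\bigwedge^2 W \wedge V))$, using that $v_0 \in W$ forces certain wedge products to lie in $\bigwedge^2 W \wedge V$ and that membership in $A$ of such decomposable-plus-correction vectors is precisely what~\eqref{giannibrera} records. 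Carefully handling the volume-form normalizations and the passage between $A \cap (\bigwedge^2 W \wedge V)$ and the Pl\"ucker pairing on $K$ is where the computation must be done with care, but no conceptual difficulty beyond bookkeeping is expected once the correct bases are fixed.
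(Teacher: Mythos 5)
Your overall strategy (expand $\det(q_A+q_v)$ via \Ref{prp}{ipsdisc} and compute the rank of its quadratic part) is indeed the paper's, but the execution rests on a wrong identification of the corank of $q_A$, and this breaks everything downstream. Since $v_0\in W$ and $\bigwedge^3 W\subset A$, the line $\bigwedge^3 W$ always lies in $A\cap(v_0\wedge\bigwedge^2 V)$, and the condition $\dim(A\cap(v_0\wedge\bigwedge^2 V))\ge 2$ is precisely what defines the \emph{support} of $C_{W,A}$. Hence for $[v_0]\in(\PP(W)\setminus C_{W,A})$ one has $A\cap(v_0\wedge\bigwedge^2 V)=\bigwedge^3 W$, which is one-dimensional, so by \eqref{nucqua} $\cork q_A=1$, not $2$. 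You have quoted \Ref{prp}{cnesinerre} backwards: Item (1) there characterizes points at which $C_{W,A}$ is a smooth curve --- in particular points lying \emph{on} $C_{W,A}$ --- whereas your $[v_0]$ lies off the curve.

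With the true corank $k=1$ your computational scheme collapses. \Ref{prp}{conodegenere} now gives only $\Phi_0=0$ and $\Phi_1(q)=c\,q(e)$ with $e=\bigwedge^2 W_0$, $W_0=W\cap V_0$; this $\Phi_1$ vanishes identically on the Pl\"ucker forms because $q_v(e)=\vol_0(v\wedge e\wedge e)=0$ for decomposable $e$ --- that is why $f_1=0$ --- but it yields no formula for $f_2$. (Your formula $f_2(v)=c\det(q_v|_K)$ with $K$ two-dimensional is the correct statement at smooth points of $C_{W,A}$, not at the points considered here.) Computing $\rk f_2$ in the corank-one case is exactly what \Ref{prp}{zeronucleo} and \Ref{prp}{beppegrillo} were set up for: with $L$ the space of Pl\"ucker forms one has $\rk(\Phi_2|_L)=\cod(T,\bigwedge^2 V_0)-\cork(\ov{q}_A|_{T/\la e\ra})$, where $T=\bigwedge^2 W_0\oplus(W_0\wedge U_0)$ is the tangent space at $[e]$ to the cone over $\Gr(2,V_0)$ ($U_0$ a complement of $W_0$ in $V_0$), so that $\rk f_2=3-\cork(q_A|_{W_0\wedge U_0})$; one then identifies $\ker(q_A|_{W_0\wedge U_0})$ with $(A\cap(\bigwedge^2 W\wedge V))/\bigwedge^3 W$ via $\alpha\mapsto v_0\wedge\alpha+\wt{q}_A(\alpha)$, giving corank equal to $\dim(A\cap(\bigwedge^2 W\wedge V))-1$ and hence \eqref{rinogaetano}. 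None of this machinery appears in your proposal: your final paragraph (``the relevant quadric lives essentially in a $4$-dimensional space after accounting for tangency'') asserts the answer rather than deriving it, and the tangency analysis it gestures at is precisely the missing content.
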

\begin{proof}
By hypothesis   there exists a subspace $V_0\subset V$ such that~\eqref{trasuno}-\eqref{trasdue} hold. 
Let  $\wt{q}_A $ be as in~\eqref{aprile} and $q_A$  be the associated quadratic form on $\bigwedge^2 V_0$. 
 Let $Q_A:=V(q_A)\subset\PP(\bigwedge^ 2 V_0)$.
By~\Ref{prp}{ipsdisc} we have
\begin{equation}
V(Y_A)|_{V_0}=V(\det(q_A+q_v))
\end{equation}
where $q_v$ is as in~\eqref{quadpluck}. 
Let $W_0:=W\cap V_0$.   Since $[v_0 ]\notin C_{W,A}$ we have $A\cap (v_0\wedge\bigwedge^2 V)=\bigwedge^ 3 W$. By~\eqref{nucqua} we get that 
$\sing Q_A=\{[\bigwedge^ 2 W_0]\}$.  Thus 
\begin{equation}
\det(q_A+q_v)=\Phi_2(v)+\Phi_3(v)+\ldots+\Phi_6(v),\qquad \Phi_i\in \Sym^i V_0^{\vee}
\end{equation}
and  the rank of $\Phi_2$ is given by~\eqref{grillino}  with $q_{*}=q_A$ and $L=V_0$.
Let us identify the subspace $T\subset\bigwedge^ 2 V_0$ given by~\eqref{eccoti}. Let $U_0\subset V_0$ be a subspace transversal to $W_0$; since the Pl\"ucker quadrics generate the ideal of the Grassmannian we have 
\begin{equation}
T=\bigwedge^ 2 W_0\oplus W_0\wedge U_0.
\end{equation}
By~\Ref{prp}{beppegrillo} we get that
 \begin{equation}\label{mango}
\rk\Phi_2=3-\dim\ker(q_A|_{W_0\wedge U}).
\end{equation}
We claim that
\begin{equation}\label{ucraina}
\dim\ker(q_A|_{W_0\wedge U}) =\dim(A\cap (\bigwedge^2 W\wedge V)).
\end{equation}
In fact let $\alpha\in W_0\wedge U$. Then $\alpha\in\ker(q_A|_{W_0\wedge U})$ if and only if 
\begin{equation}
\wt{q}_A(\alpha)\in \Ann(W_0\wedge U_0)=
\bigwedge^ 2 W_0\wedge U_0\oplus \bigwedge^ 3 U_0.
\end{equation}
 Since $A\subset(\bigwedge^ 3 W)^{\bot}$ it follows from~\eqref{giannibrera}  that necessarily  
$\wt{q}_A(\alpha)\in \bigwedge^ 2 W_0\wedge U_0$.
 Equation~\eqref{giannibrera}  gives a linear map
\begin{equation}\label{cazzare}
\begin{matrix}
\ker(q_A|_{W_0\wedge U_0}) & \overset{\varphi}\lra & A\cap (\bigwedge^ 2 W\wedge U_0)\\
\alpha & \mapsto & v_0 \wedge\alpha+\wt{q}_A(\alpha).
\end{matrix}
\end{equation}
The direct-sum decomposition
\begin{equation}\label{dikdik}
\bigwedge^ 2 W\wedge U_0=[v_0 ]\wedge W_0\wedge U_0\oplus\bigwedge^ 2 W_0\wedge U_0
\end{equation}
shows that $\varphi$ is bijective. Since there is an obvious isomorphism $(A\cap (\bigwedge^ 2 W\wedge U_0))\cong (A\cap (\bigwedge^2 W\wedge V))/\bigwedge^3 W$ we get that~\eqref{ucraina} holds.  
 \end{proof}
\begin{rmk}
Suppose that $\dim(A\cap (\bigwedge^2 W\wedge V))>4$. Then  Equation~\eqref{rinogaetano} does not make sense. On the other hand  $C_{W,A}=\PP(W)$ by~\Ref{prp}{cnesinerre} and hence there is no $[v_0 ]\in(\PP(W)\setminus C_{W,A})$.
\end{rmk}
\subsection{The extension}
\setcounter{equation}{0}
\begin{lmm}\label{lmm:platone}
Let $A_0\in(\lagr\setminus\Sigma_{\infty}\setminus\Sigma[2])$.  Then $Y_{A_0}[3]$ is finite and $C_{W,A_0}$ is a sextic curve  for every $W\in\Theta_{A_0}$. 
\end{lmm}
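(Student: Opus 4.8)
The plan is to prove the two assertions separately, using~\Ref{prp}{delinf} for the first and the rank computation of~\Ref{prp}{germediy} together with~\Ref{prp}{cnesinerre} for the second.

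First I would establish that $Y_{A_0}[3]$ is finite. This is nearly immediate from~\Ref{prp}{delinf}: that proposition states that if $\dim Y_A[3]>0$ then $A\in(\Sigma_{\infty}\cup\Sigma[2])$. Taking the contrapositive, since by hypothesis $A_0\notin\Sigma_{\infty}$ and $A_0\notin\Sigma[2]$, we conclude $\dim Y_{A_0}[3]\le 0$, i.e.~$Y_{A_0}[3]$ is finite (a priori it could be empty, which is also fine). So this half of the statement is a direct citation.

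Next I would show that for every $W\in\Theta_{A_0}$ the subscheme $C_{W,A_0}$ is a genuine sextic curve, i.e.~$C_{W,A_0}\neq\PP(W)$. The strategy is to apply~\Ref{prp}{germediy} at a suitable point $[v_0]\in\PP(W)$ and to argue that the quadratic term $f_2$ of the local equation of $Y_{A_0}$ is nonzero, which forces $Y_{A_0}\neq\PP(V)$ and $C_{W,A_0}\neq\PP(W)$. Concretely, I would first check the hypotheses of~\Ref{prp}{germediy}: since $A_0\notin\Sigma_{\infty}$ we have $\dim\Theta_{A_0}\le 0$, so in particular $\dim\Theta_{A_0}\le 2$, and~\Ref{prp}{alpiudue} then gives $Y_{\delta(A_0)}\neq\PP(V^{\vee})$ (and also $Y_{A_0}\neq\PP(V)$), so the proposition applies. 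The key numerical input is the codimension condition: because $A_0\notin\Sigma[2]$, for every $W\in\Theta_{A_0}$ we have $(W,A_0)\notin\wt{\Sigma}[2]$, i.e.~$\dim(A_0\cap(\bigwedge^2 W\wedge V))\le 2$. Hence by~\eqref{rinogaetano} the rank $\rk f_2=4-\dim(A_0\cap(\bigwedge^2 W\wedge V))\ge 2$ at any $[v_0]\in(\PP(W)\setminus C_{W,A_0})$. In particular $f_2\neq 0$, so $Y_{A_0}$ has a nontrivial local equation and $Y_{A_0}\neq\PP(V)$.

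The remaining point, which I expect to be the main obstacle, is to pass from "$f_2\neq 0$ at a point off $C_{W,A_0}$" to "$C_{W,A_0}\neq\PP(W)$". The cleanest route is via~\Ref{prp}{cnesinerre}(2): $C_{W,A_0}=\PP(W)$ if and only if $\cB(W,A_0)=\PP(W)$. So I would argue that $\cB(W,A_0)\neq\PP(W)$ under our hypotheses. Condition~(1) in~\Ref{dfn}{malvagio} requires a second $W'\in\Theta_{A_0}$, but $\Theta_{A_0}$ is finite (indeed $A_0\notin\Sigma_{\infty}$ means $\dim\Theta_{A_0}=0$), so~(1) can hold only on a finite subset of $\PP(W)$; and condition~(2) requires $\dim(A_0\cap(w\wedge\bigwedge^2 V)\cap(\bigwedge^2 W\wedge V))\ge 2$, whose locus I would bound by again invoking $(W,A_0)\notin\wt{\Sigma}[2]$, namely $\dim(A_0\cap(\bigwedge^2 W\wedge V))\le 2$, to show the set of such $[w]$ is proper in $\PP(W)$. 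Combining these, $\cB(W,A_0)$ is a proper subset of $\PP(W)$, hence $C_{W,A_0}\neq\PP(W)$, i.e.~it is a sextic curve. The delicate part is the careful dimension estimate for condition~(2); I would extract this directly from the determinantal description of $C_{W,A}$ in Subs.~3.2 of~\cite{ogmoduli} and the bound $\dim(A_0\cap(\bigwedge^2 W\wedge V))\le 2$, ensuring that the generic point of $\PP(W)$ lies outside $\cB(W,A_0)$.
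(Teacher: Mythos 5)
Your first assertion and your overall strategy for the second one coincide with the paper's proof: finiteness of $Y_{A_0}[3]$ is exactly \Ref{prp}{delinf}, and the paper likewise proves $C_{W,A_0}\ne\PP(W)$ by showing $\cB(W,A_0)\ne\PP(W)$ and quoting \Ref{prp}{cnesinerre}. Before the real issue, one structural remark: the excursion through \Ref{prp}{germediy} does no work and is in fact circular, since that proposition can only be applied at a point $[v_0]\in(\PP(W)\setminus C_{W,A_0})$ and therefore presupposes the very statement $C_{W,A_0}\ne\PP(W)$ that you are after; you implicitly abandon this route, which is the right call, but it should be deleted, not kept as a first step.

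The genuine gap is the treatment of Item~(2) of \Ref{dfn}{malvagio}, which you yourself label the delicate part and propose to ``extract'' from the determinantal description of $C_{W,A}$: that description is not where the bound comes from, and nothing in your proposal actually produces it. The needed argument is linear algebra on the second generator of $A_0\cap(\bigwedge^2 W\wedge V)$ (compare the end of the proof of \Ref{prp}{delinf}). Since $A_0\notin\Sigma[2]$, either $A_0\cap(\bigwedge^2 W\wedge V)=\bigwedge^3 W$, in which case the locus of Item~(2) is empty, or $A_0\cap(\bigwedge^2 W\wedge V)=\la \omega,\beta\ra$ with $\omega$ a generator of $\bigwedge^3 W$ and $\beta\notin\bigwedge^3 W$. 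Since $\omega\in w\wedge\bigwedge^2 V$ for every $w\in W$, Item~(2) holds at $[w]$ if and only if $\beta\in w\wedge\bigwedge^2 V$, i.e.\ if and only if $w\in\ker\ov{\beta}$, where $\ov{\beta}$ is the image of $\beta$ in $\bigwedge^2 W\wedge(V/W)\cong\Hom(W,V/W)$. As $\beta\notin\bigwedge^3 W$ we have $\ov{\beta}\ne 0$, so this locus is a linear subspace of $\PP(W)$ of dimension at most $1$, in particular proper, and properness is all you need to conclude $\cB(W,A_0)\ne\PP(W)$. (The paper asserts the stronger statement that $\cB(W,A_0)$ is finite; for that one also rules out $\rk\ov{\beta}=1$, since then $\beta$ would be decomposable, $\beta\in\bigwedge^3 W'$ with $\dim(W\cap W')=2$, and $\Theta_{A_0}$ would contain the whole line in $\Gr(3,V)$ joining $W$ and $W'$, contradicting $A_0\notin\Sigma_{\infty}$. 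The same trick is needed to make your count for Item~(1) honest: finiteness of $\Theta_{A_0}$ alone only tells you that the Item~(1) locus is a finite union of points and lines $\PP(W\cap W')$, which again is proper but not obviously finite.)
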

\begin{proof}
$Y_{A_0}[3]$ is finite by~\Ref{prp}{delinf}. Let $W\in\Theta_{A_0}$. 
Let us show that $\cB(W,A_0)\ne\PP(W)$. Let $W'\in(\Theta_{A_0}\setminus\{W\})$. Then $\dim(W\cap W')=1$ because otherwise $\bigwedge^3 W$ and $\bigwedge^3 W'$ span a line in $\Gr(3,V)$ which is contained in $\Theta_{A_0}$ and that  contradicts the assumption that $\Theta_{A_0}$ is finite. This proves finiteness of the set of $[w]\in\PP(W)$ such that Item~(1) of~\Ref{dfn}{malvagio} holds. Since $\dim (\bigwedge^2 W\wedge V)\le 2$ a similar argument gives finiteness  of  the set of $[w]\in\PP(W)$ such that Item~(2) of~\Ref{dfn}{malvagio} holds. This proves that $\cB(W,A_0)$ is finite, in particular 
$\cB(W,A_0)\ne\PP(W)$.   By~\Ref{prp}{cnesinerre} it follows that $C_{W,A_0}$ is a sextic curve.
\end{proof}
\begin{prp}\label{prp:platone}
Let $A_0\in(\lagr\setminus\Sigma_{\infty}\setminus\Sigma[2])$ and  ${\bf L}\subset\PP(V)$ be a generic $3$-dimensional linear subspace. 
Let   $\cU\subset(\lagr\setminus\Sigma_{\infty}\setminus\Sigma[2])$ be a sufficiently small  open set  containing $A_0$. Let  $A\in\cU$. Then the following hold:   
\begin{itemize}
\item[(a)]
The scheme-theoretic inverse image $f_{A}^{-1}{\bf L}$ is a reduced surface  with DuVal singularities.
\item[(b)]
If in addition $A_0\in \lagr^0$ then $f_{A}^{-1}{\bf L}$ is a smooth surface. 
\end{itemize}
\end{prp}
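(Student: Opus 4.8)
The plan is to pin down the finite set of points of ${\bf L}$ over which $f_A^{-1}{\bf L}$ can fail to be smooth, and to read off the singularity type there from the local models of~\Ref{prp}{locdegsym}. Write ${\bf L}=H_1\cap H_2$ with $H_1,H_2$ hyperplanes; then $f_A^{-1}{\bf L}=f_A^{-1}(H_1)\cap f_A^{-1}(H_2)$ is cut out of $X_A$ by two members of the base-point-free system $|f_A^{*}\cO_{\PP(V)}(1)|$, so by Bertini, for generic ${\bf L}$ it is smooth away from $\sing X_A$. The local models of~\Ref{prp}{locdegsym} present $X_A$ as a local complete intersection at every point lying over ${\bf L}$ (for generic ${\bf L}$, as the analysis below confirms), whence $f_A^{-1}{\bf L}$ is a pure two-dimensional Cohen--Macaulay scheme; being generically smooth it is reduced. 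Thus it suffices to analyze $f_A^{-1}{\bf L}$ over the set $\sing X_A\cap{\bf L}$, and the whole problem reduces to controlling $\sing X_A$.

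First I would locate $\sing X_A$. By~\Ref{prp}{ipsing} the sextic $Y_A$ is smooth along $Y_A(1)\setminus\bigcup_{W\in\Theta_A}\PP(W)$ (there $f_1\ne0$), and over such points $f_A$ is an étale double cover by~\eqref{nonram}; over $Y_A(2)\setminus\bigcup_W\PP(W)$ the $k=2$ model of~\Ref{prp}{locdegsym}, combined with the fact that $Y_A(2)$ is smooth two-dimensional there (\Ref{prp}{ipsing}(1b)), shows $X_A$ is smooth. Consequently $\sing X_A$ lies over $\bigcup_{W\in\Theta_A}\PP(W)\cup Y_A[3]$. Since $A_0\notin(\Sigma_{\infty}\cup\Sigma[2])$, \Ref{lmm}{platone} (via~\Ref{prp}{delinf}) gives that $Y_{A_0}[3]$ is finite, that $\Theta_{A_0}$ is finite, and that each $C_{W,A_0}$ is a genuine sextic curve. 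A dimension count in $\PP(V)\cong\PP^5$ then shows that a generic $3$-plane ${\bf L}$ misses the finite set $Y_{A_0}[3]$ and misses every curve $C_{W,A_0}$, while meeting each plane $\PP(W)$ in finitely many points, necessarily lying in $\PP(W)\setminus C_{W,A_0}$. At such a point $[v_0]$ one has $\bigwedge^3 W\subset A\cap(v_0\wedge\bigwedge^2 V)$, and $[v_0]\notin C_{W,A}$ forces $k:=\dim(A\cap(v_0\wedge\bigwedge^2 V))=1$.

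It remains to determine the germ of $f_A^{-1}{\bf L}$ at such a $k=1$ point $[v_0]\in\PP(W)\setminus C_{W,A}$. In the model of~\Ref{prp}{locdegsym} with $k=1$ the matrix $M_J(v)$ is the single function cutting out $Y_A$, and ${\bf X}_J$ is two disjoint copies of the germ $(Y_A,[v_0])$; hence $f_A^{-1}{\bf L}$ is two disjoint copies of the surface germ $(Y_A\cap{\bf L},[v_0])$. By~\Ref{prp}{germediy}, applicable since $Y_{\delta(A)}\ne\PP(V^{\vee})$ by~\Ref{prp}{alpiudue}, the quadratic leading term $f_2$ of $Y_A$ at $[v_0]$ satisfies $\rk f_2=4-\dim(A\cap(\bigwedge^2 W\wedge V))$, and $A_0\notin\Sigma[2]$ forces $\dim(A\cap(\bigwedge^2 W\wedge V))\in\{1,2\}$, so $\rk f_2\in\{3,2\}$. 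For generic ${\bf L}$ the restriction $f_2|_{{\bf L}}$ has rank $\min(3,\rk f_2)$: it equals $3$ when $\dim(A\cap(\bigwedge^2 W\wedge V))=1$, giving an ordinary double point $A_1$; it equals $2$ when that dimension is $2$, giving a corank-one Hessian. In the latter case Bertini guarantees that $Y_A\cap{\bf L}$ is smooth off the finite set $\sing Y_A\cap{\bf L}$, so the singularity at $[v_0]$ is isolated, and an isolated corank-one surface singularity is of type $A_n$. In every case the germs are DuVal, proving~(a). For~(b), if $A_0\in\lagr^0$ then $\Theta_{A_0}=\es$, there are no planes $\PP(W)$, hence $\sing X_A\cap{\bf L}=\es$ and $f_A^{-1}{\bf L}$ is smooth.

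To pass from $A_0$ to a neighborhood $\cU\subset(\lagr\setminus\Sigma_{\infty}\setminus\Sigma[2])$ I would argue by semicontinuity: the loci $Y_A[3]$, $\bigcup_{W}\PP(W)$ and $\bigcup_W C_{W,A}$ vary continuously with $A$, so a fixed ${\bf L}$ generic for $A_0$ continues, for $A$ near $A_0$, to miss $Y_A[3]$ and $\bigcup_W C_{W,A}$ and to meet the planes in finitely many points off $C_{W,A}$; moreover $\dim(A\cap(\bigwedge^2 W\wedge V))$ is upper semicontinuous in $(W,A)$, so the type-II singularity can only improve (from $A_2$ to $A_1$) under small perturbation, while smoothness over $X_A\setminus\sing X_A$ persists by openness of Bertini's condition. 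I expect the genuine difficulty to be the rank-two subcase of~(a): there one must know not merely that $f_2|_{{\bf L}}$ has rank $2$, but that the resulting corank-one singularity is \emph{isolated} — otherwise one might in principle meet the non-isolated germ $x^2+y^2=0$ rather than an honest $A_n$. This isolatedness is precisely what the genericity of ${\bf L}$ secures, via Bertini together with the finiteness of $\sing Y_A\cap{\bf L}$ near $[v_0]$, which in turn rests on $\sing Y_A$ reducing to the surface $\PP(W)$ near a generic point of $\PP(W)$.
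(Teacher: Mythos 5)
Your proposal is correct and takes essentially the same route as the paper's proof: a generic ${\bf L}$ misses $Y_{A_0}[3]$ and every $C_{W,A_0}$, so the only possible non-smooth points of $f_{A}^{-1}{\bf L}$ lie over the finitely many points of ${\bf L}\cap\PP(W)$, $W\in\Theta_{A}$, where $f_A$ is a trivial double cover and \Ref{prp}{germediy} together with $A\notin\Sigma[2]$ gives a quadratic tangent cone of rank $2$ or $3$, hence $A_n$ germs. The only differences are in packaging: you prove reducedness via the lci/Cohen--Macaulay argument and make explicit the isolatedness needed in the rank-$2$ case (both left implicit in the paper), while for the passage from $A_0$ to the neighborhood $\cU$ the paper simply invokes that smoothness, reducedness and having DuVal singularities are open properties of the fibers, instead of your semicontinuity rerun (where, incidentally, the rank-$2$ case gives $A_n$ with $n\ge 2$, not necessarily $A_2$).
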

\begin{proof}
Let ${\bf L}\subset\PP(V)$ be a generic $3$-dimensional linear subspace. Then
\begin{itemize}
\item[(1)]
${\bf L}\cap Y_{A_0}[3]=\es$.
\item[(2)]
${\bf L}\cap C_{W,A_0}=\es$ for every $W\in\Theta_{A_0}$.
\end{itemize}
In fact $Y_{A_0}[3]$ is finite by~\Ref{lmm}{platone} and hence~(1) holds.    Since $\Theta_{A_0}$ is finite and $C_{W,A_0}$ is a sextic curve for every $W\in\Theta_{A_0}$ Item~(2) holds as well.
We will prove that $f_{A_0}^{-1}{\bf L}$ is reduced with DuVal singularities and that it is smooth if  $A_0\in \lagr^0$.  The result will follow  because being smooth, reduced or having DuVal singularities is an open property. Write $\Theta_{A_0}=\{W_1,\ldots,W_d\}$.  By Item~(2) above the intersection ${\bf L}\cap\PP(W_i)$ is a single point $p_i$ for $i=1,\ldots,d$. Since  $p_i\notin C_{W_i,A_0}$  the points $p_1,\ldots,p_d$ are pairwise distinct. 
By~\Ref{prp}{ipsing} we know that away 
from  $\bigcup_{W\in\Theta_{A_0}}\PP(W)$ the locally closed sets $Y_A(1)$ and $Y_A(2)$ are  smooth of dimensions $4$ and $2$ respectively. By Item~(1) it follows that $f_{A_0}^{-1}{\bf L}$ is smooth away from 
\begin{equation}\label{bruttipunti}
f_{A_0}^{-1}\{p_1,\ldots,p_d\}.
\end{equation}
It remains to show that $f_{A_0}^{-1}{\bf L}$ is DuVal at each point of~\eqref{bruttipunti}.
 Since $p_i\in Y_{A_0}(1)$ the map  $f_{A_0}$ is \'etale of degree $2$ over $p_i$, see~\eqref{nonram}. Thus $f_{A_0}^{-1}(p_i)=\{q_i^{+},q_i^{-}\}$ and $f_{A_0}$ defines an isomorphism between the germ $(X_{A_0},q_i^{\pm})$ (in the classical topology) and the germ $(Y_{A_0},p_i)$. By~\Ref{prp}{germediy}  we get that the tangent cone of $f_{A_0}^{-1}{\bf L}$ at $q_i^{\pm}$ is a quadric cone  of rank $2$ or $3$; it follows that $f_{A_0}^{-1}{\bf L}$ has a singularity of type $A_n$ at $q_i^{\pm}$.  
\end{proof}
\begin{prp}\label{prp:dagest}
Let  $A_0\in(\lagr\setminus\Sigma_{\infty}\setminus\Sigma[2])$. Then  $\cP$  is regular at $A_0$  and  $\cP(A_0)\in\DD_{\Lambda}$.
\end{prp}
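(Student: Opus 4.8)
The plan is to express the period of $X_A$ as a sub-object of the periods of a surface linear section, and then to extend using Griffiths' Removable Singularity Theorem. If $A_0\in\lagr^0$ there is nothing to prove, since by definition $\cP^0$ is regular on $\lagr^0$ with values in $\DD_\Lambda$; so the interesting case is $A_0$ in the discriminant $\lagr\setminus\lagr^0$.

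First I would apply \Ref{prp}{platone}: fix a generic $3$-plane ${\bf L}\subset\PP(V)$ and a neighborhood $\cU\ni A_0$ so that $S_A:=f_A^{-1}{\bf L}$ has at worst DuVal singularities for every $A\in\cU$, and write $\cU^0:=\cU\cap\lagr^0$. For $A\in\cU^0$ the fourfold $X_A$ is smooth and $S_A$ is the complete intersection of two members of $|f_A^{*}\cO_{Y_A}(1)|$, a system that is ample because $f_A$ is finite; hence by the Lefschetz Hyperplane Theorem for the surface $S_A\subset X_A$ (valid even when $S_A$ is singular) the restriction $H^2(X_A;\QQ)\to H^2(S_A;\QQ)$ is an injection of Hodge structures. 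Composing with the pullback $H^2(S_A;\QQ)\hra H^2(\wt S_A;\QQ)$ to the minimal resolution (injective because DuVal singularities are rational, and an isomorphism on the $(2,0)$-parts because they are canonical) realizes the weight-$2$ Hodge structure of $X_A$, and in particular its period line $H^{2,0}(X_A)$, as a holomorphically varying sub-Hodge-structure of that of the smooth surface $\wt S_A$. Being given by restriction and pullback, this inclusion is equivariant for the monodromy of the two families, so $\cP^0|_{\cU^0}$ is cut out inside the variation of Hodge structure of the family $\{\wt S_A\}$.

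The crucial step is that this variation has finite local monodromy. Since every fibre $S_A$ ($A\in\cU$) has only DuVal singularities, any one-parameter degeneration has vanishing cycles spanning a negative definite root sublattice, on which the Picard--Lefschetz monodromy acts through the corresponding finite Weyl group and trivially on the orthogonal complement; equivalently, by Brieskorn--Tyurina the family $\{S_A\}$ admits a simultaneous resolution after a finite base change $p\colon\cU'\to\cU$. Over $\cU'$ the resolved family is smooth, so its weight-$2$ local system has trivial monodromy, and the sub-Hodge-structure carrying $\cP^0\circ p$ is a constant sub-local-system equipped with a holomorphically varying Hodge filtration over all of $\cU'$.

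It follows that $\cP^0\circ p$ extends to a holomorphic map $\cU'\to\DD_\Lambda$: the lift to $\Omega_\Lambda$, defined and bounded on $p^{-1}(\cU^0)$, extends holomorphically across the analytic set $\cU'\setminus p^{-1}(\cU^0)$ by Riemann's extension theorem, and its limiting value at a point over $A_0$ is the $(2,0)$-line of a genuine polarized Hodge structure on the smooth surface $\wt S_{A_0}$, so it satisfies $(\sigma,\ov\sigma)>0$ and lands in $\Omega_\Lambda$ itself rather than on its boundary. This extension is invariant under the deck group of $p$, hence descends to a holomorphic map $\cU\to\DD_\Lambda$; this shows simultaneously that $\cP$ is regular at $A_0$ and that $\cP(A_0)\in\DD_\Lambda$, i.e.\ in the interior and not on the Baily--Borel boundary. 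I expect the finiteness of the local monodromy to be the crux: everything hinges on \Ref{prp}{platone} producing DuVal (rather than worse) singularities, which is exactly what makes the Picard--Lefschetz monodromy finite and keeps the limiting period away from $\DD_\Lambda^{BB}\setminus\DD_\Lambda$.
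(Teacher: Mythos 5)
Your overall strategy is the paper's: choose a generic $3$-plane ${\bf L}$ and a neighborhood $\cU$ of $A_0$ as in \Ref{prp}{platone}, use the Lefschetz Hyperplane Theorem to embed $H^2(X_A)$ monodromy-equivariantly into $H^2(f_A^{-1}{\bf L})$ for $A\in\cU\cap\lagr^0$, deduce finiteness of monodromy from the DuVal condition, and then extend the period map across $A_0$ with value in the interior $\DD_{\Lambda}$. The divergence is at the last step: the paper concludes by citing Griffiths' Removable Singularity Theorem (p.~41 of \cite{grif}), which needs only \emph{finite} monodromy and directly produces a holomorphic extension into $\DD_{\Lambda}$; you instead try to reprove that extension by hand, and the replacement argument has a genuine gap.

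The problematic step is: ``Over $\cU'$ the resolved family is smooth, so its weight-$2$ local system has trivial monodromy.'' Smoothness of a proper family does not trivialize monodromy; that would require $\pi_1(\cU')$ to act trivially, and a finite branched cover $\cU'\to\cU$ of a contractible $\cU$ need not be simply connected --- Brieskorn--Tyurina gives no control over $\pi_1(\cU')$. Everything downstream rests on this constancy: that the sub-local system carrying $\cP^0\circ p$ is a fixed subspace, that the limiting value of the lift is the $(2,0)$-line of the genuine polarized Hodge structure of the smooth surface over $A_0$ (which is how you keep the limit off the boundary), and the deck-invariant descent. What your Picard--Lefschetz/Weyl-group argument does establish is finiteness of the \emph{local} monodromies around the branches of the discriminant; this is precisely the hypothesis of Griffiths' theorem, so the gap closes as soon as you cite it, which is what the paper does. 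Note that landing in $\DD_{\Lambda}$ rather than in the Baily--Borel boundary is exactly the nontrivial content of that theorem (via the purity of the limit mixed Hodge structure); it is not delivered by Riemann extension alone, whose output a priori lies only in the closure of the domain. Two smaller elisions, both repairable: ``bounded'' must be interpreted in the bounded realization of the Type IV domain before invoking Riemann extension; and the positivity $(\sigma,\ov{\sigma})>0$ of the limit has to hold for the Beauville--Bogomolov form transported along the flat structure, not for the intersection form of the resolved surface --- these do agree up to a positive factor on $h^{\bot}$ by Fujiki's relation, but that comparison is a needed step your write-up skips.
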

\begin{proof}
Let $\cU$ and ${\bf L}$ be as in~\Ref{prp}{platone}. Let $U\subset\cU$ be a subset containing $A_0$, open in the classical topology and contractible.
Let $U^0:=U\cap\lagr^0$. Let $\ov{A}\in U^0$; thus $X_{\ov{A}}$ is smooth. By~\Ref{lmm}{platone} we know that $f_{A}^{-1}{\bf L}$ is a smooth surface for every $A\in U^0$. Thus $\pi_1(U^0,\ov{A})$ acts by monodromy on $H^2(f_{\ov{A}}^{-1}{\bf L})$ and   by Item~(a) of~\Ref{prp}{platone} the   image of the monodromy representation is a finite group. On the other hand  $H_{\ov{A}}$ is an ample divisor on $X_{\ov{A}}$:  by the Lefschetz Hyperplane Theorem the homomorphism 
\begin{equation}\label{resmap}
H^2(X_{\ov{A}};\ZZ)\lra H^2(f_{\ov{A}}^{-1}{\bf L};\ZZ)
\end{equation}
is injective. 
The image of~\eqref{resmap} is a subgroup of $H^2(f_{\ov{A}}^{-1}{\bf L})$ invariant under the monodromy action of $\pi_1(U^0,\ov{A})$.  By injectivity of~\eqref{resmap}   the monodromy action of $\pi_1(U^0,\ov{A})$ on $H^2(X_{\ov{A}})$ is finite as well. By  Griffith's Removable Singularity Theorem (see p.~41 of~\cite{grif}) it follows that the restriction of $\cP^0$ to $U^0$ extends to a holomorphic map $U\to\DD_{\Lambda}$ .  Hence  $\cP^0$ extends regularly in a neighborhood $A_0$ and it goes into $\DD_{\Lambda}$. 
\end{proof}
\begin{dfn}\label{dfn:cappello}
Let 
$\lagrhat\subset\lagr\times\DD_{\Lambda}^{BB}$ be the closure of the graph of the restriction of $\cP$ to the set of its regular points and  
\begin{equation}\label{mappapi}
p\colon\lagrhat\to \lagr  
\end{equation}
  the restriction of projection.  Let    $\wh{\Sigma}\subset\lagrhat$ be the proper transform of $\Sigma$. 
\end{dfn}
\begin{crl}\label{crl:dagest}
Keep notation as above. Let $A$ be in the indeterminacy locus of $\cP$ and $p$ be as in~\eqref{mappapi}. Then $\dim(p^{-1}(A)\cap \wh{\Sigma})$ has dimension at least $2$.
\end{crl}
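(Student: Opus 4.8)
The plan is to combine the bound on the indeterminacy locus furnished by \Ref{prp}{dagest} with the fact (\Ref{dfn}{cappello}) that $p$ is a blow-up, and then to run a fibre-dimension estimate on the strict transform $\wh\Sigma$. Write $Z\subset\lagr$ for the indeterminacy locus of $\cP$, so that by hypothesis $A\in Z$. First I would locate and bound $Z$. By \Ref{prp}{dagest} the map $\cP$ is regular at every point of $(\lagr\setminus\Sigma_{\infty}\setminus\Sigma[2])$, hence $Z\subset(\Sigma_{\infty}\cup\Sigma[2])$. Since $\dim\Theta_A>0$ forces $\Theta_A\ne\es$, i.e.\ $A\in\Sigma$, we have $\Sigma_{\infty}\subset\Sigma$ by~\eqref{siginf}, and $\Sigma[2]\subset\Sigma$ by definition; thus $Z\subset\Sigma$. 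Moreover every irreducible component of $Z$ lies in $\Sigma_{\infty}$ or in $\Sigma[2]$, so by~\eqref{romalazio} (with $d=2$) and~\eqref{codinf} it has codimension at least $4$ in $\lagr$, giving
\[
\dim Z\le\dim\lagr-4 .
\]
In particular $\Sigma\not\subset Z$, since $\cod(\Sigma,\lagr)=1$.

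Next I would exploit the blow-up structure. As $\lagr$ is smooth, $p\colon\lagrhat\to\lagr$ is the blow-up of the indeterminacy ideal of $\cP$, whose support is $Z$; let $E\subset\lagrhat$ be the associated exceptional Cartier divisor, so $\supp E=p^{-1}(Z)$. The strict transform $\wh\Sigma$ is irreducible (being birational to the prime divisor $\Sigma$) of dimension $\dim\lagr-1$, and it is not contained in $E$, because $p$ restricts to an isomorphism over the dense open subset $\Sigma\setminus Z$ of $\Sigma$. Set $q:=p|_{\wh\Sigma}\colon\wh\Sigma\to\Sigma$; this is proper, birational, and surjective onto $\Sigma$ (its image is closed and dense). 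Since $\wh\Sigma\not\subset E$, the restriction of the Cartier divisor $E$ to $\wh\Sigma$ is a nonzero effective Cartier divisor on the irreducible variety $\wh\Sigma$; hence its support
\[
q^{-1}(Z)=E\cap\wh\Sigma
\]
is pure of dimension $\dim\lagr-2$.

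Finally I would estimate the fibre dimension. Decompose $q^{-1}(Z)=\bigcup_i E_i$ into irreducible components, each of dimension $\dim\lagr-2$, and let $Z_i:=q(E_i)\subset Z$, which is closed and irreducible with $\dim Z_i\le\dim Z\le\dim\lagr-4$. By the theorem on the dimension of the fibres of a dominant morphism, every fibre of the surjection $E_i\to Z_i$ has dimension at least $\dim E_i-\dim Z_i\ge 2$. Since $q^{-1}(Z)$ maps onto $Z$, we have $\bigcup_i Z_i=Z$, so there is an index $i$ with $A\in Z_i$, whence
\[
\dim\bigl(p^{-1}(A)\cap\wh\Sigma\bigr)=\dim q^{-1}(A)\ge\dim\bigl(E_i\cap q^{-1}(A)\bigr)\ge 2 ,
\]
which is the assertion.

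Given \Ref{prp}{dagest}, the statement is thus essentially formal, and the main care is in the two structural inputs: that $E\cap\wh\Sigma$ is genuinely of pure codimension one in $\wh\Sigma$ (which is exactly the assertion that $E$ restricts to an honest Cartier divisor there, resting on $\wh\Sigma\not\subset E$), and that each component $E_i$ surjects onto $Z_i$, so that the lower bound on fibre dimension applies at the prescribed point $A$ and not merely at a generic point of $Z_i$. The latter is automatic once $E_i\to Z_i$ is surjective; everything else is bookkeeping with the codimensions in~\eqref{romalazio} and~\eqref{codinf}. The genuinely substantive ingredient is the codimension-$4$ bound on $Z$, and that is precisely what \Ref{prp}{dagest} supplies.
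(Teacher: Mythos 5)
Your proof is correct and takes essentially the same route as the paper's: the codimension $\ge 4$ bound on $\Ind(\cP)$ coming from \Ref{prp}{dagest} together with~\eqref{romalazio} and~\eqref{codinf}, the Cartier exceptional divisor of the blow-up cutting the prime divisor $\wh{\Sigma}$ in pure codimension $2$, and the fibre-dimension theorem applied to $E\cap\wh{\Sigma}\to\Ind(\cP)$. The only difference is one of detail: you spell out two points the paper leaves implicit, namely that properness and birationality of $\wh{\Sigma}\to\Sigma$ force $p^{-1}(A)\cap\wh{\Sigma}\neq\es$, and the component-by-component argument behind the lower bound on fibre dimension.
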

\begin{proof}
Let $\Ind(\cP)$ be the indeterminacy locus of $\cP$. Since   $\lagr$ is smooth the morphism $p$ identifies $\lagrhat$ with the blow-up of  $\Ind(\cP)$. Hence  the exceptional 
set of  $p$  is the support of a Cartier divisor $E$. 
By~\Ref{prp}{dagest}  the indeterminacy locus of $\cP$ is contained in $\Sigma$ and thus $A\in\Sigma$. It follows that  $p^{-1}(A)\cap \wh{\Sigma}$ is not empty. Since $\wh{\Sigma}$ is a prime divisor in $\lagrhat$ and $E$ is a Cartier divisor every irreducible component of $E\cap\wh{\Sigma}$ has codimension $2$ in $\lagrhat$. On the other hand~\Ref{prp}{dagest}, \eqref{romalazio} and~\eqref{codinf} give that $\cod(\Ind(\cP),\lagr)\ge 4$ and hence every component of a fiber of $E\cap\wh{\Sigma}\to\Ind(\cP)$ has dimension at least $2$. Since $p^{-1}(A)\cap \wh{\Sigma}$ is one such fiber 
we get the corollary.
\end{proof}
\section{Second extension of the period map}\label{sec:perdisig}
 \setcounter{equation}{0}
\subsection{$X_A$ for generic $A$ in $\Sigma$}
\setcounter{equation}{0}
Let $A\in(\Sigma\setminus\Sigma[2])$ and $W\in\Theta_A$. Then $\cB(W,A)\not=\PP(W)$ because by~\Ref{lmm}{platone} we know that $C_{W,A}\not=\PP(W)$. By the same Lemma $Y_A[3]$ is finite. In particular $(\PP(W)\setminus \cB(W,A)\setminus Y_A[3])$ is not empty. 
\begin{prp}\label{prp:hocuspocus}
Let $A\in(\Sigma\setminus\Sigma[2])$ and $W\in\Theta_A$. Suppose in addition that $\dim(A\cap (\bigwedge^2 W\wedge V))=1$.  Let 
\begin{equation}
x\in f_A^{-1}(\PP(W)\setminus \cB(W,A)\setminus Y_A[3]). 
\end{equation}
The germ $(X_A,x)$ of $X_A$ at $x$ in the classical topology is isomorphic to $(\CC^2,0)\times A_1$ and $\sing X_A$  is equal to $f_A^{-1}\PP(W)$ in a neighborhood of $x$.  
\end{prp}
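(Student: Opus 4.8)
The plan is to use the explicit local model for the double cover $X_A$ provided by \Ref{prp}{locdegsym}, combined with the rank computation of \Ref{prp}{germediy}, to pin down the analytic germ at each point of $f_A^{-1}\PP(W)$ lying over the good locus. First I would fix $[v_0]\in(\PP(W)\setminus\cB(W,A)\setminus Y_A[3])$ with $f_A(x)=[v_0]$, and choose a decomposition~\eqref{trasuno} satisfying~\eqref{trasdue}; this is legitimate because $A\notin\Sigma[2]$ forces $\dim\Theta_A\le 1$ (indeed $\Theta_A$ is finite), so \Ref{prp}{alpiudue} applies and gives $Y_{\delta(A)}\ne\PP(V^\vee)$. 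The hypotheses $[v_0]\notin\cB(W,A)$ and $\dim(A\cap(v_0\wedge\bigwedge^2V))=2$ (which I must first extract: since $[v_0]\notin Y_A[3]$ we have $\dim\le 2$, and $[v_0]\in\PP(W)\subset Y_A$ together with $[v_0]\notin\cB(W,A)$ will give exactly $2$ via \Ref{prp}{cnesinerre}(1), noting $C_{W,A}$ is smooth at $[v_0]$) put us in the setting of the explicit local equations.

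Next I would run the normal form of \Ref{prp}{locdegsym}: the germ $(X_A,x)$ is isomorphic to the germ at $\phi^{-1}([v_0])$ of $\mathbf{X}_J\subset V_0'\times\CC^k$ cut out by $M_J(v)\cdot\xi$ and $\xi\xi^t-M_J(v)^c$, where $k=\dim\ker q_A=\dim(A\cap(v_0\wedge\bigwedge^2V))=2$. With $k=2$ the cofactor equation $\xi\xi^t=M_J(v)^c$ together with $M_J(v)\xi=0$ describes the standard determinantal double cover of the rank-$\le1$ locus of the $2\times2$ symmetric matrix $M_J(v)$. The crucial input is \Ref{prp}{germediy}: since $\dim(A\cap(\bigwedge^2W\wedge V))=1$, Equation~\eqref{rinogaetano} gives $\rk f_2=4-1=3$, so the quadratic term of the local equation of $Y_A$ at $[v_0]$ has rank $3$; transported through the isomorphism, this says $\det M_J(v)$ has a rank-$3$ quadratic leading term, i.e.\ $Y_A$ has an ordinary (corank-$1$, rank-$3$) threefold node at $[v_0]$, so the double cover $X_A$ acquires a transverse $A_1$ along the singular plane.

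To make the product structure $(\CC^2,0)\times A_1$ precise I would argue that, because $[v_0]$ ranges over the two-dimensional locus $\PP(W)\setminus(\cB(W,A)\cup Y_A[3])$ along which $Y_A$ is singular with constant corank, the family of corank-$1$ singularities is equisingular: the rank-$3$ hypersurface germ of $Y_A$ at $[v_0]$ is analytically $V(x_1^2+x_2^2+x_3^2)\subset\CC^5$, whose double cover is $\CC^2\times\{x_1^2+x_2^2+x_3^2+\xi^2=0\}$ after the determinantal model is diagonalized, and the $\CC^2$ factor is the smooth $2$-dimensional tangent space to $\PP(W)$ at $[v_0]$. Concretely I would diagonalize $M_J(v)$ by the analytic congruence already built into~\eqref{congruenza} and~\eqref{amatriciana}, split off the transverse two-variable symmetric-matrix direction controlling the $A_1$, and identify the remaining two parameters (those along $\PP(W)$) as the $(\CC^2,0)$ factor along which the singularity is constant.

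Finally, for the claim that $\sing X_A=f_A^{-1}\PP(W)$ near $x$ I would combine two facts. Away from $\PP(W)$, by \Ref{prp}{ipsing}(1) the loci $Y_A(1)$ and $Y_A(2)$ are smooth (of dimensions $4$ and $2$), and over $Y_A(1)$ the cover $f_A$ is \'etale by~\eqref{nonram}, while over $Y_A(2)\setminus\PP(W)$ and away from $Y_A[3]$ the local determinantal model with $k=2$ gives a smooth double cover; hence $X_A$ is smooth off $f_A^{-1}\PP(W)$ in the neighborhood. Conversely, every point of $f_A^{-1}\PP(W)$ near $x$ carries the transverse $A_1$ just exhibited, so it is singular. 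I expect the main obstacle to be the last, product-structure step: turning the pointwise rank-$3$ statement of \Ref{prp}{germediy} into a genuine analytic triviality of the $A_1$-family along the two-dimensional stratum, i.e.\ verifying that the congruence diagonalizing $M_J(v)$ can be chosen to depend analytically on the $\PP(W)$-directions so that the transverse node decouples cleanly, rather than merely controlling the singularity fibrewise.
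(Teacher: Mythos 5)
Your proposal founders on a case distinction that it never makes. You assert that at every point of $\PP(W)\setminus\cB(W,A)\setminus Y_A[3]$ one has $\dim(A\cap(v_0\wedge\bigwedge^2V))=2$, citing \Ref{prp}{cnesinerre}(1); this is a misreading. That proposition characterizes the points at which $C_{W,A}$ \emph{is a smooth curve}, i.e.\ points of $C_{W,A}$ itself: by the very definition of $\supp C_{W,A}$, the dimension is $\ge 2$ exactly on the curve $C_{W,A}$, while for $[v_0]\in\PP(W)\setminus C_{W,A}$ one has $A\cap(v_0\wedge\bigwedge^2V)=\bigwedge^3W$, of dimension $1$. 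So the locus in question splits into two genuinely different cases: (i) $f_A(x)\notin C_{W,A}$, where $k=1$, the point lies in $Y_A(1)$, $f_A$ is \'etale there by~\eqref{nonram}, and the germ of $X_A$ \emph{equals} the germ of $Y_A$ --- this is where \Ref{prp}{germediy} applies and gives $\rk f_2=3$; and (ii) $f_A(x)\in C_{W,A}$, where $k=2$, the fiber is a single point and the cover is not \'etale. Your argument runs the $k=2$ determinantal model with rank-$3$ quadratic input simultaneously at all points, which is internally inconsistent: \Ref{prp}{germediy} is stated only for $[v_0]\notin C_{W,A}$, and at points of $C_{W,A}$ its conclusion is in fact false --- there the quadratic term of $\det M_J(v)$ has rank $1$, not $3$ (this is visible from~\eqref{ecchilo}, whose determinant has quadratic part $-t_1^2$; equivalently, compute $\det(q_v|_K)$ for $K$ as in~\eqref{formstan}).

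As a consequence, the hard case (ii) --- which is where essentially all of the paper's work lies --- is not addressed at all. At a point of $C_{W,A}\setminus\cB(W,A)\setminus Y_A[3]$ one cannot reduce to the hypersurface geometry of $Y_A$: one must bring the $2\times2$ matrix $M_J(v)$ itself to the normal form~\eqref{matricedue}, and this needs two inputs your sketch never produces. First, the normal form~\eqref{formstan} for $K=A\cap(v_0\wedge\bigwedge^2V)$, which is precisely where the hypothesis $[v_0]\notin\cB(W,A)$ is used ($\PP(K)$ meets $\Gr(2,V_0)$ only in $\bigwedge^2W_0$ and is not tangent to it). Second, the non-degeneracy~\eqref{tortora} of the relevant $2\times2$ block of cofactors, proved via \Ref{lmm}{primosem} and the hypothesis $\dim(A\cap(\bigwedge^2W\wedge V))=1$ through~\eqref{ucraina}. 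Your appeal to ``the analytic congruence already built into~\eqref{congruenza}'' cannot substitute for this: \eqref{congruenza} only reduces the $10\times10$ Gram matrix to the block $1_j\oplus M_J(v)$; the normal form of $M_J$ is new work, and it is exactly what yields~\eqref{benni} and hence the germ $(\CC^2,0)\times A_1$ with singular locus $f_A^{-1}\PP(W)$. Finally, the obstacle you flag at the end (analytic triviality of the $A_1$-family along the stratum) is not where the difficulty sits: in case (i) the germ identification, $\rk f_2=3$, and the fact that $Y_A$ is singular along the two-dimensional $\PP(W)$ force, via the holomorphic splitting lemma, the residual two-variable function to vanish identically, so that case is essentially immediate; it is case (ii) that requires the computation.
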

\begin{proof}
Suppose first that $f_A(x)\notin  C_{W,A}$. Then $f_A(x)\in Y_A(1)$ and hence $f_A$ is \'etale over $f_A(x)$, 
see~\eqref{nonram}.  Thus the germ $(X_A,x)$ is isomorphic to the germ $(Y_A,f_A(x))$ and the statement of the proposition follows from~\Ref{prp}{germediy} because  by hypothesis $B=0$. It remains to examine the case 
\begin{equation}\label{gobetti}
f_A(x)\in(C_{W,A}\setminus \cB(W,A)\setminus Y_A[3]).  
\end{equation}
Let $f_A(x)=[v_0 ]$. Since $A\notin\Sigma_{\infty}$ there exists a subspace $V_0\subset V$  transversal to $[v_0 ]$ and such that~\eqref{trasdue} holds - see~\Ref{prp}{alpiudue}. Thus we may apply~\Ref{prp}{locdegsym}. We will adopt the notation of that Proposition, in particular we will identify $V_0$ with $(\PP(V)\setminus\PP(V_0))$ via~\eqref{apertoaffine}.
Let $W_0:= W\cap V_0$; thus $\dim W_0=2$. 
 Let $K\subset\bigwedge^2 V_0$ be the subspace corresponding to $(v_0\wedge\bigwedge^2 V)\cap A$   via~\eqref{iaquinta}.
By~\eqref{gobetti} $\dim K=2$.
 Let us prove that there exists a basis $\{w_1,w_2,u_1,u_2,u_3\}$ of $V_0$ such that $w_1,w_2\in W_0$ and 
\begin{equation}\label{formstan}
K=\la w_1\wedge w_2,\ w_1\wedge u_1+u_2\wedge u_3\ra.
\end{equation}
In fact since $[v_0]\notin\cB(W,A)$ the following hold:
\begin{itemize}
\item[(1)]
$\PP(K)\cap\Gr(2,V_0)=\{\bigwedge^2 W_0\}$.
\item[(2)]
$\PP(K)$ is \emph{not} tangent to $\Gr(2,V_0)$.
\end{itemize}
Now let $\{\alpha,\beta\}$ be a basis  of $K$ such that $\bigwedge^ 2 W_0=\la \alpha\ra$. By~(1) we have that $\beta\wedge\beta\not=0$. Let $S:=\supp(\beta\wedge\beta)$; thus $\dim S=4$. Let us prove that $W_0\not\subset S$. In fact suppose that $W_0\subset S$. Then $K\subset\bigwedge^ 2 S$ and since $\Gr(2,S)$ is a quadric hypersurface in $\PP(\bigwedge^ 2 S)$ it follows that either $\PP(K)$ intersects $\Gr(2,U)$  in two points or  is tangent to it, that contradicts~(1) or~(2) above.
Let $\{w_1,w_2\}$ be a basis of $W_0$ such that $w_1\in W_0\cap S$; it is clear that there exist $u_1,u_2,u_3\in S$ linearly independent such that $\beta=w_1\wedge u_1+u_2\wedge u_3$. This proves that~\eqref{formstan}  holds. 
Rescaling   $u_1,u_3$ we  may assume that
\begin{equation}\label{sturzo}
\vol_0 (\wedge w_1\wedge w_2\wedge u_1\wedge u_2\wedge u_3)=1
\end{equation}
where $\vol_0$ is our chosen volume form, see~\eqref{volzero}. Let 
\begin{equation}\label{basebase}
J:=\la w_1\wedge u_1,\,w_1\wedge u_2,\,
w_1\wedge u_3,\,w_2\wedge u_1,\,w_2\wedge u_2,\,
w_2\wedge u_3,\,u_1\wedge u_2,\ u_1\wedge u_3\ra.
\end{equation}
Thus $J$ is transversal to  $K$ by~\eqref{formstan} and hence  we have Decomposition~\eqref{donmilani}. Given $v\in V_0$ we write
\begin{equation}
v=s_1 w_1+s_2 w_2+t_1 u_1+t_2 u_2+ t_3 u_3.
\end{equation}
Thus $(s_1,s_2,t_1,t_2,t_3)$ are affine coordinates on $V_0$ and hence by~(\ref{apertoaffine}) they are also coordinates on an open neighborhood of $[v_0 ]\in V_0$.   Let $N=N_{J}$, $P=P_{J}$, $Q=Q_{J}$, $R=R_{J}$  be the matrix functions appearing in~\eqref{patrici} . A straightforward computation gives that
\begin{equation}\label{nittygritty}
P(v)=\left(
\begin{matrix}
0 & t_1 \\
t_1 & -2s_2
\end{matrix}
\right),\quad
R(v)=
\left(
\begin{matrix}
0 & 0 & 0 & 0 & 0 & 0 & t_3 & -t_2 \\
-s_2 & 0 & 0 & s_1 & -t_3 & t_2 & 0 & 0
\end{matrix}
\right).
\end{equation}
 The $8\times 8$-matrix $(N+Q(v))$ is invertible for $(s,t)$ in a neighborhood of $0$; we set
 \begin{equation}
(c_{ij})_{1\le i,j\le 8}=-(N+Q(v))^{-1}
\end{equation}
where $c_{ij}\in\cO_{V_0,0}$. 
A straightforward computation gives that
\begin{equation}\label{ecchilo}
P(v)-R(v)\cdot(N+Q(v))^{-1}\cdot R(v)^t= 
\left(
\begin{matrix}
c_{7,7}t_3^2-2 c_{7,8}t_2 t_3+c_{8,8} t_2^2 & 
t_1+\delta \\
t_1+\delta  & -2s_2+\epsilon
\end{matrix}
\right)
\end{equation}
where $\delta,\epsilon\in{\mathfrak m}^2_0$ (here ${\mathfrak m}_0\subset\CC[s_1,s_2,t_1,t_2,t_3]$ is the maximal ideal of $(0,\ldots,0)$).
Let us prove that 
\begin{equation}\label{tortora}
\det
\left(
\begin{matrix}
c_{7,7}(0) & -c_{7,8}(0) \\
-c_{8,7}(0) & c_{8,8}(0)
\end{matrix}
\right)\not=0.
\end{equation}
Since $Q(0)=0$ we have $c_{ij}(0)=-(\det N)^{-1}\cdot N^{ij}$ where $N^c=(N^{ij})_{1\le i,j\le 8}$ is the matrix of cofactors of $N$. Thus~(\ref{tortora}) is equivalent to  
\begin{equation}\label{ciccio}
\det
\left(
\begin{matrix}
N^{7,7} & N^{7,8} \\
N^{8,7} & N^{8,8}
\end{matrix}
\right)\not=0.
\end{equation}
The quadratic form $q_A|_{J}$ is non-degenerate and hence we have the dual quadratic form $(q_A|_{J})^{\vee}$  on $J^{\vee}$.  Let  $U:=\la u_1,u_2,u_3\ra$ where  $u_1,u_2,u_3$ are as in~\eqref{formstan}. Applying~\Ref{lmm}{primosem} to $q_A|_{J}$ and the subspace  $W_0\wedge U\subset J$ we get that 
\begin{equation}
\cork(q_A|_{W_0\wedge U})=\cork((q_A|_{J})^{\vee}|_{\Ann  (W_0\wedge U)}).
\end{equation}
By~\eqref{ucraina} $q_A|_{W_0\wedge U}$ is non-degenerate: it follows that $(q_A|_{J})^{\vee}|_{\Ann  (W_0\wedge U)}$ is non-degenerate as well. The annihilator of $W_0\wedge U$ in $J^{\vee}$ is given
by
\begin{equation}\label{pappagone}
\Ann  (W_0\wedge U)= \la u_1^{\vee}\wedge u_2^{\vee},\ u^{\vee}_1\wedge u_3^{\vee}\ra
\end{equation}
 and  the Gram-matrix of $(q_A|_{J})^{\vee}|_{\Ann  (W_0\wedge U)}$ with respect to the  basis given by~\eqref{pappagone} is equal to $(\det N)^{-1}(N^{ij})_{7\le i,j\le 8}$. Hence~\eqref{ciccio} holds and this proves that~\eqref{tortora} holds. By~(\ref{ecchilo}) and~(\ref{tortora}) there exist new analytic coordinates  $(x_1,x_2,y_1,y_2,y_3)$ on an open neighborhood $\cU$ of $0\in V_0$  - with $(0,\ldots,0)$ corresponding to $0\in V_0$ - such that
\begin{equation}\label{matricedue}
P(v)-R(v)\cdot (N+Q(v))^{-1}\cdot R(v)^t= 
\left(
\begin{matrix}
x_1^2+x_2^2 &  y_1 \\
y_1 & y_2
\end{matrix}
\right).
\end{equation}
(Recall that $\delta,\epsilon\in{\mathfrak m}^2_0$.) By~\Ref{prp}{locdegsym}  we get that
\begin{equation}\label{benni}
f_A^{-1}\cU=V(\xi_1^2-y_2,\ \xi_1\xi_2+y_1,\ \xi_2^2-x_1^2-x_2^2,)\subset\cU\times\CC^2
\end{equation}
where $(\xi_1,\xi_2)$ are coordinates on $\CC^2$ and our point $x\in X_A$ has  coordinates $(0,\ldots,0)$. 
(Notice that if $k=2$ then the entries of the first matrix of~\eqref{eqespl} belong to the ideal generated by the entries of the second matrix of~\eqref{eqespl}.)
Let $B^3(0,r)\subset\CC^3$ be a small open ball centered at the origin and let $(x_1,x_2,y_3)$ be coordinates on $\CC^3$; there is an obvious isomorphism between an open neighborhood of $0\in f_A^{-1}\cU$ and
\begin{equation}\label{barsport}
V(\xi_2^2-x_1^2-x_2^2)\subset B^3(0,r)\times\CC^2
\end{equation}
taking $(0,\ldots,0)$ to $(0,\ldots,0)$. 
This proves that $X_A$ is singular at $x$ with analytic germ as claimed. It follows that $f_A^{-1}(\PP(W)\setminus\cB(W,A)\setminus Y_A[3])\subset \sing Y_A$. On the other hand an arbitrary point $x'$ in a sufficiently small neighborhood of $x$ is mapped to $Y_A(1)$ and if it does not belong to $f_A^{-1}\PP(W)$ the map $f_A$ is \'etale at $x'$: by~\Ref{prp}{ipsing} $Y_A$ is smooth at $f(x')$  and therefore $X_A$ is smooth at $x'$.
\end{proof}
Let $\Sigma^{\rm sm}$ be the smooth locus of $\Sigma$.    
\begin{crl}\label{crl:runrabbit}
Let $A\in(\Sigma^{\rm sm}\setminus\Delta)$ and $W$ be the unique element in $\Theta_A$ (unique by~\eqref{singsig}). Then
\begin{itemize}
\item[(1)]
$\sing X_A=f_A^{-1}\PP(W)$.
\item[(2)]
 Let $x\in f_A^{-1}\PP(W)$. The germ $(X_A,x)$  in the classical topology is isomorphic to $(\CC^2,0)\times A_1$.
\item[(3)]
$C_{W,A}$ is a smooth sextic curve in $\PP(W)$. 
\item[(4)]
The map
\begin{equation}\label{doppiopasso}
\begin{matrix}
f_A^{-1}\PP(W) & \lra & \PP(W) \\
x & \mapsto & f_A(x)
\end{matrix}
\end{equation}
is a double cover simply branched over $C_{W,A}$. 
\end{itemize}
\end{crl}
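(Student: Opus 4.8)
The plan is to reduce everything to the local analysis already carried out in~\Ref{prp}{hocuspocus}, after checking that the two ``bad'' loci appearing there are empty under the present hypotheses. First I would record the consequences of $A\in(\Sigma^{\rm sm}\setminus\Delta)$. By~\eqref{singsig} the smoothness of $\Sigma$ at $A$ means $A\notin\Sigma_{+}$ and $A\notin\Sigma[1]$; the first gives $\Card(\Theta_A)=1$, say $\Theta_A=\{W\}$ (in particular $\dim\Theta_A=0$, so $A\notin\Sigma_{\infty}$), and the second gives $\dim(A\cap(\bigwedge^2 W\wedge V))=1$, whence also $A\notin\Sigma[2]$. Since $A\notin\Delta$ we have $Y_A[3]=\es$. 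The key preliminary observation is that $\cB(W,A)=\es$: indeed $\Theta_A=\{W\}$ makes Item~(1) of~\Ref{dfn}{malvagio} vacuous, while $A\cap(\bigwedge^2 W\wedge V)=\bigwedge^3 W$ is one-dimensional, so the intersection in Item~(2) can never have dimension $\ge 2$. Consequently $\PP(W)\setminus\cB(W,A)\setminus Y_A[3]=\PP(W)$, so~\Ref{prp}{hocuspocus} applies to \emph{every} $x\in f_A^{-1}\PP(W)$; this at once yields Item~(2), exhibits $f_A^{-1}\PP(W)$ as a smooth surface contained in $\sing X_A$, and shows $\sing X_A=f_A^{-1}\PP(W)$ in a neighborhood of the plane.

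To finish Item~(1) I must show $X_A$ is smooth over $Y_A\setminus\PP(W)$. Such a point maps to some $[v]$ with $v\notin W$, so no element of $\Theta_A$ contains $v$ and, using $Y_A[3]=\es$, the integer $k:=\dim(A\cap(v\wedge\bigwedge^2 V))$ is $1$ or $2$. If $k=1$ then $f_A$ is étale over $[v]\in Y_A(1)$ and $Y_A$ is smooth there by~\Ref{prp}{ipsing}(1a), so $X_A$ is smooth. The crux is $k=2$: here I would use the local determinantal model of~\Ref{prp}{locdegsym}, which for $k=2$ reduces (the first matrix in~\eqref{eqespl} then lying in the ideal of the second) to $\{a=\xi_2^2,\ b=-\xi_1\xi_2,\ c=\xi_1^2\}$, where $\left(\begin{smallmatrix} a & b\\ b & c\end{smallmatrix}\right)$ is $M_J$, which vanishes at the centre. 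A Jacobian computation shows that the unique point $\xi=0$ over $[v]$ is a smooth point of $X_A$ precisely when $da,db,dc$ are linearly independent there; but locally $Y_A[2]=\{a=b=c=0\}$, so this independence is exactly the statement that $Y_A(2)$ is smooth of dimension $2$ at $[v]$, which holds by~\Ref{prp}{ipsing}(1b). This equivalence — smoothness of $X_A$ at a branch point versus smoothness of $Y_A(2)$ — is the only ingredient not already contained in~\Ref{prp}{hocuspocus}, and I expect it to be the main point to get right.

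For Item~(3): $C_{W,A}$ is a sextic curve by~\Ref{lmm}{platone}, since $A\notin\Sigma_{\infty}\cup\Sigma[2]$. By~\Ref{prp}{cnesinerre}(1) it is smooth at $[v_0]$ as soon as $\dim(A\cap(v_0\wedge\bigwedge^2 V))=2$ and $[v_0]\notin\cB(W,A)$; every point of $\supp C_{W,A}$ has this dimension $\ge 2$ and, because $Y_A[3]=\es$, exactly $2$, while $\cB(W,A)=\es$, so $C_{W,A}$ is smooth everywhere. Finally, for Item~(4), the restriction of $f_A$ gives a finite degree-$2$ map $f_A^{-1}\PP(W)\to\PP(W)$ which is étale over $\PP(W)\setminus C_{W,A}\subset Y_A(1)$ and has a single point over each point of $C_{W,A}$ (the point $\xi=0$ of the local model). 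Since $f_A^{-1}\PP(W)$ and $\PP(W)$ are both smooth surfaces and the branch locus $C_{W,A}$ is a smooth curve, a finite degree-$2$ cover is automatically simply branched (local normal form $z\mapsto z^2$), giving the claim; this also exhibits $f_A^{-1}\PP(W)$ as the $K3$ surface $S_A$ of the introduction.
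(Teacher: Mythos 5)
Your proposal is correct, and for most of the statement it follows the paper's own argument: the verification that $A\notin(\Sigma_{\infty}\cup\Sigma[2])$, that $\dim(A\cap(\bigwedge^2 W\wedge V))=1$, that $\cB(W,A)=\es$ and $Y_A[3]=\es$, followed by the application of \Ref{prp}{hocuspocus} to get Item~(2) and the inclusion $f_A^{-1}\PP(W)\subset\sing X_A$, is exactly what the paper does; so are the deduction of Item~(3) from \Ref{lmm}{platone} and \Ref{prp}{cnesinerre}, and the deduction of Item~(4) from \eqref{nonram}, the single point over each point of $C_{W,A}$, and the smoothness of $f_A^{-1}\PP(W)$ coming from Items~(1)--(2). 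The one place where you genuinely diverge is the step you correctly single out as the crux: smoothness of $X_A$ at points lying over $Y_A(2)$ (necessarily off $\PP(W)$). The paper disposes of this in one line by citing Lemma~2.5 of \cite{ogdoppio}, an external result, whereas you rederive it inside the paper's own toolkit: in the local model of \Ref{prp}{locdegsym} with $k=2$ the equations \eqref{eqespl} reduce to $\xi_2^2=a$, $\xi_1\xi_2=-b$, $\xi_1^2=c$ with $M_J=\bigl(\begin{smallmatrix} a & b\\ b & c\end{smallmatrix}\bigr)$, the Jacobian at the unique point $\xi=0$ has rank $3$ exactly when $da,db,dc$ are independent, and you extract that independence from Item~(1b) of \Ref{prp}{ipsing}. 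Your route buys self-containedness (no appeal to \cite{ogdoppio} beyond what this paper already quotes); the paper's is shorter.

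One point you should make explicit to close your Jacobian step. You pass from \lq\lq $Y_A(2)$ is smooth two-dimensional at $[v]$\rq\rq\ to \lq\lq $da,db,dc$ are linearly independent at $[v]$\rq\rq, using that locally $Y_A[2]=\{a=b=c=0\}$. This implication is valid for the \emph{scheme} cut out by the ideal $(a,b,c)$ (a codimension-$3$ locus cut by three equations is a local complete intersection, and smoothness then forces the differentials to be independent), but it is \emph{not} formally implied by smoothness of the underlying \emph{set}: $V(x_1,x_2,x_3^2)\subset\CC^5$ is set-theoretically a smooth $2$-plane while the three differentials have rank $2$ at the origin. Since this paper defines $Y_A[k]$ only as a set, you need to either invoke the natural determinantal structure (the ideal of $9\times 9$ minors of the matrix congruent to $\mathrm{diag}(1_j,M_J(v))$ is exactly $(a,b,c)$, so the degeneracy scheme is locally $V(a,b,c)$ and \Ref{prp}{ipsing}(1b) should be read as smoothness of that scheme), or note that the proof of the quoted result in \cite{ogtasso} establishes precisely the independence of the differentials. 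This is exactly the content the paper outsources to Lemma~2.5 of \cite{ogdoppio}; with that reading made explicit, your argument is complete.
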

\begin{proof}
(1)-(2): By~\eqref{singsig} $A\notin (\Sigma_{\infty}\cup\Sigma[2])$, $\dim(A\cap (\bigwedge^2 W\wedge V))=1$ and $\cB(W,A)=\es$. Moreover  $Y_A[3]$ is empty by definition.  
By~\Ref{prp}{hocuspocus} it follows that   $f_A^{-1}\PP(W)\subset\sing X_A$ and that the analytic germ at $x\in f_A^{-1}\PP(W)$ is as stated. It remains to prove that $X_A$ is smooth at $x\in(X_A\setminus f_A^{-1}\PP(W))$. Since $A\notin\Delta$ we have that $f_A(x)\in(Y_A(1)\cup Y_A(2))$. If $f_A(x)\in Y_A(1)$ then $f_A$ is \'etale over $f_A(x)$ (see~\eqref{nonram}) and $Y_A$ is smooth at $f_A(x)$ by~\Ref{prp}{ipsing}: it follows that $X_A$ is smooth at $x$.  If $f_A(x)\in Y_A(2)$ then $X_A$ is smooth at $x$ by Lemma~2.5 of~\cite{ogdoppio}.
(3): Immediate consequence of~\Ref{prp}{cnesinerre}. 
(4):  Map~\eqref{doppiopasso} is an \'etale cover away from $C_{W,A}$, see~\eqref{nonram},  while 
 $f_A^{-1}(y)$ is a single point for $y\in C_{W,A}$ - see~\eqref{benni}. Thus either $f_A^{-1}\PP(W)$ is singular or else Map~\eqref{doppiopasso} is simply branched over $C_{W,A}$. 
Items~(1), (2) show that  $f_A^{-1}\PP(W)$ is smooth: it follows that Item~(4) holds. 
\end{proof}
\begin{dfn}
Suppose that $(W,A)\in\wt{\Sigma}$ and that $C_{W,A}\not=\PP(W)$. We let
\begin{equation}
S_{W,A}\lra\PP(W)
\end{equation}
be the double cover  ramified over $C_{W,A}$. If  $\Theta_A$ has a single element we let $S_A:=S_{W,A}$.
\end{dfn}
\begin{rmk}
Let $A\in(\Sigma^{\rm sm}\setminus\Delta)$ and $W$ be the unique element of  $\Theta_A$. By Item~(4) of~\Ref{crl}{runrabbit} $f_A^{-1}\PP(W)$ is identified with $S_A$ and the restriction of $f_A$ to $f_A^{-1}\PP(W)$ is identified with the double cover $S_A\to\PP(W)$. In particular
$f_A^{-1}\PP(W)$ is a polarized $K3$ surface of degree $2$.
\end{rmk}
\subsection{Desingularization of $X_A$ for $A\in(\Sigma^{\rm sm}\setminus\Delta)$}\label{subsec:singper}
\setcounter{equation}{0}
Let $A\in(\Sigma^{\rm sm}\setminus\Delta)$ and $W$ be the unique element of $\Theta_A$. 
Let 
\begin{equation}
\pi_{A} \colon \wt{X}_{A} \to X_{A} 
\end{equation}
 be the blow-up of $\sing X_{A} $. Then $\wt{X}_{A} $ is smooth by~\Ref{crl}{runrabbit}. Let 
\begin{equation}\label{accatilde}
\wt{H}_{A}:=\pi_{A}^{*}H_{A} ,\quad 
\wt{h}_{A}:=c_1(\cO_{\wt{X}_A }(\wt{H}_A)).
\end{equation}
Let  
\begin{equation}\label{cricket}
\cU\subset(\lagr\setminus\sing\Sigma\setminus\Delta)
\end{equation}
 be an open (classical topology) contractible neighborhood of $A$. 
We may assume  that there exists a tautological family of double EPW-sextics $\cX\to\cU$, see \S 2 of~\cite{ogdoppio}.  Let $\cH $ be the tautological divisor class on $\cX $: thus  $\cH |_{X_A }\sim H_A $.
 The holomorphic line-bundle $\cO_{\cU}(\Sigma)$ is trivial and hence there is a well-defined double cover $\phi\colon\cV\to\cU$ ramified over $\Sigma\cap\cU$.  Let $\cX_2 :=\cV\times_{\cU}\cX $ be the base change:
\begin{equation}\label{cambiobase}
\xymatrix{
\cX_2   \ar^{\wt{\phi}}[r]  \ar^{\rho_2}[d] & \cX  \ar^{\rho}[d] \\
\cV  \ar^{\phi}[r] & \cU.}
\end{equation}
Given $A'\in\Sigma\cap\cU$ we will  denote by the same symbol   the unique point in $\cV$ lying over $A'$.
\begin{prp}\label{prp:simres}
Keep notation and assumptions as above.  There is a simultaneous resolution of singularities $\pi\colon \wt{\cX}\to\cX$ fitting into a commutative diagram
\begin{equation}\label{triangolo}
\xymatrix{
\wt{\cX}   \ar^{\pi}[rr]  \ar^{g}[dr] &    & \cX_2    \ar^{\rho_2}[dl] \\
 & \cV &}
\end{equation}
Moreover $\pi$ is an isomorphism away from $g^{-1}(\phi^{-1}(\Sigma\cap \cU))$ and 
\begin{equation}\label{rumba}
g^{-1}(A)  \cong\wt{X}_{A}, \qquad \pi|_{g^{-1}(A)}  =\pi_{A},\qquad 
\pi^{*}\cH |_{g^{-1}(A)}  \sim \wt{H}_{A}. 
\end{equation}
\end{prp}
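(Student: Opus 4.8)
The plan is to reduce the whole statement to a local analytic computation in a neighbourhood of $\sing X_{A_0}=f_{A_0}^{-1}\PP(W)=S_A$, to build the simultaneous resolution explicitly in that local model, and then to glue the local resolutions. Fix $x\in S_A$. Since $A_0\notin\Sigma_\infty$, \Ref{prp}{alpiudue} furnishes a decomposition \eqref{trasuno} satisfying \eqref{trasdue}, so \Ref{prp}{locdegsym} applies and the germ of the \emph{total family} $\cX$ at $(x,A_0)$ is described by the $2\times 2$ matrix $M_J(v,A')$ of \eqref{eqespl} (here $k=\dim K=2$, exactly as in the proof of \Ref{prp}{hocuspocus}). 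That proof, through \eqref{matricedue} and \eqref{benni}, already shows that near $x$ the central fibre $X_{A_0}$ has the form of a transverse $A_1$, i.e. $\{\xi_2^2=x_1^2+x_2^2\}\times\CC^2$. The new ingredient is to track the dependence on $A'\in\cU$: because $A_0\in\Sigma^{\rm sm}$, the local defining function $s$ of $\Sigma$ in $\cU$ is a submersion, and because $X_{A'}$ is smooth for $A'\notin\Sigma$ the transverse node is genuinely smoothed as $A'$ leaves $\Sigma$. Hence the family, cut transversally to $S_A$, is a versal deformation of the $A_1$ singularity with discriminant $\Sigma$. Invoking finite determinacy and uniqueness of the versal deformation of $A_1$, I would produce analytic coordinates in which $\cX$ is, near $x$, the hypersurface $\{\xi_2^2=x_1^2+x_2^2+s(\mathbf u)\}$ (times a trivial factor $\CC^2$ accounting for the $S_A$-directions and the remaining directions of $\cU$). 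In particular $\cX$ is smooth near $S_A$ and $\rho$ is the evident projection.

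Next I would carry out the base change along $\phi\colon\cV\to\cU$, with $\cV=\{\tau^2=s\}$. In the coordinates just introduced $\cX_2=\cV\times_{\cU}\cX$ becomes the transverse $3$-fold ordinary double point $\{\xi_2^2=x_1^2+x_2^2+\tau^2\}$ (times the trivial factors), which is singular exactly along the smooth locus $\cZ:=\{x_1=x_2=\xi_2=\tau=0\}=\sing\cX_2$, and $\cZ$ lies over $\phi^{-1}(\Sigma\cap\cU)$. I would then blow up $\cZ$: the exceptional divisor $E$ is a bundle of smooth quadric surfaces $\PP^1\times\PP^1$ over $\cZ$, and contracting one of its two rulings produces a small resolution $\pi$ whose exceptional divisor is a $\PP^1$-bundle over $\cZ$. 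A direct check in these coordinates gives the three required properties: $\pi$ is an isomorphism away from $\cZ$, hence a fortiori away from $g^{-1}(\phi^{-1}(\Sigma\cap\cU))$; the central fibre $g^{-1}(A)$ is the blow-up of the transverse $A_1$, so it coincides with $\pi_A\colon\wt X_A\to X_A$ of \Ref{crl}{runrabbit}; and, since $\pi^*\cH$ restricts over $A$ to the pullback of $\cH|_{X_A}=H_A$, one gets $\pi^*\cH|_{g^{-1}(A)}\sim\wt H_A$ as in \eqref{accatilde}. This establishes \eqref{rumba} locally.

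The hard part will be the global step. The small resolution of $\cX_2$ depends on a choice of one of the two rulings of the quadric-surface bundle $E\to\cZ$, and a priori this choice need not be consistent as $x$ ranges over the $K3$ surface $S_A$: the two rulings are interchanged by the monodromy of the vanishing cycle of the transverse node around $\Sigma$. This is precisely where the base change $\phi\colon\cV\to\cU$ ramified along $\Sigma$ is indispensable — it is the Brieskorn–Tyurina base change attached to the $A_1$ singularity and it kills exactly this monodromy. I would make the globalization precise by showing that the $\ZZ/2$-bundle on $\cZ$ parametrising the two rulings of $E$ becomes trivial after passage to $\cV$; a global ruling then exists, and contracting it yields a single smooth complex space $\pi\colon\wt\cX\to\cX_2$. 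Away from $\cZ$ the gluing is automatic because $\pi$ is an isomorphism there, while the equalities \eqref{rumba} are local along $g^{-1}(A)$ and therefore persist under the gluing. Putting $g:=\rho_2\circ\pi$ then yields the commutative diagram \eqref{triangolo} with all the asserted properties.
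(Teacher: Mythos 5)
Your overall strategy (local normal form for the family, Brieskorn--Tyurina base change along $\phi$, blow-up of $\sing\cX_2$ and contraction of one ruling) is the same as the paper's, but the step on which everything rests has a genuine gap: the claim that, because $\Sigma$ is smooth at $A_0$ and $X_{A'}$ is smooth for $A'\notin\Sigma$, the family cut transversally to $S_A$ is a \emph{versal} deformation of the $A_1$ singularity. Smoothing of the nearby fibres plus smoothness of the set-theoretic discriminant does not imply versality. Consider the family $\{\xi_2^2=x_1^2+x_2^2+s^2\}$ over the $s$-line: every fibre with $s\neq 0$ is smooth and the discriminant is the smooth divisor $\{s=0\}$, yet the family is not versal (its Kodaira--Spencer class vanishes; it is the pull-back of the versal family under $s\mapsto s^2$) and its total space is \emph{singular}. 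Your hypotheses do not exclude this scenario, and in it your subsequent steps collapse: after the base change $\tau^2=s$ the local equation becomes $\xi_2^2=x_1^2+x_2^2+\tau^4$, the exceptional divisor of the blow-up of the singular locus is then a bundle of \emph{singular} quadrics (rank $3$, not $\PP^1\times\PP^1$), and the small resolution with the properties~\eqref{rumba} does not arise as you describe. So versality must be proved, not inferred from smoothing.

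What is missing is exactly the input the paper uses at this point: smoothness of the total space $\cX$, cited from Proposition~3.2 of~\cite{ogdoppio}. Once $\cX$ is smooth, the fibres of $\rho$ near $x$ are level sets of functions on a smooth germ, so the deformation parameter enters linearly as the constant term of the local equation; combined with \Ref{prp}{hocuspocus} this yields the normal form $(\xi,\eta,t)\mapsto(\xi_1^2+\xi_2^2+\xi_3^2,t_2,\ldots,t_{54})$ for the germ $(\cX,x)\to(\cU,A')$, which is precisely the versality you want. Alternatively you could prove versality by an honest first-order computation of the dependence of $M_J(v,A')$ on $A'$, but you never carry that out — and note that \Ref{prp}{locdegsym} as stated concerns a single fixed $A$, so even your relative matrix description of the germ of $\cX$ needs justification. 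A smaller remark on your last paragraph: the consistency-of-rulings issue you flag as "the hard part" is disposed of in the paper by observing that $\sing\cX_2$ is simply connected, so the exceptional $\PP^1\times\PP^1$-fibration has two global rulings and one simply chooses one of the two resulting small resolutions; your proposed $\ZZ/2$-cover argument is a reasonable substitute but is left as a plan rather than executed.
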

\begin{proof}
By Proposition~3.2 of~\cite{ogdoppio} $\cX$ is smooth and the map $\rho$ of~\eqref{cambiobase} is a submersion of smooth manifolds away from points $x\in\cX$ such that 
\begin{equation}\label{condax}
\rho(x):=A'\in\Sigma\cap\cU,\quad x\in S_{A'}.
\end{equation}
Let $(A',x)$ be as in~\eqref{condax}. By~\Ref{prp}{hocuspocus} and smoothness of $\cX$ we get that the map of analytic germs $(\cX,x)\to (\cU,A')$ is isomorphic to 
\begin{equation}
\begin{matrix}
(\CC^3_{\xi}\times\CC^2_{\eta}\times\CC^{53}_t,{\bf 0}) & \lra & (\CC^{54}_t,{\bf 0}) \\
(\xi,\eta,t) & \mapsto & (\xi_1^2+\xi_2^2+\xi_3^2,t_2,\ldots,t_{54})
\end{matrix}
\end{equation}
Thus~\eqref{triangolo} is obtained by the classical process of simultaneous resolution of ordinary double points of surfaces.
More precisely let $\wh{\cX}_2 \to\cX_2 $ be the blow-up of $\sing\cX_2 $. Then $\wh{\cX}_2 $ is smooth and the exceptional divisor is a fibration over $\sing\cX_2 $ with fibers isomorphic to $\PP^1\times\PP^1$. Since $\sing\cX_2 $ is simply-connected we get that the exceptional divisor has two rulings by $\PP^1$'s. It follows that there are two small resolutions of $\cX_2 $ obtained by  contracting the exceptional divisor along either one of the two rulings.  Choose one small resolution and call it $\wt{\cX}_2 $.  Then~\eqref{rumba} holds.
\end{proof}
\begin{crl}\label{crl:conichesuw}
Let $A\in(\Sigma^{\rm sm}\setminus\Delta)$ and $A'\in\lagr^0$. Then  $(\wt{X}_{A},\wt{H}_{A})$ is a $HK$ variety deformation equivalent to  $(X_{A'},H_{A'})$. Moreover
$\cP(A)=\Pi(\wt{X}_A ,\wt{H}_A)$ where $\Pi(\wt{X}_A ,\wt{H}_A)$ is given by~\eqref{eccoperi}. 
\end{crl}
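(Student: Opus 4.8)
The plan is to read off both assertions from the smooth family $g\colon\wt{\cX}\to\cV$ produced in \Ref{prp}{simres}, which already does the heavy lifting. First I would record the structural facts I need. By \Ref{prp}{simres} the resolution $\pi$ is an isomorphism away from $g^{-1}(\phi^{-1}(\Sigma\cap\cU))$ and $g$ is a proper submersion (a simultaneous resolution of a family of surface ordinary double points), hence a smooth proper family of compact complex manifolds over $\cV$. The base $\cV$ is connected: $\Sigma$ is reduced, so the trivializing section of $\cO_{\cU}(\Sigma)$ cutting out the branch locus is not a square and the double cover $\phi$ is connected. By \eqref{rumba} the central fibre $g^{-1}(A)$ is $\wt{X}_A$, whereas the fibre over any $v\in\cV^0:=\phi^{-1}(\cU\cap\lagr^0)$ is the smooth double EPW-sextic $X_{\phi(v)}$, a HK variety deformation equivalent to the Hilbert square of a $K3$. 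Moreover $\pi^{*}\cH$ is a global divisor class on $\wt{\cX}$ restricting to $\wt{H}_A$ on the central fibre and to $H_{\phi(v)}$ over $\cV^0$, so it is a fibrewise polarizing class for the family.

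For the deformation-equivalence clause I would invoke Ehresmann together with connectedness of $\cV$: all fibres of $g$ are diffeomorphic, and the flat family $(\wt{\cX},\pi^{*}\cH)\to\cV$ exhibits $(\wt{X}_A,\wt{H}_A)$ as a polarized deformation of $(X_{\phi(v)},H_{\phi(v)})$ for $v\in\cV^0$. Since $\lagr^0$ is connected, the tautological family over $\lagr^0$ places all $(X_{A'},H_{A'})$ with $A'\in\lagr^0$ in one polarized deformation class, and transitivity gives that $(\wt{X}_A,\wt{H}_A)$ is deformation equivalent to $(X_{A'},H_{A'})$. Here one should note that $\wt{H}_A=\pi_A^{*}H_A$ is nef and big but not ample, since it contracts the exceptional divisor; thus the equivalence runs through fibrewise-nef, generically ample polarizations, which is harmless for the period computation.

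For the period identity I would compare two continuous maps on $\cV$. Because $\pi^{*}\cH$ is a global class, its fibrewise Beauville--Bogomolov square and divisibility are locally constant, hence equal to the generic values (square $2$, divisibility $1$), and it is of type $(1,1)$ on every fibre; so the polarized family $(\wt{\cX},\pi^{*}\cH)\to\cV$ carries a period map $P\colon\cV\to\DD_{\Lambda}$, $v\mapsto\Pi(\wt{X}_v,\pi^{*}\cH|_{\wt{X}_v})$, which is holomorphic by the local period theory for HK manifolds (markings exist locally and the point of $\DD_{\Lambda}$ in \eqref{eccoperi} is marking-independent). Over $\cV^0$ one has $\wt{X}_v=X_{\phi(v)}$ and $\pi^{*}\cH|_{\wt{X}_v}=H_{\phi(v)}$, so $P=\cP\circ\phi$ there. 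Now $\cU\subset(\lagr\setminus\sing\Sigma\setminus\Delta)$ avoids $\Sigma_{\infty}$ and $\Sigma[2]$ (both contained in $\sing\Sigma$ by \eqref{singsig}), so $\cP$ is regular, hence continuous, on all of $\cU$ by \Ref{prp}{dagest}, and therefore $\cP\circ\phi$ is continuous on $\cV$. Two continuous maps agreeing on the dense open $\cV^0$ agree everywhere; evaluating at the point of $\cV$ over $A$ yields $\Pi(\wt{X}_A,\wt{H}_A)=P(A)=\cP(A)$.

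The hard part will be the middle step: verifying that the family period map $P$ genuinely lands in $\DD_{\Lambda}$ and is holomorphic across the branch locus $\phi^{-1}(\Sigma)$, where the fibres are resolutions $\wt{X}_{\phi(v)}$ rather than honest double EPW-sextics. This is precisely where the smoothness of $g$ from \Ref{prp}{simres} and the constancy of the Beauville--Bogomolov invariants of $\pi^{*}\cH$ are essential, together with the regularity of $\cP$ at $A$ from \Ref{prp}{dagest}; once these are secured, the identification $\cP(A)=\Pi(\wt{X}_A,\wt{H}_A)$ is forced by continuity.
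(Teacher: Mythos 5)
Your overall strategy is the paper's own: everything is read off from the simultaneous resolution $g\colon\wt{\cX}\to\cV$ of \Ref{prp}{simres}, with deformation equivalence coming from connectedness of the base and the period identity from the fact that two holomorphic maps agreeing on the dense open set $\cV^0=\phi^{-1}(\cU\cap\lagr^0)$ agree everywhere (the paper compresses all of this into ``the remaining statements are obvious''). Your use of \eqref{rumba} to identify the fibres and the classes, and of \Ref{prp}{dagest} for the regularity of $\cP$ on all of $\cU$, is exactly what is needed to make that compression precise.

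There is, however, one missing ingredient, and it sits at the very point you call ``the hard part''. For the fibres of $g$ lying over $\phi^{-1}(\Sigma\cap\cU)$ to be HK varieties --- which is what the first assertion of the corollary claims for $\wt{X}_A$, and what you need for the period point $\Pi(\wt{X}_v,\pi^{*}\cH|_{\wt{X}_v})$ to be defined and to vary holomorphically across the branch locus --- you must know that these fibres are K\"ahler (equivalently, here, projective). Smoothness and properness of $g$ do \emph{not} give this by themselves: in a smooth proper family the limit of projective (even HK) fibres can fail to be K\"ahler (Hironaka-type examples), and without the K\"ahler property the Hodge decomposition on $H^2$, the Beauville--Bogomolov form, and the principle ``a smooth deformation of an HK manifold is HK'' are all unavailable, so your family period map $P$ is not yet defined at the points over $\Sigma\cap\cU$. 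This is precisely why the paper's proof opens with the sentence asserting that $\wt{X}_A$ is projective: $\pi_A\colon\wt{X}_A\to X_A$ is a blow-up of a projective variety, hence $\wt{X}_A$ (and likewise every special fibre $\wt{X}_{A'}$, $A'\in\Sigma\cap\cU$) is projective, and being in addition a smooth deformation of the HK fourfolds $X_{\phi(v)}$, $v\in\cV^0$, it is an HK variety. With that one sentence inserted, your argument is complete and coincides with the paper's intended proof.
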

\begin{proof}
Since $\pi_A\colon\wt{X}_{A}\to X_A$ is a blow-up $\wt{X}_{A}$ is projective. By~\Ref{prp}{simres}  $\wt{X}_{A}$ is a (smooth) deformation of $X_{A'}$: it follows that  $\wt{X}_{A}$ is a HK variety. The remaining statements are obvious.
\end{proof}
\begin{dfn}
Let $A\in(\Sigma^{\rm sm}\setminus\Delta)$. We let  $E_A\subset \wt{X}_A $ be the exceptional divisor of $\pi_A \colon\wt{X}_A \to X_A$ and $\zeta_A:=c_1(\cO_{\wt{X}_A }(E_A))$.
\end{dfn}
Given $A\in(\Sigma^{\rm sm}\setminus\Delta)$ we have a smooth conic bundle\footnote{$p$ is a smooth map and each fiber is isomorphic to $\PP^1$.}
\begin{equation}\label{fibcon}
p\colon E_A \lra S_A.
\end{equation}
\begin{clm}
Let $(,)$ be the Beauville-Bogomolov quadratic form of $\wt{X}_A$. The following formulae hold:
\begin{eqnarray}
(\wt{h}_A,\zeta_A) & =0,\label{ortogo}\\
(\zeta_A,\zeta_A) & =-2.\label{menodue}
\end{eqnarray}
\end{clm}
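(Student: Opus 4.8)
The plan is to reduce both identities to intersection numbers on the smooth fourfold $\wt X_A$ by means of the Fujiki relation, and then to evaluate those numbers by restriction to the exceptional divisor $E_A$ followed by push-forward along the conic bundle $p\colon E_A\to S_A$ of~\eqref{fibcon}. Since $\wt X_A$ is a deformation of the Hilbert square of a $K3$ (\Ref{crl}{conichesuw}), its Fujiki constant is $3$, so for $\alpha_1,\ldots,\alpha_4\in H^2(\wt X_A)$ one has
\[
\int_{\wt X_A}\alpha_1\alpha_2\alpha_3\alpha_4=(\alpha_1,\alpha_2)(\alpha_3,\alpha_4)+(\alpha_1,\alpha_3)(\alpha_2,\alpha_4)+(\alpha_1,\alpha_4)(\alpha_2,\alpha_3).
\]
By the same corollary and deformation invariance of the Beauville--Bogomolov form, $(\wt h_A,\wt h_A)=2$, which is the value for a smooth double EPW-sextic. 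Write $h_A:=c_1(\cO_{X_A}(H_A))$, so that $\wt h_A=\pi_A^{*}h_A$ and $\wt h_A|_{E_A}=p^{*}(h_A|_{S_A})$, while $\zeta_A=[E_A]$.

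For~\eqref{ortogo}, apply the Fujiki relation to $(\wt h_A,\wt h_A,\wt h_A,\zeta_A)$, obtaining $\int_{\wt X_A}\wt h_A^{3}\zeta_A=3(\wt h_A,\wt h_A)(\wt h_A,\zeta_A)=6(\wt h_A,\zeta_A)$. On the other hand the projection formula gives
\[
\int_{\wt X_A}\wt h_A^{3}\zeta_A=\int_{E_A}p^{*}\bigl((h_A|_{S_A})^{3}\bigr),
\]
which vanishes because $(h_A|_{S_A})^{3}\in H^{6}(S_A)=0$, $S_A$ being a surface. Hence $(\wt h_A,\zeta_A)=0$.

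For~\eqref{menodue}, the Fujiki relation applied to $(\wt h_A,\wt h_A,\zeta_A,\zeta_A)$ together with~\eqref{ortogo} yields $\int_{\wt X_A}\wt h_A^{2}\zeta_A^{2}=(\wt h_A,\wt h_A)(\zeta_A,\zeta_A)=2(\zeta_A,\zeta_A)$. Restricting one factor $\zeta_A=[E_A]$ to $E_A$ and applying the projection formula for $p$,
\[
\int_{\wt X_A}\wt h_A^{2}\zeta_A^{2}=\int_{E_A}p^{*}\bigl((h_A|_{S_A})^{2}\bigr)\cdot c_1(N_{E_A/\wt X_A})=\Bigl(\int_{S_A}(h_A|_{S_A})^{2}\Bigr)\cdot\deg\bigl(N_{E_A/\wt X_A}|_{p^{-1}(s)}\bigr),
\]
where $s\in S_A$ and $p^{-1}(s)$ is a fiber. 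By Item~(4) of~\Ref{crl}{runrabbit} the cover $S_A\to\PP(W)$ has degree $2$ and $H_A|_{S_A}$ is the pullback of $\cO_{\PP(W)}(1)$, so $\int_{S_A}(h_A|_{S_A})^{2}=2$. Finally, by~\Ref{prp}{hocuspocus} the germ of $X_A$ along $S_A$ is $(\CC^{2},0)\times A_1$; hence, transversally to $S_A$, the map $\pi_A$ is the blow-up of an ordinary double point of a surface, each fiber $p^{-1}(s)$ is the resulting $(-2)$-curve, and $N_{E_A/\wt X_A}|_{p^{-1}(s)}\cong\cO_{\PP^1}(-2)$. The fiber degree is therefore $-2$, so $\int_{\wt X_A}\wt h_A^{2}\zeta_A^{2}=2\cdot(-2)=-4$ and $(\zeta_A,\zeta_A)=-2$. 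The only delicate point is this last normal-bundle computation: once the Fujiki relation is in hand everything else is formal, so the crux is reading off from the transverse $A_1$ model of~\Ref{prp}{hocuspocus} that the fibers of $p$ are $(-2)$-curves, which forces the fiber degree of $\zeta_A|_{E_A}$ to be $-2$.
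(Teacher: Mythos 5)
Your proof is correct, and its skeleton --- the polarized Fujiki relation with constant $3$, restriction to the exceptional divisor $E_A$, and push-forward along the conic bundle $p\colon E_A\to S_A$, with the inputs $(\wt{h}_A,\wt{h}_A)=2$ and $\int_{S_A}(h_A|_{S_A})^2=2$ --- is exactly that of the paper. The one genuine divergence is the step you yourself single out as the crux: the fiber degree of $\zeta_A$. You obtain $N_{E_A/\wt{X}_A}|_{p^{-1}(s)}\cong\cO_{\PP^1}(-2)$ from the transverse $A_1$ model of \Ref{prp}{hocuspocus}: locally $X_A\cong(\CC^2,0)\times A_1$, so $\pi_A$ is fiberwise the minimal resolution of an ordinary double point and each $p^{-1}(s)$ is the exceptional $(-2)$-curve of a transverse surface slice; this is sound, since the local product structure contains entire fibers of $p$, so the normal-bundle identification is valid along the whole fiber and not just at a point. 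The paper instead gets $\int_{p^{-1}(s)}\zeta_A=-2$ (its equation~\eqref{duesucon}) by pure adjunction: $K_{\wt{X}_A}$ is trivial, hence $K_{E_A}\cong\cO_{E_A}(E_A)$, and since a fiber of the smooth fibration $p$ has trivial normal bundle inside $E_A$, the degree of $\zeta_A$ on it equals $\deg K_{\PP^1}=-2$. The adjunction route is shorter and uses nothing about the singularities of $X_A$ beyond the fact that $E_A$ is a smooth conic bundle divisor in a hyperk\"ahler fourfold --- which is precisely why the same computation reappears verbatim in the more general \Ref{prp}{accaie}; your route buys a concrete geometric picture of the fibers as $(-2)$-curves at the cost of invoking the specific local model of the contraction.
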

\begin{proof}
We claim that
\begin{equation}\label{dopugu}
6(\zeta_A,\wt{h}_A)=\int_{\wt{X}_A }\zeta_A\wedge \wt{h}_A^3=
\int_{S_A}h_A^3=0.
\end{equation}
In fact the first equality follows from Fujiki's relation
\begin{equation}\label{relfuj}
\int_X \alpha^4=3(\alpha,\alpha)^2,\qquad \alpha\in H^2(X)
\end{equation}
valid for any deformation of the Hilbert square of a $K3$ (together with the 
fact that  $(\wt{h}_A,\wt{h}_A)=2$) and third equality in~\eqref{dopugu} holds because $\dim S_A=2$.    Equation~\eqref{ortogo} follows from~\eqref{dopugu}.  In order to prove~\eqref{menodue} we notice that  $K_{E_A}\cong \cO_E(E_A)$ by adjunction and hence 
\begin{equation}\label{duesucon}
\int_{p^{-1}(s)}\zeta_A=-2,\quad s\in S_A.
\end{equation}
Using~\eqref{relfuj}, \eqref{ortogo}  and~(\ref{duesucon}) one gets that
\begin{equation}
2(\zeta_A,\zeta_A)=(\wt{h}_A,\wt{h}_A)\cdot(\zeta_A,\zeta_A)=
\int_{\wt{X}_A }\wt{h}_A^2\wedge \zeta_A^2 =
2\int_{p^{-1}(s)}\zeta_A=-4.
\end{equation}
Equation~\eqref{menodue} follows from the above equality.
\end{proof}
\subsection{Conic bundles in HK fourfolds}\label{subsec:famconiche}
\setcounter{equation}{0}
We have shown that if $A\in(\Sigma^{\rm sm}\setminus\Delta)$ then $\wt{X}_A $ contains a divisor which is a smooth conic bundle over a $K3$ surface. In the present section we will discuss HK four-folds containing a smooth conic bundle over a $K3$ surface. (Notice that  if a divisor in a HK four-fold is a  conic bundle over a smooth base then the base is a holomorphic symplectic surface.)
\begin{prp}\label{prp:accaie}
Let $X$ be a hyperk\"ahler  $4$-fold. Suppose that $X$ contains a prime divisor $E$ which carries a conic fibration $p\colon  E \lra S$ over a $K3$ surface $S$.
Let $\zeta:=c_1(\cO_X(E))$. Then:
\begin{itemize}
\item[(1)]
$h^0(\cO_X(E))=1$ and $h^p(\cO_X(E))=0$ for $p>0$.
\item[(2)]
$q_X(\zeta)<0$ where $q_X$ is the Beauville-Bogomolov quadratic form of $X$.
\end{itemize}
\end{prp}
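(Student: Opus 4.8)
The plan is to establish (1) by computing all the groups $H^i(X,\cO_X(E))$, and then to read off (2) from the resulting Euler characteristic via Riemann--Roch. For (1), first observe that $E$ is smooth (since $p$ is smooth and $S$ is a $K3$) and connected (being prime), and that adjunction together with $\omega_X\cong\cO_X$ gives $\cO_E(E)\cong\omega_E$. The Leray spectral sequence for the smooth $\PP^1$-fibration $p\colon E\to S$, with $R^0p_{*}\cO_E=\cO_S$ and $R^1p_{*}\cO_E=0$, yields $H^\bullet(E,\cO_E)\cong H^\bullet(S,\cO_S)$, so these groups are $\CC,0,\CC,0$ in degrees $0,1,2,3$; likewise $p_{*}\omega_E=p_{*}\omega_{E/S}=0$ gives $H^0(E,\omega_E)=0$. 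The one genuinely geometric input is the claim that the restriction $\iota^{*}\colon H^2(X,\cO_X)\to H^2(E,\cO_E)$ is an isomorphism. Both spaces are $1$-dimensional, so it suffices to show the map is nonzero; by Hodge theory it is the complex conjugate of $\iota^{*}\colon H^0(X,\Omega^2_X)\to H^0(E,\Omega^2_E)$, $\sigma\mapsto\sigma|_E$, where $\sigma$ is the holomorphic symplectic form. For each $x\in E$ the subspace $T_xE$ is a hyperplane in the symplectic space $(T_xX,\sigma_x)$, hence coisotropic, so $\sigma_x|_{T_xE}$ has rank $2$ (its kernel being the $1$-dimensional symplectic orthogonal of $T_xE$); thus $\sigma|_E$ is a nowhere-vanishing, in particular nonzero, holomorphic $2$-form on $E$, and the claim follows.

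With this in hand I would feed the restriction maps into the ideal-sheaf sequence $0\to\cO_X(-E)\to\cO_X\to\iota_{*}\cO_E\to0$. Since $\iota^{*}$ is an isomorphism in degrees $0$ and $2$, and since $H^1(E,\cO_E)=H^3(E,\cO_E)=0$ together with $H^1(X,\cO_X)=H^3(X,\cO_X)=0$, a chase through the long exact sequence forces $H^q(X,\cO_X(-E))=0$ for $q\le3$ and $H^4(X,\cO_X(-E))\cong H^4(X,\cO_X)\cong\CC$. Serre duality on the fourfold ($\omega_X\cong\cO_X$) then gives $h^q(\cO_X(E))=h^{4-q}(\cO_X(-E))$, i.e. $h^0(\cO_X(E))=1$ and $h^q(\cO_X(E))=0$ for $q>0$, which is precisely statement (1).

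For (2), part (1) gives $\chi(\cO_X(E))=1$. Applying Riemann--Roch (when $X$ is a deformation of the Hilbert square of a $K3$ one has $\chi(\cO_X(L))=\binom{q_X(c_1(L))/2+3}{2}$) and solving $\binom{q_X(\zeta)/2+3}{2}=1$ gives $q_X(\zeta)\in\{-2,-8\}$, in particular $q_X(\zeta)<0$. The sign can also be seen directly: a fibre $C\cong\PP^1$ of $p$ satisfies $\zeta\cdot C=\deg(\cO_E(E)|_C)=\deg(\omega_E|_C)=-2$; were $q_X(\zeta)>0$ the class $\zeta$ would be big, so a general member of $|m\zeta|$ would avoid a general fibre $C$ and yield $0\le m\,\zeta\cdot C=-2m$, a contradiction, while $q_X(\zeta)=0$ is incompatible with $\chi(\cO_X(E))=1$.

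The main obstacle is the isomorphism $\iota^{*}\colon H^2(X,\cO_X)\to H^2(E,\cO_E)$ used in (1): every other step is bookkeeping with the two short exact sequences and Serre duality, whereas this is the point at which the hyperkähler geometry enters, through the non-degeneracy of $\sigma$ forcing $\sigma|_E\ne0$.
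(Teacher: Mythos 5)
Your treatment of Item~(1) is correct and is essentially the paper's own proof: the same Leray computation of $H^{\bullet}(\cO_E)$, the same key geometric input (a holomorphic symplectic form cannot vanish along the $3$-fold $E$, since isotropic subspaces of $(T_xX,\sigma_x)$ have dimension at most $2$), and the same ideal-sheaf sequence combined with Serre duality.

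Item~(2) is where you have a genuine gap. The proposition is stated for an \emph{arbitrary} hyperk\"ahler $4$-fold $X$, but your main argument invokes the Riemann--Roch formula $\chi(\cO_X(L))=\binom{q_X(c_1(L))/2+3}{2}$, which is valid only for deformations of the Hilbert square of a $K3$; nothing in the hypotheses places $X$ in that deformation class. The paper is deliberately working in full generality here: its proof of (2) polarizes the Fujiki relation with an \emph{undetermined} Fujiki constant $c_X$, getting $\frac{c_X}{3}q_X(\zeta)q_X(\sigma+\ov{\sigma})=\int_X\zeta^2\wedge(\sigma+\ov{\sigma})^2$ from $(\zeta,\sigma+\ov{\sigma})=0$, computes the right-hand side on $E$ via $\iota^{*}\sigma=p^{*}\tau$ and~\eqref{trippa} to be $-2\int_S(\tau+\ov{\tau})^2<0$, and concludes from $c_X>0$ and $q_X(\sigma+\ov{\sigma})>0$. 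That the generality matters is visible in the sequel: \Ref{clm}{conserva} \emph{adds} the hypotheses $c_X=3$ and $q_X(\zeta)=-2$, precisely because they are not part of the present proposition. For a general HK $4$-fold one only knows $\chi(\cO_X(E))=\frac{c_X}{24}q_X(\zeta)^2+\frac{c'}{2}q_X(\zeta)+3$ with constants depending on $X$, so the equation $\chi=1$ by itself does not determine the sign of $q_X(\zeta)$ without positivity input on $c'$ that you have not supplied.

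Your fallback \lq\lq direct\rq\rq\ argument for the sign also fails at a key step: bigness of $\zeta$ does \emph{not} imply that a general member of $|m\zeta|$ avoids a general fibre $C$, because $E$ may be a fixed component of $|m\zeta|$ (and, given $h^0(\cO_X(E))=1$, that is exactly what one should expect); indeed $mE\in|m\zeta|$ contains \emph{every} fibre. The argument can be repaired --- by Kodaira's lemma write $m\zeta\sim A+aE+N'$ with $A$ ample, $N'$ effective and not containing $E$, deduce $a>m$ from $\zeta\cdot C=-2$ and $N'\cdot C\ge 0$, and then derive a contradiction by intersecting $0\sim A+N'+(a-m)E$ with the cube of an ample class --- but as written the step is unjustified. (Your exclusion of $q_X(\zeta)=0$ is fine in full generality, since $\chi(\cO_X(E))$ depends only on $q_X(\zeta)$ by Fujiki-type relations and would then equal $\chi(\cO_X)=3\neq 1$.)
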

\begin{proof}
By adjunction $K_E\cong\cO_E(E)$ and hence 
\begin{equation}\label{trippa}
\int_{p^{-1}(s)}\zeta=-2,\quad s\in S.
\end{equation}
Thus $h^0(\cO_E(E))=0$ and hence $h^0(\cO_X(E))=1$. Let us prove that the homomorphism 
\begin{equation}\label{pinkydinky}
H^q(\cO_X)\lra H^q(\cO_E)
\end{equation}
induced by restriction is an isomorphism for $q<4$. It is an isomorphism for $q=0$ because both $X$ and $E$ are connected.
The spectral sequence with $E_2$ term $H^i(R^j(p|_E)\cO_E)$ abutting to  $H^q(\cO_E)$ gives an isomorphism $H^q(\cO_E)\cong H^q(\cO_S)$. Since $S$ is a $K3$ surface it follows that $H^q(\cO_E)=0$ for $q=1,3$. On the other hand $H^q(\cO_X)=0$ for odd $q$ because $X$ is a HK manifold. Thus~\eqref{pinkydinky} is an isomorphism for $q=1,3$. It remains to prove that~\eqref{pinkydinky} is an isomorphism for $q=2$. By Serre duality it is equivalent  to prove that  the restriction homomorphism $ H^0(\Omega^2_X)\to H^0(\Omega^2_E)$ is an isomorphism. 
Since $1= h^0(\Omega^2_X)=h^0(\Omega^2_E)$ it suffices to notice that a holomorphic symplectic form on $X$ cannot vanish on $E$ (the maximum dimension of an isotropic subspace for $\sigma|_{T_x X}$ is equal to $2$). 
This finishes the proof that~\eqref{pinkydinky}  is an isomorphism for $q<4$. 
The long exact cohomology sequence associated to 
\begin{equation}
0\lra\cO_X(-E)\lra\cO_X\lra\cO_E\lra 0
\end{equation}
gives that $h^q(\cO_X(-E))=0$ for $q<4$.  
By Serre duality we get that Item~(1) holds..
 Let $c_X$ be the Fujiki constant of $X$; thus 
\begin{equation}
\int_X\alpha^4= c_X q_X(\alpha)^2,\qquad \alpha\in H^2(X).
\end{equation}
 Let $\iota\colon E\hra X$ be Inclusion. Let $\sigma$ be a holomorphic symplectic form on $X$. We proved above that there exists a holomorphic symplectic form $\tau$ on $S$ such that $\iota^{*}\sigma=p^{*}\tau$. Thus we have
\begin{equation}
\frac{c_X}{3} q_X(\zeta) q_X(\sigma+\ov{\sigma})=
\int_X \zeta^2\wedge(\sigma+\ov{\sigma})^2=\int_E\iota^{*}\zeta\wedge p^{*}(\tau+\ov{\tau})^2=-2\int_S(\tau+\ov{\tau})^2.
\end{equation}
 (The first equality follows from $(\zeta,\sigma+\ov{\sigma})=0$, we used~(\ref{trippa}) to get the last equality.) On the other hand $c_X>0$ and $q_X(\sigma+\ov{\sigma})>0$: thus $q_X(\zeta)<0$. 
\end{proof}
Let $X$ and $E$ be as in~\Ref{prp}{accaie}. Let $\Def_E(X)\subset \Def(X)$ be the germ representing deformations for which $E$ deforms and $\Def_{\zeta}\subset \Def(X)$ be the germ representing deformations that keep $\zeta$ of type $(1,1)$. We have an inclusion of germs
\begin{equation}\label{ingerme}
\Def_E(X)\hra \Def_{\zeta}(X).
\end{equation}
\begin{crl}\label{crl:batman}
Let $X$ and $E$ be as in~\Ref{prp}{accaie}. The following hold:
\begin{itemize}
\item[(1)]
Inclusion~\eqref{ingerme} is an isomorphism.
\item[(2)]
Let $C$ be a fiber of the conic vibration $p\colon E\to S$. Then 
\begin{equation}\label{benjo}
\{\alpha\in H^2(X;\CC)\mid (\alpha,\zeta)=0\}=\{\alpha\in H^2(X;\CC)\mid \int_C\alpha=0\}.
\end{equation}
\item[(3)]
The restriction map $H^2(X;\CC)\to H^2(E;\CC)$ is an isomorphism. 
\end{itemize}
\end{crl}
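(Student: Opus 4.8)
The plan is to treat the three items in turn, using throughout the Fujiki relation~\eqref{relfuj} valid for deformations of the Hilbert square of a $K3$, the cohomological structure of the fibration $p\colon E\to S$, and the restriction statements already obtained in the proof of~\Ref{prp}{accaie}. A basic numerical input is that $b_2(X)=23$, as for any deformation of the Hilbert square of a $K3$; on the other hand, since $p\colon E\to S$ is a conic bundle with smooth fibres over a $K3$ surface, the Leray--Hirsch theorem applies over $\QQ$ (the class $-\tfrac12\iota^{*}\zeta=-\tfrac12 K_E$ restricts to the positive generator of $H^2$ of each fibre), giving $H^{*}(E;\QQ)\cong H^{*}(S;\QQ)\otimes H^{*}(\PP^1;\QQ)$, whence $b_2(E)=b_2(S)+1=23$ as well. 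I record the adjunction identity $\iota^{*}\zeta=c_1(N_{E/X})=K_E$, which holds because $K_X$ is trivial.

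For Item~(1) the inclusion~\eqref{ingerme} is evident, so I would show it is surjective on germs. Over $\Def_{\zeta}(X)$ the class $\zeta$ stays of type $(1,1)$, hence $\cO_X(E)$ extends to a line bundle $\cL$ on the restricted family. By Item~(1) of~\Ref{prp}{accaie} we have $h^0(\cO_X(E))=1$ and $h^q(\cO_X(E))=0$ for $q>0$; thus $\chi$ is constant equal to $1$, and upper semicontinuity forces $h^0(\cL_t)=1$ with all higher cohomology vanishing for $t$ near $0$. Each nearby fibre therefore carries a unique effective divisor in the class $\zeta_t$, specializing to $E$ at $t=0$. This lifts every point of $\Def_{\zeta}(X)$ into $\Def_E(X)$, so~\eqref{ingerme} is an isomorphism.

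Item~(2) is the computational core, and I expect it to be the main obstacle. Restricting to $E$ via the Poincar\'e dual of $E$, for every $\alpha\in H^2(X;\CC)$ one has
\begin{equation*}
\int_X \alpha\wedge\zeta^3=\int_E \iota^{*}\alpha\wedge(\iota^{*}\zeta)^2 ,
\end{equation*}
while the polarized form of~\eqref{relfuj} gives $\int_X\alpha\wedge\zeta^3=3\,q_X(\alpha,\zeta)\,q_X(\zeta,\zeta)$. To evaluate the right-hand integral I set $\xi:=-\tfrac12\iota^{*}\zeta$, so $\int_C\xi=1$ and $H^2(E;\QQ)=\QQ\xi\oplus p^{*}H^2(S;\QQ)$; writing $\iota^{*}\alpha=\big(\int_C\alpha\big)\xi+p^{*}\gamma$ and using $(\iota^{*}\zeta)^2=4\xi^2$ one finds
\begin{equation*}
\int_E\iota^{*}\alpha\wedge(\iota^{*}\zeta)^2=4\Big(\int_C\alpha\Big)\int_E\xi^3+4\int_S\gamma\wedge p_{*}(\xi^2).
\end{equation*}
The key point is that $p_{*}(\xi^2)=\tfrac14 p_{*}(K_E^2)=0$: on the $\PP^1$-fibration $K_E=p^{*}K_S+K_{E/S}$ with $K_S=0$, and $p_{*}(K_{E/S}^2)=0$. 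Hence the $\gamma$-term vanishes and $\int_X\alpha\wedge\zeta^3$ is a fixed multiple of $\int_C\alpha$ for all $\alpha$. Comparing with the Fujiki value gives $4\big(\int_E\xi^3\big)\int_C\alpha=3\,q_X(\zeta,\zeta)\,q_X(\alpha,\zeta)$ for all $\alpha$; since $q_X(\zeta,\zeta)\neq 0$ by~\Ref{prp}{accaie} the right-hand side is a non-zero functional, so $\alpha\mapsto\int_C\alpha$ and $\alpha\mapsto q_X(\alpha,\zeta)$ are proportional with non-zero constant, which is precisely~\eqref{benjo}.

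Finally, for Item~(3) the two spaces have equal dimension $23$ by the first paragraph, so it suffices to prove that $\iota^{*}\colon H^2(X;\CC)\to H^2(E;\CC)$ is injective. Suppose $\iota^{*}\alpha=0$. Then $\int_C\alpha=\int_C\iota^{*}\alpha=0$, so $q_X(\alpha,\zeta)=0$ by Item~(2), and $\int_X\alpha\wedge\zeta\wedge\beta\wedge\gamma=\int_E\iota^{*}\alpha\wedge\iota^{*}\beta\wedge\iota^{*}\gamma=0$ for all $\beta,\gamma\in H^2(X)$. Expanding the last integral by the polarized Fujiki relation and using $q_X(\alpha,\zeta)=0$ yields $q_X(\alpha,\beta)q_X(\zeta,\gamma)+q_X(\alpha,\gamma)q_X(\zeta,\beta)=0$ for all $\beta,\gamma$; fixing $\gamma$ with $q_X(\zeta,\gamma)\neq 0$ shows that $q_X(\alpha,-)$ is proportional to $q_X(\zeta,-)$, so $\alpha=\lambda\zeta$ by non-degeneracy. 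Then $0=q_X(\alpha,\zeta)=\lambda\,q_X(\zeta,\zeta)$ forces $\lambda=0$, hence $\alpha=0$. Thus $\iota^{*}$ is injective, and therefore an isomorphism.
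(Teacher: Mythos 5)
Your proof is correct, and for Items~(2) and~(3) it takes a genuinely different route from the paper's; your Item~(1) coincides with the paper's argument (Item~(1) of \Ref{prp}{accaie} plus semicontinuity). The paper handles (2) and (3) Hodge-theoretically: it deforms to a very generic $X_t$ in $\Def_{\zeta}(X)$, where the only non-trivial proper rational sub-Hodge structures of $H^2(X_t)$ are $\QQ\zeta_t$ and $\zeta_t^{\bot}$; Item~(1) supplies a deformation $E_t$ of $E$, hence a curve class $C_t$, the hyperplane $\{\alpha\mid\int_{C_t}\alpha=0\}$ is then forced to equal $\zeta_t^{\bot}$, the kernel of $H^2(X_t)\to H^2(E_t)$ is forced to vanish, and Gauss--Manin parallel transport carries these statements back to $t=0$. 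You instead compute directly on $X$: the polarized form of~\eqref{relfuj} identifies $\alpha\mapsto\int_X\alpha\wedge\zeta^3$ with a non-zero multiple of $\alpha\mapsto q_X(\alpha,\zeta)$, while Leray--Hirsch on $p\colon E\to S$ together with $p_{*}(K_{E/S}^2)=0$ identifies it with a multiple of $\alpha\mapsto\int_C\alpha$, which gives~\eqref{benjo}; a similar expansion of $\int_X\alpha\wedge\zeta\wedge\beta\wedge\gamma$ gives injectivity in (3). What your route buys: Items~(2)--(3) become deformation-free and in particular independent of Item~(1), on which the paper's proof of (2)--(3) relies; the price is the intersection-theoretic input $p_{*}(K_{E/S}^2)=0$ --- true for any smooth $\PP^1$-fibration, e.g.\ by Grothendieck--Riemann--Roch, but asserted by you without proof --- and an explicit dimension count. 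One caveat, shared with the paper: surjectivity in (3) ultimately rests on $b_2(X)=b_2(E)=23$, i.e.\ on $X$ being of $K3^{[2]}$-type (you also take the Fujiki constant to be $3$); this is not literally among the hypotheses of \Ref{prp}{accaie}, though it holds in the paper's application and is implicitly used by the paper as well, whose own proof of (3) establishes only injectivity. If you replace $3$ by the Fujiki constant $c_X>0$ throughout, your proofs of (2) and of injectivity in (3) remain valid for an arbitrary hyperk\"ahler four-fold.
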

\begin{proof}
Item~(1) follows at once from Item~(1) of~\Ref{prp}{accaie} and upper-semicontinuity of cohomology dimension. 
 Let us prove Item~(2).  Let $X_t$ be a very generic small deformation of $X$ parametrized by a point of $\Def_{\zeta}\subset \Def(X)$ and $\zeta_t\in H^{1,1}_{\ZZ}(X_t)$ be the class deforming $\zeta$. A non-trivial rational Hodge sub-structure of  $H^2(X_t)$ is  equal to $\zeta_t^{\bot}$ or to 
 $\CC \zeta_t$. On the other hand~\eqref{ingerme} is an isomorphism: thus $X_t$ contains a deformation $E_t$ of $E$ and hence also a deformation $C_t$ of $C$. Clearly $\{\alpha\in H^2(X_t;\CC)\mid \int_{C_t}\alpha=0\}$ is a  rational Hodge sub-structure of $H^2(X_t)$ containing $H^{2,0}(X_t)$ and non-trivial  by~\eqref{trippa}: it follows that
\begin{equation}
\{\alpha\in H^2(X_t;\CC)\mid (\alpha,\zeta_t)=0\}=\{\alpha\in H^2(X_t;\CC)\mid \int_{C_t}\alpha=0\}.
\end{equation}
The kernel of the restriction map $H^2(X_t;\CC)\to H^2(E_t;\CC)$ is a  rational Hodge sub-structure   $V_t\subset H^2(X_t)$. By~\eqref{trippa} we know that $\zeta_t\notin V_t$ and  since~\eqref{pinkydinky} is an isomorphism for $q=2$ we know that $H^{2,0}(X_t)\not\subset V_t$; thus $V_t=0$. 
Parallel transport by the Gauss-Manin connection gives Items~(2) and~(3).
\end{proof}
Let $\iota\colon E\hra X$ be Inclusion. By Items~(2) and~(3) of~\Ref{crl}{batman} we have  an isomorphism
\begin{equation}\label{roars}
\begin{matrix}
\zeta^{\bot}& \overset{\sim}{\lra} & \{\beta\in H^2(E;\CC)\mid \int_C\beta=0\}\\
& & \\
\alpha & \mapsto & \iota^{*}\alpha
\end{matrix}
\end{equation}
On the other hand $p^{*}\colon H^2(S;\CC)\to H^2(E;\CC)$ defines an isomorphism of $H^2(S;\CC)$ onto the right-hand side of~\eqref{roars}. Thus~\eqref{roars} gives an isomorphism 
\begin{equation}\label{benhur}
r\colon \zeta^{\bot} \overset{\sim}{\lra} H^2(S;\CC).
\end{equation}
\begin{clm}\label{clm:conserva}
Let $X$, $E$ be as in~\Ref{prp}{accaie} and $r$ be as in~\eqref{benhur}. Suppose in addition that  the Fujiki constant $c_X$ is equal to $3$ and that $q_X(\zeta)=-2$. 
Let $\alpha\in\zeta^{\bot}$. Then
 \begin{equation}
q_X(\alpha)=\int_S r(\alpha)^2.
\end{equation}
\end{clm}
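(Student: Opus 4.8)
The plan is to compute the single quartic intersection number $\int_X\alpha^2\wedge\zeta^2$ in two independent ways and to match the two outcomes. On the one hand this number is controlled by the Beauville--Bogomolov form through Fujiki's relation; on the other hand it can be pushed down from $X$ to $E$ and then to $S$, where it becomes $\int_S r(\alpha)^2$. Equating the two expressions, and using $\alpha\in\zeta^{\bot}$ together with $q_X(\zeta)=-2$, will produce the stated identity with no further work.

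First I would evaluate $\int_X\alpha^2\wedge\zeta^2$ algebraically. Since the hypothesis $c_X=3$ means that the quartic form $\beta\mapsto\int_X\beta^4$ equals $3\,q_X(\beta)^2$, its (unique) symmetric $4$-linear polarization — the polarized form of~\eqref{relfuj} — reads
\[
\int_X \beta_1\beta_2\beta_3\beta_4 = q_X(\beta_1,\beta_2)q_X(\beta_3,\beta_4)+q_X(\beta_1,\beta_3)q_X(\beta_2,\beta_4)+q_X(\beta_1,\beta_4)q_X(\beta_2,\beta_3),
\]
where $q_X(\,\cdot\,,\,\cdot\,)$ denotes the associated bilinear form. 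Taking $\beta_1=\beta_2=\alpha$ and $\beta_3=\beta_4=\zeta$ gives $\int_X\alpha^2\wedge\zeta^2 = q_X(\alpha)\,q_X(\zeta)+2\,q_X(\alpha,\zeta)^2$. The mixed term vanishes because $\alpha\in\zeta^{\bot}$, and the hypothesis $q_X(\zeta)=-2$ then collapses this to $\int_X\alpha^2\wedge\zeta^2=-2\,q_X(\alpha)$.

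Next I would recompute the same number geometrically. Writing $\iota\colon E\hra X$ for the inclusion, the class $\zeta=c_1(\cO_X(E))$ is Poincar\'e dual to $E$, so $\int_X\alpha^2\wedge\zeta^2=\int_E\iota^{*}\alpha\wedge\iota^{*}\alpha\wedge\iota^{*}\zeta$. By the very construction of $r$ in~\eqref{benhur} one has $p^{*}r(\alpha)=\iota^{*}\alpha$, whence $\iota^{*}\alpha\wedge\iota^{*}\alpha=p^{*}(r(\alpha)^2)$. The projection formula along the conic bundle $p\colon E\to S$ then yields
\[
\int_E p^{*}(r(\alpha)^2)\wedge\iota^{*}\zeta=\int_S r(\alpha)^2\cdot p_{*}(\iota^{*}\zeta),
\]
and $p_{*}(\iota^{*}\zeta)\in H^0(S)$ is nothing but the fibre degree $\int_C\iota^{*}\zeta=\int_C\zeta=-2$ recorded in~\eqref{trippa}. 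Hence $\int_X\alpha^2\wedge\zeta^2=-2\int_S r(\alpha)^2$.

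Comparing the two evaluations gives $-2\,q_X(\alpha)=-2\int_S r(\alpha)^2$, and dividing by $-2$ finishes the proof. The step I expect to require the most care — and which I would treat as the main obstacle — is the geometric evaluation: one must check that the identity $p^{*}r(\alpha)=\iota^{*}\alpha$ is precisely what~\eqref{benhur} supplies (so that $r(\alpha)^2$ is the genuine image under $p^{*}$ of $\iota^{*}\alpha\wedge\iota^{*}\alpha$), and that the projection formula applies cleanly with $\iota^{*}\zeta=c_1(N_{E/X})$ restricting to degree $-2$ on each fibre $C$. Once these are in place, the remainder is just the formal polarization of Fujiki's relation together with $q_X(\alpha,\zeta)=0$ and $q_X(\zeta)=-2$.
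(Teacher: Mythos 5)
Your proof is correct and is essentially identical to the paper's: the paper's argument is precisely the chain $-2q_X(\alpha)=\tfrac{c_X}{3}q_X(\zeta)q_X(\alpha)=\int_X\zeta^2\wedge\alpha^2=\int_E\iota^{*}\zeta\wedge(\iota^{*}\alpha)^2=-2\int_S r(\alpha)^2$, i.e.\ the polarized Fujiki relation with the mixed term killed by $\alpha\in\zeta^{\bot}$, followed by restriction to $E$, the identity $\iota^{*}\alpha=p^{*}r(\alpha)$, the projection formula, and the fibre degree $\int_C\zeta=-2$ from~\eqref{trippa}. You have merely written out in detail the steps the paper compresses into one displayed equation.
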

\begin{proof}
Equality~\eqref{trippa} gives that 
\begin{equation}
-2 q_X(\alpha)=\frac{c_X}{3} q_X(\zeta) q_X(\alpha)=\int_X \zeta^2\wedge\alpha^2=\int_E\iota^{*}\zeta\wedge(\iota^{*}\alpha)^2=-2\int_S r(\alpha)^2.
\end{equation}
\end{proof}
\subsection{The period map on $(\Sigma^{\rm sm}\setminus\Delta)$}
\setcounter{equation}{0}
 Let $A_0\in(\Sigma^{\rm sm}\setminus\Delta)$. 
By~\eqref{singsig} and Cor.~2.5.1 of~\cite{ogmoduli}
 $A_0$ belongs to the GIT-stable locus of $\lagr$. 
By Luna's  \'etale slice Theorem~\cite{luna1} it follows that there exists an analytic   $PGL(V)$-slice at $A_0$, call it $Z_{A_0}$,   such that the natural map
 \begin{equation}
Z_{A_0}/\Stab(A_0)\to \gM
\end{equation}
is an isomorphism onto an open (classical topology) neighborhood of $[A_0]$. We may assume that $Z_{A_0}\subset\cU$ 
where $\cU$ is as in~\eqref{cricket}.  Let $\wt{Z}_{A_0}:=\phi^{-1}Z_{A_0}$ where $\phi\colon\cV\to\cU$ is as in~\eqref{cambiobase}. Then $\phi$ defines a double cover $\wt{Z}_{A_0}\to Z_{A_0}$ ramified over $\Sigma\cap Z_{A_0}$; if $A\in \Sigma\cap Z_{A_0}$ we will denote by the same letter  the unique point in $\phi^{-1}(A)$. By~\Ref{prp}{simres} points of $\wt{Z}_{A_0}$ parametrize deformations of $X_A$ for $A\in\lagr^0$.
Since $\Sigma$ is smooth at $A_0$ also $\Sigma\cap Z_{A_0}$ is smooth at $A_0$. Thus $\wt{Z}_{A_0}$ is smooth at $A_0$. Shrinking $Z_{A_0}$ around $A_0$ if necessary we may assume that $\wt{Z}_{A_0}$ is contractible. Hence a marking $\psi$ of $(\wt{X} _{A_0},\wt{H}_{A_0})$ defines a marking of $(\wt{X} _{A},\wt{H}_{A})$ for all $A\in Z_{A_0}$; we will denote it by 
the same letter $\psi$.
Thus we have a local period map
\begin{equation}\label{prodiere}
\begin{matrix}
\wt{Z}_{A_0} & \overset{\wt{\cP}}\lra & \Omega_{\Lambda} \\
& & \\
t & \mapsto & \psi_{\CC} (H^{2,0}(g^{-1}t)).
\end{matrix}
\end{equation}
\begin{clm}\label{clm:vendola}
The local period map $\wt{\cP}$   of~\eqref{prodiere} defines an isomorphism of a sufficiently small open neighborhood of $A_0$ in $\wt{Z}_{A_0}$ onto an open subset of $\Omega_{\Lambda}$. 
\end{clm}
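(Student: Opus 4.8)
The plan is to deduce the statement from the local Torelli theorem for the hyperk\"ahler fourfold $\wt{X}_{A_0}$ (which is hyperk\"ahler by~\Ref{crl}{conichesuw}) together with an analysis of the Kodaira--Spencer map of the family $g\colon\wt{\cX}\to\wt{Z}_{A_0}$ of~\Ref{prp}{simres}. First I record the dimension count: $\dim\wt{Z}_{A_0}=\dim\gM=55-35=20$, which equals $\dim\Omega_{\Lambda}$ (the lattice $\Lambda$ has signature $(2,20)$). Since the polarization class $\wt{h}_{A}$ stays of type $(1,1)$ throughout the family, $\wt{\cP}$ factors as $\mathrm{Per}\circ\kappa$, where $\kappa\colon\wt{Z}_{A_0}\to\Def_{\wt{h}}(\wt{X}_{A_0})$ is the classifying (Kodaira--Spencer) map and $\mathrm{Per}\colon\Def_{\wt{h}}(\wt{X}_{A_0})\to\Omega_{\Lambda}$ is the restricted period map, a local isomorphism by local Torelli applied to $\wt{h}^{\bot}\cong\Lambda$. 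As source and target of $\wt{\cP}$ are both smooth of dimension $20$ (recall that $\wt{Z}_{A_0}$ is smooth at $A_0$, being a double cover of the smooth-at-$A_0$ variety $Z_{A_0}$ ramified along the smooth divisor $\Sigma\cap Z_{A_0}$), it suffices to prove that the differential $d\wt{\cP}_{A_0}$ is injective. I will check injectivity on $T_{A_0}(\Sigma\cap\wt{Z}_{A_0})$ and on a complementary direction separately; write $s$ for a local coordinate on $\wt{Z}_{A_0}$ cutting out $\Sigma$ and realizing the ramification of the double cover, so that $\partial_s$ is transverse to $\Sigma\cap\wt{Z}_{A_0}$.

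Along $\Sigma$ the argument is Hodge-theoretic. Over $\Sigma\cap\wt{Z}_{A_0}$ the exceptional divisor $E_A$ deforms (it is a fibrewise divisor in the family of~\Ref{prp}{simres}), so $\zeta_{A}$ stays of type $(1,1)$; hence $\kappa$ carries $\Sigma\cap\wt{Z}_{A_0}$ into $\Def_{\wt{h},\zeta}(\wt{X}_{A_0})=\Def_{\wt{h}}\cap\Def_{E}$ (using $\Def_{E}=\Def_{\zeta}$, \Ref{crl}{batman}), whose period image is $\Omega_{\Gamma}=\zeta^{\bot}\cap\Omega_{\Lambda}$. By the isometry $r\colon\zeta^{\bot}\overset{\sim}{\lra}H^2(S_A)$ of~\eqref{benhur}, which is compatible with the quadratic forms by~\Ref{clm}{conserva} and with the Hodge structures, the restricted period coincides with the period of the degree-$2$ $K3$ surface $S_A$, under the identification $\Omega_{\Gamma}=\Omega_{\Phi}$ (cf.~\eqref{mapparo}). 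Local Torelli for $K3$ surfaces, together with the maximal variation of the branch sextic $C_{W,A}$ (consistent with the dimension count $\dim(\Sigma\cap\wt{Z}_{A_0})=19=\dim\Omega_{\Phi}$, and reflecting the generic injectivity of periods on $\Sigma$), then shows that $d\wt{\cP}_{A_0}$ is injective on $T_{A_0}(\Sigma\cap\wt{Z}_{A_0})$ with image spanning the hyperplane $T\Omega_{\Gamma}$.

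It remains to handle $\partial_s$. The monodromy of the family around $\Sigma$ is the reflection $R_{\zeta}$ in the $(-2)$-class $\zeta$ (equations~\eqref{ortogo},~\eqref{menodue}); this is exactly why one passes to the double cover $\wt{Z}_{A_0}$. Consequently $\wt{\cP}=F(s,\cdot)$ is single-valued with $F(-s,\cdot)=R_{\zeta}F(s,\cdot)$, and differentiating the relation at $s=0$ shows that $\partial_s F(0)$ is anti-invariant under $R_{\zeta}$, hence points in the $\zeta$-direction, transverse to the hyperplane $T\Omega_{\Gamma}$. Thus injectivity of $d\wt{\cP}_{A_0}$ will follow provided $\partial_s F(0)\neq 0$, that is, provided the transverse direction moves the period off $\zeta^{\bot}$ to first order.

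The nonvanishing $\partial_s F(0)\neq 0$ is the heart of the matter and the main obstacle. It amounts to showing that smoothing the transverse $A_1$-singularities of $X_A$ along $S_A$ is an effective first-order deformation pairing nontrivially with $\zeta$; equivalently, in the notation of~\Ref{prp}{simres}, for the nodal degeneration with local model $\xi_1^2+\xi_2^2+\xi_3^2=s^2$ the period of the vanishing cycle has asymptotics $c\,s$ with $c\neq0$. This is the standard local period computation for a nodal (type $A_1$) degeneration, and together with the fact that $\zeta$ is Poincar\'e dual to that vanishing class it gives $\partial_s F(0)=c\,\zeta\neq 0$. Combining the two cases, $d\wt{\cP}_{A_0}$ is injective, hence an isomorphism by equidimensionality, and $\wt{\cP}$ is therefore a local biholomorphism of a neighborhood of $A_0$ onto an open subset of $\Omega_{\Lambda}$.
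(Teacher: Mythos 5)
Your skeleton agrees with the paper's (reduce to injectivity of $d\wt{\cP}$ at $A_0$, since both sides are smooth of dimension $20$; prove injectivity along $\Sigma$; then exhibit one transverse vector whose image leaves $\psi(\zeta_{A_0})^{\bot}$), but your argument along $\Sigma$ has a genuine gap. You reduce injectivity on $T_{A_0}(\Sigma\cap\wt{Z}_{A_0})$ to local Torelli for the $K3$ surface $S_{A_0}$ \emph{together with ``maximal variation of the branch sextic $C_{W,A}$''}, and you justify the latter only by a dimension count and by ``the generic injectivity of periods on $\Sigma$''. Neither is admissible: equality of dimensions never yields injectivity of a differential, and generic injectivity of the period map on $\Sigma$ is precisely what is being established at this point of the paper --- it is \Ref{prp}{nerola}, which is proved \emph{using} \Ref{clm}{vendola}, so invoking it is circular. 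Once you pass to the $K3$ side via $r\colon\zeta_{A_0}^{\bot}\overset{\sim}{\lra}H^2(S_{A_0})$, the assertion that the Hodge structure of $S_A$ varies with injective differential as $A$ moves in $\Sigma\cap Z_{A_0}$ is \emph{equivalent} to the statement you are trying to prove; the detour buys nothing unless you independently show that $C_{W,A}$ has $19$ moduli, a nontrivial geometric fact which the paper only obtains downstream (\Ref{prp}{feniglia}). The paper avoids this entirely: by Luna's \'etale slice theorem the germ map $(Z_{A_0},A_0)\to\Def(X_{A_0},H_{A_0})$ is an \emph{isomorphism}, hence $(\Sigma\cap\wt{Z}_{A_0},A_0)\hra\Def_{\zeta_{A_0}}(\wt{X}_{A_0},\wt{H}_{A_0})$ is an inclusion of germs, and injectivity of the local period map of the hyperk\"ahler fourfold $\wt{X}_{A_0}$ (not of a $K3$) then makes $\wt{\cP}$ restricted to $\Sigma\cap\wt{Z}_{A_0}$ an immersion into the $19$-dimensional smooth divisor $\psi(\zeta_{A_0})^{\bot}\cap\Omega_{\Lambda}$.

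Your transverse-direction argument (monodromy around $\Sigma$ equals the reflection $R_{\zeta}$, hence $\partial_sF(0)$ is anti-invariant and proportional to $\zeta$, with nonvanishing supplied by the vanishing-cycle asymptotics for $\xi_1^2+\xi_2^2+\xi_3^2=s^2$) is genuinely different from the paper's and is plausible, but as written it too rests on unproved identifications: that the monodromy of the family around $\Sigma$ is $R_{\zeta_{A_0}}$, and that $\zeta_{A_0}$ corresponds to the class of the transverse vanishing sphere. The paper sidesteps both with a deformation-theoretic argument: by Item~(1) of \Ref{crl}{batman} one has $\Def_{E}(\wt{X}_{A_0})=\Def_{\zeta_{A_0}}(\wt{X}_{A_0})$, so it suffices to show that $E_{A_0}$ does not lift to first order along an arc transverse to $\Sigma\cap\wt{Z}_{A_0}$, and this follows from the normal-bundle computation for a conic fiber $\PP^1\subset E_{A_0}$ inside the total space of the pulled-back family, where the $\cO_{\PP^1}(-1)\oplus\cO_{\PP^1}(-1)$ summand forbids any first-order displacement out of the central fiber. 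To make your proposal complete you would need to prove the two identifications above (or replace both halves by the Luna-slice/local-Torelli and $\Def_E=\Def_{\zeta}$ arguments).
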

\begin{proof}
Since $\wt{Z}_{A_0}$ is smooth and $\dim \wt{Z}_{A_0} =\dim \Omega_{\Lambda}$ it suffices to prove that  $d\wt{\cP}(\wt{A}_0)$ is injective. By Luna's \'etale slice Theorem we have an isomorphism of germs  
\begin{equation}\label{pescoluse}
(Z_{A_0},A_0)\overset{\sim}{\lra}\Def(X_{A_0},H_{A_0})
\end{equation}
 induced by the local tautological family of double EPW-sextics parametrized by $Z_{A_0}$.
 By~\Ref{crl}{runrabbit} the points of $Z_{A_0}\cap \Sigma$ parametrize deformations of $X_{A_0}$ which are locally trivial at points of $S_A$. Let $\wt{\Sigma}_{A_0}\subset\wt{Z}_{A_0}$ be the inverse image of $\Sigma\cap Z_{A_0}$ {\it with reduced structure}. Let 
$\Def_{\zeta_{A_0}}(\wt{X}_{A_0} ,\wt{H}_{A_0})\subset \Def(\wt{X}_{A_0} ,\wt{H}_{A_0})$ be the germ representing deformations  that  \lq\lq leave  $\zeta_{A_0}$ of type $(1,1)$\rq\rq. 
 The natural map of germs 
\begin{equation}\label{inclusione}
(\wt{\Sigma}_{A_0},A_0)\lra 
\Def_{\zeta_{A_0}}(\wt{X}_{A_0} ,\wt{H}_{A_0})
\end{equation}
is an inclusion because Map~\eqref{pescoluse} is an isomorphism.
Notice that $\zeta_{A_0}\in \wt{h}_{A_0}^{\bot}$  by~\eqref{ortogo}; since $\zeta_{A_0}\in H^{1,1}_{\ZZ}(\wt{X}_{A_0} )$ we have
\begin{equation}\label{inradice}
\wt{\cP}(\wt{\Sigma}_{A_0}) \subset 
\psi(\zeta_{A_0})^{\bot}\cap\Omega_{\Lambda}.
\end{equation}
Notice that $\zeta_{A_0}^{\bot}\cap\Omega_{\Lambda}$ has codimension $1$ and is smooth  because $(\zeta_{A_0},\zeta_{A_0})=-2$. By injectivity of the local period map we get injectivity of the period map restricted to $\wt{\Sigma}_{A_0}$:
\begin{equation}\label{sigmaiso}
\begin{matrix}
(\wt{\Sigma}_{A_0},A_0) & \hra & 
(\psi(\zeta_{A_0})^{\bot}\cap\Omega_{\Lambda},\psi_{\CC} H^{2,0}(\wt{X}_{A_0} ))\\
\\
t & \mapsto & \wt{\cP}(t)
\end{matrix}
\end{equation}
Since domain and codomain have equal dimensions
 the above map is a local isomorphism.   
In particular $d\wt{\cP}(A_0)$ is injective when restricted to the tangent space to $\wt{\Sigma}_{A_0}$ at $A_0$.  Thus it will suffice to exhibit a tangent vector $v\in T_{A_0}\wt{Z}_{A_0}$ such that $d\wt{\cP}(v)\notin\psi(\zeta_{A_0})^{\bot}$. By Item~(1) of~\Ref{crl}{batman} it suffices to prove that $E_{A_0}$ does not lift to $1$-st order in the direction $v$. Let $\Delta$ be the unit complex disc and $\gamma\colon\Delta\hra \wt{Z}_{A_0}$ be an inclusion with $v:=\gamma'(0)\notin\wt{\Sigma}_{A_0}$. Let $\wt{\cX}_{\Delta} \to\Delta$ be obtained by base-change from $g\colon\wt{\cX}_2 \to\cV$. Let $\PP^1$ be an arbitrary fiber of~(\ref{fibcon});  then $N_{\PP^1}\cX_{\Delta} \cong\cO_{\PP^1}(-1)\oplus \cO_{\PP^1}(-1)$.  It follows that $E_{A_0}$ does not lift to $1$-st order in the direction $v$. This finishes the proof that $d\wt{\cP}(\wt{A}_0)$ is injective. 
\end{proof}
\begin{prp}\label{prp:nerola}
The restriction of $\gp$ to $(\Sigma^{\rm sm}\setminus\Delta)// PGL(V)$ is a dominant map
 to  ${\mathbb S}_2^{\star}$ with finite fibers. Let $A\in(\Sigma^{\rm sm}\setminus\Delta)$ and $\psi$ be a marking of $(\wt{X} _A,\wt{H}_A)$: then $\psi(\zeta_A)$ is a $(-2)$-root of $\Lambda$ and $\divisore(\psi(\zeta_A))=1$. 
\end{prp}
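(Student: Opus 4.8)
The plan is to prove the second assertion first: once we know that $\psi(\zeta_A)$ is a $(-2)$-root of divisibility $1$, the first assertion follows almost formally from~\Ref{clm}{vendola}. For the root statement, write $\psi(\wt h_A)=v_1$ with $v_1^2=2$. Then~\eqref{ortogo} gives $(\zeta_A,\wt h_A)=0$, so $\psi(\zeta_A)\in v_1^{\bot}=\Lambda$, and~\eqref{menodue} gives $\psi(\zeta_A)^2=-2$. Primitivity is immediate from the evenness of $\wt\Lambda$: were $\zeta_A=2w$ with $w$ integral we would have $w^2=-1/2$, absurd. Since a primitive vector of square $-2$ is a root, $\psi(\zeta_A)$ is a $(-2)$-root of $\Lambda$.

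The divisibility is the heart of the matter. The first step I would take is to record the identity
\begin{equation*}
\int_C\alpha=(\zeta_A,\alpha),\qquad \alpha\in H^2(\wt X_A;\ZZ),
\end{equation*}
for $C$ a fibre of the conic bundle~\eqref{fibcon}: by Item~(2) of~\Ref{crl}{batman} the forms $\int_C(-)$ and $(\zeta_A,-)$ have the same kernel $\zeta_A^{\bot}$, hence are proportional, and the constant is $1$ because $\int_C\zeta_A=-2=(\zeta_A,\zeta_A)$ by~\eqref{duesucon} and~\eqref{menodue}. Thus $\divisore_{\Lambda}(\psi(\zeta_A))=\gcd\{\int_C y\mid y\in H^2(\wt X_A;\ZZ),\ (y,\wt h_A)=0\}$, and I must produce an integral $y$ orthogonal to $\wt h_A$ with $\int_C y=1$. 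To control this I would upgrade the $\CC$-linear isomorphism $r$ of~\eqref{benhur} to an integral isometric embedding: using Leray--Hirsch for the $\PP^1$-bundle $p\colon E_A\to S_A$ one checks $\iota^{*}\alpha\in p^{*}H^2(S_A;\ZZ)$ for integral $\alpha\in\zeta_A^{\bot}$, so $r$ restricts to an embedding of the rank-$22$ lattice $\zeta_A^{\bot}\cap H^2(\wt X_A;\ZZ)$, whose discriminant group has order $4$, into the unimodular lattice $H^2(S_A;\ZZ)$; by~\Ref{clm}{conserva} it is isometric onto its image $L$, which therefore has index $2$, and by Item~(4) of~\Ref{crl}{runrabbit} it sends $\wt h_A$ to the degree-$2$ polarization $h_S$ (square $2$, divisibility $1$ in the $K3$ lattice). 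Polarizing~\Ref{clm}{conserva} gives $(\wt h_A,\alpha)=\int_{S_A}h_S\cdot r(\alpha)$, whence $\divisore_{\Lambda}(\psi(\zeta_A))=\divisore_L(h_S)$.

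The whole question is then whether $h_S^{\bot}\subset L$ (which would give divisibility $2$ and place $\psi(\zeta_A)$ in the orbit of $e_1$ or $e_2$) or $h_S^{\bot}\cap L$ has index $2$ in $h_S^{\bot}$ (giving divisibility $1$ and the orbit of $e_3$); by~\Ref{prp}{orbrad} and~\Ref{prp}{criteich} these are the only possibilities for a $(-2)$-root. Settling this dichotomy is the main obstacle. I expect to compute the index-$2$ extension $L\subset H^2(S_A;\ZZ)$ explicitly from the local normal form~\eqref{benni}, exhibiting its glue vector as a class that pairs nontrivially with $h_S$ modulo $2$; this forces $h_S^{\bot}\not\subset L$, i.e.\ $G:=\psi(\zeta_A)^{\bot}\cap\Lambda$ has discriminant group $(\ZZ/2)^{3}$ rather than $\ZZ/2$, so $G\cong\Gamma$ and $\psi(\zeta_A)$ lies in the orbit of $e_3$, of divisibility $1$. (A shortcut would be to quote the classification of prime exceptional divisors on fourfolds of $K3^{[2]}$-type: a primitive class of a reduced, irreducible, uniruled divisor of square $-2$ has divisibility $1$, the divisibility-$2$ square-$(-2)$ class occurring only as half of the Hilbert--Chow divisor.)

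Granting divisibility $1$, the first assertion is quick. By~\Ref{prp}{criteich} and the definitions~\eqref{essetiti}, for every $A_0\in(\Sigma^{\rm sm}\setminus\Delta)$ the divisor $\kappa(\psi(\zeta_{A_0})^{\bot}\cap\Omega_{\Lambda})$ equals ${\mathbb S}_2^{\star}$. By~\Ref{clm}{vendola} the period map carries a neighbourhood of $A_0$ in $\wt\Sigma_{A_0}$ isomorphically onto an open subset of $\psi(\zeta_{A_0})^{\bot}\cap\Omega_{\Lambda}$; passing to the $\PGL(V)$-quotient through the Luna slice, the restriction of $\gp$ to $(\Sigma^{\rm sm}\setminus\Delta)//\PGL(V)$ becomes a local isomorphism onto an open subset of the irreducible divisor ${\mathbb S}_2^{\star}$. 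As $\dim (\Sigma^{\rm sm}\setminus\Delta)//\PGL(V)=54-35=19=\dim{\mathbb S}_2^{\star}$, the image is dense, so the map is dominant, and being étale it is quasi-finite, hence has finite fibres.
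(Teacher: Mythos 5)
Your opening and closing paragraphs are fine and agree with the paper's own argument: \eqref{ortogo} and \eqref{menodue} give that $\psi(\zeta_A)$ is a primitive $(-2)$-vector of $\Lambda$, and once divisibility $1$ is known, \Ref{clm}{vendola} plus Eichler's criterion yields dominance onto ${\mathbb S}_2^{\star}$ with finite fibres. The gap is that you never actually prove $\divisore(\psi(\zeta_A))=1$, which is the heart of the proposition, and your reduction of it contains a hidden circularity. By \Ref{prp}{orbrad} three orbits are a priori possible: those of $e_1$, $e_2$, $e_3$. Your assertion that $\zeta_A^{\bot}\cap H^2(\wt{X}_A;\ZZ)$ has discriminant group of order $4$ holds precisely when $\psi(\zeta_A)$ has divisibility $1$ in $\wt{\Lambda}$, i.e.\ precisely when the $e_2$-orbit has been excluded: if $\psi(\zeta_A)$ lay in the $e_2$-orbit, then $\zeta_A^{\bot}\cap H^2(\wt{X}_A;\ZZ)$ would be unimodular, $r$ would map it isometrically \emph{onto} $H^2(S_A;\ZZ)$, and both your index-$2$ statement and the identity $\divisore_{\Lambda}(\psi(\zeta_A))=\divisore_L(h_S)$ would fail (the left side would be $2$, the right side $1$). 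So one of the two bad cases is assumed away rather than ruled out. The other bad case, the $e_1$-orbit, is exactly where you write that you \lq\lq expect to compute\rq\rq\ the glue vector of $L\subset H^2(S_A;\ZZ)$ from the local normal form \eqref{benni}: that computation is not carried out, and it is not explained how a local analytic model could determine a global glue vector. The parenthetical shortcut via Markman's classification of prime exceptional divisors is a genuinely viable alternative ($E_A$ is reduced, irreducible, uniruled, of square $-2$), but as written it is a one-sentence appeal whose precise statement you would still have to extract and verify; note also that \eqref{leuca}, which you try to establish along the way, is deduced in the paper \emph{from} this very proposition, so your route must be independent of it.

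For comparison, the paper resolves the dichotomy by a global argument with no lattice gluing at all: by \Ref{clm}{vendola} the image of $(\Sigma^{\rm sm}\setminus\Delta)//\PGL(V)$ under $\gp$ has codimension $1$, and by \eqref{inradice} and \Ref{prp}{orbrad} it is contained in ${\mathbb S}_2^{\star}\cup{\mathbb S}'_2\cup{\mathbb S}''_2$, hence (irreducibility of $\Sigma$) dense in exactly one of these three prime divisors. Since $\Sigma//\PGL(V)$ is stable under the duality involution $\delta$ induced by \eqref{specchio}, equation \eqref{idiota} makes the image stable under $\ov{\iota}$, and by \eqref{azionesse} the only $\ov{\iota}$-invariant divisor among the three is ${\mathbb S}_2^{\star}$; divisibility $1$ then falls out of \Ref{prp}{orbrad} as a corollary rather than serving as an input. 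If you wish to keep your lattice-theoretic route, the repair is to turn the Markman citation into an actual argument that excludes both the $e_1$- and the $e_2$-orbits at once.
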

\begin{proof}
Let $A\in(\Sigma^{\rm sm}\setminus\Delta)$. By~\Ref{clm}{vendola} we get that $[A]$ is an isolated point in the fiber $\gp^{-1}(\gp([A]))$.  In particular
\begin{equation}\label{mare}
\cod(\gp((\Sigma^{\rm sm}\setminus\Delta)// \PGL(V)),\DD_{\Lambda})=1.
\end{equation}
  By~\eqref{ortogo} and~\eqref{menodue}  $\psi(\zeta_A)$ is a $(-2)$-root of $\Lambda$. By~\eqref{inradice} and~\Ref{prp}{orbrad} we get that
\begin{equation}\label{alternativa}
\gp((\Sigma^{\rm sm}\setminus\Delta)// \PGL(V))\subset{\mathbb S}_2^{\star}\cup{\mathbb S}'_2\cup{\mathbb S}''_2.
\end{equation}
By~\eqref{mare} and irreducibility of $\Sigma$ the left-hand side of~\eqref{alternativa} is dense in  one of ${\mathbb S}_2^{\star}$, ${\mathbb S}'_2$, ${\mathbb S}''_2$. Let $\delta_V$ be as in~\eqref{specchio} and $\delta\colon\gM\to\gM$ be the induced involution, let $\ov{\iota}\colon\DD_{\Lambda}^{BB}\to\DD_{\Lambda}^{BB}$ be the involution given by~\eqref{invoper}). Then $(\Sigma// \PGL(V))$ is mapped  to itself  by  $\delta$   and hence~\eqref{idiota} gives that  its image under the period map $\gp$ is mapped to itself by $\ov{\iota}$. By~\eqref{azionesse} it follows that $\gp$ maps $(\Sigma^{\rm sm}\setminus\Delta)// PGL(V)$ 
 into  ${\mathbb S}_2^{\star}$ and hence that  $\divisore(\psi(\zeta_A))=1$. 
\end{proof}
\subsection{Periods of $K3$ surfaces of degree $2$}
\setcounter{equation}{0}
Let  $A\in(\Sigma^{\rm sm}\setminus\Delta)$. We will recall results of Shah on the period map for double covers   of $\PP^2$ branched over a sextic curve. 
Let
 $\gC_6:=|\cO_{\PP^2}(6)|// PGL_3$ 
 and $\Phi$ be the lattice given by~\eqref{retdue}. There is a period map 
\begin{equation}\label{kappadue}
\gs \colon \gC_6  \dashrightarrow  \DD_{\Phi}^{BB}
\end{equation}
whose restriction to the open set parametrizing smooth sextics is defined as follows. Let $C$ be a smooth plane sextic  and  $f\colon S\to\PP^2$ be the double cover branched over $C$.  Then~\eqref{kappadue} maps the orbit of $C$ to the period point of the polarized $K3$ surface $(S,f^{*}\cO_{\PP^2}(1))$.  Shah~\cite{shah} determined the \lq\lq boundary\rq\rq and the indeterminacy locus of the above map. In order to state Shah' results  we recall a definition.
\begin{dfn}\label{dfn:adesing}
A curve $C\subset\PP^2$ has a {\it simple singularity} at $p\in C$ if and only if the following hold:
\begin{itemize}
\item[(i)] 
$C$ is reduced in a neighborhood of $p$. 
\item[(ii)] 
$\mult _p(C)\le 3$ and if equality holds 
$C$ does not have a consecutive triple point at $p$.\footnote{$C$ has  a consecutive triple point at $p$ if the strict transform of $C$ in $Bl_p(\PP^2)$ has a point of multiplicity $3$ lying over $p$.} 
\end{itemize}
\end{dfn}
\begin{rmk}\label{rmk:semade}
Let $C\subset\PP^2$ be a sextic curve. Then $C$ has simple singularities if and only if   the double cover $S\to\PP^2$ branched over $C$ is a normal surface with DuVal singularities or equivalently  the minimal desingularization $\wt{S}$ of $S$ is a $K3$ surface (with A-D-E curves lying over the singularities of $S$), see Theorem 7.1 of~\cite{bpv}. 
\end{rmk}
Let $C\subset \PP^2$ be a sextic curve with simple singularities. Then $C$ is $PGL_3$-stable by~\cite{shah}.  We let
\begin{equation}\label{sestisem}
\gC^{ADE}_6:=\{C\in |\cO_{\PP^2}(6)| \mid \text{$C$ has simple singularities}\}// PGL_3.
\end{equation}
 Let $C$ be a plane sextic. If $C$  has simple singularities the period map~\eqref{kappadue}  is regular at $C$ and takes value in $\DD_{\Phi}$ - see~\Ref{rmk}{semade}. More generally Shah~\cite{shah} proved   that~\eqref{kappadue}  is regular at $C$  if and only if $C$ is $PGL_3$-semistable and the unique closed orbit in $\ov{PGL_3 C}\cap|\cO_{\PP^2}(6)|^{ss}$ is not that of triple (smooth) conics. 
\begin{dfn}
Let $\lagr^{ADE}\subset\lagr$  be the set of $A$ such that $C_{W,A}$ is a curve with simple singularities for every $W\in\Theta_A$.  
Let  $\lagr^{ILS}\subset\lagr$ be the set of $A$ such that the period map~\eqref{kappadue} is regular at $C_{W,A}$  for every $W\in\Theta_A$. 
\end{dfn}
Notice that both $\lagr^{ADE}$ and $\lagr^{ILS}$ are open. We have inclusions
\begin{equation}
(\lagr\setminus\Sigma)\subset\lagr^{ADE}\subset\lagr^{ILS}.
\end{equation}
The reason for the superscript $ILS$ is the following:   a curve $C\in|\cO_{\PP(W)}(6)|$ is in the regular locus of the period map~\eqref{persestiche} if and only if the double cover of $\PP(W)$ branched over $C$ has \emph{Insignificant Limit Singularities} in the terminology of Mumford, see~\cite{shah2}. 
\begin{dfn}
Let $\Sigma^{ILS}:=\Sigma\cap\lagr^{ILS}$. Let $\wt{\Sigma}^{ILS}\subset\wt{\Sigma}$ be the inverse image of $\Sigma^{ILS}$ for the natural forgetful map $\wt{\Sigma}\to\Sigma$, and  $\wh{\Sigma}^{ILS}\subset\wh{\Sigma}$ 
\begin{equation}
\wh{\Sigma}^{ILS}:=(p|_{\wh{\Sigma}})^{-1}(\Sigma^{ILS})
\end{equation}
 where $p\colon\wh{\lagr}\to \lagr$ and $\wh{\Sigma}$  are as in~\Ref{dfn}{cappello}. 
\end{dfn}
\subsection{The period map on $\Sigma$ and periods of $K3$ surfaces}
\setcounter{equation}{0}
Let  
\begin{equation}
\begin{matrix}
\wt{\Sigma}^{ILS} & \overset{\tau}{\lra} & \Sigma^{ILS} \\
(W,A) & \mapsto & A
\end{matrix}
\end{equation}
 be the forgetful map. Let $A\in(\Sigma^{\rm sm}\setminus\Delta)$: then $\Theta_A$ is a singleton by~\eqref{singsig} and if $W$ is the unique element of $\Theta_A$ then  $C_{W,A}$ is smooth sextic  by Item~(3) of~\Ref{crl}{runrabbit}. It follows that $(\Sigma^{\rm sm}\setminus\Delta)\subset\lagr^{ILS}$ and
$\tau$ defines an isomorphism $\tau^{-1}(\Sigma^{\rm sm}\setminus\Delta)\to (\Sigma^{\rm sm}\setminus\Delta)$. Thus we may regard $(\Sigma^{\rm sm}\setminus\Delta)$ as an (open dense) subset of    $\wt{\Sigma}^{ILS}$:
\begin{equation}\label{riccmonelli}
\iota\colon (\Sigma^{\rm sm}\setminus\Delta)\hra \wt{\Sigma}^{ILS}.
\end{equation}
By definition of $\wt{\Sigma}^{ILS}$ we have    a regular map
\begin{equation}
\begin{matrix}
\wt{\Sigma}^{ILS} & \overset{q}{\lra} & \DD_{\Phi}^{BB}\\
(W,A) & \mapsto & \Pi(S_{W,A},D_{W,A})
\end{matrix}
\end{equation}
where $D_{W,A}$ is the pull-back to $S_{W,A}$ of $\cO_{\PP(W)}(1)$ and $\Pi(S_{W,A},D_{W,A})$ is the (extended) period point of $(S_{W,A},D_{W,A})$. Recall that we have defined a finite map  $\rho\colon\DD^{BB}_{\Gamma}\to\DD^{BB}_{\Phi}$, see~\eqref{mapparo} and that there is a natural map $\nu\colon \DD^{BB}_{\Gamma}\to\ov{\mathbb S}_2^{\star}$ which is identified with the normalization of $\ov{\mathbb S}_2^{\star}$, see~\eqref{normesse}. 
\begin{prp}\label{prp:feniglia}
There exists a regular map 
\begin{equation}\label{mapperre}
Q\colon  \wt{\Sigma}^{ILS} \to  \DD_{\Gamma}^{BB}
\end{equation}
 such that $\rho\circ Q=q$. Moreover the composition $\nu\circ(Q|_{(\Sigma^{\rm sm}\setminus\Delta)})$ is equal to the restriction of the period map $\cP$ to  $(\Sigma^{\rm sm}\setminus\Delta)$.  
\end{prp}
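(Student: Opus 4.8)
The plan is to construct $Q$ first on the dense open subset $\iota(\Sigma^{\rm sm}\setminus\Delta)\subset\wt\Sigma^{ILS}$ by a direct Hodge-theoretic recipe, to verify the two asserted identities there, and then to extend $Q$ regularly over all of $\wt\Sigma^{ILS}$ by a Zariski's-main-theorem argument that trades on the finiteness of $\rho$ and the smoothness of $\wt\Sigma$. Throughout I fix $A\in(\Sigma^{\rm sm}\setminus\Delta)$ with $W$ the unique element of $\Theta_A$ (unique by~\eqref{singsig}), so that $S_A=S_{W,A}$ is a smooth degree-$2$ $K3$ by Item~(3) of~\Ref{crl}{runrabbit}.

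First I would promote the isometry $r\colon\zeta_A^\bot\to H^2(S_A;\CC)$ of~\eqref{benhur} to a Hodge isometry of polarized weight-two Hodge structures. That $r$ respects the Hodge decomposition follows from $r=(p^{*})^{-1}\circ\iota^{*}$ together with the identity $\iota^{*}\sigma=p^{*}\tau$ established inside the proof of~\Ref{prp}{accaie}; hence $r(H^{2,0}(\wt X_A))=H^{2,0}(S_A)$. For the polarization, since $\wt H_A=\pi_A^{*}H_A$ and $f_A$ restricts to the double cover $S_A\to\PP(W)$ one gets $\iota^{*}\wt h_A=p^{*}D_{W,A}$, so $r(\wt h_A)=D_{W,A}$. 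Next I refine $r$ to an \emph{integral} embedding $\zeta_A^\bot\cap H^2(\wt X_A;\ZZ)\hookrightarrow H^2(S_A;\ZZ)$: for an integral class $\alpha\perp\zeta_A$ one has $\int_C\iota^{*}\alpha=0$ by~\eqref{benjo}, so $\iota^{*}\alpha$ carries no component along the fibre-degree-one class of the $\PP^1$-bundle $p\colon E_A\to S_A$ and therefore lies in $p^{*}H^2(S_A;\ZZ)$, making $r(\alpha)$ integral. This is the index-two inclusion of integral Hodge structures announced in the Introduction; that its index is $2$ follows from~\Ref{clm}{conserva} (applicable because the Fujiki constant of $\wt X_A$ is $3$ and $q_X(\zeta_A)=-2$ by~\eqref{menodue}), which makes $r$ an isometry, together with the discriminant comparison $4=\discr\wt\Gamma=m^{2}\cdot\discr(\text{K3 lattice})=m^{2}$.

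The heart of the first identity $\rho\circ Q=q$ is then lattice bookkeeping. I would choose a marking $\psi$ of $(\wt X_A,\wt H_A)$ with $\psi(\wt h_A)=v_1$ and $\psi(\zeta_A)=e_3$; this is possible because $\psi(\zeta_A)$ is a divisibility-one $(-2)$-root of $\Lambda$ by~\Ref{prp}{nerola}, hence lies in the $\wt O(\Lambda)$-orbit of $e_3$ by~\Ref{prp}{orbrad}. Restricting $\psi$ identifies $\zeta_A^\bot\cap H^2(\wt X_A;\ZZ)$ isometrically with $\wt\Gamma=e_3^\bot$. The overlattice $H^2(S_A;\ZZ)$ of the index-two sublattice $r(\zeta_A^\bot\cap H^2(\wt X_A;\ZZ))\cong\wt\Gamma$ is even and of index $2$, so by the uniqueness in~\Ref{clm}{unisov} it must correspond to $\wt\Phi$; thus $\psi|_{\zeta_A^\bot}$ extends to a marking $\phi$ of $(S_A,D_{W,A})$ with $\phi\circ r=(\wt\Gamma\hookrightarrow\wt\Phi)\circ\psi|_{\zeta_A^\bot}$. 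Setting $Q(A):=[\psi_\CC H^{2,0}(\wt X_A)]\in\DD_\Gamma$ (well defined modulo $\wt O(\Gamma)$, the stabilizer of $e_3$), the Hodge-isometry property of $r$ yields $\phi_\CC H^{2,0}(S_A)=\psi_\CC H^{2,0}(\wt X_A)$ as points of $\Omega_\Phi=\Omega_\Gamma$, whence $\rho(Q(A))=\Pi(S_A,D_{W,A})=q(A)$. The ``moreover'' clause is then immediate: by~\eqref{normesse} and~\Ref{crl}{conichesuw}, $\nu(Q(A))=[\psi_\CC H^{2,0}(\wt X_A)]_{\wt O(\Lambda)}=\cP(A)$.

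Finally I would extend $Q$ to all of $\wt\Sigma^{ILS}$. Let $\Gamma_Q\subset\wt\Sigma^{ILS}\times\DD^{BB}_\Gamma$ be the closure of the graph of the map just built, with projections $p_1,p_2$. Since $\rho\circ p_2=q\circ p_1$ holds on a dense subset and both sides are regular maps into the separated $\DD^{BB}_\Phi$, the equality holds on all of $\Gamma_Q$; as $\rho$ is finite (see~\eqref{mapparo} and~\Ref{rmk}{gradoro}) every fibre of $p_1$ lies inside $\rho^{-1}(q(\cdot))$ and is therefore finite. Thus $p_1$ is proper, birational, and quasi-finite, and because $\wt\Sigma$ is smooth---it fibres over $\Gr(3,V)$ with fibres the smooth Lagrangian Grassmannians $\{A\in\lagr\mid\bigwedge^3 W\subset A\}$---the base $\wt\Sigma^{ILS}$ is normal, so Zariski's main theorem forces $p_1$ to be an isomorphism. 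Then $Q:=p_2\circ p_1^{-1}$ is the desired regular map and $\rho\circ Q=q$ extends by continuity. I expect the main obstacle to be the integral lattice step of the second and third paragraphs, namely pinning down the index-two embedding and matching it with $\rho$ through~\Ref{clm}{unisov}; once that is in place, the smoothness of $\wt\Sigma$ makes the final extension essentially formal.
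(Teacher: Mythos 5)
Your proposal is correct and is essentially the paper's own proof: the same index-two integral embedding $r$ pinned down by the discriminant comparison and \Ref{clm}{unisov}, the same re-marking sending $\zeta_A$ to $e_3$, and the same extension step (your graph closure is exactly the paper's component $\cZ$ of the fiber product $\wt{\Sigma}^{ILS}\times_{\DD_{\Phi}^{BB}}\DD_{\Gamma}^{BB}$, and a finite birational map onto the smooth $\wt{\Sigma}^{ILS}$ is forced to be an isomorphism). The only packaging difference is that the paper defines $Q_0$ on $(\Sigma^{\rm sm}\setminus\Delta)$ as the lift of $\cP$ through the normalization $\nu$, which makes regularity of $Q_0$ and the \lq\lq moreover\rq\rq\ clause automatic, whereas your pointwise definition via markings leaves regularity of $Q_0$ to be justified; this is easily repaired by observing, as you do, that $\nu\circ Q_0=\cP|_{(\Sigma^{\rm sm}\setminus\Delta)}$, so that $Q_0$ coincides with the unique regular lift of a dominant map from a normal variety through the normalization.
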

\begin{proof}
By~\Ref{prp}{nerola} the restriction of the period map to  $(\Sigma^{\rm sm}\setminus\Delta)$  is a dominant map to $\ov{\mathbb S}_2^{\star}$   and therefore it lifts to the normalization of $\ov{\mathbb S}_2^{\star}$:
\begin{equation}\label{sollevo}
\xymatrix
{&  &   \DD^{BB}_{\Gamma}\ar^{\nu}[d]\\   
(\Sigma^{\rm sm}\setminus\Delta) \ar^{Q_0}[urr]  \ar^{\cP|_{(\Sigma^{\rm sm}\setminus\Delta)}}[rr] & & \ov{\mathbb S}_2^{\star} \\}
\end{equation}
We claim that
\begin{equation}\label{pescia}
\rho\circ Q_0=q|_{(\Sigma^{\rm sm}\setminus\Delta)}.
\end{equation}
In fact let $A\in (\Sigma^{\rm sm}\setminus\Delta)$. Let $r\colon \zeta_A^{\bot}\to H^2(S_A;\CC)$ be the isomorphism given by~\eqref{benhur}. 
 Let's prove that
\begin{equation}\label{leuca}
[H^2(S_A;\ZZ): r (\zeta_A^{\bot}\cap H^2(\wt{X}_A;\ZZ))]=2.
\end{equation}
In fact  $r$ is a homomorphism of lattices by~\Ref{clm}{conserva}. Since $H^2(S_A;\ZZ)$ and $\zeta_A^{\bot}\cap H^2(\wt{X}_A;\ZZ)$ have the same rank it follows that $r (\zeta_A^{\bot}\cap H^2(\wt{X}_A;\ZZ))$ is of finite index 
in  $H^2(S_A;\ZZ)$: let $d$ be the 
the index. By the last clause of~\Ref{prp}{nerola} the lattice  $(\zeta_A^{\bot} \cap H^2(\wt{X}_A;\ZZ))$ is isometric to $\wt{\Gamma}$ - see~\eqref{tilgam}. 
Hence we have 
\begin{equation}\label{vespa}
-4=\discr \wt{\Gamma}=\discr (\zeta_A^{\bot} \cap H^2(\wt{X}_A;\ZZ))=d^2\cdot \discr H^2(S_A;\ZZ)=-d^2.
\end{equation}
Equation~\eqref{leuca} follows at once. Next let $\psi\colon H^2(\wt{X} _A;\ZZ)\overset{\sim}{\lra}\wt{\Lambda}$ be a marking of $(\wt{X} _A,\wt{H}_A)$. By  the last clause of~\Ref{prp}{nerola} we know that $\psi(\zeta_A)$ is a $(-2)$-root of $\Lambda$ of divisibility $1$. By~\Ref{prp}{orbrad} there exists $g\in \wt{O}(\Lambda)$ such that $g\circ\psi(\zeta_A)=e_3$. Let $\phi:=g\circ\psi$. Then $\phi$ is a new marking of   $(\wt{X} _A,\wt{H}_A)$ and  $\phi(\zeta_A)=e_3$. It follows that $\phi(\zeta_A ^{\bot}\cap H^2(\wt{X}_A;\ZZ))=\wt{\Gamma}$. 
Let $\psi_{\QQ}\colon H^2(\wt{X} _A;\QQ)\overset{\sim}{\lra}\wt{\Lambda}_\QQ$ be the $\QQ$-linear extension of $\phi$.  By~\eqref{leuca} $H^2(S_A;\ZZ)$ is an overlattice of $\zeta_A^{\bot}\cap H^2(\wt{X}_A;\ZZ)$ and hence it may be emebedded canonically into $H^2(\wt{X} _A;\QQ)$: thus $\phi_\QQ(H^2(S_A;\ZZ))$ makes sense. By~\eqref{leuca} we get that  $\phi_\QQ(H^2(S_A;\ZZ))$ is an  overlattice of $\phi (\zeta_A^{\bot} \cap H^2(\wt{X}_A;\ZZ))$ and that $\phi (\zeta_A^{\bot}  \cap H^2(\wt{X}_A;\ZZ))$ has index $2$ in $\phi_\QQ(H^2(S_A;\ZZ))$. By~\Ref{clm}{unisov} it follows that $\phi_\QQ(H^2(S_A;\ZZ))=\wt{\Phi}$. Equation~\eqref{pescia} follows at once from this.
By~\eqref{pescia} we have a  commutative diagram 
\begin{equation}\label{trapezio}
\xymatrix{  & \wt{\Sigma}^{ILS} \times_{\DD_{K_2}^{BB}}\DD_{\Gamma}^{BB} \ar[d]  \ar[r]  & \DD_{\Gamma}^{BB}
 \ar^{\rho}[d] \\   
 (\Sigma^{\rm sm}\setminus\Delta) \ar^{(\iota,Q_0)}[ur] \ar^{\iota}[r] & \wt{\Sigma}^{ILS}  
 \ar^{q}[r] &  \DD_{\Phi}^{BB}.}
\end{equation}
where $\iota$ is the inclusion map~\eqref{riccmonelli}. 
Let $\cZ$ be the closure of $\im(\iota,Q_0)$. Then  $\cZ$ is an irreducible component of $\wt{\Sigma}^{ILS}\times_{\DD_{K_2}^{BB}}\DD_{\Gamma}^{BB}$ because $\iota$ is an open inclusion. The natural projection 
$\cZ\to \wt{\Sigma}^{ILS}$
is a finite birational map and hence it is an isomorphism because $\wt{\Sigma}^{ILS}$ is smooth. We define the map $Q\colon \wt{\Sigma}^{ILS} \to  \DD_{\Gamma}^{BB}$ as the composition of the inverse $\wt{\Sigma}^{ILS}\to \cZ$ and the projection $\cZ\to \DD_{\Gamma}^{BB}$. The properties of $Q$ stated in the proposition hold  by commutativity of~\eqref{trapezio}.  
\end{proof}
\begin{crl}\label{crl:feniglia}
The image of the map $(\tau,\nu\circ Q)\colon\wt{\Sigma}^{ILS}\to \Sigma^{ILS}\times\ov{\mathbb S}_2^{\star}$ is equal 
to $\wh{\Sigma}^{ILS}$. 
\end{crl}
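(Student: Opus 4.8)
The plan is to reinterpret the morphism $\Psi:=(\tau,\nu\circ Q)$ as a morphism into $\lagr\times\DD_{\Lambda}^{BB}$, by composing with the closed inclusion $\Sigma^{ILS}\times\ov{\mathbb S}_2^{\star}\hra\lagr\times\DD_{\Lambda}^{BB}$, and then to identify its image with the proper transform $\wh{\Sigma}^{ILS}\subset\lagrhat\subset\lagr\times\DD_{\Lambda}^{BB}$. First I would record the geometry over the dense open locus. Since $\Sigma$ is a prime divisor, $\wt{\Sigma}$ is irreducible (the forgetful map $\wt{\Sigma}\to\Sigma$ being birational), and so are its open subset $\wt{\Sigma}^{ILS}$ and the proper transform $\wh{\Sigma}$, hence $\wh{\Sigma}^{ILS}$. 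By~\eqref{singsig} the locus $(\Sigma^{\rm sm}\setminus\Delta)$ avoids $\Sigma_{\infty}\cup\Sigma[2]$, so by~\Ref{prp}{dagest} the map $\cP$ is regular there and these points lie outside the indeterminacy locus; thus $p$ is an isomorphism over $(\Sigma^{\rm sm}\setminus\Delta)$ and there $\wh{\Sigma}^{ILS}$ is the graph $\{(A,\cP(A))\}$, an open dense subset of $\wh{\Sigma}^{ILS}$. Using the last clause of~\Ref{prp}{feniglia} (namely $\nu\circ Q=\cP$ on $(\Sigma^{\rm sm}\setminus\Delta)$) together with the fact that $\tau$ is an isomorphism over $(\Sigma^{\rm sm}\setminus\Delta)$, the restriction of $\Psi$ there is exactly $A\mapsto(A,\cP(A))$; in particular $\Psi$ and the projection $p|_{\wh{\Sigma}^{ILS}}$ induce mutually inverse isomorphisms over $(\Sigma^{\rm sm}\setminus\Delta)$.

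Next I would show that $\Psi$ factors through $\wh{\Sigma}^{ILS}$. The set $\wh{\Sigma}$ is closed in $\lagrhat$ and $\lagrhat$ is closed in $\lagr\times\DD_{\Lambda}^{BB}$, so $\wh{\Sigma}$ is closed in $\lagr\times\DD_{\Lambda}^{BB}$ and hence $\Psi^{-1}(\wh{\Sigma})$ is closed in $\wt{\Sigma}^{ILS}$. By the previous paragraph it contains the dense subset $\tau^{-1}(\Sigma^{\rm sm}\setminus\Delta)$, so irreducibility of $\wt{\Sigma}^{ILS}$ forces $\Psi^{-1}(\wh{\Sigma})=\wt{\Sigma}^{ILS}$, i.e.\ $\Psi(\wt{\Sigma}^{ILS})\subset\wh{\Sigma}$. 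Since the first component of $\Psi$ is $\tau$, which takes values in $\Sigma^{ILS}$, the image in fact lands in $(p|_{\wh{\Sigma}})^{-1}(\Sigma^{ILS})=\wh{\Sigma}^{ILS}$, yielding a morphism $\ov{\Psi}\colon\wt{\Sigma}^{ILS}\to\wh{\Sigma}^{ILS}$ of $\Sigma^{ILS}$-schemes with $p\circ\ov{\Psi}=\tau$. (In passing this shows $\wh{\Sigma}^{ILS}$ really does sit inside $\Sigma^{ILS}\times\ov{\mathbb S}_2^{\star}$, as the statement presupposes.)

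Finally I would deduce that $\ov{\Psi}$ is surjective, which gives the corollary. The forgetful map $\wt{\Sigma}\to\lagr$ is proper because $\wt{\Sigma}$ is closed in $\Gr(3,V)\times\lagr$ with $\Gr(3,V)$ complete; restricting over the open set $\Sigma^{ILS}$ makes $\tau\colon\wt{\Sigma}^{ILS}\to\Sigma^{ILS}$ proper. Likewise $p\colon\lagrhat\to\lagr$ is a blow-up, hence projective, so $p|_{\wh{\Sigma}^{ILS}}$ is proper over $\Sigma^{ILS}$. As $\ov{\Psi}$ is a morphism of $\Sigma^{ILS}$-schemes with proper source and separated target it is itself proper, so its image is closed in $\wh{\Sigma}^{ILS}$; by the first paragraph this image contains the dense open $(\Sigma^{\rm sm}\setminus\Delta)$-locus, and $\wh{\Sigma}^{ILS}$ being irreducible the image is all of $\wh{\Sigma}^{ILS}$. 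Hence the image of $(\tau,\nu\circ Q)$ equals $\wh{\Sigma}^{ILS}$. The step I expect to require the most care is the factorization through the \emph{closed} proper transform $\wh{\Sigma}$: it is what ties the abstract period map $Q$ of~\Ref{prp}{feniglia} to the geometric closure defining $\lagrhat$, and it rests essentially on the closure description of $\lagrhat$ together with the irreducibility of $\wt{\Sigma}^{ILS}$.
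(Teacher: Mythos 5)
Your proposal is correct and follows essentially the same route as the paper: identify $(\tau,\nu\circ Q)$ with $A\mapsto(A,\cP(A))$ over the dense locus $(\Sigma^{\rm sm}\setminus\Delta)$ using the second clause of \Ref{prp}{feniglia}, use closedness of $\wh{\Sigma}$ and the commutative triangle over $\Sigma^{ILS}$ to factor through $\wh{\Sigma}^{ILS}$, and conclude via properness of $\tau$ (which forces $(\tau,\nu\circ Q)$ to be proper, hence have closed image) together with density and irreducibility. The only cosmetic difference is that you spell out the preimage/irreducibility argument for landing in $\wh{\Sigma}$, where the paper invokes closedness directly.
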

\begin{proof}
Let  $p\colon\wh{\lagr}\to \lagr$ be as in~\Ref{dfn}{cappello}. Since $\cP$ is regular on $(\Sigma^{\rm sm}\setminus \Delta)$ the map $p^{-1}(\Sigma^{\rm sm}\setminus \Delta)\to 
(\Sigma^{\rm sm}\setminus \Delta)$ is an isomorphism and $p^{-1}(\Sigma^{\rm sm}\setminus \Delta)$ is an open dense subset of $\wh{\Sigma}^{ILS}$ (recall that $\Sigma$ is irreducible and hence so is $\wh{\Sigma}$). 
By the second clause of~\Ref{prp}{feniglia} we have that 
\begin{equation}\label{tombolo}
(\tau,\nu\circ Q)(\Sigma^{\rm sm}\setminus \Delta)= p^{-1}(\Sigma^{\rm sm}\setminus \Delta).
\end{equation}
Since  $\wh{\Sigma}$ is closed in $\lagrhat\times\DD_{\Lambda}^{BB}$ it follows that $\im(\tau,\nu\circ Q)\subset \wh{\Sigma}$. The commutative diagram
\begin{equation}\label{frigidaire}
\xymatrix
{\wt{\Sigma}^{ILS}  \ar^{(\tau,\nu\circ Q)}[rr] \ar_{\tau}[dr] &  &  \wh{\Sigma}^{ILS} \ar^{p|_{\wh{\Sigma}^{ILS}}}[dl] \\   
& {\Sigma}^{ILS}  &  }
\end{equation}
gives that $\im(\tau,\nu\circ Q)\subset \wh{\Sigma}^{ILS}$. The right-hand side of~\eqref{tombolo} is dense in $\wh{\Sigma}^{ILS}$: thus in order to finish the proof it suffices to show that $\im(\tau,\nu\circ Q)$ is closed in $\wh{\Sigma}^{ILS}$. The equality $(p|_{\wh{\Sigma}^{ILS}})\circ(\tau,\nu\circ Q)=\tau$ and  properness of $\tau$ give that $(\tau,\nu\circ Q)$ is proper (see Ch.~II, Cor.~4.8, Item~(e) of~\cite{hart}) and hence closed: thus $\im(\tau,\nu\circ Q)$ is closed in $\wh{\Sigma}^{ILS}$. 
\end{proof}
\subsection{Extension of the period map}\label{subsec:ariestendi}
\setcounter{equation}{0}
\begin{prp}\label{prp:regper}
Let  $A\in\lagr^{ILS}$. If $\dim\Theta_A\le 1$ the  period map $\cP$ is regular at $A$ and moreover  $\cP(A)\in\DD_\Lambda$ if and only if $A\in \lagr^{ADE}$.
\end{prp}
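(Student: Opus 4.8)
The plan is to split on whether $\Theta_A$ is empty, and for the non-empty case to run a dimension-count contradiction for regularity, followed by an interior-versus-boundary bookkeeping for the second assertion. If $\Theta_A=\es$ then $A\notin\Sigma$, so in particular $A\notin(\Sigma_\infty\cup\Sigma[2])$, and \Ref{prp}{dagest} gives immediately that $\cP$ is regular at $A$ with $\cP(A)\in\DD_\Lambda$; since $A\in\lagr^{ADE}$ holds vacuously, both clauses are verified. So I would assume from now on that $\Theta_A\ne\es$, i.e. $A\in\Sigma$; combined with $A\in\lagr^{ILS}$ this gives $A\in\Sigma^{ILS}$.

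For regularity I would argue by contradiction. Suppose $\cP$ is not regular at $A$; then $A$ lies in the indeterminacy locus, and \Ref{crl}{dagest} yields $\dim(p^{-1}(A)\cap\wh\Sigma)\ge 2$. Because $A\in\Sigma^{ILS}$ and $\wh\Sigma^{ILS}=(p|_{\wh\Sigma})^{-1}(\Sigma^{ILS})$, every point of $\wh\Sigma$ over $A$ already lies in $\wh\Sigma^{ILS}$, so $p^{-1}(A)\cap\wh\Sigma=(p|_{\wh\Sigma^{ILS}})^{-1}(A)$. Now I would invoke \Ref{crl}{feniglia}: the image of $(\tau,\nu\circ Q)\colon\wt\Sigma^{ILS}\to\wh\Sigma^{ILS}$ is all of $\wh\Sigma^{ILS}$, while $(p|_{\wh\Sigma^{ILS}})\circ(\tau,\nu\circ Q)=\tau$ by the triangle~\eqref{frigidaire}. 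Hence $(p|_{\wh\Sigma^{ILS}})^{-1}(A)=(\tau,\nu\circ Q)(\tau^{-1}(A))$, and therefore
\begin{equation*}
\dim\bigl(p^{-1}(A)\cap\wh\Sigma\bigr)\le\dim\tau^{-1}(A)=\dim\Theta_A\le 1,
\end{equation*}
the middle equality holding because $\tau^{-1}(A)=\{(W,A)\mid W\in\Theta_A\}$ is in bijection with $\Theta_A$. This contradicts the bound $\ge 2$, so $\cP$ must be regular at $A$.

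For the second clause, once $\cP$ is regular at $A$ the fibre $p^{-1}(A)$ is the single point $(A,\cP(A))$; since $A\in\Sigma$ and $A$ is not in the indeterminacy locus, this point lies in $\wh\Sigma$, and since $A\in\Sigma^{ILS}$ it lies in $\wh\Sigma^{ILS}$. Applying \Ref{crl}{feniglia} to this one-point fibre forces $\cP(A)=\nu(Q(W,A))$ for every $W\in\Theta_A$ (all these values coincide, being equal to the unique $\cP(A)$). Fix one such $W$. The normalization $\nu\colon\DD^{BB}_\Gamma\to\ov{\mathbb S}_2^{\star}$ carries $\DD_\Gamma$ onto ${\mathbb S}_2^{\star}\subset\DD_\Lambda$ and the Baily--Borel boundary of $\DD^{BB}_\Gamma$ into the boundary of $\DD^{BB}_\Lambda$, and likewise the finite map $\rho\colon\DD^{BB}_\Gamma\to\DD^{BB}_\Phi$ of~\eqref{mapparo}, being induced by a commensurable inclusion of arithmetic groups on a common period domain, sends interior to interior and boundary to boundary. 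Since $\rho\circ Q=q$ by \Ref{prp}{feniglia}, I would deduce the chain
\begin{equation*}
\cP(A)\in\DD_\Lambda\iff Q(W,A)\in\DD_\Gamma\iff q(W,A)\in\DD_\Phi,
\end{equation*}
and by \Ref{rmk}{semade} together with Shah's description of~\eqref{kappadue}, $q(W,A)=\Pi(S_{W,A},D_{W,A})\in\DD_\Phi$ if and only if $C_{W,A}$ has simple singularities. As $\cP(A)$ is independent of the chosen $W$, the condition "$C_{W,A}$ has simple singularities" holds for one $W\in\Theta_A$ exactly when it holds for all of them; hence $\cP(A)\in\DD_\Lambda$ if and only if $C_{W,A}$ has simple singularities for every $W\in\Theta_A$, which is precisely $A\in\lagr^{ADE}$.

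The technical heart is already carried by \Ref{prp}{feniglia} and \Ref{crl}{feniglia}, so the present statement reduces to clean bookkeeping. The step I expect to require the most care is the interior/boundary dictionary for $\nu$ and $\rho$ — namely checking that both finite maps restrict to maps of the open period spaces and of the Baily--Borel boundaries, so that membership of $\cP(A)$ in $\DD_\Lambda$ can be read off faithfully from membership of $q(W,A)$ in $\DD_\Phi$ — together with the identification, via \Ref{rmk}{semade} and Shah, of the interior of $\DD_\Phi$ with periods of the sextics $C_{W,A}$ having simple singularities.
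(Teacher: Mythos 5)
Your proposal is correct and takes essentially the same route as the paper: regularity is proved by the identical contradiction between the lower bound $\dim(p^{-1}(A)\cap\wh{\Sigma})\ge 2$ of \Ref{crl}{dagest} and the upper bound $\dim(p^{-1}(A)\cap\wh{\Sigma})\le\dim\Theta_A\le 1$ obtained from \Ref{crl}{feniglia} together with diagram~\eqref{frigidaire}. The second clause, which the paper dispatches with a one-line citation of \Ref{crl}{feniglia}, is precisely the interior/boundary bookkeeping for $\nu$, $\rho$ and Shah's period map that you spell out explicitly.
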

\begin{proof}
If $A\notin\Sigma$ then $\cP$ is regular at $A$ by~\Ref{prp}{dagest}. Now assume that $A\in\Sigma^{ILS}$. 
Suppose that $\cP$  is not regular at $A$: we will reach a contradiction.  Let $p\colon\lagrhat\to \lagr$ be as in~\Ref{dfn}{cappello}. Then $p^{-1}(A)\cap\wh{\Sigma}$ is a subset of $\{A\}\times\DD^{BB}_{\Lambda}$ and hence we may  identify it with its projection in   $\DD^{BB}_{\Lambda}$. This subset is  equal to $\nu\circ Q(\tau^{-1}(A))$ by~\Ref{crl}{feniglia} and  Commutative Diagram~\eqref{frigidaire}. 
On the other hand  $\tau^{-1}(A)=\Theta_A$ and hence $\dim\tau^{-1}(A)\le 1$ by hypothesis:  it follows that $\dim p^{-1}(A)\le 1$ and this contradicts~\Ref{crl}{dagest}. 
 This proves that $\cP$ is regular at $A$. The last clause of the proposition follows  from~\Ref{crl}{feniglia}.
\end{proof}
\noindent
{\it Proof of~\Ref{thm}{teorprinc}.\/}
Let  $x\in(\gM\setminus\gI)$. There exists a GIT-semistable $A\in\lagr$ representing $x$ with $\PGL(V)$-orbit closed in the semistable locus $\lagr^{\rm ss}$, and such $A$ is determined up to the action of $\PGL(V)$. By Luna's  \'etale slice Theorem~\cite{luna1} it suffices to prove that the period map $\cP$ is regular at $A$. 
If $A\notin\Sigma$ then $\cP$ is regular at $A$ and $\cP(A)\in\DD_{\Lambda}$ by~\Ref{prp}{dagest}. Now suppose that $A\in\Sigma$.  Then  $A\in\Sigma^{ILS}$  because  $x\notin\gI$.  
By~\Ref{prp}{regper} in order to prove that $\cP$ is regular at $A$ it will suffice to show that $\dim\Theta_A\le 1$. Suppose that $\dim\Theta_A\ge 2$, we will reach a contradiction. Theorem~3.26 and Theorem~3.36 of~\cite{ogtasso} give that $A$ belongs to certain subsets of $\lagr$, namely $\XX_{\cA_{+}},\XX_{\cA^{\vee}_{+}},\ldots,\XX_{+}$ (notice the misprint in the statement of Theorem~3.36: $\XX_{\cD}$ is to be replaced by $\XX_{\cD,+}$). Thus, unless 
\begin{equation}\label{licaone}
A\in(\XX_{\cY}\cup \XX_{\cW}\cup \XX_{h}\cup \XX_{k}),
\end{equation}
(notice that $\XX_{+} \subset \XX_{\cW}$)  the lagrangian $A$ belongs to one of the standard unstable strata listed in Table 2 of~\cite{ogmoduli}, and hence is $\PGL(V)$-unstable. That is a contradiction because 
  $A$ is semistable and hence we conclude that~\eqref{licaone} holds. Proposition~4.3.7 of~\cite{ogmoduli} gives that if $A\in\XX_{\cY}$ then $A\in\PGL_6 A_{+}$ i.e.~$A\in\XX_{+}$ (because $A$ has minimal $\PGL(V)$-orbit), thus $A\in \XX_{\cW}\cup \XX_{h}\cup \XX_{k})$. If  $A\in \XX_{\cW}$ then $A\notin\Sigma^{ILS}$ by Claim~4.4.5 of~\cite{ogmoduli}, if $A\in \XX_{h}$ then  $A\notin\Sigma^{ILS}$ by~(4.5.6) of~\cite{ogmoduli}, if $A\in \XX_{k}$ then  $A\notin\Sigma^{ILS}$ by~(4.5.5) of~\cite{ogmoduli}:  that is a contradiction. This shows that  $\dim\Theta_A\le 1$ and hence $\gp$ is regular at $x$.
The last clause of~\Ref{prp}{regper} gives that $\gp(x)\in\DD_{\Lambda}$ if and only if $x\in\gM^{ADE}$.
\qed
 \section{On the image of the period map}\label{sec:imagoper}
 \setcounter{equation}{0}
We will prove~\Ref{thm}{imagoper}. Most of the work goes into showing that $\cP(\lagr\setminus\Sigma)$ does not intersect ${\mathbb S}_2^{\star}\cup{\mathbb S}'_2\cup{\mathbb S}''_2\cup{\mathbb S}_4$ i.e.~that $\gp$ maps $(\gM\setminus\gN)$ into the right-hand side of~\eqref{looicomp}. First we will prove that result, then we will show that the restriction of $\gp$ to $(\gM\setminus\gN)$ is an open embedding.
 \subsection{Proof that $\cP(\lagr\setminus\Sigma)\cap{\mathbb S}_2^{\star}=\es$}
 \setcounter{equation}{0}
Let $S$ be a $K3$ surface. We recall~\cite{beau} the description of $H^2(S^{[2]})$ and the Beauville-Bogomolov form $q_{S^{[2]}}$ in terms of $H^2(S)$. Let $\mu\colon H^2(S)\to H^2(S^{[2]})$ be the composition of the  symmetrization map $H^2(S)\to H^2(S^{(2)})$ and the pull-back 
$H^2(S^{(2)})\to H^2(S^{[2]})$. There is a direct sum decomposition 
\begin{equation}\label{argentieri}
H^2(S^{[2]})=\mu(H^2(S;\ZZ))\oplus \ZZ\xi
\end{equation}
 where $2\xi$ is represented by the locus parametrizing non-reduced subschemes. Moreover if $H^2(S)$ and $H^2(S^{[2]})$ are equipped with the intersection form and Beauville-Bogomolov quadratic form $q_{S^{[2]}}$ respectively, then  $\mu$ is an isometric embedding, Decomposition~\eqref{argentieri} is orthogonal, and $q_{S^{[2]}}(\xi)=-2$. Recall that $\delta_V\colon\lagr\overset{\sim}{\lra}\lagrdual$ is defined by $\delta_V(A):=\Ann A$, see~\eqref{specchio}.
\begin{lmm}\label{lmm:nondel}
 $\cP(\Delta\setminus\Sigma)\not\subset({\mathbb S}_2^{\star}\cup{\mathbb S}'_2\cup{\mathbb S}''_2\cup{\mathbb S}_4)$ and 
 $\cP(\delta_V(\Delta)\setminus\Sigma)\not\subset({\mathbb S}_2^{\star}\cup{\mathbb S}'_2\cup{\mathbb S}''_2\cup{\mathbb S}_4)$. 
\end{lmm}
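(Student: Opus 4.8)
The plan is to treat $\wt{X}_A$ for generic $A\in(\Delta\setminus\Sigma)$ in close analogy with the analysis of $\wt{X}_A$ for generic $A\in\Sigma$ carried out in~\Ref{sec}{perdisig}, the only change being the type of the exceptional locus of the resolution. Since $A\notin\Sigma$ forces $\Theta_A=\es$, we have $A\in(\lagr\setminus\Sigma_{\infty}\setminus\Sigma[2])$, so $\cP$ is regular at $A$ and $\cP(A)\in\DD_\Lambda$ by~\Ref{prp}{dagest}. For generic such $A$ the fourfold $X_A$ has a single singular point, whose tangent cone is the contraction of the zero-section of $T^{*}\PP^2$ (Prop.~3.10 of~\cite{ogdoppio}); the blow-up $\pi_A\colon\wt{X}_A\to X_A$ of that point is then a smooth symplectic resolution whose exceptional locus is a Lagrangian plane $P\cong\PP^2$. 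Running the simultaneous-resolution argument of~\Ref{prp}{simres} and~\Ref{crl}{conichesuw} — with the resolution of families of ordinary double points replaced by the standard simultaneous resolution, after base change, of a family of $T^{*}\PP^2$-cones — shows that $\wt{X}_A$ is a HK fourfold deformation equivalent to a smooth double EPW-sextic and that $\cP(A)=\Pi(\wt{X}_A,\wt{H}_A)$, where $\wt{H}_A:=\pi_A^{*}H_A$ has square $2$.

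The next step is to produce the relevant algebraic class. Let $\ell\subset P$ be a line and let $\eta\in H^2(\wt{X}_A;\ZZ)$ be the primitive generator of the ray spanned by the Beauville--Bogomolov dual of $[\ell]$. Because $\ell$ is contracted by $\pi_A$ we have $\wt{H}_A\cdot\ell=H_A\cdot\pi_{A*}\ell=0$, so $(\eta,\wt{H}_A)=0$; moreover $\eta$ is of type $(1,1)$, being proportional to the dual of an algebraic curve class. The one nontrivial numerical input is the Beauville--Bogomolov square of a line in a Lagrangian plane in a fourfold of $K3^{[2]}$-type (Hassett--Tschinkel): a local computation on $T^{*}\PP^2$ gives $(\eta,\eta)=-10$.

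I would then show that for very general $A\in(\Delta\setminus\Sigma)$ one has $\NS(\wt{X}_A)=\ZZ\wt{H}_A\oplus\ZZ\eta$. As in~\Ref{clm}{vendola}, the local period map of the resolved family is an immersion whose restriction to the locus keeping $\eta$ of type $(1,1)$ is transverse, so the image of $(\Delta\setminus\Sigma)$ is dense in a component of $\kappa(\psi(\eta)^{\bot}\cap\Omega_\Lambda)$ and at its very general point the N\'eron--Severi lattice attains its minimal rank $2$; hence $\NS(\wt{X}_A)\cap\wt{H}_A^{\bot}=\ZZ\eta$. Now fix a marking $\psi\colon H^2(\wt{X}_A;\ZZ)\overset{\sim}{\lra}\wt{\Lambda}$ with $\psi(\wt{H}_A)=v_1$, so that $\psi(\eta)\in\Lambda=v_1^{\bot}$ is primitive with $\psi(\eta)^2=-10$. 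By~\Ref{prp}{orbrad} every negative root of $\Lambda$ has square $-2$ or $-4$, so no nonzero vector of $\ZZ\psi(\eta)$ is a root. Since $\cP(A)$ lies in one of the four divisors precisely when $\psi(\NS(\wt{X}_A))\cap\Lambda=\ZZ\psi(\eta)$ contains a negative root, this gives $\cP(A)\notin({\mathbb S}_2^{\star}\cup{\mathbb S}'_2\cup{\mathbb S}''_2\cup{\mathbb S}_4)$ by~\eqref{essetiti}, and, such $A$ being dense in $\Delta\setminus\Sigma$, the first assertion follows. The hard part will be the structural input of the first two paragraphs — that $\wt{X}_A$ is a smooth HK fourfold with $\cP(A)=\Pi(\wt{X}_A,\wt{H}_A)$ and that the Lagrangian plane contributes exactly one new algebraic class of square $-10$; once that square is pinned down the class is automatically not a root by~\Ref{prp}{orbrad}, and everything else is formal.

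The second assertion follows by symmetry. By~\eqref{idiota} we have $\bar\iota\circ\cP=\cP\circ\delta_V$, and by~\eqref{azionesse} the involution $\bar\iota$ preserves the union of the four divisors. Since $\delta_V(\Sigma)$ is a prime divisor distinct from $\Delta$, the generic point of $\Delta$ lies neither in $\Sigma$ nor in $\delta_V(\Sigma)$; thus for very general $A\in\Delta$ both $A\in\Delta\setminus\Sigma$ and $\delta_V(A)\in\delta_V(\Delta)\setminus\Sigma$ hold. As $\cP(\delta_V(A))=\bar\iota(\cP(A))$ and $\cP(A)\notin U$ while $\bar\iota(U)=U$, we get $\cP(\delta_V(A))\notin U$, so $\cP(\delta_V(\Delta)\setminus\Sigma)\not\subset({\mathbb S}_2^{\star}\cup{\mathbb S}'_2\cup{\mathbb S}''_2\cup{\mathbb S}_4)$ as well.
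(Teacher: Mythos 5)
Your endgame and your treatment of $\delta_V(\Delta)$ coincide with the paper's: in both arguments one shows that for very general $A\in(\Delta\setminus\Sigma)$ the algebraic classes orthogonal to the polarization on a smooth HK model of $X_A$ form a rank-one lattice generated by a class of square $-10$, which therefore contains no $(-2)$- or $(-4)$-root (so $\cP(A)$ avoids the four divisors by~\eqref{essetiti} and~\Ref{prp}{orbrad}), and the second assertion then follows from~\eqref{idiota} and~\eqref{azionesse} exactly as you wrote. The difference is in how the structural input is obtained. The paper does not resolve $X_A$ by hand: it quotes Theorem~4.15 of~\cite{ogdoppio}, which for generic $A\in(\Delta\setminus\Sigma)$ provides a projective $K3$ surface $S_A$ of genus $6$ and a \emph{small contraction} $S_A^{[2]}\to X_A$ identifying $\cP(A)$ with the Hodge structure on $(\mu(d_A)-2\xi)^{\bot}$, together with Proposition~4.14 of~\cite{ogdoppio}, which lets one take $(S_A,D_A)$ general in moduli; the generator $2\mu(d_A)-5\xi$ of $H^{1,1}_{\ZZ}(S_A^{[2]})\cap(\mu(d_A)-2\xi)^{\bot}$ has square $-10$, and one is done. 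Your class $\eta$ is this same class, so the numerics agree.

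The problem is that your substitute for this structural input contains a genuine error and an unjustified step. First, the blow-up of the singular point of $X_A$ is \emph{not} a symplectic resolution, and its exceptional locus is not a Lagrangian plane: the germ is the contraction of the zero-section of $T^{*}\PP^2$, i.e.\ the affine cone over the incidence threefold in $\PP^2\times\PP^2$ (the closure of the minimal nilpotent orbit of $\mathfrak{sl}_3$), so blowing up the reduced point produces as exceptional divisor the projectivized tangent cone, which is $3$-dimensional, and the resulting map is not crepant; indeed a symplectic resolution is semismall, so over an isolated fourfold singularity its fiber can have dimension at most $2$. The symplectic resolution of this germ is a \emph{small} resolution (locally $T^{*}\PP^2$ over the cone), never a point blow-up, and the existence of a projective small resolution of $X_A$ globally is precisely what Theorem~4.15 of~\cite{ogdoppio} supplies, namely $S_A^{[2]}$. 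Second, the appeal to a \lq\lq standard simultaneous resolution, after base change, of a family of $T^{*}\PP^2$-cones\rq\rq\ is not standard: the Atiyah--Brieskorn mechanism used in~\Ref{prp}{simres} is specific to rational double points, and here the monodromy analysis is genuinely different (the reflection in a class of square $-10$ and divisibility $2$ is not an integral isometry); making this step precise requires Namikawa's deformation theory of singular symplectic varieties or Huybrechts' non-separatedness of birational HK manifolds, and it is exactly what the identification $\cP(A)=\Pi(\wt{X}_A,\wt{H}_A)$ rests on. Both issues disappear if you replace your first two paragraphs by the citations above; your density argument via the local period map (as in~\Ref{clm}{vendola}), the N\'eron--Severi rank-two statement at the very general point, the square-$(-10)$ computation, and the duality step are then sound and agree with the paper.
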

\begin{proof}
Let $A\in(\Delta\setminus\Sigma)$ be  generic. By Theorem~4.15 of~\cite{ogdoppio} there exist a projective $K3$ surface $S_A$ of genus $6$  and a small contraction $S_A^{[2]}\to X_A$. Moreover the  period point $\cP(A)$  may be identified with the Hodge structure of $S_A^{[2]}$ as follows. 
The surface $S_A$ comes equipped with an ample divisor $D_A$ of genus $6$ i.e.~$D_A\cdot D_A=10$, let $d_A$ be the Poincar\`e dual of $D_A$. 
Then $\cP(A)$ is identified with the Hodge structure on $(\mu(D_A)-2\xi)^{\bot}$, where $\xi$ is as above. By Proposition~4.14  of~\cite{ogdoppio} we may assume that $(S_A,D_A)$ is a general polarized $K3$ surface of genus $6$. It follows that if $A$ is very general in $(\Delta\setminus\Sigma)$ then  $H^{1,1}_{\ZZ}(S_A)=\ZZ d_A$. Thus for  $A\in(\Delta\setminus\Sigma)$  very general we have that
\begin{equation}\label{generatore}
H^{1,1}_{\ZZ}(S_A^{[2]})\cap (\mu(d_A)-2\xi)^{\bot}=\ZZ(2\mu(d_A)-5\xi).
\end{equation}
 Now suppose that $\cP(A)\in({\mathbb S}_2^{\star}\cup{\mathbb S}'_2\cup{\mathbb S}''_2\cup{\mathbb S}_4)$: by definition  there exists $\alpha\in H^{1,1}_{\ZZ}(S_A^{[2]})\cap (\mu(d_A)-2\xi)^{\bot}$ of square $(-2)$ or $(-4)$: since $q_{S^{[2]}_A}(2\mu(d_A)-5\xi)=-10$ that contradicts~\eqref{generatore}. This proves that $\cP(\Delta\setminus\Sigma)\not\subset({\mathbb S}_2^{\star}\cup{\mathbb S}'_2\cup{\mathbb S}''_2\cup{\mathbb S}_4)$.  Next let $\ov{\iota}\colon \DD_{\Lambda}^{BB}\to \DD_{\Lambda}^{BB}$ be the involution defined by $\delta_V$, see~\eqref{invoper}.  Then $\ov{\iota}$ maps  $({\mathbb S}_2^{\star}\cup{\mathbb S}'_2\cup{\mathbb S}''_2\cup{\mathbb S}_4)$ to itself, see~\eqref{azionesse}, and hence $\cP(\delta_V(\Delta)\setminus\Sigma)\not\subset({\mathbb S}_2^{\star}\cup{\mathbb S}'_2\cup{\mathbb S}''_2\cup{\mathbb S}_4)$ because otherwise it would follow that $\cP(\Delta\setminus\Sigma)\subset({\mathbb S}_2^{\star}\cup{\mathbb S}'_2\cup{\mathbb S}''_2\cup{\mathbb S}_4)$. 
\end{proof}
Suppose that $\cP(\lagr\setminus\Sigma)\cap{\mathbb S}_2^{\star}\not=\es$. Since ${\mathbb S}_2^{\star}$ is a $\QQ$-Cartier divisor of $\DD_{\Lambda}$ it follows that $\cP^{-1}({\mathbb S}_2^{\star})\cap (\lagr\setminus\Sigma)$ has pure codimension $1$:  let $C$ be one of its 
 irreducible components. Then  $C\not=\Delta$ by~\Ref{lmm}{nondel} and hence $C^0:=C\setminus\Delta$ is a codimension-$1$ $\PGL(V)$-invariant closed subset of $(\lagr\setminus\Sigma)$. 
  Since $C^0$ has pure codimension $1$ and is contained in the stable locus of $\lagr$ (see Cor.~2.5.1 of~\cite{ogmoduli}) the quotient $C^0//\PGL(V)$ has codimension $1$ in $\gM$. If $A\in (C^0\setminus\Delta)$ then $X_A$ is smooth and hence the local period map $\Def(X_A,H_A)\to\Omega_{\Lambda}$ is a (local) isomorphism (local Torelli for hyperk\"ahler manifolds): it follows that  the restriction of $\gp$ to $C^0//\PGL(V)$ has  finite  fibers and hence  $\cP(C^0)$ is dense in ${\mathbb S}_2^{\star}$. Now consider the period map $\gp\colon (\gM\setminus\gI)\to\DD^{BB}_{\Lambda}$: it is birational by Verbitsky's Global Torelli and Markman's monodromy results, see Theorem~1.3 and Lemma~9.2 of~\cite{markman}. We have proved that there are (at least) two distinct components in $\gp^{-1}(\ov{\mathbb S}_2^{\star})$ which are mapped \emph{dominantly} to $\ov{\mathbb S}_2^{\star}$ by $\gp$, namely $((\Sigma//\PGL(V))\setminus\gI)$ and the closure of $C^0//\PGL(V)$: that is a contradiction because $\DD^{BB}_{\Lambda}$ is normal. 
\qed
 \subsection{Proof that $\cP(\lagr\setminus\Sigma)\cap({\mathbb S}'_2\cup{\mathbb S}''_2)=\es$}
 \setcounter{equation}{0}
First we will prove that $\cP(\lagr\setminus\Sigma)\cap{\mathbb S}''_2=\es$. Let $U$ be a $3$-dimensional complex vector space and $\pi\colon S\to\PP(U)$ a double cover branched over a smooth sextic curve; thus $S$ is a $K3$ surface. Let $D\in |\pi^{*}\cO_S(1)|$ and $d\in H^{1,1}_{\ZZ}(S;\ZZ)$ be its Poincar\'e dual. Since $S^{[2]}$ is simply connected there is a unique class $\mu(D)\in\Pic(S^{[2]})$ whose first Chern class is equal to $\mu(d)$. One easily checks the following facts. There is a natural isomorphism
\begin{equation}
 \PP(\Sym^2 U^{\vee}) \overset{\sim}{\lra} |\mu(D)|
\end{equation}
and  the composition of the natural maps
\begin{equation}\label{ippogrifo}
S^{[2]}\lra S^{(2)}\lra \PP(U)^{(2)}\lra \PP(\Sym^2 U)
\end{equation}
is identified with the natural map $f\colon S^{[2]}\to |\mu(D)|^{\vee}$. 
The image of $f$ is  the chordal variety $\cV_2$ of  the Veronese surface $\{[u^2] \mid 0\not=u\in U\}$, and the map $ S^{[2]}\to\cV_2$ is finite of degree $4$. Since $\mu(d)$ has square $2$ we have a well-defined period point $\Pi(S^{[2]},\mu(d))\in \DD_{\Lambda}$. The class
  $\xi\in H^{1,1}_{\ZZ}(S^{[2]})$ is a $(-2)$-root of divisibility $2$ and it is orthogonal to $\mu(d)$: it follows that  $\Pi(S^{[2]},\mu(d))\in({\mathbb S}'_2\cup{\mathbb S}''_2)$. Actually $\Pi(S^{[2]},\mu(d))\in{\mathbb S}''_2$ because the divisibility of $\xi$ as an element  of $H^2(S^{[2]};\ZZ)$   is equal to $2$ (and not only as element of $\mu(d)^{\bot}$). The periods $\Pi(S^{[2]},\mu(d))$ with $S$  as above fill-out an open dense subset of ${\mathbb S}''_2$. Now suppose that there exists $A\in(\lagr\setminus\Sigma)$ such that $\cP(A)\in{\mathbb S}''_2$.  
Since ${\mathbb S}''_2$ is a $\QQ$-Cartier divisor of $\DD_{\Lambda}$ it follows that $\cP^{-1}({\mathbb S}''_2)\cap (\lagr\setminus\Sigma)$ has pure codimension $1$:  let $C$ be one of its 
 irreducible components. By~\Ref{lmm}{nondel} $C^0:=(C\setminus\Delta)$ is a codimension-$1$ subset of $\lagr^0$ and hence $\cP(C^0)$ contains an open dense subset of  ${\mathbb S}''_2$. It follows that there exist $A\in C^0$ and  a double cover $\pi\colon S\to\PP(U)$ as above \emph{with $\Pic(S)=\ZZ\mu(D)$} and such that $\cP(A)=\Pi(S^{[2]},\mu(d))$. By Verbitsky's Global Torelli Theorem there exists a birational map $\varphi\colon S^{[2]}\dra X_A$. Now $\varphi^{*}h_A$ is a $(1,1)$-class of square $2$: since  $\Pic(S)=\ZZ\mu(D)$ it follows that $\varphi^{*}h_A=\pm\mu(d)$, and hence $\varphi^{*}H_A=\mu(D)$ because $|-\mu(D)|$ is empty. But that is a contradiction because the map $f_A\colon X_A\to |H_A|^{\vee}$ is $2$-to-$1$ onto its image while the map $f\colon S^{[2]}\to |\mu(D)|^{\vee}$ has degree $4$ onto its image. This proves that 
\begin{equation}\label{primopasso}
\cP(\lagr\setminus\Sigma)\cap{\mathbb S}''_2=\es.
\end{equation}
 It remains to prove that $\cP(\lagr\setminus\Sigma)\cap{\mathbb S}'_2=\es$. Suppose that $\cP(\lagr\setminus\Sigma)\cap{\mathbb S}'_2\not=\es$. Let $\Sigma(V^{\vee})$ be the locus of $A\in\lagrdual$ containing a non-zero decomposable tri-vector.  Since $\delta_{V}(\Sigma)=\Sigma(V^{\vee})$ we get that  
 $\cP(\lagrdual\setminus\Sigma(V^{\vee}))\cap{\mathbb S}''_2\not=\es$ by~\eqref{idiota} and~\eqref{azionesse}: that contradicts~\eqref{primopasso}.
 \qed
\begin{rmk}
In the above proof we have noticed that the generic point of ${\mathbb S}''_2$ is equal to $\Pi(S^{[2]},\mu(d))$. One may also identify explicitly polarized hyperk\"ahler varieties whose periods belong to   ${\mathbb S}'_2$. In fact let $\pi\colon S\to \PP(U)$  and 
$D$,   $d$ be as above. Let $v\in H^{*}(S;\ZZ)$ be the Mukai vector $v:=(0,d,0)$ and let $\cM_v$ be the corresponding moduli space of $D$-semistable sheaves on $S$ with Mukai vector $v$: the generic such sheaf  is isomorphic to $\iota_{*}\eta$ where $\iota\colon C\hra S$ is the inclusion of a smooth $C\in|D|$ and $\eta$ is an invertible sheaf on $C$ of degree $1$. As is well-known $\cM_v$ is a hyperk\"ahler variety deformation equivalent to $K3^{[2]}$. Moreover $H^2(\cM_v)$ with its Hodge structure and B-B form  is identified with $v^{\bot}$ with the Hodge structure it inherits from the Hodge structure of $H^{*}(S)$ and the quadratic form given by the Mukai pairing, see~\cite{yoshi}.  Let  $h\in H^2(\cM_v)$  correspond to $\pm(1,0,-1)$. Then $h$ has square $2$ and, as is easily checked, the period point of $(\cM_v,h)$ belongs to   ${\mathbb S}'_2$: more precisely $\Pi(\cM_v,h)=\ov{\iota}(\Pi(S^{[2]},\mu(d)))$.  
\end{rmk}
 \subsection{Proof that $\cP(\lagr\setminus\Sigma)\cap{\mathbb S}_4=\es$}
 \setcounter{equation}{0}
Let $S\subset\PP^3$ be a smooth quartic surface, $D\in|\cO_S(1)|$ and $d$ be the Poincar\`e dual of $D$. We have a natural map
\begin{equation}
\begin{matrix}
S^{[2]} & \overset{f}{\lra} & \Gr(1,\PP^3)\subset\PP^5 \\
Z & \mapsto & \la Z\ra
\end{matrix}
\end{equation}
where $ \la Z\ra$ is the unique line containing the lenght-$2$ scheme $Z$. Let $H\in |f^{*}\cO_{\PP^5}(1)|$ and $h$ be its Poincar\`e dual. One checks easily that $h=(\mu(d)-\xi)$, in particular $q_{S^{[2]}}(h)=2$. Moreover pull-back gives an identification of $f$ with the natural map $S^{[2]}\to | H|^{\vee}$. The equalities
\begin{equation}
(h,\mu(d)-2\xi)_{S^{[2]}}=0,\qquad q_{S^{[2]}}(\mu(d)-2\xi)=-4,\qquad (h^{\bot},\mu(d)-2\xi)_{S^{[2]}}=2\ZZ
\end{equation}
(here $h^{\bot}\subset H^2(S^{[2]};\ZZ)$ is the subgroup of classes orthogonal to $h$) show that $(\mu(d)-2\xi)$ is a $(-4)$-root of $h^{\bot}$ and hence $\Pi(S^{[2]},h)\in{\mathbb S}_4$ by~\Ref{prp}{orbrad}. Moreover the generic point of ${\mathbb S}_4$ is equal to $\Pi(S^{[2]},h)$ for some $(S,d)$ as above: in fact ${\mathbb S}_4$ is irreducible, see~\Ref{rmk}{essirr}, of dimension $19$ i.e.~the dimension  of the set of periods $\Pi(S^{[2]},h)$ for $(S,d)$ as above. Now assume that $\cP(\lagr\setminus\Sigma)\cap{\mathbb S}_4\not=\es$. Arguing as in the previous cases we get that there exists a closed $\PGL(V)$-invariant codimension-$1$ subvarietry $C^0\subset(\lagr\setminus\Delta\setminus\Sigma)$ such that $\cP(C^0)\subset {\mathbb S}_4$. Thus $\cP(C^0)$ contains an open dense subset of ${\mathbb S}_4$ and therefore if $A\in C^0$ is very generic $h^{1,1}_{\ZZ}(X_A)=2$. By the discussion above we get that there exist $(S,d)$ as above such that $\Pi(S^{[2]},h)=\Pi(X_A,h_A)$ with $h^{1,1}_{\ZZ}(X_A)=2$. By Verbitsky's Global Torelli Theorem there exists a birational map
$\varphi\colon S^{[2]}\dra X_A$. Since the map $f_A\colon X_A\to |H_A|^{\vee}$ is of degree $2$ onto its image, and since $\varphi$ defines an isomorphism between the complement of a codimension-$2$ subsets of $S^{[2]}$ and  the complement of a codimension-$2$ subsets of $X_A$ (because both are varieties with trivival canonical bundle) we get that 
\begin{equation}\label{esticazzi}
\text{$q_{S^{[2]}}(\varphi^{*}h_A)=2$,  $ |\varphi^{*}H_A|$ has no base divisor, $S^{[2]}\dra |\varphi^{*}H_A|^{\vee}$ is of degree $2$ onto its image.}
\end{equation}
We will get a contradiction by showing that there exists no divisor of square $2$ on $S^{[2]}$ such that~\eqref{esticazzi} holds. Notice that if $H$ is the divisor  on $S^{[2]}$   defined above then the first two conditions of~\eqref{esticazzi} hold but not the third (the degree of the map is equal to $6$). This does not finish the proof because the set of elements of $ H^{1,1}_{\ZZ}(S^{[2]})$ whose square is $2$ is infinite.
\begin{lmm}\label{lmm:quadue}
There exists $n\in\ZZ$ such that
\begin{equation}\label{infostud}
\varphi^{*}h_A=x\mu(d)+y\xi,\qquad y+x\sqrt{2}=(-1+\sqrt{2})(3+2\sqrt{2})^n.
\end{equation}
\end{lmm}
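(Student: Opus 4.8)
The plan is to turn the square-$2$ condition into a Pell equation on the rank-$2$ lattice $H^{1,1}_{\ZZ}(S^{[2]})$ and then to fix the relevant sign by a positive-cone argument. First I would record the lattice data. Since $\varphi\colon S^{[2]}\dra X_A$ is a birational map of hyperk\"ahler fourfolds it is an isomorphism in codimension one, and hence induces a Hodge isometry $\varphi^{*}\colon H^2(X_A;\ZZ)\overset{\sim}{\lra}H^2(S^{[2]};\ZZ)$. Because $A\in C^0$ is very general and $\cP(C^0)$ is dense in ${\mathbb S}_4$, I may assume that the quartic surface $S$ matching $A$ satisfies $\Pic(S)=\ZZ d$; then $H^{1,1}_{\ZZ}(S)=\ZZ d$, and the direct sum decomposition~\eqref{argentieri} gives $H^{1,1}_{\ZZ}(S^{[2]})=\ZZ\mu(d)\oplus\ZZ\xi$. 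Using $d^2=D^2=4$ (a hyperplane section of a quartic surface), $q_{S^{[2]}}(\xi)=-2$ and $\mu(d)\perp\xi$, the Gram matrix of this rank-$2$ lattice in the basis $\{\mu(d),\xi\}$ is $\mathrm{diag}(4,-2)$. In particular $\varphi^{*}h_A$ is an integral $(1,1)$-class, so I may write $\varphi^{*}h_A=x\mu(d)+y\xi$ with $x,y\in\ZZ$.

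Next I would exploit $q_{S^{[2]}}(\varphi^{*}h_A)=q_{X_A}(h_A)=2$. In the above basis this reads $4x^2-2y^2=2$, i.e.
\[
y^2-2x^2=-1 .
\]
This is the negative Pell equation attached to $\ZZ[\sqrt2]$. Writing $N(y+x\sqrt2)=y^2-2x^2$ and recalling that the fundamental unit is $1+\sqrt2$ with $N(1+\sqrt2)=-1$, the units of norm $-1$ are precisely the odd powers of the fundamental unit, so the integral solutions are exactly $y+x\sqrt2=\pm(1+\sqrt2)^{2n-1}$, $n\in\ZZ$. Since $(1+\sqrt2)^{-1}=-1+\sqrt2$ and $(1+\sqrt2)^{2}=3+2\sqrt2$, the ``$+$'' family is precisely $y+x\sqrt2=(-1+\sqrt2)(3+2\sqrt2)^{n}$.

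It remains to select the ``$+$'' family, and this sign determination is the step I expect to be the only real point. Because $\varphi$ is an isomorphism in codimension one, $\varphi^{*}$ carries the positive cone of $X_A$ into that of $S^{[2]}$; the class $h_A$ is ample (it is the pull-back of $\cO(1)$ under the finite double cover $f_A$) and $\mu(d)$ is nef and big, so both $\varphi^{*}h_A$ and $\mu(d)$ lie in the component of the positive cone of $S^{[2]}$ containing the K\"ahler cone. Since the Beauville--Bogomolov form restricts to a Lorentzian form on $H^{1,1}_{\RR}(S^{[2]})$, two elements of the same component of the positive cone pair positively, whence $(\varphi^{*}h_A,\mu(d))_{S^{[2]}}>0$. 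Evaluating gives $(\varphi^{*}h_A,\mu(d))_{S^{[2]}}=4x$, so $x>0$. Finally, for any solution of $y^2-2x^2=-1$ one has $|y|=\sqrt{2x^2-1}<|x|\sqrt2$, so the sign of $y+x\sqrt2$ equals that of $x$; thus $x>0$ forces the ``$+$'' family, and $y+x\sqrt2=(-1+\sqrt2)(3+2\sqrt2)^{n}$ for some $n\in\ZZ$, as claimed.
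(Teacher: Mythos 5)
Your proof is correct, and its core is the same as the paper's: you identify $H^{1,1}_{\ZZ}(S^{[2]})$ with $\ZZ[\sqrt{2}]$ via $x\mu(d)+y\xi\mapsto y+x\sqrt{2}$, translate $q_{S^{[2]}}(\varphi^{*}h_A)=2$ into the negative Pell equation $y^{2}-2x^{2}=-1$, and list its solutions as $\pm(-1+\sqrt{2})(3+2\sqrt{2})^{n}$; this is exactly the paper's reduction (the paper even derives the rank-two statement the same way you do, from $h^{1,1}_{\ZZ}(X_A)=2$ for very general $A\in C^0$). The only genuine divergence is how the sign is fixed. The paper pairs $\varphi^{*}h_A$ against $h=\mu(d)-\xi$: it observes that $S$ contains no lines (since $h^{1,1}_{\ZZ}(S)=1$), so $S^{[2]}\to\Gr(1,\PP^3)$ is finite and $h$ is ample, and then uses that $\varphi^{*}H_A$ is a nonzero effective class, whence $(\varphi^{*}h_A,h)_{S^{[2]}}>0$; a trace computation, using $(-1+\sqrt{2})(-1-\sqrt{2})=-1$, then eliminates the minus sign. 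You instead pair against $\mu(d)$: you invoke the standard fact that the pullback along a birational map of hyperk\"ahler manifolds identifies positive cones, note that $\mu(d)$ is nef and big, deduce $(\varphi^{*}h_A,\mu(d))_{S^{[2]}}=4x>0$ from the Lorentzian signature, and conclude via the elementary estimate $|y|<|x|\sqrt{2}$ that the sign of $y+x\sqrt{2}$ is that of $x$. Both routes are sound. Yours avoids the no-lines observation and the ampleness of $h$ altogether, at the cost of importing the cone-preservation property of birational pullbacks; the paper's choice keeps the positivity input more elementary (ample against effective is positive) and does not require knowing in which component of the positive cone $\varphi^{*}h_A$ lies.
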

\begin{proof}
Since $h^{1,1}_{\ZZ}(X_A)=2$ we have $h^{1,1}_{\ZZ}(S^{[2]})=2$ and hence $H^{1,1}_{\ZZ}(S^{[2]})$ is generated (over $\ZZ$) by $\mu(d)$ and $\xi$.  Let
\begin{equation}\label{triciclo}
\begin{matrix}
H^{1,1}_{\ZZ}(S^{[2]}) & \overset{\psi}{\lra} & \ZZ[\sqrt{2}] \\
x\mu(d)+y\xi & \mapsto & y+x\sqrt{2}
\end{matrix}
\end{equation}
Then 
\begin{equation}\label{prodtraccia}
(\alpha,\beta)=-\Tr(\psi(\alpha)\cdot\ov{\psi(\beta)}).
\end{equation}
 Since $\varphi^{*}h_A$ is an element of square $2$ we will need to solve a (negative) Pell equation. Solving Pell's equation $N(y+x\sqrt{2})=1$ (see for example Proposition~17.5.2 of~\cite{rosen}) and noting that $N(-1+\sqrt{2})=-1$ 
we get that there exists $n\in\ZZ$ such that
\begin{equation}
\varphi^{*}h_A=x\mu(d)+y\xi,\qquad y+x\sqrt{2}=\pm(-1+\sqrt{2})(3+2\sqrt{2})^n.
\end{equation}
Next notice   that $S$ does not contains lines  because   $h^{1,1}_{\ZZ}(S)=1$: it follows that the map $S^{[2]}\to \Gr(1,\PP^3)$ is finite and therefore $H$ is ample. 
Since $|\varphi^{*}H_A|$ is not empty and  $\varphi^{*}H_A$ is not equivalent to $0$ we get that 
\begin{equation}
0<(\varphi^{*}h_A,h)_{S^{[2]}}=-\Tr\left(\pm(-1+\sqrt{2})(3+2\sqrt{2})^n(-1-\sqrt{2})\right).
\end{equation}
It follows that the $\pm$ is actually $+$.
\end{proof}
Next we will consider the analogue of nodal classes on $K3$ surfaces. For $n\in\ZZ$ we define  $\alpha_n\in H^{1,1}_{\ZZ}(S^{[2]})$ by requiring that 
\begin{equation}
\psi(\alpha_n)=-(3-2\sqrt{2})^n.
\end{equation}
Thus $q_{S^{[2]}}(\alpha_n)=-2$ for all $n$. 
\begin{lmm}\label{lmm:nodeff}
If $n> 0$ then $2\alpha_n$ is effective, if  $n\le 0$ then $-2\alpha_n$ is effective.
\end{lmm}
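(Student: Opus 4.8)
The plan is to bootstrap from a single effective class using the Beauville involution of $S^{[2]}$. The base case is $n=0$: since $-2\alpha_0=2\xi=[E]$ is the class of the diagonal divisor $E$ (the exceptional divisor of the Hilbert--Chow morphism $S^{[2]}\to S^{(2)}$), it is effective. The engine for every other $n$ will be the fact that in the present situation $\Pic(S)=\ZZ d$, so $S\subset\PP^3$ is a smooth quartic containing no line (as already noted in the proof of~\Ref{lmm}{quadue}); hence the residuation map $Z\mapsto(\la Z\ra\cdot S)-Z$ is a biregular involution $\iota_B\colon S^{[2]}\to S^{[2]}$, the Beauville involution. Being a biregular automorphism, $\iota_B^{*}$ carries effective divisor classes to effective divisor classes.

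Next I would pin down the action of $\iota_B^{*}$ on $\Pic(S^{[2]})=\ZZ\mu(d)\oplus\ZZ\xi$. Since $\iota_B$ preserves the spanning line $\la Z\ra$, the map $f\colon S^{[2]}\to\Gr(1,\PP^3)$ satisfies $f\circ\iota_B=f$, so $\iota_B^{*}$ fixes $h=\mu(d)-\xi$. As $h$ is non-isotropic and $\iota_B^{*}\ne\mathrm{Id}$, the isometry $\iota_B^{*}$ is forced to be the reflection in $\la h\ra^{\bot}=\ZZ(\mu(d)-2\xi)$. Transporting this reflection through the isometry $\psi$ of~\eqref{triciclo} onto $\ZZ[\sqrt{2}]$, it becomes $z\mapsto-(3-2\sqrt{2})\,\ov{z}$, and a direct check gives $\iota_B^{*}(\xi)=\alpha_1$ together with $\iota_B^{*}(\alpha_n)=-\alpha_{1-n}$ for all $n$.

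Finally I would assemble the three cases. Applying $\iota_B^{*}$ to the effective class $[E]=2\xi$ shows that $2\alpha_1=\iota_B^{*}(2\xi)$ is effective. For $n\ge 1$ I would write $(3-2\sqrt{2})^n=p_n-q_n\sqrt{2}$ and use the recursion $p_{n+1}=3p_n+4q_n$, $q_{n+1}=2p_n+3q_n$: it yields the identity $3q_n-2p_n=q_{n-1}$ and shows that every $q_n$ is even, whence $\alpha_n=\tfrac12 q_{n-1}\,\xi+\tfrac12 q_n\,\alpha_1$ with non-negative integer coefficients. Thus $2\alpha_n=\tfrac12 q_{n-1}\,[E]+\tfrac12 q_n\,(2\alpha_1)$ is a non-negative integral combination of effective classes, hence effective, for every $n\ge 1$. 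Applying $\iota_B^{*}$ once more and using $\iota_B^{*}(2\alpha_n)=-2\alpha_{1-n}$ then gives effectivity of $-2\alpha_m$ for all $m=1-n\le 0$, which is the remaining case.

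The main obstacle is the second step: producing a genuine biregular symmetry of $S^{[2]}$ (not merely a lattice isometry) that moves the one known effective class around, and computing its Picard action precisely. The structural point that makes a single order-two automorphism suffice is that the two rays $\RR_{\ge0}\xi$ and $\RR_{\ge0}\alpha_1=\RR_{\ge0}\iota_B^{*}\xi$ generate a cone whose integral points already contain every $\alpha_n$ with $n\ge1$; the elementary $\ZZ[\sqrt{2}]$-bookkeeping verifying the non-negativity coming from $3q_n-2p_n=q_{n-1}\ge0$ is what converts this geometric symmetry into effectivity for all $n$.
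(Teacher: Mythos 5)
Your proof is correct, but it takes a genuinely different route from the paper's. The paper's argument is lattice-theoretic and very short: since $q_{S^{[2]}}(\alpha_n)=-2$, Theorem~1.11 of~\cite{markman2} guarantees that one of $\pm 2\alpha_n$ is effective, and the sign is then decided by requiring positive pairing with the ample class $h=\mu(d)-\xi$ (ampleness was established in the proof of~\Ref{lmm}{quadue}), the pairing being computed in one line via the trace formula~\eqref{prodtraccia}. You avoid Markman's theorem altogether and manufacture effectivity by hand: starting from the one visibly effective class $2\xi=[E]$, you transport it by the Beauville involution $\iota_B$ (regular precisely because $S$ contains no line), identify $\iota_B^{*}$ on $\Pic(S^{[2]})$ with the reflection fixing $h$, hence $\iota_B^{*}(\alpha_n)=-\alpha_{1-n}$, and then verify by Pell-equation bookkeeping that each $2\alpha_n$, $n\ge 1$, is a non-negative integral combination of $[E]$ and $\iota_B^{*}[E]$. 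I checked your arithmetic (the formula $z\mapsto -(3-2\sqrt{2})\,\ov{z}$, the identity $3q_n-2p_n=q_{n-1}$, parity of $q_n$, and the decomposition $\alpha_n=\tfrac{1}{2}q_{n-1}\xi+\tfrac{1}{2}q_n\alpha_1$): it is all correct. What each approach buys: the paper's proof is shorter and applies verbatim to any $(-2)$-class of type $(1,1)$ on any projective hyperk\"ahler manifold; yours is constructive (it exhibits $2\alpha_n$ as an effective cycle supported on $E\cup\iota_B(E)$) and needs no monodromy-theoretic input, at the price of quartic-specific geometry. Two points you should make explicit to close the argument. First, biregularity of the residuation map on $S^{[2]}$ for a line-free quartic is a nontrivial classical theorem of Beauville, proved in a paper that is not among this paper's references (the cited~\cite{beau} contains the construction of $S^{[2]}$, not of this involution), so it deserves a citation or a proof. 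Second, you need $\iota_B^{*}\ne\Id$ \emph{on the Picard lattice}, not merely on $H^2$; this does hold, e.g.\ because $E$ is the unique effective divisor in its linear system (being the exceptional divisor of the Hilbert--Chow contraction, $h^0(\cO_{S^{[2]}}(E))=1$), while $\iota_B(E)\ne E$ since a generic tangent line to $S$ meets $S$ residually in two distinct points; hence $\iota_B^{*}\xi\ne\xi$, and the reflection alternative is the only one left.
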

\begin{proof}
By Theorem~1.11 of~\cite{markman2} either $2\alpha_n$ or $-2\alpha_n$ is effective (because $q_{S^{[2]}}(\alpha_n)=-2$). Since $(\mu(d)-\xi)$ is ample we decide which of $\pm 2\alpha_n$  is effective by requiring that the product with $(\mu(d)-\xi)$ is strictly positive. The result  follows easily from~\eqref{prodtraccia}.
\end{proof}
\begin{prp}\label{prp:contiene}
Suppose that $\varphi^{*}h_A$ is given by~\eqref{infostud} with $n\not=0$. Then there exists an \emph{effective} $\beta\in H^{1,1}_{\ZZ}(S^{[2]})$ such that  $(\varphi^{*}h_A,\beta)_{S^{[2]}}<0$. 
\end{prp}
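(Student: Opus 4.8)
The plan is to exhibit $\beta$ explicitly among the classes $\alpha_m$, using the effective multiples furnished by~\Ref{lmm}{nodeff}. The key simplification is to rewrite everything in terms of the fundamental unit $t:=1+\sqrt2$ of $\ZZ[\sqrt2]$, for which $3+2\sqrt2=t^2$, $\ 3-2\sqrt2=t^{-2}$ and $-1+\sqrt2=t^{-1}$. With this notation~\eqref{infostud} reads $\psi(\varphi^{*}h_A)=t^{2n-1}$, the defining relation of $\alpha_m$ gives $\psi(\alpha_m)=-t^{-2m}$, and one has $\psi(\xi)=1$, so that $\alpha_0=-\xi$.

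Next I would compute the relevant pairings via~\eqref{prodtraccia}. Since $\ov t=1-\sqrt2=-t^{-1}$, substitution gives $\overline{\psi(\alpha_m)}=-t^{2m}$, whence
\begin{equation*}
(\varphi^{*}h_A,\alpha_m)_{S^{[2]}}=-\Tr\bigl(t^{2n-1}\cdot(-t^{2m})\bigr)=\Tr\bigl(t^{2(n+m)-1}\bigr)=t^{2(n+m)-1}-t^{1-2(n+m)},
\end{equation*}
using $\Tr(t^{j})=t^{j}+(-1)^{j}t^{-j}$ and the fact that $2(n+m)-1$ is odd. Since $t>1$, the sign of $(\varphi^{*}h_A,\alpha_m)_{S^{[2]}}$ is exactly that of the exponent $2(n+m)-1$: it is strictly positive when $n+m\ge 1$ and strictly negative when $n+m\le 0$.

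It then remains to choose $m$ so that the effective multiple of $\alpha_m$ provided by~\Ref{lmm}{nodeff} pairs negatively with $\varphi^{*}h_A$, splitting into the two cases $n\ge 1$ and $n\le -1$. If $n\ge 1$ I would take $\beta:=2\xi=-2\alpha_0$, which is effective (it is the divisor of non-reduced subschemes, and equals $-2\alpha_0$ with index $0\le 0$); then
\begin{equation*}
(\varphi^{*}h_A,\beta)_{S^{[2]}}=-2\bigl(t^{2n-1}-t^{1-2n}\bigr)<0\qquad\text{because }2n-1\ge 1.
\end{equation*}
If $n\le -1$ I would take $\beta:=2\alpha_1$, effective by~\Ref{lmm}{nodeff} since $1>0$; then
\begin{equation*}
(\varphi^{*}h_A,\beta)_{S^{[2]}}=2\bigl(t^{2n+1}-t^{-2n-1}\bigr)<0\qquad\text{because }2n+1\le -1.
\end{equation*}
In either case $\beta$ is effective and pairs negatively with $\varphi^{*}h_A$, which is the assertion.

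I do not expect a serious obstacle here: once the substitution $t=1+\sqrt2$ is made, the argument is a finite piece of sign-bookkeeping. The only points needing care are the conjugation identity $\ov t=-t^{-1}$ (and the resulting signs in the trace computation) and invoking the correct effectivity statement from~\Ref{lmm}{nodeff} for the chosen index, together with the elementary identification $2\xi=-2\alpha_0$. The genuine input is \Ref{lmm}{nodeff} itself, which I take as given; the present proposition is then essentially a computation, with the two cases $n\ge 1$ and $n\le -1$ handled symmetrically.
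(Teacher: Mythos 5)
Your proof is correct and follows essentially the same route as the paper: both invoke \Ref{lmm}{nodeff} to produce effective classes of the form $\pm 2\alpha_m$ and then verify, via the trace formula~\eqref{prodtraccia}, that a suitable such class pairs negatively with $\varphi^{*}h_A$. The only (harmless) difference is in bookkeeping: you use the fixed witnesses $2\xi=-2\alpha_0$ (for $n\ge 1$) and $2\alpha_1$ (for $n\le -1$) and read off the sign from powers of the unit $1+\sqrt{2}$, whereas the paper takes the $n$-dependent witnesses $-2\alpha_{-n+1}$ and $2\alpha_{-n}$ and uses the isometry $g$ (multiplication by $3-2\sqrt{2}$) to reduce both cases to the constant pairing value $-4$.
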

\begin{proof}
Identify $H^{1,1}_{\ZZ}(S^{[2]}) $ with $\ZZ[\sqrt{2}]$ via~\eqref{triciclo} and let $g\colon H^{1,1}_{\ZZ}(S^{[2]}) \to H^{1,1}_{\ZZ}(S^{[2]})$ correspond to multiplication by $(3-2\sqrt{2})$. Since $N(3-2\sqrt{2})=1$ the map $g$ is an isometry. Notice that $\alpha_k=g^k(-\xi)$ and    by~\Ref{lmm}{quadue} we have that $\varphi^{*}h_A=g^{-n}(\mu(d)-\xi)$. Now suppose that $n>0$. Then $-2\alpha_{-n+1}$ is effective by~\Ref{lmm}{nodeff} and 
\begin{equation}\label{asilo}
\scriptstyle
(\varphi^{*}h_A,-2\alpha_{-n+1})_{S^{[2]}}=(g^{-n}(\mu(d)-\xi),2g^{-n+1}(\xi))_{S^{[2]}}=(\mu(d)-\xi,2g(\xi))_{S^{[2]}}
=(\mu(d)-\xi,-4\mu(d)+6\xi)_{S^{[2]}}=-4<0.
\end{equation}
Lastly suppose that $n<0$. Then $2\alpha_{-n}$ is effective by~\Ref{lmm}{nodeff} and 
\begin{equation}\label{asilo}
(\varphi^{*}h_A,2\alpha_{-n})_{S^{[2]}}=(g^{-n}(\mu(d)-\xi),2g^{-n}(-\xi))_{S^{[2]}}=(\mu(d)-\xi,-2\xi)_{S^{[2]}}
=-4<0.
\end{equation}
\end{proof}
Now we are ready to prove that~\eqref{esticazzi} cannot hold and hence reach a contradiction. By~\Ref{lmm}{quadue} we know that $\varphi^{*}h_A$ is given  by~\eqref{infostud} for some  $n\in\ZZ$. We have already noticed that ~\eqref{esticazzi} cannot hold if $n=0$ . Suppose that $n\not=0$. By~\Ref{prp}{contiene} there exists an effective $\beta\in H^{1,1}_{\ZZ}(S^{[2]})$ such that 
\begin{equation}\label{negativo}
(\varphi^{*}h_A,\beta)_{S^{[2]}}<0. 
\end{equation}
Let $B$ be an effective divisor representing $\beta$ and $C\in | \varphi^{*}h_A |$. Then $C\cap B$ does not have codimension $2$ i.e.~there exists at least one prime divisor $B_i$ which is both in the support of $B$ and in the support of $C$. In fact suppose the contrary. Let $c\in H^2(S^{[2]})$ be the Poincar\`e dual of $C$ and $\sigma$ be a symplectic form on $S^{[2]}$: then
\begin{equation}
0\le \int_{B\cap C}\sigma\wedge\ov{\sigma}=(\beta,c)_{S^{[2]}}(\sigma,\ov{\sigma})_{S^{[2]}}
\end{equation}
and since $(\sigma,\ov{\sigma})_{S^{[2]}}>0$ we get that $(\beta,c)\ge 0$ i.e.~$(\varphi^{*}h_A,\beta)_{S^{[2]}}\ge 0$, that  contradicts~\eqref{negativo}. The conclusion is that there exists a prime divisor $B_i$ which is both in the support of $B$ and of \emph{any} $C\in | \varphi^{*}h_A |$, i.e.~$B_i$ is  a base divisor of the linear system $| \varphi^{*}h_A |$: that shows that~\eqref{esticazzi}  does not hold.
\qed
 \subsection{Proof that $\gp$ restricted to $(\gM\setminus\gN)$ is an open embedding.}\label{subsec:immaperta}
 \setcounter{equation}{0}
Let $\gT:=\Delta//\PGL(V)$. Let $\delta\colon\gM\to\gM$ be the duality involution defined 
by~\eqref{specchio}. By~\cite{ogduca}, pp.~36-38, we have that
\begin{equation}\label{codal}
\cod(\gT\cap\delta(\gT),\gM)\ge 2.
\end{equation}
\begin{clm}\label{clm:granaper}
The restriction of $\gp$ to $(\gM\setminus\gN\setminus(\gT\cap\delta(\gT)))$ is open.
\end{clm}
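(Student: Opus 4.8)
The plan is to prove openness \emph{pointwise}, using the duality involution to reduce the singular case $x\in\gT$ to the smooth case. First I would record that $\gp$ is genuinely regular on all of $(\gM\setminus\gN)$, so that asking for openness makes sense: by~\Ref{thm}{teorprinc} the indeterminacy locus of $\gp$ is contained in $\gI$, and every point of $\gI$ is represented by some $A\in\lagr^{ss}$ with $\Theta_A\neq\es$, hence with $A\in\Sigma$; thus $\gI\subset\gN$. Next I would observe the identification $\gM^0=\gM\setminus\gN\setminus\gT$: if the (unique) closed-orbit representative $A$ of a point lies outside $\Sigma$ it is stable by Cor.~2.5.1 of~\cite{ogmoduli}, and if moreover it lies outside $\Delta$ then $A\in\lagr^0$. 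Consequently our set decomposes as $\gM\setminus\gN\setminus(\gT\cap\delta(\gT))=\gM^0\cup\big((\gT\setminus\gN)\setminus\delta(\gT)\big)$, and it suffices to treat these two pieces.

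For $x\in\gM^0$ the argument is the one sketched in the introduction. A representative $A\in\lagr^0$ has $X_A$ smooth, so on a Luna slice the local period map $Z_A\to\Omega_\Lambda$ is a local isomorphism by local Torelli for hyperk\"ahler manifolds (injectivity and surjectivity of the infinitesimal period map together with $\dim Z_A=\dim\Omega_\Lambda=20$). Composing the open surjection $Z_A\to\gM$ with the open quotient map $\kappa\colon\Omega_\Lambda\to\DD_\Lambda$ then exhibits $\gp$ near $[A]$ as an open map, so $\gp$ is open at $x$.

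For $x\in(\gT\setminus\gN)\setminus\delta(\gT)$ I would transport the smooth case through the duality involution $\delta\colon\gM\to\gM$ of~\eqref{specchio}. Since $\delta(\gN)=\gN$ (recorded in the proof of~\Ref{prp}{nerola}) and $x\notin\delta(\gT)$, the point $y:=\delta(x)$ lies in $\gM\setminus\gN\setminus\gT=\gM^0$, where $\gp$ is already known to be open. Applying $\ov{\iota}$ to the identity~\eqref{idiota} and using $\ov{\iota}^2=\Id$ yields $\gp=\ov{\iota}\circ\gp\circ\delta$ on the regular locus; as both $\delta$ and the involution $\ov{\iota}$ of~\eqref{invoper} are biholomorphisms, openness of $\gp$ at $y=\delta(x)$ forces openness of $\gp$ at $x$. (Regularity of $\gp$ at these points of $\gT\setminus\gN$ is automatic, since there $A\in\Delta\setminus\Sigma$, so $\Theta_A=\es$ and $[A]\in\gM^{ADE}\subset\gM\setminus\gI$.)

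The only genuinely delicate point—and the reason the locus $\gT\cap\delta(\gT)$ must be excised—is the bookkeeping of these loci under $\delta$: one needs $\delta(\gN)=\gN$ whereas $\delta(\gT)\neq\gT$, so that $\delta$ moves the $\Delta$-part of $(\gM\setminus\gN)$ off of $\gT$ \emph{exactly} on the complement of $\delta(\gT)$. Granting that, the hypothesis $x\notin\gT\cap\delta(\gT)$ guarantees that at least one of $x,\delta(x)$ represents a smooth double EPW-sextic, and the proof closes by combining local Torelli with the biholomorphy of $\delta$ and $\ov{\iota}$. I expect no further obstacle beyond verifying these invariance properties of $\delta$, which are available from the cited results.
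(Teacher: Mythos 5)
Your proof is correct and follows essentially the same route as the paper: the smooth locus $\gM^0=\gM\setminus\gN\setminus\gT$ is handled via a Luna slice, local Torelli for hyperk\"ahler manifolds and the open quotient map $\Omega_{\Lambda}\to\DD_{\Lambda}$, and points of $\gT\setminus\delta(\gT)$ are transported to the smooth case using $\delta(\gN)=\gN$ and the identity~\eqref{idiota}. The extra preliminary observations (that $\gI\subset\gN$, hence $\gp$ is regular on $\gM\setminus\gN$, and the identification of $\gM\setminus\gN\setminus\gT$ with $\gM^0$) are accurate and merely make explicit what the paper leaves implicit.
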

\begin{proof}
It suffices to prove that the restriction of $\gp$ to $(\gM\setminus\gN\setminus(\gT\cap\delta(\gT)))$ is open in the classical topology. Suppose first that  $[A]\in(\gM\setminus\gN\setminus\gT)$ i.e.~$X_A$ is smooth. Then $A$ is stable by Corollary~2.5.1 of~\cite{ogmoduli}. Let $Z_A\subset\lagr$ be an analytic $\PGL(V)$-slice at $A$, see~\cite{luna1}. We may and will assume that $Z_A$ is contractible; hence a marking of $(X_A,H_A)$ defines a lift of   $\cP|_{Z_A}$ to a regular map $\wt{\cP}_A\colon Z_A\to\Omega_{\Lambda}$.  The family of double EPW-sextics parametrized by $Z_A$ is a representative of the universal deformation space of the polarized $4$-fold $(X_A,H_A)$ and hence  $\wt{\cP}_A$  is a local isomorphism $(Z_A,A)\to(\Omega_{\Lambda},\cP(A))$ (injectivity and surjectivity of the local period map for compact hyperk\"ahler manifolds). Now suppose that $[A]\in \gT$. By hypothesis $\delta([A])\notin\gT$, and since $\delta(\gN)=\gN$ it follows that $\delta([A])\in(\gM\setminus\gN\setminus\gT)$; hence $\gp$ is open at $[A]$ by openness at $\delta([A])$ (which we have just proved) and~\eqref{idiota}. 
\end{proof}
Next we notice that $\DD_{\Lambda}$ is $\QQ$-factorial. In fact by Lemma~7.2 in Ch.~7 of~\cite{sata} there exists a torsion-free subgroup $G<\wt{O}(\Lambda)$ of finite index. Thus the natural map $\pi\colon (G\backslash \Omega_{\Lambda})\to \DD_{\Lambda}$ is a finite map of quasi-projective varieties, and $(G\backslash \Omega_{\Lambda})$ is smooth: it follows that  $\DD_{\Lambda}$ is $\QQ$-factorial. (If $D$ is a  divisor of $\DD_{\Lambda}$ then $(\deg\pi)D=\pi_{*}(\pi^{*}D)$ is the push-forward of a Cartier divisor, hence Cartier.) Since $\DD_{\Lambda}$ is $\QQ$-factorial and the restriction of $\gp$ to $(\gM\setminus\gN)$ is a birational map 
\begin{equation}\label{restper}
(\gM\setminus\gN)\to\DD_{\Lambda}
\end{equation}
the exceptional set of~\eqref{restper} is either empty or of pure codimension $1$. By~\Ref{clm}{granaper} there are no components of codimension $1$ in the  the exceptional set of~\eqref{restper}, hence it is empty. This proves that~\eqref{restper} is an open embedding.

\end{document}